\newtheorem{thm}{Theorem}
\newtheorem{prop}[thm]{Proposition}
\newtheorem{lem}[thm]{Lemma}
\newtheorem{sublem}[thm]{Sublemma}
\newtheorem{cor}[thm]{Corollary}
\newtheorem{fact}[thm]{Fact}
\theoremstyle{remark}
\newtheorem{rem}[thm]{Remark}
\theoremstyle{definition}
\newcommand{\C}{\mathbb C}
\newcommand{\R}{\mathbb R}
\newcommand{\Z}{\mathbb Z}
\newcommand{\SL}{\mathrm{SL}_n(\R)}
\newcommand{\GL}{\mathrm{GL}_n(\R)}
\newcommand{\PGL}{\mathrm{PGL}_n(\R)}
\newcommand{\Flag}{\mathrm{Flag}(\R^n)}
\newcommand{\F}{\mathcal F}
\newcommand{\B}{\mathcal B}
\newcommand{\E}{\mathrm{e}}
\newcommand{\sln}{\mathfrak{sl}_n(\R)}
\newcommand{\Ad}{\mathop{\mathrm{Ad}}}
\newcommand{\bdry}{\partial_\infty \widetilde S }
\newcommand{\wt}{\widetilde}
\newcommand{\wh}{\widehat}
\newcommand{\Left}{\mathrm{left}}
\newcommand{\Right}{\mathrm{right}}
\newcommand{\Out}{\mathrm{out}}
\newcommand{\Hit}{\mathrm{Hit}_n}
\newcommand{\Id}{\mathrm{Id}}
\DeclareMathOperator*{\Bigcirc}{\bigcirc}
\newcommand{\cupp}{{\smallsmile}}
\newcommand{\db}{/\kern -4pt/}
\renewcommand{\leq}{\leqslant}
\renewcommand{\geq}{\geqslant}
\renewcommand{\phi}{\varphi}
\renewcommand{\epsilon}{\varepsilon}
\title[The symplectic structure
of the $\PGL$--Hitchin component]
{The symplectic structure
of 
\\
the $\PGL$--Hitchin component}
\author{Francis Bonahon}
\address{University of Southern California, Los Angeles, CA 90089-2532, U.S.A.}
\address{Michigan State University, East Lansing, MI 48824, U.S.A.}
\email{fbonahon@usc.edu, bonahonf@msu.edu}
\author{Ya\c sar S\"ozen}
\address{Hacettepe University, 06800 Ankara, T\"urk\.{\i}ye}
\email{ysozen@hacettepe.edu.tr}
\author{Hat\.{\i}ce Zeybek}
\address{Hacettepe University, 06800 Ankara, T\"urk\.{\i}ye}
\email{haticekayazeybek@gmail.com}
\date{\today}
\thanks{This work was partially supported by  the National Science Foundation of the United States (grant DMS-2005656), and by the Scientific and Technological Research Council of T\"urkiye, T\"UB\.{I}TAK (B\.IDEB-2221-(1059B211402137) and  B\.{I}DEB-2214/A)}
\begin{document}

\maketitle

\begin{abstract}

The $\PGL$--Hitchin component $\Hit(S)$ of a closed oriented surface is a preferred component of the character variety consisting of homomorphisms from the fundamental group of the surface to the projective linear group $\PGL$. It admits a symplectic structure, defined by the Atiyah-Bott-Goldman symplectic form. The main result of the article is an explicit computation of this symplectic form in terms of certain global coordinates for $\Hit(S)$. A remarkable feature of this expression is that its coefficients are constant. 
 
\end{abstract}
\tableofcontents

\section*{Introduction}

For an oriented closed  surface $S$ of negative Euler characteristic, the \emph{$\PGL$--Hitchin component} $\Hit(S)$ is a preferred component of the character variety
$$
\mathcal X_{\PGL}(S) = \{ \rho\colon \pi_1(S) \to \PGL \} \db \PGL
$$
consisting of group homomorphisms $\rho\colon \pi_1(S) \to \PGL$ from the fundamental group $\pi_1(S)$ of the surface to the projective linear group $\PGL$, considered modulo the action of $\PGL$ by conjugation. 

Fundamental results of Hitchin \cite{Hit}, Labourie \cite{Lab} and Fock-Goncharov \cite{FocGon1}, as well as subsequent work by many researchers triggered by these breakthroughs, have showed that the \emph{Hitchin representations}, namely the homomorphisms $\rho\colon \pi_1(S) \to \PGL$ representing elements of $\Hit(S)$, have very strong geometric properties. The general theme is that these properties are very similar to the ones observed in the case $n=2$ where, for the identification between $\mathrm{PGL}_2(\R)$ and the isometry group of the hyperbolic plane $\mathbb H^2$, Hitchin representations correspond to the monodromies of hyperbolic metrics on the surface $S$.  For instance, Hitchin representations are injective with discrete image, the action of the mapping class group on $\Hit(S)$ is discontinuous, and the Hitchin component $\Hit(S)$ is smooth and diffeomorphic to $\R^{-\chi(S)(n^2-1)}$, where $\chi(S)<0$ is the Euler characteristic of the surface. 

The Hitchin diffeomorphism between $\Hit(S)$ and $\R^{-\chi(S)(n^2-1)}$ was based on the theory of Higgs bundles. A different  parametrization, combining the dynamical properties uncovered in \cite{Lab} with the representation theoretic arguments of \cite{FocGon1}, was introduced in \cite{BonDre2}. 
The Hitchin component $\Hit(S)$ and, more generally, the smooth part of $\mathcal X_{\PGL}(S) $, comes with a preferred symplectic structure, the \emph{Atiyah-Bott-Goldman symplectic form} \cite{AtiBot, Gol}. The main result of the current article is an explicit computation of this symplectic form in terms of the global coordinates for $\Hit(S)$ developed in \cite{BonDre2}.

The coordinates of  \cite{BonDre2} were inspired by earlier work of Fock and Goncharov \cite{FocGon1}  for surfaces with at least one puncture. For such a punctured surface $S$, Fock and Goncharov developed  a theory of positive framed representations $\rho\colon \pi_1(S) \to \PGL$  that is parallel to that of Hitchin representations. In particular, they  proved that the space $\mathrm P_n(S)$ of positive framed characters is also diffeomorphic to $\R^{-\chi(S)(n^2-1)}$, by constructing explicit global coordinates for this space, associated to an ideal triangulation of the surface $S$. Because of the punctures, the space $\mathrm P_n(S)$ does not admit a symplectic structure, but it is endowed with a Poisson structure and Fock and Goncharov explicitly compute this Poisson structure in their coordinates. 

\begin{figure}[htbp]

\includegraphics{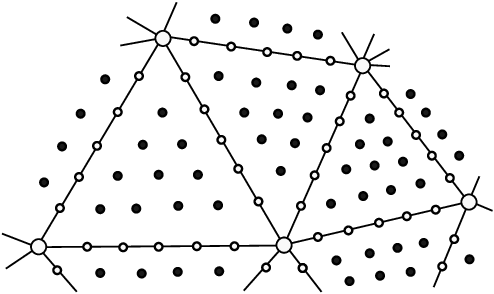}

\caption{The dots indexing the Fock-Goncharov coordinates, for $n=6$.}
\label{fig:FockGonDots}
\end{figure}

Before describing the generalized Fock-Goncharov coordinates of \cite{BonDre2} it may be useful, for perspective, to remind the reader of the original Fock-Goncharov coordinates of \cite{FocGon1} for punctured surfaces. Incidentally, we are here considering the so-called $X$--coordinates for positive framed $\PGL$--characters, as opposed to the $A$--coordinates of the dual $\SL$--based theory; see \cite{FocGon1}. Let $S$ be a punctured surface of negative Euler characteristic, obtained  by removing a nonempty set of punctures $\{p_1, p_2, \dots, p_k\}$ from a closed oriented surface $\bar S$. The Fock-Goncharov coordinates for the space $\mathrm P_n(S)$ of positive framed characters use an \emph{ideal triangulation} of the surface $S$, namely a triangulation of the closed surface $\bar S$ whose vertices are exactly the punctures $p_i$.  The Fock-Goncharov coordinates are then indexed by a certain number of dots drawn on the surface. More precisely, there are $n-1$ hollow (white) dots on each edge of the ideal triangulation, and the interior of each face $T$ is endowed with $\frac12(n-1)(n-2)$ solid (black) dots arranged as an embedded copy of the \emph{discrete triangle}
$$
\Theta_n = \{ (a,b,c)\in\Z^3; a,b,c\geq1 \text{ and } a+b+c=n\}, 
$$
in such a way that the vertices of the discrete triangle point towards the corners of the triangle $T$. See Figure~\ref{fig:FockGonDots}. An elementary Euler characteristic argument shows that there is a total of $-\chi(S)(n^2-1)$ such solid and hollow dots, a number that is also the dimension of  the space $\mathrm P_n(S)$ of positive framed characters. 

Given  a positive framed character $[\rho] \in\mathrm P_n(S)$ , Fock and Goncharov associate a \emph{triangle invariant} $\tau_\delta(\rho)\in \R$ to each solid dot $\delta$ in a face of the triangulation, and a \emph{shearing invariant} $\sigma_\delta(\rho)\in \R$ to each hollow dot $\delta$ in an edge. To be completely accurate, the original Fock-Goncharov $X$--coordinates are positive numbers, and we are taking here their logarithms. Fock and Goncharov then show that these invariants provide a diffeomorphism from  the space $\mathrm P_n(S)$ of positive framed characters to $\R^{-\chi(S)(n^2-1)}$.

\begin{figure}[htbp]

\includegraphics{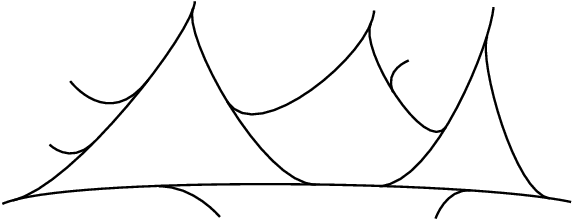}

\caption{A train track.}
\label{fig:TrainTrack}
\end{figure}

A closed surface has no ideal triangulation, since there is no puncture where to put the vertices. The parametrization developed in \cite{BonDre2} replaces the ideal triangulation by the data of  a maximal geodesic lamination $\lambda$ and of a train track carrying $\lambda$; see \S \ref{subsect:GeodLamTrainTracks} for precise definitions. A geodesic lamination usually has a very intricate dynamical structure, but a train track is a very combinatorial object. More precisely, a  \emph{train track} $\Psi$ in the surface $S$ is a trivalent graph embedded in $S$, whose edges are smoothly embedded and such that, at each vertex, the three adjacent edges are tangent to a single line with two edges going in one direction and the remaining edge in the other direction. See Figure~\ref{fig:TrainTrack} for an example. This property implies that the completion of the complement $S-\Psi$ has sharp corners occurring at the vertices of $\Psi$ and, because the geodesic lamination is maximal, there is the additional property that each component of $S-\Psi$ is a triangle, namely a disk with 3 corners in its boundary.

In accordance with the american railroad terminology, the vertices of the train track $\Psi$ are called its \emph{switches}, and its edges are its \emph{branches}. 

As in the original framework of \cite{FocGon1}, the generalized Fock-Goncharov coordinates of \cite{BonDre2} are associated to dots drawn on the surface, except that there are many more of them. More precisely, there are $n-1$ hollow dots on each branch of the train track $\Psi$, and $\frac12(n-1)(n-2)$ solid dots in each triangle component $U$ of the complement $S-\Psi$, arranged as a copy of the discrete triangle $\Theta_n$ in such a way that the vertices of $\Theta_n$ point toward the corners of $U$. In particular, there  is a total of $-\chi(S)(n-1)(n+7)$ such dots. See Figure~\ref{fig:BonDreDots} for an illustration. A Hitchin character $[\rho] \in \Hit(S)$ then determines a \emph{triangle invariant} $\tau_\delta(\rho) \in \R$ associated to each solid dot $\delta$ in the complement $S-\Psi$, and a \emph{shearing invariant} $\sigma_\delta(\rho)\in \R$ associated to each hollow dot on a branch of $\Psi$. We should probably emphasize that these invariants depend, not just on the train track $\Psi$, but also on the maximal geodesic lamination $\lambda$ carried by $\Psi$.

\begin{figure}[htbp]

\includegraphics{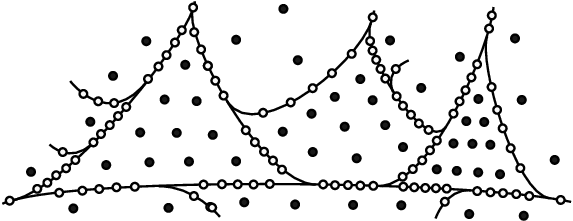}

\caption{The dots indexing the generalized Fock-Goncharov coordinates, for $n=6$.}
\label{fig:BonDreDots}
\end{figure}

The total number $- \chi(S)(n-1)(n+7)$ of dots  is significantly more than the dimension $- \chi(S)(n^2-1)$ of $\Hit(S)$. Indeed, whereas the original Fock-Goncharov coordinates were independent,  the generalized Fock-Goncharov coordinates satisfy a certain number of linear relations associated to the switches of the train track $\Psi$. More precisely, by our definition of train tracks, a switch $s$ of $\Psi$ has two incoming branches $e_s^\Left$ and $e_s^\Right$ and a single outgoing branch $e_s^\Out$, with $e_s^\Left$ to the left of $e_s^\Right$ when oriented toward $s$. We orient $e_s^\Out$ away from $s$. The switch also determines a component $U_s$ of $S-\Psi$, located between $e_s^\Left$ and $e_s^\Right$ near $s$.  Then, for $a=1$, $2$, \dots, $n-1$, the \emph{$a$--th switch relation} associated to $s$ states that
$$
\sigma_{\delta_s^\Out(a)}(\rho) = \sigma_{\delta_s^\Left(a)}(\rho) + \sigma_{\delta_s^\Right(a)}(\rho) - \sum_\delta \tau_\delta(\rho)
$$
where $\delta_s^\Out(a)$, $\delta_s^\Left(a)$ and $\delta_s^\Right(a)$ are the $a$--th dots in the branches $e_s^\Out$, $e_s^\Left$ and $e_s^\Right$ for the orientations specified above, and where the sum is over the $n-a-1$ solid dots $\delta$ in the component $U_s$ of $S-\Psi$ that, in the corresponding discrete triangle, are in the $a$--th row parallel to the side of this discrete triangle that is opposite  the vertex facing $s$.  See Figure~\ref{fig:SwitchRelation} for a graphical illustration  of this switch relation, and \S \ref{subsect:GenFocGonInvariants} for a more formal description. 

\begin{figure}[htbp]

\SetLabels\small
( .32 * .57 ) $\delta_s^\Out(a)$  \\
\R(  .72* .8 )  $\delta_s^\Right(a)$ \\
\R( .72 *  .13)  $\delta_s^\Left(a)$ \\
( .88 * .23) $\delta$\\
( .88 * .4) $\delta$\\
( .88 * .56) $\delta$\\
( .88 * .76) $\delta$\\
\endSetLabels
\centerline{\AffixLabels{\includegraphics{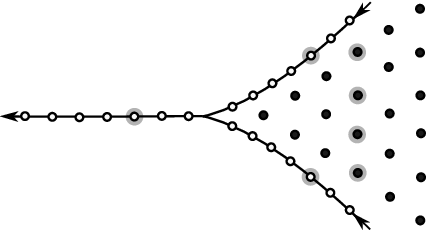}}}

\caption{The $a$--th switch relation for $n=8$ and $a=3$}
\label{fig:SwitchRelation}
\end{figure}

The switch relations constrain the generalized Fock-Goncharov invariants $\tau_\delta(\rho)$ and $\sigma_\delta(\rho)$ to the linear subspace $L$ of $\R^{-(n-1)(n+7) \chi(S)}$ defined by these relations. In addition, these invariants must satisfy a certain \emph{positivity condition} defined by finitely many strict linear inequalities, involving the transverse measures for  the geodesic lamination $\lambda$ and consequently based on the dynamical structure of the lamination. 

The main result of \cite{BonDre2} is that the map assigning its generalized Fock-Goncharov invariants to a Hitchin character $[\rho] \in \Hit(S)$ defines a diffeomorphism from $\Hit(S)$ to the polytope in $\R^{-(n-1)(n+7) \chi(S)}$ defined by the switch relations and the positivity condition. 

Going back to the Atiyah-Bott-Goldman symplectic form $\omega$, each dot $\delta$ defines a function $\tau_\delta \colon [\rho] \mapsto \tau_\delta(\rho)$ or $\sigma_\delta \colon [\rho] \mapsto \sigma_\delta(\rho)$ on the Hitchin component $\Hit(S)$. Our main result provides an explicit computation of $\omega$ in terms of the differentials $d\tau_\delta$ and $d\sigma_\delta$ of these coordinate maps.

\begin{thm}
\label{thm:MainThm}

The Atiyah-Bott-Goldman symplectic form $\omega$ of the Hitchin component $\Hit(S)$ can be expressed in terms of the generalized Fock-Goncharov invariants as 
\begin{align*}
 \omega &=
 \sum_{U \text{ component of } S-\Psi}\ \sum_{\delta, \, \delta' \in U} \tfrac12 C(\delta,\delta') \,d\tau_\delta \wedge d\tau_{\delta'}
 \\
&\qquad
+ \sum_{s \text{ switch of }\Psi}\ \sum_{\delta \in e_s^\Left \!, \,\delta'\in e_s^\Right} C(\delta,\delta') \,d\sigma_\delta \wedge d\sigma_{\delta'}
 \\
&\qquad
- \sum_{s \text{ switch of }\Psi}\ \sum_{\delta \in e_s^\Left\!, \, \delta'\in U_s} C(\delta,\delta') \,d\sigma_\delta \wedge d\tau_{\delta'}
 \\
&\qquad
+ \sum_{s \text{ switch of }\Psi}\ \sum_{\delta \in e_s^\Right\!,  \,\delta'\in U_s} C(\delta,\delta') \,d\sigma_\delta \wedge d\tau_{\delta'}
\end{align*}
where the coefficients $C(\delta,\delta')$ are explicit integers associated to the dots $\delta$ and $\delta'$. In particular, these coefficients are constant. 
\end{thm}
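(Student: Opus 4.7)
The plan is to represent each generalized Fock-Goncharov coordinate by an explicit tangent vector, construct for this tangent vector a \emph{localized} $1$-cocycle in $Z^1(\pi_1(S); \sln_{\Ad\rho})$, and then evaluate the Atiyah-Bott-Goldman pairing cell by cell using the cellular cup product and the trace form on $\sln$. More precisely, following the Sözen-Bonahon approach to Goldman's formula, we will write
$$
\omega([\rho])(v, w) = \langle [v] \smile [w], [S]\rangle,
$$
with cochain coefficients paired by the Killing form, and use a cell decomposition of $S$ whose $2$-cells are the triangular components of $S - \Psi$, whose $1$-cells are the branches of $\Psi$, and whose $0$-cells are the switches.

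First, I would identify the cocycle representatives of $\partial/\partial \tau_\delta$ and $\partial/\partial \sigma_\delta$ at a Hitchin character $[\rho]$. Reading the construction of the triangle and shearing invariants in \cite{BonDre2}, an infinitesimal change of $\tau_\delta$ implements a nilpotent deformation of the flag data attached to a lift of the triangular component of $S - \Psi$ containing $\delta$, while an infinitesimal change of $\sigma_\delta$ implements a diagonal one-parameter shear across the branch containing $\delta$. Each such deformation determines a specific element of $\sln$ indexed by the position of $\delta$ in the discrete triangle $\Theta_n$ or on the branch, and the corresponding cocycle can be supported in an arbitrarily small neighborhood of the triangle or branch in question.

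Second, I would compute $\omega\bigl(\partial/\partial x_\delta, \partial/\partial x_{\delta'}\bigr)$ by locality. Since the elementary cocycles are compactly supported near a triangle or a branch, the cellular cup product contributes only when the two supports overlap, which forces the pair $(\delta,\delta')$ to fall into one of the four configurations enumerated in the statement: both dots inside a common component $U$ of $S - \Psi$; both dots on the two incoming branches $e_s^\Left$, $e_s^\Right$ at a switch $s$; or one shearing dot on $e_s^\Left$ or $e_s^\Right$ paired with a triangle dot in $U_s$. In each configuration, the local pairing reduces to a trace-form evaluation between the $\sln$-elements identified in the first step, and these evaluations are finite combinatorial sums over the roots of $\sln$, yielding integer coefficients $C(\delta,\delta')$.

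The main obstacle is the first step: while the localized character of the deformations is morally transparent from the dynamical definition of the coordinates, extracting explicit cocycle representatives requires carefully tracking how an infinitesimal variation of the Fock-Goncharov data propagates through the limit flag maps associated with the geodesic lamination $\lambda$ to produce a well-defined element of $Z^1(\pi_1(S); \sln_{\Ad\rho})$. An internal consistency check on the construction is that the resulting tangent vectors should satisfy the linear relations dual to the switch relations
$$
\sigma_{\delta_s^\Out(a)}(\rho) = \sigma_{\delta_s^\Left(a)}(\rho) + \sigma_{\delta_s^\Right(a)}(\rho) - \sum_\delta \tau_\delta(\rho);
$$
once this is in place, the remaining computation of the coefficients $C(\delta,\delta')$ is Lie-theoretic bookkeeping in $\sln$, and specializing to $n=2$ should recover the classical Wolpert-type shear formula on Teichmüller space as a sanity check.
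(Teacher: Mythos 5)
Your overall framework --- evaluate $\omega(V_1, V_2)$ as the cup product of Weil cocycles against $[S]$, express those cocycles in terms of coordinate variations, and reduce to local trace-form computations in $\sln$ --- does match the paper's strategy. But the central premise that the cocycle representatives ``can be supported in an arbitrarily small neighborhood of the triangle or branch in question'' is a genuine gap. For the coordinates of \cite{BonDre2}, the shearing invariant $\sigma^{ab}_\rho(e)$ is defined via slithering maps that aggregate the effect of every leaf of $\wt\lambda$ crossing the (lifted) branch, of which there are typically uncountably many; correspondingly, the Weil cocycle $c_V$ evaluated on a tie of the train track neighborhood is an \emph{infinite sum} of elementary contributions over all components of $\wt S - \wt\lambda$ separating two given components (this is the paper's Lemma~\ref{lem:EvaluateWeilTie}), not a locally supported expression. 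You flag this as ``the main obstacle'' but do not propose a resolution; without one, the locality argument --- and hence the claim that only the four listed configurations of dots contribute --- does not go through.

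Resolving this obstacle is where the bulk of the work lies, and it is not routine. Section~\ref{bigsect:VariationFlagMap} constructs the infinitesimal variation of the flag map explicitly via the eruption and shearing maps of \cite{SunWieZha}, together with exponential-decay estimates (in combinatorial depth) for the elementary contributions; Section~\ref{bigsect:CohomologyClassTgtVector} then opens zippers in the train track up to depth $\delta$ so that $c_V$ agrees with a genuinely local cochain up to an error $O(\E^{-C\delta})$ (Lemma~\ref{lem:EstimateWeilZipperOpening}). After that the local cup-product computation you describe does run, but it yields only an \emph{approximate} formula with an $O(\E^{-C\delta})$ error (Lemma~\ref{lem:ApproximateFormula}). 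The final step, absent from your outline, is to show the resulting expression is invariant under zipper opening (Lemma~\ref{lem:InvarianceZipperOpening}); letting $\delta\to\infty$ then upgrades the approximation to an exact equality. In short, the ``Lie-theoretic bookkeeping'' you defer is indeed manageable (it produces the constants $C(\delta,\delta')$), but the analytic content you defer --- controlling the propagation of a coordinate variation through the flag map of a lamination with uncountably many leaves, and showing the truncation error vanishes --- is precisely where the theorem is earned.
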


More precisely, 
$$
C(\delta,\delta')=
\begin{cases}
a'b-ab' &\text{if } a\leq a', b\leq b' \text{ and } c\geq c'\\
ac'-a'c &\text{if } a\leq a', b\geq b' \text{ and } c\leq c'\\
b'c-bc' &\text{if } a\leq a', b\geq b' \text{ and } c\geq c'\\
b'c-bc' &\text{if } a\geq a', b\leq b' \text{ and } c\leq c'\\
ac'-a'c &\text{if } a\geq a', b\leq b' \text{ and } c\geq c'\\
a'b -ab'&\text{if } a\geq a', b\geq b' \text{ and } c\leq c'
\end{cases}
$$
if the solid dots $\delta$, $\delta'\in U$ are respectively associated to the points $(a,b,c)$, $(a', b', c')$ of the discrete triangle $\Theta_n$, and
$$
C(\delta,\delta')=
\begin{cases}
a'(n-a)  &\text{if } a\geq a'
\\
 a(n-a') &\text{if } a\leq a',
\end{cases}
$$
if: $\delta$ is the $a$--th hollow dot in $e_s^\Left$ and  $\delta'$ is the $a'$--th hollow dot in  $e_s^\Right$; or  $\delta$ is the $a$--th hollow dot in $e_s^\Left$ and $\delta'\in U_s$ is a solid dot which, in the discrete triangle of $U_s$, is in the $a'$--th row parallel to the side that is opposite $s$; or  $\delta$ is the $a$--th hollow dot in $e_s^\Right$ and $\delta'\in U_s$ is a solid dot which, in the discrete triangle of $U_s$, is in the $a'$--th row parallel to the side  opposite $s$. 

As an illustration, the formula is relatively simple when $n=3$. In this case, let $\delta_s$ be the unique solid 
dot contained in the component $U_s$ of $S-\Psi$ associated to the switch $s$. Also, let  $\delta_s^\Left(a)$ and $\delta_s^\Right(a)$ be the $a$--th dots in the incoming branches $e_s^\Left$ and $e_s^\Right$ of $s$, as in our statement of the Switch Condition and in Figure~\ref{fig:SwitchRelation}. 

\begin{cor}
 When $n=3$, the symplectic form of the Hitchin component $\mathrm{Hit}_3(S)$ is equal to
 \pushQED{\qed}
\begin{align*}
\omega
&=  \sum_{s \text{ switch of }\Psi} 2\left( 
d\sigma_{\delta_s^\Left(1)} \wedge  d\sigma_{\delta_s^\Right(1)} 
+d\sigma_{\delta_s^\Left(2)} \wedge  d\sigma_{\delta_s^\Right(2)}
\right)
\\
&\qquad+  \sum_{s \text{ switch of }\Psi} \left( 
d\sigma_{\delta_s^\Left(1)} \wedge  d\sigma_{\delta_s^\Right(2)}
+d\sigma_{\delta_s^\Left(2)} \wedge  d\sigma_{\delta_s^\Right(1)}
\right)
\\
&\qquad+  \sum_{s \text{ switch of }\Psi} \left( 
2 \sigma_{\delta_s^\Right(1)} +\sigma_{\delta_s^\Right(2)} - 2\sigma_{\delta_s^\Left(1)} - \sigma_{\delta_s^\Left(2)}  
\right) \wedge d\tau_{\delta_s}.
\qedhere
\end{align*}
\end{cor}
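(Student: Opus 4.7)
The plan is to specialize Theorem~\ref{thm:MainThm} to $n=3$ and evaluate the coefficients $C(\delta,\delta')$ one family at a time; no new geometric ingredient beyond the main theorem is required.

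First I would observe that for $n=3$ the discrete triangle $\Theta_3 = \{(a,b,c)\in\Z^3 : a,b,c\geq 1,\ a+b+c=3\}$ reduces to the single point $(1,1,1)$. Consequently each component $U$ of $S-\Psi$ carries exactly one solid dot, and the first double sum in Theorem~\ref{thm:MainThm} vanishes because $d\tau_\delta \wedge d\tau_\delta = 0$. Each branch still carries $n-1=2$ hollow dots, so the remaining three sums involve only finitely many terms to enumerate.

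Next I would compute the hollow--hollow contribution. Substituting $n=3$ into $C = a'(n-a)$ when $a \geq a'$ and $C = a(n-a')$ when $a \leq a'$, with $a,a'\in\{1,2\}$, yields $C(1,1)=C(2,2)=2$ and $C(1,2)=C(2,1)=1$; this reproduces the coefficients in the first two lines of the claimed formula. For the hollow--solid contributions, the unique solid dot $\delta_s \in U_s$ corresponds to $(1,1,1) \in \Theta_3$, and since each of its coordinates equals $1$ it lies in row $a'=1$ relative to every side of $\Theta_3$, in particular the side opposite the vertex facing $s$. The hollow--solid formula therefore gives $C(\delta_s^\Left(a),\delta_s) = C(\delta_s^\Right(a),\delta_s) = n-a$, equal to $2$ for $a=1$ and $1$ for $a=2$. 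Combining the third sum (with its overall minus sign) with the fourth sum then produces the last line of the corollary.

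Since the corollary follows by direct substitution, there is no real obstacle; the only care required is to confirm that the ``row $a'$'' convention is applied correctly to the unique interior point of $\Theta_3$, and that the orientation conventions for $e_s^\Left$ and $e_s^\Right$ yield exactly the left--minus--right sign pattern appearing in the hollow--solid part of the symplectic form.
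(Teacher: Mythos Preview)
Your proposal is correct and is exactly the approach the paper takes: the corollary is stated with a \qed and no separate proof, since it is an immediate specialization of Theorem~\ref{thm:MainThm} to $n=3$. Your explicit enumeration of the coefficients $C(\delta,\delta')$ simply spells out the bookkeeping that the paper leaves to the reader.
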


\begin{rem}
 We have chosen to restrict attention to trivalent train tracks for the sake of the exposition. The reader familiar with this concept should have no problem extending Theorem~\ref{thm:MainThm} (and the definition of the switch relations) to  general train tracks carrying the maximal geodesic lamination $\lambda$, where some switches are allowed to  be adjacent to more than three branches. This would result in a significant increase in notational complexity for the statements, but tends to be more convenient for explicit computations. 
\end{rem}

\begin{rem}
 The article \cite{BonDre1}, intended as an easier version of \cite{BonDre2} for the case when the geodesic lamination $\lambda$ has finitely many leaves, develops a set of coordinates for the Hitchin component $\Hit(S)$ that looks very much like that of \cite{BonDre2}. In fact, these two sets of coordinates coincide for the most part, but they differ very substantially in the way they measure the shear along a closed leaves of $\lambda$. The outcome is that, as already observed in \cite{SunZha}, the coordinates of \cite{BonDre1} are not well adapted to the symplectic form $\omega$. Indeed, the expression of $\omega$ in these coordinates involves coefficients that are not constant and depend on the point of $\Hit(S)$ considered. 
\end{rem}

Theorem~\ref{thm:MainThm} is proved as Theorem~\ref{thm:MainThm2} in \S \ref{bigsect:ProofMainThm}, using a notation scheme that is more formula-oriented and less graphical than the one used in this Introduction (see \S \ref{subsect:GenFocGonInvariants} for the correspondence). 
The Atiyah-Bott-Goldman form is defined \cite{Gol} as a cup-product, using the Weil identification of the tangent space $T_{[\rho]} \Hit(S)$ with the cohomology space $H^1(S; \sln_{\Ad \rho})$. The main technical part of the article, requiring subtle analytic estimates to guarantee convergence, is concentrated in \S\S \ref{bigsect:VariationFlagMap}-\ref{bigsect:CohomologyClassTgtVector} and is devoted to the explicit construction of a simplicial cocycle $c_V \in C^1(S; \sln_{\Ad \rho})$ representing the cohomology class associated to a tangent vector $V \in T_{[\rho]} \Hit(S)$. This part involves a reconstruction of a Hitchin representation from its generalized Fock-Goncharov invariants, which is more explicit than the one used in \cite{BonDre2} and may be of independent interest. After this, relatively straightforward combinatorial and algebraic computations in \S \ref{bigsect:ComputeEstimateCupProduct} provide an estimate for the value of the cup-product. In a final step, the estimate is shown to be invariant under the operation of zipper-opening for train tracks described in \S \ref{subsect:OpeningZippers}, and is therefore exactly equal to the value of the cup-product.

Theorem~\ref{thm:MainThm} is the natural extension of the results of \cite{SozBon} for the case where $n=2$, and of \cite{Zey} for the restriction of $\omega$ to the slices of $\Hit(S)$ where the triangle invariants $\tau_\rho(d)$ are constant. A  version of Theorem~\ref{thm:MainThm} was obtained by Zhe Sun and Tengren Zhang \cite{SunZha} for the case  where the geodesic lamination is restricted to have only finitely many leaves and where the convergence arguments are easier. We actually rely on several tools developed by these authors, in particular the eruptions introduced in \cite{SunWieZha} (see \S \ref{subsect:Eruptions}),  the algebraic computation of Lemma~\ref{lem:ComputeKillingFormLeftRight}, and the barriers of \S\ref{subsect:Barriers} which enabled us to simplify an earlier approach of ours.

\section{ The $\PGL$--Hitchin component}
\label{bigsect:HitchinComp}

\subsection{Hitchin characters and Hitchin representations}
For a closed  surface $S$ of negative Euler characteristic, the \emph{$\PGL$--Hitchin component} $\Hit(S)$ is a preferred component of the character variety
$$
\mathcal X_{\PGL}(S) = \{ \rho\colon \pi_1(S) \to \PGL \} \db \PGL
$$
consisting of group homomorphisms $\rho\colon \pi_1(S) \to \PGL$ from the fundamental group $\pi_1(S)$ of the surface to the projective linear group $\PGL$, considered modulo the action of $\PGL$ by conjugation. More precisely, the monodromy $ \pi_1(S) \to \mathrm{PSL}_2(\R)$ of a hyperbolic metric on $S$ can be extended to a homomorphism  $\rho\colon \pi_1(S) \to \PGL$ by composing it with the unique $n$--dimensional irreducible representation $\mathrm{SL}_2(\R) \to \mathrm{SL}_n(\R)$ of $\mathrm{SL}_2(\R)$. The Hitchin component is the component of $\mathcal X_{\PGL}(S)$ that contains the characters thus associated to hyperbolic metrics. 

We will refer to elements of $\Hit(S)$ as \emph{Hitchin characters}, while homomorphisms $\rho\colon \pi_1(S) \to \PGL$ representing a Hitchin character $[\rho] \in \Hit(S)$ will be \emph{Hitchin representations}. 
As indicated in the Introduction, Hitchin representations have very strong geometric properties. 
We will particularly rely on two such features. 

One is a dynamical property of Labourie \cite{Lab} which provides, for each Hitchin representation $\rho\colon \pi_1(S) \to \PGL$, a continuous $\rho$--equivariant \emph{flag map} $\F_\rho \colon \bdry \to \Flag$ from the boundary at infinity $\bdry$ of the universal cover $\wt S$ to the space $\Flag$ of flags in $\R^n$. This flag map is actually the unique continuous $\rho$--equivariant map $ \bdry \to \Flag$. 

The second property, due to Fock-Goncharov \cite{FocGon1}, is more Lie theoretic and asserts that the flag map $\F_\rho$ sends any finite family of distinct points $x_1$, $x_2$, \dots, $x_k \in\bdry$, occurring in this order in the circle $\bdry$, to a family of flags $\F_\rho(x_1)$, $\F_\rho(x_2)$, \dots, $\F_\rho(x_k) \in \Flag$ that is positive, in a sense that we will discuss in a few more details in \S \ref{subsect:TripleDoubleRatios}.  

\subsection{Geodesic laminations and train tracks}
\label{subsect:GeodLamTrainTracks}

The parametrization of $\Hit(S)$ developed in \cite{BonDre2} uses maximal geodesic laminations as  a substitute for the ideal triangulations used by Fock and Goncharov \cite{FocGon1} for punctured surfaces, since closed surfaces admit no ideal triangulation. The next section \S \ref{subsect:GenFocGonInvariants} will give a  brief description of the coordinates of  \cite{BonDre2}, but we first give some background on geodesic laminations. We refer to, for instance, \cite{ThuNotes, CanEpsGre, CasBle, PenHar} for details about these facts. 

Since we are restricting attention to surfaces of negative Euler characteristic, we can endow the closed surface $S$ with an arbitrary riemannian metric $m_0$ of negative curvature. A \emph{geodesic lamination} (or \emph{$m_0$--geodesic lamination} if we want to emphasize the dependence on the auxiliary metric $m_0$) is a closed subset $\lambda \subset S$ that can be decomposed as the union of a family of disjoint simple geodesics, where a simple geodesic is a complete $m_0$--geodesic that does not intersect itself (but may be closed). These geodesics are the \emph{leaves} of the geodesic lamination. 

A simple example is provided by a family of finitely many disjoint simple closed geodesics. This example can be expanded by adding a few infinite geodesics that spiral around the closed geodesics. However, a typical geodesic lamination is usually much more complicated, and can locally consist of a Cantor set of disjoint geodesics. This is often the case with those geodesic laminations that arise from a geometric context, such as the stable and unstable measured geodesic laminations of a pseudo-Anosov diffeomorphism, or the bending lamination of a kleinian group. 

Although the definition given here depends on the choice of a negatively curved metric $m_0$ on the surface $S$, geodesic laminations can actually be described in an intrinsic way that make them independent of this choice, for instance in terms of the boundary at infinity $\bdry$ of the universal cover $\wt S$. As such, geodesic lamination  intrinsically are topological objects associated to the surface. 

A geodesic lamination is \emph{maximal} if it is maximal for inclusion among all geodesic laminations. 

\begin{prop}
$ $
\begin{enumerate}
 \item Every geodesic lamination is contained in a maximal geodesic lamination. 
 \item A geodesic lamination $\lambda$ is maximal if and only if its complement $S-\lambda$ consists of finitely many ideal triangles, each bounded by three infinite geodesics in $\lambda$. In addition, the number of these triangles is equal to $-2\chi(S)$, where $\chi(S)$ is the Euler characteristic of $S$. \qed
\end{enumerate}
\end{prop}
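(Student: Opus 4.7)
The plan is to prove (1) by Zorn's lemma and (2) by a combination of topological arguments for the shape of complementary regions and Gauss--Bonnet for the count.

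For part (1), I would consider the poset of geodesic laminations of $S$ containing a given $\lambda_0$, ordered by inclusion. The key point is that the union of an increasing chain $(\lambda_i)$ of geodesic laminations has closure $\lambda = \overline{\bigcup_i \lambda_i}$ that is itself a geodesic lamination. Indeed, every point of $\lambda$ is a limit of points lying on leaves of the $\lambda_i$; a convergent subsequence of those leaves in the $C^1$ Hausdorff topology on the space of complete simple geodesics of $(S,m_0)$ yields a complete geodesic through the given point, and this limit geodesic cannot cross any existing leaf since transverse intersections persist under $C^1$ perturbation. Compactness of the space of geodesic laminations in the Hausdorff topology makes the extraction of such limits uniform across $\lambda$, so Zorn's lemma delivers a maximal geodesic lamination containing $\lambda_0$.

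For the forward direction of (2), let $\lambda$ be maximal and let $U$ be a component of $S-\lambda$. Viewing $U$ as an open subsurface of $S$ whose ``ideal boundary'' consists of the leaves of $\lambda$ accessible from $U$, I would argue by contradiction: if $U$ is not an ideal triangle bounded by three leaves of $\lambda$, then $U$ contains a complete simple geodesic $g$ disjoint from $\lambda$. The possibilities to handle are $U$ having positive genus (use a non-peripheral simple closed geodesic in $U$), $U$ being simply connected with at least four sides in its ideal boundary (use a ``diagonal'' joining two non-adjacent ideal points of $\widetilde U \subset \bdry$, which projects to a simple geodesic in $U$), or $U$ being a crown with spiraling sides (use a geodesic transverse to the spiraling structure). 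In each case, adjoining $g$ strictly enlarges $\lambda$, contradicting maximality.

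For the converse in (2), if each component of $S-\lambda$ is an open ideal triangle bounded by three leaves of $\lambda$, then no new complete simple geodesic can be added without crossing $\lambda$: any complete geodesic entering an ideal triangle must exit through a distinct side in finite time because the three bounding geodesics are pairwise asymptotic at the ideal vertices, so it crosses an existing leaf. For the count, I would invoke the fact that geodesic laminations admit a purely topological description via pairs of points of $\bdry$, so replacing $m_0$ by a hyperbolic metric $m$ in the same conformal class affects neither the set of components of $S-\lambda$ nor their combinatorial type. Under $m$, each complementary ideal triangle has area $\pi$ by Gauss--Bonnet applied to triangles, and $\lambda$ has zero area, so $\pi N = \mathrm{Area}_m(S) = -2\pi\chi(S)$ gives $N = -2\chi(S)$ (and in particular finiteness).

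The main obstacle, in my view, is the forward direction of (2): one must enumerate the various shapes a non-triangle component $U$ can take --- it may have finite or infinite genus, infinitely many ideal cusps, or sides that are non-closed leaves spiraling onto closed leaves --- and verify in each case that a complete simple geodesic exists inside $U$ and that its union with $\lambda$ remains a geodesic lamination (i.e.\ is closed and foliated by pairwise disjoint simple geodesics). The cleanest way to handle this uniformly is probably to pass to the universal cover and work with the endpoints of $U$'s lifts on $\bdry$.
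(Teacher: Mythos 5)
Since the paper states this proposition as a standard fact with a reference to \cite{ThuNotes, CanEpsGre, CasBle, PenHar} and no proof, there is no paper route to compare against; your outline follows the standard one from those sources and is correct in spirit. A few details should be sharpened. For part (1), the Zorn argument is fine once one invokes compactness of the space of geodesic laminations in the Hausdorff topology cleanly: a Hausdorff-convergent subnet of an increasing chain of laminations is itself a geodesic lamination containing every term of the chain, which is the required upper bound, and this replaces the pointwise extraction you sketch. Zorn is however more machinery than needed here: since a geodesic lamination has Lebesgue measure zero and every complementary component has hyperbolic area at least $\pi$, the complement $S-\lambda$ has at most $-2\chi(S)$ components, and one reaches a maximal lamination after adding only finitely many simple geodesics.

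For the forward direction of (2), the hidden input is precisely the structure theorem you allude to, namely that the metric completion of each component of $S-\lambda$ is a complete hyperbolic surface of finite type with totally geodesic boundary and finitely many spikes. Granted that, your case analysis does produce a simple geodesic $g$ in a non-triangle component $U$, but you must also check that $\lambda\cup\overline g$ is closed; this holds because the ends of $g$ either spiral onto or are asymptotic to leaves of $\lambda$, so $\overline g\subset g\cup\lambda$. In the converse, the phrase ``must exit through a distinct side in finite time'' is literally false: a geodesic entering a complementary triangle $T$ through a side may be forward-asymptotic to an ideal vertex of $T$ and never exit, although it has already crossed $\lambda$ on entry. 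The argument you actually need is different: a complete simple geodesic $g$ disjoint from $\lambda$ lies in a single complementary component $T$, and lifting to $\widetilde S$ the lift of $g$ lies in a closed ideal triangle $\widetilde T$ whose closure meets $\bdry$ in exactly three points; hence the two ideal endpoints of the lifted geodesic are among those three points, forcing $g$ to be a side of $T$, contradiction. The Gauss--Bonnet count is then correct, relying on the measure-zero property of $\lambda$.
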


In practice, exactly as a Cantor set looks like a finite set at finite resolution, a geodesic lamination drawn on a surface looks like a  train track as defined in the Introduction. This can be formalized as follows. 

A (trivalent) \emph{train track neighborhood} for the $m_0$--geodesic lamination $\lambda$ is a closed neighborhood $\Phi$ of $\lambda$ which can be decomposed as a union of finitely many embedded rectangles $R_i$, called its \emph{branches}, such that
\begin{enumerate}
\item  each rectangle $R_i \cong [0,1] \times [0,1]$ is foliated by arcs $\{x \} \times [0,1]$, called its \emph{ties}, and the leaves of $\lambda$ are transverse to these ties; 

\item  the points of a branch $R_i \cong [0,1] \times [0,1]$ that belong to  another branch $R_j$ are exactly the points of the boundary ties $\{0,1\} \times [0,1]$;

\item the points of $S$ locally belonging to at least two branches form a family of disjoint arcs, called the \emph{switch ties} of the train track neighborhood;

\item each switch tie is locally adjacent to one branch $R_i$ on one side, and to exactly two branches $R_j$, $R_k$ on the other side.
\end{enumerate}

It follows from this general description that the closure of the complement $S-\Phi$ has corners, corresponding to points where three distinct branches meet. We add one more global condition. 

\begin{enumerate}
\setcounter{enumi}{4}
\item no component of $S-\Phi$ is a disk with 0, 1 or 2 corners in its boundary. 
\end{enumerate}

\begin{figure}[htbp]
\vskip 15pt

\SetLabels
( 1 * .85 )  $\lambda$ \\
( 1 *  .05)  $\lambda$ \\
\R( - 0.01 * .45  )  $\lambda$ \\
( .25 * 1 )  a switch tie \\
( .75 * 1.03 ) another tie  \\
( .2 * .08 )  $R_i$ \\
( .93 * .56 )  $R_j$ \\
( .86 * .4 )  $R_k$ \\
\endSetLabels
\centerline{\AffixLabels{\includegraphics{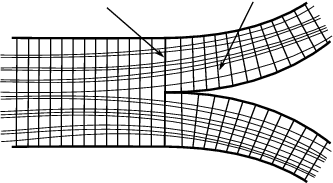}}}

\caption{The local model of a train track neighborhood near a switch tie}
\label{fig:TrainTrackLocal}
\end{figure}

A fundamental property is that every geodesic lamination $\lambda$ admits arbitrarily small train track neighborhoods. 

\begin{prop}
\label{prop:ComplementsTrainTrackGeodLamination}

Let $U$ be a component of the complement $S-\Phi$ of a train track neighborhood $\Phi$ of the maximal geodesic lamination $\lambda$, and let $T$ be the component of $S-\lambda$ that contains it. Then, the foliation of $\Phi\cap T$ induced by the ties of $\Phi$ is of the type illustrated in Figure~{\upshape\ref{fig:TriangleAndTrainTrack}}. In particular, $U$ is a triangle, namely a disk with three corners in its boundary, and each corner of $U$ points toward a unique vertex of the ideal triangle $T$.

This establishes a one-to-one correspondence between the components $U$ of  $S-\Phi$ and the components $T$ of $S-\lambda$, as well as a one-to-one correspondence between the corners of the boundary of $U$ and the vertices of the ideal triangle $T$. 
\qed
 \end{prop}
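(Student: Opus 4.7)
My plan is to reduce the statement to a purely local analysis of $T-\Phi$ for each ideal triangle $T$, and then combine the geometry of leaf convergence at ideal vertices with the combinatorial conditions (1)--(5) on the train track neighborhood.

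\medskip
\textbf{Step 1 (Reduction to a single triangle).} Since $\Phi$ is a neighborhood of $\lambda$ we have $\lambda\subset\mathrm{int}(\Phi)$, hence $S-\Phi\subset S-\lambda$. Every component $U$ of $S-\Phi$ is therefore contained in some component $T$ of $S-\lambda$, so it suffices to show that for each ideal triangle $T$, the intersection $T\cap(S-\Phi)=T-\Phi$ is a single triangular region with three corners, each pointing to a distinct ideal vertex of $T$. Since the three sides of $T$ are leaves of $\lambda$ and therefore lie in $\mathrm{int}(\Phi)$, the boundary of $T-\Phi$ inside $T$ consists only of arcs of $\partial\Phi$, with corners at endpoints of switch ties.

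\medskip
\textbf{Step 2 (Compactness of $T-\Phi$ near the vertices).} Let $v$ be an ideal vertex of $T$ and let $\ell,\ell'$ be the two sides of $T$ adjacent to $v$. In the metric $m_0$ these are leaves of $\lambda$ that converge exponentially at $v$. I would follow the branches of $\Phi$ successively traversed by $\ell$ and by $\ell'$ heading toward $v$: at each switch tie, $\ell$ either stays in the same branch or enters a new one, and similarly for $\ell'$. Because the width of every branch is bounded below and the distance between $\ell$ and $\ell'$ tends to zero at $v$, at some switch tie $\ell$ and $\ell'$ enter a common branch on the ``merging'' side of the local model of Figure~\ref{fig:TrainTrackLocal}. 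Beyond that switch, the entire end of $T$ toward $v$ is covered by this single branch, hence lies in $\Phi$. Thus $T-\Phi$ is relatively compact in $T$, and near $v$ its closure terminates in a single corner pointing toward $v$.

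\medskip
\textbf{Step 3 (Strips along different sides are disjoint except near vertices).} The key geometric point is that each branch $R\cong[0,1]\times[0,1]$ of $\Phi$, foliated by ties transverse to $\lambda$, is a rectangular neighborhood of a finite segment of a single leaf of $\lambda$; consequently, if $R$ meets $T$, then $R\cap T$ is a neighborhood of an arc of one specific side of $T$, or of a short arc of two sides near a common ideal vertex. Two sides of $T$ meet only at an ideal vertex of $T$, so no branch can simultaneously contain a point near the interior of one side and a point near the interior of another. Therefore the three ``strips'' of $\Phi\cap T$ along the three sides of $T$ can only meet in the three neighborhoods of ideal vertices described in Step~2. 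This shows that $T-\Phi$ is connected, and its boundary in $T$ consists of exactly three arcs of $\partial\Phi$ (one along each side of $T$) joined at exactly three corners (one at each ideal vertex). Hence $T-\Phi$ is an open disk with three corners as claimed, and the foliation by ties has the model of Figure~\ref{fig:TriangleAndTrainTrack}.

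\medskip
\textbf{Step 4 (Correspondence).} The map $U\mapsto T$ assigning to each component $U$ of $S-\Phi$ the unique component $T$ of $S-\lambda$ containing it is well-defined by Step~1 and is surjective by Steps~2--3; the same steps show it is injective, since for each $T$ there is exactly one component of $T-\Phi$. Condition (5) is not actually needed for the proof (it is automatic from Steps~2--3), but can be viewed as confirming that no degenerate components appear. The correspondence between the three corners of $U$ and the three vertices of $T$ is precisely the one established in Step~2.

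\medskip
\textbf{Main obstacle.} The delicate step is Step~3: ruling out the possibility that a branch of $\Phi$ ``crosses'' $T$ from one side to another away from ideal vertices, which would cut $T-\Phi$ into several pieces. This uses in an essential way that leaves of $\lambda$ inside $\Phi$ are transverse to the ties and that $T$ is bounded by three leaves of $\lambda$ which meet only at ideal vertices; combined with the negative-curvature geometry, this forces the strip structure described above.
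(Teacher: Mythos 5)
Your overall strategy is sensible: localize to a single ideal triangle $T$ of $S-\lambda$, show $T-\Phi$ is relatively compact by analyzing the three ends of $T$, and then argue that $T-\Phi$ is a single disk with three corners. But there is a genuine gap in Steps~2--3, and the claim in Step~4 that condition~(5) in the definition of a train track neighborhood is ``automatic'' is unjustified and is in fact the missing ingredient.

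Here is the gap concretely. Once the two sides $\ell,\ell'$ of $T$ adjacent to a vertex $v$ enter a common branch, nothing in your argument prevents their train routes from splitting into two different branches at a subsequent switch (still heading toward $v$, with the switch cusp lying in $\wt T$ between them) and then merging again at a yet later switch. Each such split-then-merge pair cuts out a disk component of $T-\Phi$ with exactly two corners, i.e.\ a bigon, so $T-\Phi$ would fail to be connected and the corner count would be wrong. This is exactly the degenerate configuration that condition~(5) forbids. Your Step~2 assertion that ``near $v$ its closure terminates in a \emph{single} corner'' is the point that is not proved, and your Step~3 conclusion that the three strips meeting only near the vertices implies $T-\Phi$ is connected is a non sequitur: bigon components would sit precisely in those vertex neighborhoods. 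A clean way to close the argument is the standard Euler-characteristic count: each component of $S-\Phi$ is a disk with $\geq 3$ corners by condition~(5), each of the $-2\chi(S)$ ideal triangles of $S-\lambda$ contains at least one component of $S-\Phi$ (by your Step~2 together with the existence of a switch cusp in each such spike), and $\chi(\Phi)=-\frac12 V$ where $V$ is the number of switches; together these force exactly one component per triangle with exactly three corners. You would need either to invoke condition~(5) and run this count, or prove (5) from conditions (1)--(4) for maximal laminations, which is a nontrivial claim you do not establish.

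Two smaller imprecisions worth flagging. In Step~2, ``branch widths bounded below'' is not quite the right mechanism; the clean statement is that $S-\Phi$ is a positive distance from the compact set $\lambda$, so the spike near $v$ is eventually inside $\Phi$ once its width drops below that distance, and the phrase ``the entire end of $T$ toward $v$ is covered by this single branch'' is not literally correct since the end passes through infinitely many branches. In Step~3, a branch is not ``a rectangular neighborhood of a finite segment of a single leaf of $\lambda$''; a branch generally contains segments of uncountably many leaves. What you actually want is that a branch meets at most two sides of $T$, and does so only near a common ideal vertex — which is a correct claim, but needs the convexity of the distance function between asymptotic geodesics in negative curvature rather than the statement you give.

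Since the paper states this proposition with a \textsc{qed} and no proof (it is treated as standard train track material, cf.\ Penner--Harer), there is no proof in the paper to compare against directly; the comparison above is to the standard argument the authors are implicitly invoking.
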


 \begin{figure}[htbp]
 
 \SetLabels
( .92 *  .7) $T$  \\
\E( .65 * .5 )  $U$ \\
(  *  )   \\
\endSetLabels
\centerline{\AffixLabels{\includegraphics{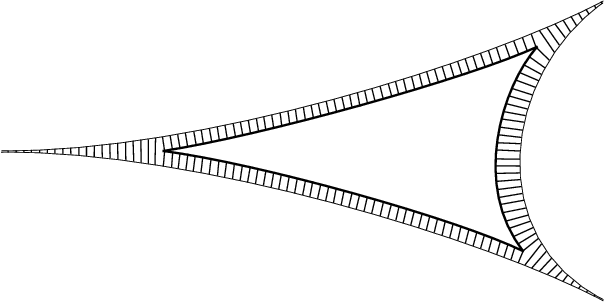}}}

\caption{The train track neighborhood $\Phi$ and a component $T$ of $S-\lambda$}
\label{fig:TriangleAndTrainTrack}
\end{figure}

From a train track neighborhood $\Phi$ of the $m_0$--geodesic lamination $\lambda$, one easily obtains a train track $\Psi$ in the sense of the Introduction, by collapsing each tie of $\Phi$ to a single point. In this situation, we will say that $\lambda$ is \emph{(strongly) carried} by the train track $\Psi$. 

We will often use the same letter to denote a branch $e$ of the train track neighborhood $\Phi$ and the corresponding branch $e$ of the train track $\Psi$ associated to $\Phi$, or for the switch tie $s$ of $\Phi$ and the corresponding switch $s$ of $\Psi$.

\begin{rem}
The above definitions heavily depend on the auxiliary negatively curved metric $m_0$ chosen to describe geodesic laminations. There is a more topological point of view, independent of the choice of the metric $m_0$, in which a geodesic lamination is \emph{weakly carried} by the train track $\Psi$ if, for every leaf $g$ of $\lambda$, there is a train route in $\Psi$ that is homotopic to $g$ by a homotopy moving points by a uniformly bounded amount (the Fellow Traveller Property). All the results of the current article (as well as those of \cite{BonDre2}) could be rephrased in this more topological framework, but at the expense of increased complexity in the exposition. We have chosen to restrict attention to the more restricted (metric dependent) viewpoint,  but experts should have no problem extending our arguments to the more general setup. 
\end{rem}

\subsection{Generalized Fock-Goncharov invariants}
\label{subsect:GenFocGonInvariants}

Let $\lambda$ be  a maximal geodesic lamination in the surface $S$ that is carried by a  trivalent train track $\Psi$, in the sense of \S \ref{subsect:GeodLamTrainTracks}. By Proposition~\ref{prop:ComplementsTrainTrackGeodLamination}, each component of $S-\Psi$ is a triangle, namely a disk with three boundary corners (the vertices of the triangle) located at switches of $\Psi$.

The  parametrization of $\Hit(S)$  associated to $\lambda$ and $\Psi$ that is developed in \cite{BonDre2}  is based on two distinct types of invariants for a Hitchin representation $\rho \colon \pi_1(S) \to \PGL$. 

The first type are the \emph{triangle invariants} $\tau^{abc}_\rho (U,s) \in \R$. These are indexed by the components $U$ of the complement $S-\Psi$, the vertices $s$ of the triangle $U$, and the points $(a,b,c)$ in the \emph{discrete triangle}
$$
\Theta_n = \{ (a,b,c)\in\Z^3; a,b,c\geq1 \text{ and } a+b+c=n\}. 
$$
These triangle invariants satisfy the \emph{Rotation Condition} that, if $s_1$, $s_2$, $s_3$ are the three vertices of the triangle $U$, indexed counterclockwise in this order around $U$, 
$$
\tau_\rho^{abc}(U, s_1) = \tau_\rho^{bca}(U, s_2) = \tau_\rho^{cab}(U, s_3). 
$$

The second type of invariant consists of the \emph{shearing invariants} $\sigma_\rho^{ab}(e)\in \R$, indexed by the oriented branches $e$ of the train track $\Psi$ and by the points $(a,b)$ in the \emph{discrete interval}
$$
\mathrm I_n = \{ (a,b) \in \Z^2; a,b \geq1 \text{ and } a+b=n\}. 
$$
These are constrained by the \emph{Orientation Reversal Relation} that, if $\overline e$ is obtained by reversing the orientation of the branch $e$, 
$$
\sigma_\rho^{ab}(\overline e) = \sigma_\rho^{ba}( e) . 
$$

There are two additional constraints for the invariants $\tau^{abc}_\rho (U,s)$ and $\sigma_\rho^{ab}(e)$. The main one consists of  the \emph{Switch Relations} for the shearing invariants $\sigma_\rho^{ab}$, defined as follows. By our assumption that the train track $\Psi$ is trivalent, there are three branches $e_s^\Out$, $e_s^\Left$, $e_s^\Right$ of $\Psi$ that are adjacent to each switch $s$, with $e_s^\Out$ on one side of $s$ while $e_s^\Left$ and $e_s^\Right$ are on the other side. We orient $e_s^\Out$ away from $s$ and $e_s^\Left$ and $e_s^\Right$ toward $s$, with $e_s^\Left$ coming in on the left of $e_s^\Right$ (hence the names). See Figure~\ref{fig:SwitchRelation}. Finally, let $U_s$ be the triangle component of $S-\Psi$ that has $s$ as a vertex. The Switch Condition then asserts that 
$$
\sigma_\rho^{ab} (e_s^\Out) = \sigma_\rho^{ab} (e_s^\Left) + \sigma_\rho^{ab} (e_s^\Right) - \sum_{b'+c'=n-a} \tau_\rho^{ab'c'}(U_s,s)
$$
where the sum is over all $b'$, $c'$ such that $(a, b', c')$ is in the discrete triangle $\Theta_n$. 

There is an additional \emph{Positivity Condition} satisfied by the invariants, which we alluded to in the Introduction. However, it is here irrelevant  as it is an open condition and we are only interested in local properties. 

\begin{rem}
 The shearing invariant $\sigma_\rho^{ab}(e)$ is called $\sigma^\rho_{a}(e)$ in \cite{BonDre2}, with no loss of information since $b=n-a$. 
\end{rem}

The reader should have no difficulty connecting these invariants $\tau^{abc}_\rho (U,s)$ and $\sigma_\rho^{ab}(e)$ to the invariants $\tau_\rho (\delta)$ and $\sigma_\delta(\rho)$ of the Introduction. Namely, if we embed the discrete triangle $\Theta_n$ in the component $U$ of $S-\Psi$ in such a way that the vertices $(n-2, 1, 1)$, $(1, n-2, 1)$, $(1,1,n-2)$ of $\Theta_n$ respectively point toward corners $s_1$, $s_2$, $s_3$ of $U$ occurring counterclockwise in this order around $U$, then $\tau_\rho^{abc}(U,s_1)$ is equal to  $\tau_\delta(\rho)$ for the solid dot $\delta \in \Theta_n \subset U$ corresponding to $(a,b,c)$. Similarly, if $e$ is an oriented branch of the train track $\Psi$ and if $\delta$ is the hollow dot drawn on $e$ that occurs in $a$--th position for the orientation of $e$, the shearing invariant $\sigma_\delta(\rho)$ is equal to $\sigma_\rho^{a(n-a)}(e)$. The Rotation Condition and the Orientation Reversal Condition above guarantee the consistency of these identifications. Then, the Switch Relations of this section are easily seen to be equivalent to those described in the Introduction.

\section{The symplectic structure of the Hitchin component $\Hit(S)$}
\label{bigsect:SymplecticStructure}

\subsection{The Atiyah-Bott-Goldman form}
\label{subsect:Atiyah-Bott-Goldman}
The starting point in the definition of the Atiyah-Bott-Goldman form is  Weil's cohomological interpretation of tangent vectors to character varieties, pioneered in \cite{Wei}. In our context, this correspondence is the following. 

Let $V\in T_{[\rho]} \Hit(S)$ be a tangent vector of the Hitchin component at the point $[\rho]\in \Hit(S)$. Realize it as the tangent vector 
$$
V= \frac{d}{dt} [\rho_t]_{|t=0}
$$ 
for a curve $t\mapsto [\rho_t] \in \Hit(S)$ with $[\rho_0] = [\rho]$, namely for a family of group homomorphisms $\rho_t \colon \pi_1(S) \to \PGL $ depending smoothly on a real parameter $t$ and such that $\rho_0=\rho$. 

For every $\gamma \in \pi_1(S)$, the derivative
$$
c_V(\gamma) = \frac{d}{dt} \rho_t(\gamma)\rho_0(\gamma)^{-1} {}_{|t=0}
$$
is an element of the Lie algebra $\mathfrak{pgl}_n(\R)$ of $\PGL$. We will identify $\mathfrak{pgl}_n(\R)$ to the more familiar Lie algebra $\sln$ of $\SL$, where computations will also be more explicit. 

It can then be shown that the map $c_V \colon \pi_1(S) \to \sln$ so defined is a group cocycle twisted by the adjoint representation $\Ad \rho \colon \pi_1(S) \to \mathrm{GL}\left( \sln \right)$, in the sense that
$$
c_V(\gamma \gamma') = c_V(\gamma) + \Ad\rho(\gamma) \left( c_V(\gamma') \right) =  c_V(\gamma) + \rho(\gamma) c_V(\gamma') \rho(\gamma)^{-1} .
$$

This cocycle $c_V$  defines a group cohomology class $[c_V] \in H^1\left( \pi_1(S); \sln_{\Ad\rho}\right)$ twisted by the adjoint representation. Weil shows (in a more general setup) that this cohomology class $[c_V]$ depends only on the tangent vector $V$, and that this establishes a one-to-one correspondence between the tangent space  $T_{[\rho]} \Hit(S)$ and the cohomology space  $H^1\left( \pi_1(S); \sln_{\Ad\rho}\right)$. 

Since the universal cover of $S$ is contractible, the cohomology spaces $H^1\left( \pi_1(S); \sln_{\Ad\rho}\right)$ and $H^1\left( S; \sln_{\Ad\rho}\right)$ are naturally isomorphic.

Now, two tangent vectors $V_1$, $V_2 \in T_{[\rho]} \Hit(S)$ give two cohomology classes  $[c_{V_1}]$, $[c_{V_2}] \in H^1(S; \sln_{\Ad\rho})$ and, inspired by an earlier construction of Atiyah-Bott \cite{AtiBot} for compact groups, Goldman \cite{Gol} considers their cup-product
$$
[c_{V_1}] \cupp [c_{V_2}] \in H^2\left(S; \left(\sln\otimes \sln \right)_{\Ad\rho}\right).
$$
The Killing form $K \colon \sln\otimes \sln \to \R$ of $\sln$ is invariant under the adjoint representation, and therefore defines a homomorphism $K^* \colon  H^2\left(S; \left(\sln\otimes \sln \right)_{\Ad\rho}\right) \to H^2\left(S; \R \right) $ valued in the real untwisted cohomology. We can then evaluate the corresponding cohomology class on the fundamental class $[S] \in H_2(S; \R)$, and define
$$
\omega_\rho (V_1, V_2) =\left\langle  K^* \left( [c_{V_1}] \cupp [c_{V_2}] \right), [S] \right\rangle.
$$

This construction provides an antisymmetric bilinear form $\omega_\rho$ on each tangent space $T_{[\rho]} \Hit(S)$, which depends differentiably on the point $[\rho] \in \Hit(S)$. In other words, this defines a differential form $\omega$ of degree 2 on the Hitchin component $\Hit(S)$. 

It follows from Poincar\'e duality  that  this differential form $\omega \in \Omega^2\big( \Hit(S)\big)$ is non-degenerate at every point $[\rho] \in \Hit(S)$, and Goldman shows that it is also closed. As a consequence, $\omega$ is a symplectic form on $\Hit(S)$. 

\subsection{A simplicial description of the Weil cohomology class}
\label{subsect:SimplicialWeil}

The group cohomology description of the Weil class $[c_V] \in H^1\left( \pi_1(S); \sln_{\Ad\rho}\right)= H^1\left( S; \sln_{\Ad\rho}\right)$ associated to a tangent vector $V\in T_{[\rho]}\Hit(S)$ is not very convenient for our purposes. In particular, to compute the cup-product of two such classes, we want a description of these classes in terms of the simplicial cohomology associated to a triangulation of the surface. We now describe a scheme that provides such a description, and will  implement this scheme in  later sections. 

The group $\R^\times$ of nonzero real numbers acts by multiplication on the set of all bases of $\R^n$. Let a \emph{projective basis} be an element of the corresponding quotient, namely a basis of $\R^n$ considered only up to scalar multiplication.

Let a tangent vector $V\in T_{[\rho]}\Hit(S)$ be given, and represented by the tangent vector $\dot\rho = \frac{d}{dt} \rho_t{}_{|t=0}$ of a 1--parameter family of homomorphisms $\rho_t \colon \pi_1(S) \to \PGL$ with $\rho_0=\rho$. Suppose that we are given the additional data, for each point $\wt x$ of  the universal cover $\wt S$, of a projective basis $ \B_t(\wt x)$ such that
\begin{enumerate}
 \item the choice of $\mathcal B_t(\wt x)$ is $\rho_t$--equivariant, in the sense that $ \B_t(\gamma \wt x)= \rho_t(\gamma) \left(  \B_t(\wt x) \right)$ for every $\gamma \in \pi_1(S)$ and every point $\wt x$ of $\wt S$;
 \item for every  $\wt x\in \wt S$, the projective basis $\mathcal B_t(\wt x)$ depends differentiably on $t$.
\end{enumerate}

Note that there is no requirement that the projective basis $\mathcal B_t(\wt x)$ depends continuously on $\wt x$, so that it can just be constructed orbit by orbit for the action of $\pi_1(S)$ on $\wt S$. In practice, we will only need these projective bases $\mathcal B_t(\wt x)$ over finitely many orbits, corresponding to the preimage in $\wt S$ of the vertex set of a triangulation of $S$.

By considering the coordinates of its elements, we can interpret the projective basis $\B_t(\wt x)$ as a projective matrix $\B_t(\wt x)\in \PGL$ and we will use the same notation to represent the projective basis and the projective matrix. Namely, $\B_t(\wt x)\in \PGL$ is the unique projective map sending the standard basis of $\R^n$ to the projective basis $\B_t(\wt x)$. 
 For every $\wt x \in \wt S$, we now consider  the derivative 
 $$
 c^0 (\wt x) =  \frac d{dt} \B_t(\wt x) \B_0(\wt x)^{-1}{}_{|t=0} \in \mathfrak{pgl}_n(\R)=\sln.
 $$
 
 We can interpret $c^0$ as an $\sln$--valued singular 0--cochain $c^0 \in C^0 \big(\widetilde S;\sln \big)$. We then consider its coboundary $c^1 = dc^0 \in C^1\big(\widetilde S; \sln \big)$. Namely, if $\widetilde k$ is a singular 1--simplex in $\widetilde \Sigma$, considered as a path going from $\wt x_-$ to $\wt x_+$, then
 $$
 c^1(\widetilde k) = c^0(\wt x_+) - c^0 (\wt x_-).
 $$

 \begin{lem}
 \label{lem:ProjectiveBasisDefinesCocyle}
 The $\sln$--valued cochain $c^1$ is closed and  $\Ad\rho$--equivariant. 
\end{lem}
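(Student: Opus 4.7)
The plan is to dispatch the two claims separately. Closure of $c^1$ is essentially automatic: by construction $c^1 = dc^0$, so $dc^1 = d^2 c^0 = 0$ as an identity in the singular cochain complex of $\wt S$. There is nothing further to verify here.

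The real content is the $\Ad\rho$--equivariance. The first step I would carry out is to work out the transformation law for $c^0$ itself under the action of $\pi_1(S)$, which I expect to be twisted-affine of the form
\[
c^0(\gamma\wt x) = u(\gamma) + \Ad\rho(\gamma)\bigl(c^0(\wt x)\bigr),
\]
where $u(\gamma) = \dot\rho(\gamma)\rho(\gamma)^{-1}$ is exactly the Weil cocycle representing $V$ at the group cohomology level that is recalled in \S\ref{subsect:Atiyah-Bott-Goldman}. To derive this, I would start from the $\rho_t$--equivariance hypothesis $\B_t(\gamma\wt x) = \rho_t(\gamma)\B_t(\wt x)$, multiply on the right by $\B_0(\gamma\wt x)^{-1} = \B_0(\wt x)^{-1}\rho(\gamma)^{-1}$, and then differentiate the resulting identity
\[
\B_t(\gamma\wt x)\B_0(\gamma\wt x)^{-1} = \rho_t(\gamma)\,\B_t(\wt x)\B_0(\wt x)^{-1}\,\rho(\gamma)^{-1}
\]
at $t=0$ by the product rule. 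The middle factor $\B_t(\wt x)\B_0(\wt x)^{-1}$ equals $\Id$ at $t=0$, so differentiating only the leading $\rho_t(\gamma)$ produces $\dot\rho(\gamma)\rho(\gamma)^{-1} = u(\gamma)$, while differentiating only the middle factor contributes $\Ad\rho(\gamma)\bigl(c^0(\wt x)\bigr)$; the trailing factor $\rho(\gamma)^{-1}$ is constant in $t$ and contributes nothing on its own. Summing yields the claimed transformation law.

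Once this law is in hand, the equivariance of $c^1$ is a one-line consequence: for a singular $1$--simplex $\wt k$ from $\wt x_-$ to $\wt x_+$, the translated simplex $\gamma\wt k$ runs from $\gamma\wt x_-$ to $\gamma\wt x_+$, and substituting the transformation law into the definition of $c^1$ causes the $\wt x$--independent term $u(\gamma)$ to cancel between the two endpoints, leaving
\[
c^1(\gamma\wt k) = \Ad\rho(\gamma)\bigl(c^0(\wt x_+) - c^0(\wt x_-)\bigr) = \Ad\rho(\gamma)\bigl(c^1(\wt k)\bigr).
\]

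The main (and still mild) obstacle I anticipate is that the $\B_t(\wt x)$ and $\rho_t(\gamma)$ live in $\PGL$ rather than in $\GL$, so each product $\B_t\B_0^{-1}$ is only defined up to an overall nonzero scalar and its derivative at $t=0$ only up to an additive multiple of $\Id$. I would resolve this by lifting locally in $t$ to $\GL$, performing the product-rule differentiation there, and then pushing down to $\mathfrak{pgl}_n(\R)\cong\sln$; the scalar ambiguity from the choice of lift becomes a multiple of $\Id$ which vanishes in $\sln$, so every identity above makes unambiguous sense as an identity in $\sln$. With this bookkeeping in place, the lemma reduces to the two short computations above.
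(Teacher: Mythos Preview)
Your proposal is correct and follows essentially the same route as the paper: both first derive the twisted-affine transformation law $c^0(\gamma\wt x) = \dot\rho(\gamma)\rho(\gamma)^{-1} + \Ad\rho(\gamma)\bigl(c^0(\wt x)\bigr)$ by differentiating the $\rho_t$--equivariance of $\B_t$, and then observe that the $\wt x$--independent term cancels upon passing to $c^1 = dc^0$. Your added remark about lifting from $\PGL$ to $\GL$ to make the product-rule computation unambiguous is a point the paper leaves implicit.
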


\begin{proof} The fact that $c^1$ is closed is an immediate consequence of the fact that $c^1 = dc^0$. 

To prove the $\Ad\rho$--equivariance, let us first compute how $c^0$ behaves under the action of $\pi_1(S)$. For $\wt x \in \wt S$ and  $\gamma \in \pi_1(S)$, 
\begin{align*}
 c^0 ( \gamma \wt x) 
 &= \frac d{dt} \B_t(\gamma \wt x) \B_0(\gamma \wt x)^{-1}{}_{|t=0}
= \frac d{dt} \rho_t(\gamma) \B_t( \wt x) \B_0( \wt x)^{-1} \rho_0(\gamma)^{-1} {}_{|t=0}
  \\
 &= \frac d{dt} \rho_t(\gamma) \B_0( \wt x)  \B_0( \wt x)^{-1} \rho_0(\gamma)^{-1} {}_{|t=0}
 + \frac d{dt} \rho_0(\gamma) \B_t( \wt x)  \B_0( \wt x)^{-1} \rho_0(\gamma)^{-1} {}_{|t=0}  
\\
&= \frac d{dt} \rho_t(\gamma) \rho_0(\gamma)^{-1} {}_{|t=0}
+  \rho_0(\gamma) \left( \frac d{dt}  \B_t( \wt x)  \B_0( \wt x)^{-1} {}_{|t=0} \right) \rho_0(\gamma)^{-1} 
\\
&= \frac d{dt} \rho_t(\gamma) \rho_0(\gamma)^{-1} {}_{|t=0}
+   \Ad\rho(\gamma) \big(c^0(\wt x) \big).
\end{align*}

Therefore, if the 1--simplex  $\widetilde k$ goes from $\wt x_-$ to $\wt x_+$,
\begin{align*}
 c^1(\gamma \widetilde k) &=  c^0( \gamma \wt x_+) - c^0 (\gamma \wt x_-)
 \\
 &=  \Ad\rho(\gamma) \big(c^0(\wt x_+) \big) -  \Ad\rho(\gamma) \big(c^0(\wt x_-) \big)
 \\
 &=  \Ad\rho(\gamma) \big(c^1(\widetilde k) \big).
\end{align*}
This proves that $c^1$ is $\Ad\rho$--equivariant. Note that this is in general not true for $c^0$. 
\end{proof}

As a consequence, $c^1$ is a cocycle in the twisted chain complex $C^\bullet(S; \sln_{\Ad\rho})$ by definition of this  chain complex, and defines a singular cohomology class  $[c^1] \in H^1(S; \sln_{\Ad\rho})$.

\begin{lem}
 \label{lem:ProjectiveBasisDefinesWeilCocyle}
 The singular cohomology class $[c^1] \in H^1(S; \sln_{\Ad\rho})$ is equal to the Weil class $[c_V] \in H^1\left( \pi_1(S); \sln_{\Ad\rho}\right)= H^1\left( S; \sln_{\Ad\rho}\right)$ associated to the same tangent vector $V \in T_{[\rho]} \Hit(S)$. 
\end{lem}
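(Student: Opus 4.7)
The plan is to compare the two cohomology classes under the standard isomorphism $H^1\bigl(S; \sln_{\Ad\rho}\bigr) \cong H^1\bigl(\pi_1(S); \sln_{\Ad\rho}\bigr)$, which exists because $\wt S$ is contractible. I would describe this isomorphism concretely by fixing a base point $\wt x_0 \in \wt S$ and choosing, for each $\gamma \in \pi_1(S)$, a singular $1$-chain $\wt k_\gamma$ in $\wt S$ from $\wt x_0$ to $\gamma \wt x_0$; the isomorphism then sends an $\Ad\rho$-equivariant singular $1$-cocycle $\alpha$ on $\wt S$ to the group cocycle $\gamma \mapsto \alpha(\wt k_\gamma)$. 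Independence from the choice of $\wt k_\gamma$ follows from closedness of $\alpha$ and simple connectivity of $\wt S$, while the twisted group cocycle identity follows from $\Ad\rho$-equivariance of $\alpha$ together with additivity along a concatenation of paths.

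Next I would apply this recipe to $\alpha = c^1 = dc^0$. By telescoping, $c^1(\wt k_\gamma) = c^0(\gamma \wt x_0) - c^0(\wt x_0)$. The essential observation is that the intermediate computation already carried out in the proof of Lemma~\ref{lem:ProjectiveBasisDefinesCocyle} yields the identity $c^0(\gamma \wt x) = c_V(\gamma) + \Ad\rho(\gamma)\bigl(c^0(\wt x)\bigr)$, so specializing $\wt x = \wt x_0$ shows that the group cocycle assigned to $[c^1]$ is equal to $\gamma \mapsto c_V(\gamma) + \Ad\rho(\gamma)(v) - v$, with $v = c^0(\wt x_0) \in \sln$. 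The term $\gamma \mapsto \Ad\rho(\gamma)(v) - v$ is by definition the group coboundary of the constant $0$-cochain $v$, so the two group $1$-cocycles are cohomologous, and therefore $[c^1] = [c_V]$ in $H^1\bigl(\pi_1(S); \sln_{\Ad\rho}\bigr) = H^1\bigl(S; \sln_{\Ad\rho}\bigr)$, which is the desired conclusion.

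I do not anticipate a serious obstacle: the essential algebraic identity has already been established in the proof of the previous lemma, and the remainder is bookkeeping. The only point requiring some care is verifying that the naive base-point construction genuinely implements the canonical isomorphism between singular cohomology of the $K(\pi,1)$-space $S$ and the group cohomology of $\pi_1(S)$, and matching sign conventions so that the element $v = c^0(\wt x_0)$ really plays the role of a group $0$-cochain whose coboundary is $\gamma \mapsto \Ad\rho(\gamma)(v) - v$.
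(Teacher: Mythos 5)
Your proposal is correct and follows essentially the same path as the paper: both start from the telescoping identity $c^1(\wt k_\gamma)=c^0(\gamma\wt x_0)-c^0(\wt x_0)$, feed in the equivariance formula $c^0(\gamma\wt x_0)=c_V(\gamma)+\Ad\rho(\gamma)\bigl(c^0(\wt x_0)\bigr)$ already derived in the proof of Lemma~\ref{lem:ProjectiveBasisDefinesCocyle}, and recognize the discrepancy $\gamma\mapsto\Ad\rho(\gamma)(v)-v$ as the coboundary of the $0$-cochain $v=c^0(\wt x_0)$. The paper packages this by writing out the bar-resolution model $K$, a homotopy equivalence $f\colon K\to S$, and an explicit $\Ad\rho$-equivariant singular $0$-cochain $c^{0\,\prime}$ supported on the orbit of $\wt x_0$ whose coboundary accounts for the difference, whereas you phrase the same cancellation directly in group cohomology; the two are interchangeable bookkeeping for the isomorphism $H^1(S;\sln_{\Ad\rho})\cong H^1(\pi_1(S);\sln_{\Ad\rho})$.
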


\begin{proof}
To compare $[c^1]$ to the Weil class $[c_V]$, we need to be more explicit with the correspondence between the group cohomology space $H^1 \left( \pi_1(S); \sln_{\Ad \rho} \right)$ and the singular cohomology space $H^1 \left( S; \sln_{\Ad \rho} \right)$. 

The group cohomology of $\pi_1(S)$ is defined as the simplicial cohomology of a complex $K$ whose $n$--dimensional simplices $\sigma_{\gamma_1,\gamma_2, \dots, \gamma_n}$ are indexed by the elements $(\gamma_1,\gamma_2, \dots, \gamma_n)$ of the power $\pi_1(S)^n$, and where the boundary operator is defined by
$$
\partial \sigma_{\gamma_1,\gamma_2, \dots, \gamma_n} =\sigma_{\gamma_2,\gamma_3, \dots, \gamma_n}  + \sum_{i=1}^{n-1} (-1)^i \sigma_{\gamma_1, \dots, \gamma_{i-1}, \gamma_i\gamma_{i+1}, \gamma_{i+2}, \dots, \gamma_n} + (-1)^n \sigma_{\gamma_1,\gamma_2, \dots, \gamma_{n-1}} .
$$
See for instance \cite[II.3]{Brown}. 

The twisted cohomology  $H^* \left( \pi_1(S); \sln_{\Ad \rho} \right)$ is defined as the homology of the chain complex of $\Ad \rho$--equivariant cochains on the universal cover $\wt K$ of $K$. More precisely, the universal cover $\wt K$ comes with a base vertex $\wt \sigma_0$ in the preimage of the unique 0--simplex $\sigma_0$ of $K$, and the 0--skeleton of $\wt K$ is the orbit of $\wt \sigma_0$ under the action of $\pi_1(K)=\pi_1(S)$. Then, the $\Ad \rho$--equivariant  1--cochain $c_V \in C^1(\wt K; \sln)$ representing the Weil class $[c_V] \in H^1 \left( \pi_1(S); \sln_{\Ad \rho} \right)$ is uniquely determined by the property that, if $\wt \sigma_\gamma$ is the lift of $\sigma_\gamma$ starting at the base point $\wt\sigma_0$, 
$$
c_V(\wt\sigma_\gamma) = \frac d{dt} \rho_t(\gamma) \rho_0(\gamma)^{-1} {}_{|t=0}  \in \sln . 
$$
Namely, $c_V(\wt\sigma_\gamma)$ is what we earlier called $c_V(\gamma)$ in the group cocycle interpretation of $c_V$. For a general $1$--simplex  $\wt \sigma$ of $\wt K$ going from the 0--simplex $\alpha_- \wt\sigma_0$ to $\alpha_+ \wt\sigma_0$, with $\alpha_-$, $\alpha_+ \in \pi_1(S)$, the $\Ad\rho$--equivariance of $c_V$ implies that
$$
c_V(\wt \sigma)   = \Ad\rho(\alpha_-) \big(c_V(\wt\sigma_{\alpha_-^{-1}\alpha_+}) \big)  = \Ad\rho(\alpha_-) \Big( \frac d{dt} \rho_t(\alpha_-^{-1}\alpha_+) \rho_0(\alpha_-^{-1}\alpha_+)^{-1} {}_{|t=0}  \Big). 
$$

As a topological space, $K$ is homotopy equivalent to $S$. Lift an arbitrary homotopy equivalence $f \colon K \to S$ to a $\pi_1(S)$--equivariant map $\wt f \colon \wt K \to \wt S$, sending the base vertex $\wt\sigma_0 \in \wt K$ to a base point $\wt x_0 = \wt f(\wt \sigma_0) \in \wt S$. The map $\wt f$ then induces an isomorphism $H^1(f) \colon H^1(S; \sln_{\Ad\rho}) \to  H^1(K; \sln_{\Ad\rho})$. 

To determine the image $H^1(f)\big([c^1]\big) \in H^1(K; \sln_{\Ad\rho})$ of $[c^1] \in H^1(S; \sln_{\Ad\rho})$, we first evaluate $c^1$ on the  1--simplex  $\wt \sigma_\gamma$ associated to $\gamma\in \pi_1(S)$.
\begin{align*}
 c^1 \big( \wt f( \wt \sigma_\gamma ) \big) 
 &= c^0 \big(\wt f (\gamma \wt \sigma_0) \big) - c^0 \big(\wt f (\wt \sigma_0) \big) 
 = c^0 (\gamma \wt x_0) - c^0 (\wt x_0)
 \\
 &=  \frac d{dt} \rho_t(\gamma) \rho_0(\gamma)^{-1} {}_{|t=0}
+   \Ad\rho(\gamma) \big( c^0(\wt x_0) \big)
- c^0 (\wt x_0)
 \\
 &=  c_V(\wt\sigma_\gamma) 
+   \Ad\rho(\gamma) \big( c^0(\wt x_0) \big)
- c^0 (\wt x_0),
\end{align*}
using our earlier computation of $c^0 (\gamma \wt x_0)$ in the proof of Lemma~\ref{lem:ProjectiveBasisDefinesCocyle}. If we introduce an $\Ad\rho$--equivariant 0--cocycle  $c^{0\,\prime} \in C^0 (\wt S; \sln_{\Ad\rho})$ defined by the property that
$$
c^{0\, \prime}(\wt x) = 
\begin{cases}
 \Ad\rho(\alpha) \big( c^0(\wt x_0) \big)
 &\text{if } \wt x = \alpha \wt x_0 \text{ for some } \alpha \in \pi_1(S)
 \\
 0 &\text{if } \wt x \text{ is not in the orbit of } \wt x_0,
\end{cases}
$$
this can be rewritten as
\begin{align*}
  c^1 \big( \wt f( \wt \sigma_\gamma ) \big)
 &=  c_V(\wt\sigma_\gamma)  +  c^{0\,\prime}(\gamma \wt x_0) - c^{0\,\prime} (\wt x_0)
 \\
 &=  c_V(\wt\sigma_\gamma)  +  dc^{0\,\prime}\big( \wt f(\wt\sigma_\gamma) \big).
\end{align*}
By $\Ad\rho$--equivariance, this implies that
$$
 c^1 \big( \wt f( \wt \sigma ) \big) - dc^{0\,\prime}\big( \wt f(\wt\sigma) \big) = c_V(\wt\sigma) 
$$
for every $1$--simplex $\wt\sigma$ of $\wt K$. As a consequence, the isomorphism $H^1(f) \colon H^1(S; \sln_{\Ad\rho}) \to  H^1(K; \sln_{\Ad\rho})$ sends $[c^1] \in H^1(S; \sln_{\Ad\rho})$ to
$$
 H^1(f)\big([c^1]\big) =H^1(f)\big([c^1 -dc^{0\prime}]\big) = [c_V] 
$$
in $H^1(K; \sln_{\Ad\rho}) = H^1\big(\pi_1(S); \sln_{\Ad\rho}\big)$. 
\end{proof}

The cocycle $c^1 \in H^1 ( S;\sln_{\Ad\rho} )$ developed in this section will enable us to localize the computation of the Weil class $[c_V]\in H^1\big( \pi_1(S);\sln_{\Ad\rho} \big)$ associated to a tangent vector $V\in T_{[\rho]}\Hit(S)$, in order to compute the cup product $c_{V_1} \cupp\, c_{V_2}$ corresponding to two tangent vectors. For this purpose, we will not need to determine the full singular cocycle $c^1$, but just its evaluation on the 1--simplices of a suitably chosen triangulation of the surface $S$.


\section{Eruptions and shears}
\label{bigsect:EruptionsShears}

\subsection{Flags, triple ratios and double ratios}
\label{subsect:TripleDoubleRatios}

Recall that a (complete) \emph{flag} $F$ in $\R^n$ is a nested family of linear subspaces $0=F^{(0)} \subset F^{(1)} \subset F^{(2)} \subset \dots \subset F^{(n-1)} \subset F^{(n)} = \R^n$ such that each $F^{(a)}$ has dimension $a$. We denote the space of flags in $\R^n$ as $\Flag$. 

The standard action of the linear group $\GL$ on $\R^n$ induces an action of $\GL$ on $\Flag$, which descends to an action of the projective group $\PGL$ on $\Flag$. 
The key ingredients for the results of \cite{FocGon1} are certain invariants for the action of $\PGL$ on sufficiently generic finite families of flags. 

A flag triple  $(E,F,G) \in \Flag^3$ satisfies the \emph{Maximum Span Property} if
$$
\dim ( E^{(a)} + F^{(b)} + G^{(c)} ) = \min \{ a+b+c, n\}
$$
for every $a$, $b$, $c\in \{0,1,2,\dots, n\}$. In other words, this means that the span $ E^{(a)} + F^{(b)} + G^{(c)} $ of the three subspaces $ E^{(a)}$, $ F^{(b)}$, $ G^{(c)} $ is as large as possible. When this property is satisfied, we will also say that $(E,F,G)$ is a \emph{maximum-span flag triple}. Elementary considerations show that this is equivalent to the property that
$$
\R^n = E^{(a)} \oplus F^{(b)} \oplus G^{(c)} 
$$
for every $a$, $b$, $c$ with $a+b+c=n$. 

This definition is exactly what is needed to define, for every $a$, $b$, $c\geq1$ with $a+b+c=n$, the \emph{$(a,b,c)$--triple-ratio invariant} of a maximum-span flag triple $(E,F,G) \in \Flag^3$, which is the quantity
$$
X_{abc}(E,F,G)= 
\frac
{ e^{(a+1)} \wedge  f^{(b)} \wedge  g^{(c-1)}}
{ e^{(a-1)} \wedge  f^{(b)} \wedge  g^{(c+1)}}
\frac
{ e^{(a)} \wedge  f^{(b-1)} \wedge  g^{(c+1)}}
{ e^{(a)} \wedge  f^{(b+1)} \wedge  g^{(c-1)}}
\frac
{ e^{(a-1)} \wedge  f^{(b+1)} \wedge  g^{(c)}}
{ e^{(a+1)} \wedge  f^{(b-1)} \wedge  g^{(c)}}
 \in \R-\{0\}
$$
where all $e^{(a')}\in \mathsf\Lambda^{a'}E^{(a')}\cong \R$, $f^{(b')}\in \mathsf\Lambda^{b'}F^{(b')}\cong \R$ and $g^{(c')}\in \mathsf\Lambda^{c'}G^{(c')}\cong \R$ are arbitrary nonzero elements of the corresponding exterior powers, and where the quotients are taken for an arbitrary isomorphism $\mathsf\Lambda^n\R^n \cong \R$. 

\begin{prop}
[{\cite{FocGon1}}]
\label{prop:TripleRatioClassifyFlagTriples}
 Given two maximum-span flag triples $(E,F,G)$ and $(E', F', G') \in \Flag^3$, there exists a projective map $\phi\in \PGL$ sending $(E,F,G)$ to $(E', F', G')$ if and only if
 $$
 X_{abc}(E,F,G)= X_{abc}(E',F',G')
 $$
 for every $a$, $b$, $c\geq1$ with $a+b+c=n$. 
 
 In addition, the projective map $\phi$ is unique when it exists. 
 \qed
\end{prop}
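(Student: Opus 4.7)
The ``only if'' direction and the uniqueness statement both reduce to formal checks. For ``only if'', each of $e^{(a')}$, $f^{(b')}$, $g^{(c')}$ enters the expression for $X_{abc}(E,F,G)$ the same number of times in numerator and denominator, so the quantity is independent of all rescalings and of the isomorphism $\mathsf\Lambda^n\R^n\cong\R$; moreover, each wedge $e^{(a')}\wedge f^{(b')}\wedge g^{(c')}$ in the formula is nonzero by the Maximum Span Property, so $X_{abc}$ is a well-defined $\PGL$--invariant function of the triple and agrees on $(E,F,G)$ and $(E',F',G')$ whenever they are $\PGL$--equivalent. For uniqueness, I plan to show that the stabilizer in $\PGL$ of any maximum-span triple is trivial: the stabilizer of a transverse pair $(E,F)$ is the projective diagonal torus $T\subset\PGL$ associated to any basis $(e_1,\dots,e_n)$ adapted to the pair, and the Maximum Span Property forces the line $G^{(1)}$ to be spanned by a vector with all coordinates nonzero in such a basis, which is fixed by an element of $T$ only if that element is trivial in $\PGL$. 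Hence if $\phi_1$ and $\phi_2$ both send $(E,F,G)$ to $(E',F',G')$, then $\phi_2^{-1}\phi_1$ must be the identity.

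For the existence direction, the plan is to normalize and reduce to a bijectivity statement on a slice. By transitivity of $\PGL$ on transverse flag pairs, I first choose $\PGL$--elements bringing $(E,F)$ and $(E',F')$ to a common standard pair $(E_0,F_0)$ defined by $E_0^{(a)}=\langle e_1,\dots,e_a\rangle$ and $F_0^{(b)}=\langle e_n,\dots,e_{n-b+1}\rangle$ for the canonical basis. The problem then becomes: if $(E_0,F_0,G)$ and $(E_0,F_0,G')$ have the same triple ratios, find an element of $T$ carrying $G$ to $G'$. The open subset $\Omega\subset\Flag$ of third flags in maximum-span position with $(E_0,F_0)$ has dimension $\binom{n}{2}$, and $T$ acts freely on it with orbits of dimension $n-1$, so $\Omega/T$ has dimension $\binom{n-1}{2}$, which is exactly the number of triple ratio invariants. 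This dimension coincidence is what makes the statement plausible; the task is to upgrade it to a genuine bijection.

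The core technical step is then to exhibit the triple ratios as global coordinates on $\Omega/T$. Picking a basis $(g_1,\dots,g_n)$ with $G^{(c)}=\langle g_1,\dots,g_c\rangle$ and writing $g_i=\sum_j\lambda_{ij}\,e_j$, the Maximum Span Property translates into the non-vanishing of certain minors of $(\lambda_{ij})$. The combined freedom of rescaling each $g_i$ and acting by $T$ on the $e_j$'s amounts to $2n-1$ scalings, and I would use these to bring $(\lambda_{ij})$ into a canonical form in which the ``edge'' entries of a $\Theta_n$-like index pattern are normalized to $1$, leaving exactly $\binom{n-1}{2}$ interior entries as free parameters. Evaluating the triple ratios in this normal form should yield a monomial, triangular change of variables between the interior parameters and the $X_{abc}$, and inverting it produces an element of $T$ matching $G$ with $G'$.

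I expect the main obstacle to be this last combinatorial computation: verifying that the $2n-1$ scalings really absorb the outer entries as advertised and that the resulting monomial map on $\binom{n-1}{2}$ coordinates is invertible. The cleanest way to manage the bookkeeping is likely the ``snake'' construction of Fock--Goncharov, which inductively reconstructs $G$ from the triple-ratio data by an induction on $n$, using the already-understood classification of smaller configurations at each step. Once this core computation is in place, composing the torus element with the initial normalizations yields the desired $\phi\in\PGL$, and all the remaining pieces — well-definedness, $\PGL$--invariance, uniqueness, and the reduction to the slice — are routine.
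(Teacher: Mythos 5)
The paper does not prove this proposition; it cites it from Fock--Goncharov, and the \verb|\qed| marks the end of the statement-with-citation, not a proof. So there is no in-paper argument to compare against. What the paper does provide is Lemma~\ref{lem:ProjectiveMapBetweenFlagTriples}, which it explicitly identifies as the main ingredient in the uniqueness parts of Propositions~\ref{prop:TripleRatioClassifyFlagTriples} and~\ref{prop:DoubleTripleRatioClassifyFlagQuadruples}, and your uniqueness argument is exactly that lemma's proof: in a basis adapted to the transverse pair $(E,F)$ the stabilizer is the diagonal torus, and the Maximum Span Property (applied with $c=1$ and $a+b=n-1$) forces any spanning vector of $G^{(1)}$ to have all coordinates nonzero, so only the scalars fix it. The ``only if'' direction is, as you say, a routine check of homogeneity and well-definedness.

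The existence half is where your proposal is a roadmap rather than an argument, and you say so yourself. Two remarks. The dimension count $\dim\Omega/T = \binom{n}{2}-(n-1)=\binom{n-1}{2}$ is correct and matches $|\Theta_n|$, but your more refined parameter count is off: the flag $G$ determines the matrix $(\lambda_{ij})$ only up to left multiplication by \emph{arbitrary} upper-triangular matrices (a change of adapted basis), not merely diagonal rescaling of the $g_i$, so the freedom you must quotient by is $\binom{n}{2}+n$ from the left plus $n-1$ from $T$, not $2n-1$; with the full freedom the bookkeeping does come out to $\binom{n-1}{2}$, but you need to use all of it (for instance by fixing the unipotent adapted basis). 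More substantively, the ``monomial, triangular change of variables'' between the normalized interior parameters and the triple ratios is precisely the content of the theorem and has to be exhibited, not anticipated. Fock--Goncharov's snake construction, which you invoke, is the standard way to do this: it recovers the third flag inductively from the $X_{abc}$, and making that induction explicit is the missing step. In short: uniqueness and ``only if'' are complete and match Lemma~\ref{lem:ProjectiveMapBetweenFlagTriples}; existence is on the right track but the central reconstruction is still a plan.
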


Similarly, let $(E,F;G,H) \in \Flag^4$ be a flag quadruple such that the flag triples $(E,F,G)$ and $(E,F,H)$ satisfy the Maximum Span Property. For $a$, $b\geq1$ with $a+b=n$, the \emph{$(a,b)$--double-ratio invariant} of $(E,F;G,H)$ is defined as
$$
X_{ab} (E,F;G,H) =
- \frac
{ e^{(a)} \wedge  f^{(n-a-1)}\wedge  g^{(1)}} 
{ e^{(a)} \wedge  f^{(n-a-1)}\wedge  h^{(1)}}
\frac
{ e^{(a-1)} \wedge  f^{(n-a)}\wedge  h^{(1)}}
{ e^{(a-1)}  \wedge  f^{(n-a)}\wedge  g^{(1)}} 
 \in \R - \{0\}
$$
where all $e^{(a')}\in \mathsf\Lambda^{a'}E^{(a')}\cong \R$, $f^{(b')}\in \mathsf\Lambda^{b'}F^{(b')}\cong \R$, $g^{(1)}\in \mathsf\Lambda^{1}G^{(1)}\cong \R$, $h^{(1)}\in \mathsf\Lambda^{1}H^{(1)}\cong \R$  are arbitrary nonzero elements in the corresponding exterior powers. 

Note that $X_{ab} (E,F;G,H) $ does not depend on the whole flags $G$ and $H$, but only on the lines $G^{(1)}$, $H^{(1)}$. In particular, the requirement that $(E,F,G)$ and $(E,F,H)$ satisfy the Maximum Span Property is an overkill if we only want $X_{ab} (E,F;G,H) $ to be defined.  

\begin{prop}
[{\cite{FocGon1}}]
\label{prop:DoubleTripleRatioClassifyFlagQuadruples}
 Given two flag quadruples $(E,F;G, H)$ and $(E', F'; G', H') \in \Flag^4$ such that $(E,F,G)$, $(E,F,H)$, $(E',F',G')$ and $(E',F',H')$ satisfy the Maximum Span Property, there exists a projective map $\phi\in \PGL$  sending  $(E,F;G, H)$ to $(E', F'; G', H')$ if and only if
\begin{align*}
 X_{abc}(E,F,G)&= X_{abc}(E',F',G'),
 \\
 X_{abc}(E,F,H)&= X_{abc}(E',F',H'),
 \\
\text{and } X_{a'b'}(E,F;G, H)&= X_{a'b'}(E', F'; G', H') 
\end{align*}
 for every $a$, $b$, $c$, $a'$, $b'\geq1$ with $a+b+c=n$ and $a'+b'=n$. 
 
  In addition, the projective map $\phi$ is unique when it exists. 
 \qed
\end{prop}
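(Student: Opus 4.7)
The plan is to bootstrap from Proposition~\ref{prop:TripleRatioClassifyFlagTriples}: the triple already handles ``three flags'', so I would aim to reduce the quadruple question to a rigidity statement about how a fourth flag $H$ is pinned down, relative to a maximum-span triple $(E,F,G)$, by its triple ratios with $(E,F)$ and its double ratios with $(E,F)$ against $G$. Necessity of all three families of invariants is immediate, since triple and double ratios are defined by ratios of wedges and are thus $\PGL$--invariant.

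For sufficiency, I would first apply Proposition~\ref{prop:TripleRatioClassifyFlagTriples} to the triples $(E,F,G)$ and $(E',F',G')$, obtaining a unique $\phi_0\in\PGL$ with $\phi_0(E,F,G)=(E',F',G')$. After replacing the primed quadruple by its $\phi_0^{-1}$--image, I may assume $E=E'$, $F=F'$, $G=G'$, and the problem reduces to showing $H=H'$. Applying Proposition~\ref{prop:TripleRatioClassifyFlagTriples} a second time, this time to the triples $(E,F,H)$ and $(E,F,H')$, I obtain a unique $\psi\in\PGL$ with $\psi(E,F,H)=(E,F,H')$. The goal then becomes: show that $\psi=\mathrm{Id}$, which forces $H=H'$, and at the same time gives the uniqueness statement of the proposition since any $\phi$ sending the full quadruple to its primed counterpart must restrict to the unique map on the triple.

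The main obstacle, and the place where the double-ratio hypothesis enters, is ruling out nontrivial $\psi$. Since $\psi$ stabilizes both $E$ and $F$ (which are transverse by maximum span), $\psi$ lies in the maximal torus $T\subset\PGL$ that is diagonal in any basis $(v_1,\dots,v_n)$ adapted to the pair $(E,F)$, with $E^{(a)}=\mathrm{span}(v_1,\dots,v_a)$ and $F^{(b)}=\mathrm{span}(v_n,\dots,v_{n-b+1})$. Write $\psi=\mathrm{diag}(\lambda_1,\dots,\lambda_n)$ in such a basis. Combining the hypothesis $X_{ab}(E,F;G,H)=X_{ab}(E,F;G,H')$ with $\PGL$--invariance of double ratios and $\psi(H)=H'$, I get
\[
X_{ab}(E,F;G,H')=X_{ab}(E,F;\psi(G),H')
\quad\text{for all }a+b=n,\ a,b\geq1.
\]
A direct computation in the adapted basis, using that $e^{(a)}\wedge f^{(n-a-1)}\wedge\ell$ extracts (up to sign) the $v_{a+1}$--coordinate of $\ell$, shows that the double ratio $X_{ab}(E,F;L,H')$ for a line $L=\mathrm{span}(\ell_1 v_1+\dots+\ell_n v_n)$ is, up to a sign and the fixed factor determined by $H'$, equal to the ratio $\ell_{a+1}/\ell_a$. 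The maximum-span hypothesis on $(E,F,G)$ forces all coordinates $g_i$ of a generator of $G^{(1)}$ to be nonzero, so the coordinates of a generator of $\psi(G)^{(1)}$ are $\lambda_i g_i$, and the equality above translates to $\lambda_{a+1}/\lambda_a=1$ for every $a=1,\dots,n-1$. Hence all $\lambda_i$ coincide, $\psi$ is scalar, and $\psi=\mathrm{Id}$ in $\PGL$, which yields $H=H'$ and completes the proof. Uniqueness follows because any candidate $\phi$ maps the maximum-span triple $(E,F,G)$ to $(E',F',G')$, and is therefore forced to equal $\phi_0$ by Proposition~\ref{prop:TripleRatioClassifyFlagTriples}.
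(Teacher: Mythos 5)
The paper states Proposition~\ref{prop:DoubleTripleRatioClassifyFlagQuadruples} with a citation to Fock--Goncharov and a bare \verb|\qed|; it supplies no proof of its own, and only hints (just after Lemma~\ref{prop:ComplementsTrainTrackGeodLamination}\dots sorry, after Lemma~\ref{lem:ProjectiveMapBetweenFlagTriples}) that Lemma~\ref{lem:ProjectiveMapBetweenFlagTriples} is ``the main ingredient in the proof of the uniqueness part.'' So there is no in-paper proof to compare against, and your argument should be judged on its own terms.

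Your proof is correct and self-contained. Necessity is indeed immediate from $\PGL$--invariance. For sufficiency, the reduction via Proposition~\ref{prop:TripleRatioClassifyFlagTriples} to $E=E'$, $F=F'$, $G=G'$ is legitimate, and the second application of that proposition to $(E,F,H)$ and $(E,F,H')$ is valid because the hypothesis gives equality of all triple ratios. The heart of the matter is then showing that the resulting $\psi \in \PGL$ fixing $E$, $F$ and sending $H$ to $H'$ is the identity, and you do this by observing that $\psi$ is diagonal in a basis adapted to the transverse pair $(E,F)$ and then forcing the diagonal to be scalar via the double-ratio hypothesis. The computational core — that in an adapted basis $e^{(a)}\wedge f^{(n-a-1)}\wedge\ell$ extracts the $(a+1)$--st coordinate of $\ell$ and $e^{(a-1)}\wedge f^{(n-a)}\wedge\ell$ extracts the $a$--th, so that $X_{ab}(E,F;L,H')$ is $-\frac{h'_a}{h'_{a+1}}\cdot\frac{\ell_{a+1}}{\ell_a}$ — is exactly right, and the Maximum Span hypothesis on $(E,F,G)$ is correctly invoked to guarantee all $g_i\neq 0$ so these ratios are meaningful. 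The conclusion $\lambda_{a+1}/\lambda_a=1$ for all $a$ then gives $\psi=\mathrm{Id}$, and uniqueness of $\phi$ is a direct consequence of the uniqueness in Proposition~\ref{prop:TripleRatioClassifyFlagTriples}. In spirit this is the same adapted-basis argument that underlies the paper's Lemma~\ref{lem:ProjectiveMapBetweenFlagTriples}, just packaged as a diagonal-torus reduction rather than invoking that lemma explicitly; either route works.
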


A basis $\mathcal B=\{e_1, e_2, \dots, e_n\}$ for $\R^n$ specifies two preferred flags: the \emph{ascending flag} $E$ defined by the property that each subspace $E^{(a)}$ is spanned by the first $a$ vectors $e_1$, $e_2$, \dots, $e_a$ of $\mathcal B$; and the \emph{descending flag} $F$ for which each $F^{(b)}$ is spanned by the last $b$ vectors $e_{n-b+1}$, $e_{n-b+2}$, \dots, $e_n$. 

We will frequently make use of the following elementary result, which was also the main ingredient in the proof of the uniqueness part of Propositions~\ref{prop:TripleRatioClassifyFlagTriples} and \ref{prop:DoubleTripleRatioClassifyFlagQuadruples}.
\begin{lem}
\label{lem:ProjectiveMapBetweenFlagTriples}
 Let $(E,F,G)$, $(E', F', G') \in \Flag^3$ be two maximum-span flag triples. Then, there is a unique projective map $\phi \in \PGL$ sending the flag $E$ to $E'$, the flag $F$ to $F'$, and the line $G^{(1)}$ to $G^{\prime(1)}$. 
 \end{lem}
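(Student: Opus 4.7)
The plan is to construct, from each maximum-span flag triple equipped with a distinguished line $G^{(1)}$, a projective basis of $\R^n$ that is canonical up to no further choices. The map $\phi$ is then forced to be the unique element of $\PGL$ sending one such canonical basis to the other.

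First, for a maximum-span triple $(E,F,G)$, the Maximum Span Property applied with $c=0$ yields $\dim(E^{(a)} + F^{(n-a+1)}) = n$, so each intersection
$$
L_a = E^{(a)} \cap F^{(n-a+1)}
$$
is a line. Any choice of nonzero $e_a \in L_a$, for $a = 1, \ldots, n$, produces a basis of $\R^n$ whose ascending flag is $E$ and descending flag is $F$; at this stage the basis is determined only up to an independent rescaling of each $e_a$.

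Next, I would use the line $G^{(1)}$ to pin down these relative scalings. Picking a nonzero $g \in G^{(1)}$ and writing $g = \sum_{a=1}^n \lambda_a e_a$, I claim each $\lambda_a$ is nonzero: otherwise $g$ would lie in $E^{(a-1)} + F^{(n-a)}$, a subspace of dimension $(a-1) + (n-a) = n-1$, whereas the Maximum Span Property applied with parameters $(a-1, n-a, 1)$ forces $E^{(a-1)} + F^{(n-a)} + G^{(1)}$ to be all of $\R^n$. Rescaling each $e_a$ by $\lambda_a^{-1}$, we may arrange $g = e_1 + e_2 + \cdots + e_n$. The only remaining freedom is a simultaneous rescaling of all the $e_a$ by one common nonzero scalar, which exactly mirrors the ambiguity in our initial choice of $g$. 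Thus $(E,F,G)$ canonically determines a projective basis $\mathcal B(E,F,G) \in \PGL$.

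Existence is then immediate: performing the same construction for $(E',F',G')$ and setting $\phi = \mathcal B(E',F',G') \circ \mathcal B(E,F,G)^{-1}$ produces a projective map sending $E$ to $E'$, $F$ to $F'$, and $G^{(1)}$ to $G^{\prime(1)}$. For uniqueness, it suffices to show that any $\psi \in \PGL$ preserving the triple $(E, F, G^{(1)})$ is trivial: such a $\psi$ must preserve each line $L_a$, so it is diagonal in the basis $(e_1, \ldots, e_n)$ with some eigenvalues $\mu_1, \ldots, \mu_n$; since $\psi$ also preserves the line spanned by $e_1 + \cdots + e_n$, all the $\mu_a$ must coincide, making $\psi$ a scalar, hence trivial in $\PGL$. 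Applied to $\psi = \phi_1^{-1}\phi_2$ for two candidate maps, this yields $\phi_1 = \phi_2$. There is no serious obstacle in this argument: everything is driven by the dimension counts supplied by the Maximum Span Property.
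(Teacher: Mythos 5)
Your proof is correct and follows essentially the same route as the paper: construct the basis $e_a \in E^{(a)} \cap F^{(n-a+1)}$ adapted to $E$ and $F$, normalize it using $G^{(1)}$ so that the basis is unique up to a single overall scalar, and conclude by comparing the two canonical projective bases. The paper leaves all of this implicit under the phrase ``an elementary argument,'' so you have simply filled in exactly the details it was alluding to.
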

\begin{proof}
 An elementary argument provides a basis $\mathcal B=\{e_1, e_2, \dots, e_n\}$ whose ascending flag is equal to $E$, whose descending flag is equal to $F$, and such that the line $G^{(1)}$ is spanned by the sum $e_1+e_2+ \dots +e_n$ of the elements of $\mathcal B$. In addition, this basis is unique up to multiplication of the elements of $\mathcal B$ by the same scalar. The result then follows by consideration of the basis $\mathcal B'$ similarly associated to the flag triple $(E', F', G')$. 
\end{proof}

\subsection{Connection with Fock-Goncharov invariants}
\label{subsect:DoubleTripleRatiosFockGoncharov}

Let $\wt S$ be the universal cover of the surface $S$, and let $\bdry$ denote its circle at infinity.  An important consequence of the Anosov property of Hitchin representations developed by Labourie \cite{Lab} is the following.

\begin{prop}
 [{\cite{Lab}}]
 \label{prop:FlagMap}
 Let $\rho \colon \pi_1(S) \to \PGL$ be a Hitchin representation. Then there exists a unique continuous map $\F_\rho \colon \bdry \to \Flag$ that is $\rho$--equivariant, in the sense that
 $$
 \F_\rho(\gamma x) = \rho(\gamma) \F_\rho(x)
 $$
 for every $x\in \bdry$ and $\gamma \in \pi_1(S)$. \qed
\end{prop}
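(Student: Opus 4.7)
The plan is to construct $\F_\rho$ first on a dense subset of $\bdry$ --- namely the attracting fixed points of the action of $\pi_1(S)$ on $\bdry$ --- and then extend continuously to all of $\bdry$ using the contraction properties built into Hitchin representations. The uniqueness statement will then drop out from the density of these fixed points combined with continuity.

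For the existence, I would start from the following fact, which is part of the geometric package of Hitchin representations established by Labourie: for every nontrivial $\gamma \in \pi_1(S)$, the element $\rho(\gamma) \in \PGL$ is purely loxodromic with $n$ distinct positive real eigenvalues. In particular, $\rho(\gamma)$ admits a unique attracting fixed flag $E_\gamma^+ \in \Flag$, whose $a$--dimensional subspace $E_\gamma^{+(a)}$ is the sum of the eigenlines associated to the $a$ largest eigenvalues. On the other hand, $\gamma$ acts on $\wt S$ as a hyperbolic isometry with a well-defined attracting fixed point $\gamma^+ \in \bdry$. Setting $\F_\rho(\gamma^+) = E_\gamma^+$ defines the flag map on the dense subset $D = \{\gamma^+ : \gamma \in \pi_1(S),\ \gamma \neq 1\}$ of $\bdry$. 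Verifying that this assignment is well-defined (distinct elements sharing the same attracting fixed point lie in a common maximal cyclic subgroup and so determine the same attracting flag) and $\rho$--equivariant on $D$ is routine.

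The main obstacle will be extending $\F_\rho$ continuously from $D$ to the whole of $\bdry$, and this is precisely where the Anosov nature of Hitchin representations is essential. For any pair of distinct points $x, y \in \bdry$ the oriented geodesic in $\wt S$ from $y$ to $x$ descends to a trajectory of the geodesic flow on the unit tangent bundle $T^1 S$, along which the Anosov property supplies a $\rho$--equivariant splitting of the associated flat bundle that contracts exponentially in one time direction and expands in the other. To define $\F_\rho(x_\infty)$ for a non-fixed point $x_\infty \in \bdry$, I would approximate $x_\infty$ by attracting fixed points $\gamma_k^+$ and use the contraction estimates along lifts of the corresponding periodic orbits to show that the flags $E_{\gamma_k}^+$ form a Cauchy sequence in $\Flag$ whose limit is independent of the approximating sequence. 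Setting $\F_\rho(x_\infty)$ equal to this limit, and exploiting the H\"older estimates built into the Anosov framework, yields a continuous (in fact H\"older) $\rho$--equivariant map $\F_\rho \colon \bdry \to \Flag$.

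For uniqueness, suppose $\F_\rho'$ is a second continuous $\rho$--equivariant map. For any nontrivial $\gamma \in \pi_1(S)$, the relation $\F_\rho'(\gamma^+) = \rho(\gamma)\F_\rho'(\gamma^+)$ forces $\F_\rho'(\gamma^+)$ to be a flag fixed by $\rho(\gamma)$. Pick any point $x \in \bdry$ with $x \neq \gamma^-$; then $\gamma^k x \to \gamma^+$ in $\bdry$ while, because $\rho(\gamma)$ is purely loxodromic with simple real eigenvalues, $\rho(\gamma)^k \F_\rho'(x)$ converges in $\Flag$ to the attracting flag $E_\gamma^+$. Continuity of $\F_\rho'$ then yields $\F_\rho'(\gamma^+) = E_\gamma^+ = \F_\rho(\gamma^+)$. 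Since $D$ is dense in $\bdry$ and both maps are continuous, $\F_\rho' = \F_\rho$ on all of $\bdry$.
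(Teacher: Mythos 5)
The paper does not prove this proposition: it is stated as a citation of Labourie (the \verb|\qed| is a ``boxed citation,'' not the end of an argument), so there is no internal proof against which to compare. Your outline is a reasonable reconstruction of how the result is obtained from the two heavy theorems of \cite{Lab} that you invoke as input --- that $\rho(\gamma)$ is purely loxodromic for each nontrivial $\gamma$, and that $\rho$ satisfies the Anosov property along the geodesic flow --- but both of those are themselves substantial parts of what Labourie proves, so what you have written is a plan, not a self-contained argument. That is fine as a sketch; the existence half, extending from the dense set $D$ of attracting fixed points by a Cauchy argument based on Anosov contraction, is consistent with the standard treatments (Labourie, and later Guichard--Wienhard), though the precise contraction estimates that make the Cauchy claim go through are not spelled out.

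There is, however, a genuine gap in your uniqueness argument. You assert that for \emph{any} $x \neq \gamma^{-}$, the sequence $\rho(\gamma)^k \F_\rho'(x)$ converges in $\Flag$ to the attracting flag $E_\gamma^+$. This is false in general: that convergence holds only when $\F_\rho'(x)$ is transverse to the repelling flag $E_\gamma^-$ (i.e., lies in the open dense basin of $E_\gamma^+$). A priori the image of a competing continuous equivariant map $\F_\rho'$ could avoid this basin entirely, in which case your limit would be one of the other fixed flags of $\rho(\gamma)$ and the argument breaks down. The gap is fillable --- for instance, by using the Zariski density of $\rho(\pi_1(S))$ in $\PGL$ (itself a theorem about Hitchin representations) to show that $\F_\rho'$ cannot be constant and that its image must meet the generic basin for each $\gamma$, or by appealing to the hyperconvexity/transversality statement --- but as written the ``pick any point $x$'' step is unjustified and needs to be replaced by a choice of $x$ for which the required transversality actually holds.
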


The map $\F_\rho \colon \bdry \to \Flag$ is the \emph{flag map} of the Hitchin representation $\rho \colon \pi_1(S) \to \PGL$. 

\begin{prop}
 [{\cite{Lab, FocGon1}}]
 \label{prop:FlagMapPositive}
 The flag map $\F_\rho \colon \bdry \to \Flag$ of a Hitchin representation $\rho \colon \pi_1(S) \to \PGL$ is \emph{positive}, in the sense that:
\begin{enumerate}
 \item For every distinct three points $x$, $y$, $z\in \bdry$, and every integers $a$, $b$, $c\geq 1$ with $a+b+c=n$, the triple-ratio
 $
 X_{abc} \big( \F_\rho(x),  \F_\rho(y),  \F_\rho(z) \big)
 $
 is well-defined and positive. 
 \item For every four points $x$, $y$, $u$, $v\in \bdry$ such that $u$ and $v$ are in different components of $\bdry - \{x,y\}$, and for every integers $a$, $b\geq 1$ with $a+b=n$, the double-ratio $X_{ab}\big ( \F_\rho(x),  \F_\rho(y);  \F_\rho(u),  \F_\rho(v) \big)$ is  positive. 
 \qed
\end{enumerate}
\end{prop}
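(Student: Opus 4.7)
The plan is to prove positivity by a deformation argument, using the Fuchsian locus of $\Hit(S)$ as the base case and propagating positivity through the connected component via continuity, with non-vanishing guaranteed by the transversality properties of $\F_\rho$.

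First, I would establish a transversality/maximum-span property: for any three distinct points $x, y, z\in\bdry$, the flag triple $\bigl( \F_\rho(x), \F_\rho(y), \F_\rho(z)\bigr)$ satisfies the Maximum Span Property of \S\ref{subsect:TripleDoubleRatios}, and similarly for the quadruples relevant to double-ratios. This is precisely the content of Labourie's Anosov property for Hitchin representations: the lift of the geodesic flow of $S$ to the flag bundle associated to $\rho$ is Anosov, and the resulting splitting of $\R^n$ into transverse stable/unstable subspaces of complementary dimensions is exactly the transversality $\F_\rho(x)^{(a)}\oplus \F_\rho(y)^{(n-a)}=\R^n$ at the two endpoints of each leaf, with the appropriate three-flag version obtained from the dynamics at three distinct endpoints. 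Consequently, all the wedge products appearing in the definitions of $X_{abc}$ and $X_{ab}$ are nonzero, so these invariants are well-defined and nonzero real numbers.

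Next, I would verify positivity on the Fuchsian locus. A Fuchsian character is obtained from a hyperbolic monodromy $\rho_0\colon\pi_1(S)\to \mathrm{PSL}_2(\R)$ composed with the unique irreducible embedding $\iota\colon \mathrm{PSL}_2(\R)\to\PGL$, i.e.\ the $(n-1)$-st symmetric power. By uniqueness in Proposition~\ref{prop:FlagMap}, the flag map $\F_{\iota\rho_0}$ is the Veronese embedding $\mathbb{RP}^1\to \Flag$ sending $[s:t]$ to the osculating flag at $[1:t/s:(t/s)^2:\dots:(t/s)^{n-1}]$ of the rational normal curve. For three points on $\mathbb{RP}^1\cong\bdry$ in cyclic order, the antisymmetric forms $e^{(a)}\wedge f^{(b)}\wedge g^{(c)}$ reduce to monomials in the three $2\times 2$ Plücker determinants, each of which has the same sign under the cyclic ordering. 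A direct calculation then expresses each triple-ratio $X_{abc}$ and each double-ratio $X_{ab}$ (for points in the configuration specified) as a positive rational monomial in these determinants, establishing positivity on the Fuchsian locus.

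Finally, I would propagate positivity to all of $\Hit(S)$ by connectedness. The Hitchin component is connected (indeed diffeomorphic to a Euclidean space), and the flag map $\F_\rho$ depends continuously on $\rho\in\Hit(S)$ in the topology of uniform convergence on $\bdry$; this continuity follows from the dynamical characterization of $\F_\rho$ in Labourie's framework, together with the structural stability of Anosov systems. Hence, for fixed boundary points, the functions $\rho\mapsto X_{abc}(\F_\rho(x),\F_\rho(y),\F_\rho(z))$ and $\rho\mapsto X_{ab}(\F_\rho(x),\F_\rho(y);\F_\rho(u),\F_\rho(v))$ are continuous on $\Hit(S)$. By the first step, they are nowhere zero and nowhere infinite; by the second step, they are positive at some Fuchsian basepoint. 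The intermediate value theorem on any path to a Fuchsian representation then forces them to remain positive throughout $\Hit(S)$.

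The main obstacle I expect is the rigorous justification of the two inputs from Labourie's theory: the strong maximum-span transversality (not merely pairwise transversality) for every triple of distinct boundary points, and the continuity of $\F_\rho$ in $\rho$. Both ultimately rest on uniform hyperbolic estimates for the associated Anosov flow, and are the reason this positivity theorem is a genuinely nontrivial consequence of the Anosov property rather than a formal calculation.
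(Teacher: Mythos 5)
The paper does not actually prove this proposition: it is stated with the citation \cite{Lab, FocGon1} and a terminal $\qed$, signaling that the result is being imported wholesale from those references. There is therefore no internal proof to compare against, but I can assess your reconstruction on its own terms.

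Your overall strategy --- verify positivity on the Fuchsian locus via the Veronese/osculating flag curve, propagate by connectedness of $\Hit(S)$, and use non-vanishing of the ratios to keep the sign locked in --- is indeed the approach underlying the cited results, and your Fuchsian computation and continuity argument are both fine. However, the first step as you phrased it is not quite right, and the imprecision is at exactly the point you yourself identify as delicate. The Anosov property gives you pairwise transversality, namely $\F_\rho(x)^{(a)}\oplus \F_\rho(y)^{(n-a)}=\R^n$ for all $x\neq y$ and all $a$; but the Maximum Span Property for a \emph{triple} of flags (in fact Labourie proves it for arbitrary finite tuples) is strictly stronger and does \emph{not} follow formally from pairwise transversality when $n\geq 3$. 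It is a separate theorem --- Labourie's ``Property (H)'' / hyperconvexity, with the converse due to Guichard --- whose proof uses more than the stable/unstable splitting of the flow, e.g.\ a curvature or frame-flow argument. Writing that the maximum-span condition ``is precisely the content of Labourie's Anosov property'' misattributes a nontrivial theorem to a definition, and if one tried to run your argument literally, the non-vanishing that underpins the intermediate value step would be left unjustified. To close the gap, replace that sentence with an explicit appeal to hyperconvexity of the Hitchin limit map (Labourie, Guichard), rather than deriving it from the Anosov splitting ``at three distinct endpoints.'' With that substitution, your sketch is a faithful rendering of the standard proof.
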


We can now hint at the way the generalized Fock-Goncharov invariants $\tau_\rho^{abc}(U,s)$ and $\sigma_\rho^{ab}(e)$ of a Hitchin representation are defined. These are based on the triple-ratio and double-ratio invariants of \S \ref{subsect:TripleDoubleRatios}. 

Let $\Psi$ be a train track carrying the geodesic lamination $\lambda$, associated to a train track neighborhood $\Phi$ of $\lambda$. Let $U$ be a component of $S-\Phi$, and let $s$ be one of the three corners of the closure of $U$. By the correspondence of Proposition~\ref{prop:ComplementsTrainTrackGeodLamination}, the corner $s$ determines a vertex $x$ of the component $T$ of $S-\lambda$ that contains the component of $S-\Phi$ corresponding to $U$. For the preimage $\wt\lambda$ of $\lambda$ in the universal cover $\wt S$, lift $T$ to a component $\wt T$ of $\wt S-\wt\lambda$ and let $\wt x\in \bdry$ be the vertex of $\wt T$ corresponding to $x$. Index the other vertices of $\wt T$ as $\wt y$ and $\wt z $ in such a way that $\wt x$, $\wt y$, $\wt z$ occur in this order counterclockwise around $\wt T$, and consider the flags $ \F_\rho(\wt x)$, $ \F_\rho(\wt y)$, $ \F_\rho(\wt z) \in \Flag$ associated to these vertices by the flag map $ \F_\rho$. Then, the invariant $\tau_\rho^{abc}(U,s)$ is defined as 
$$
\tau_\rho^{abc}(U,s) = \log  X_{abc} \big( \F_\rho(\wt x),  \F_\rho(\wt y),  \F_\rho(\wt z) \big). 
$$
Note that the positivity of $X_{abc} \big( \F_\rho(\wt x),  \F_\rho(\wt y),  \F_\rho(\wt z) \big)$ guaranteed by Proposition~\ref{prop:FlagMapPositive} is needed for the logarithm to be defined. 

The shearing invariant $\sigma_\rho^{ab}(e)$ is easily described in the very special case when there is a unique leaf $g$ of $\lambda$ that passes through the branch of $\Phi$ corresponding to $e$. The orientation of the branch $e$ determines an orientation for $g$, and  $g$ separates two components $T$ and $T'$ of $S-\lambda$ with $T$ on the right of $g$ and $T'$ on its left. Lift $g$ to an oriented leaf $\wt g$ of  $\wt\lambda$, and let $\wt T$ and $\wt T'$ be the two components of $\wt S-\wt\lambda$ that are adjacent to $\wt g$ and respectively lift $T$ and $T'$. Let $x \in \bdry$ be the positive endpoint of $\wt g$, let $y$ be its negative endpoint, let $u$ be the third vertex of $\wt T$, and let $v$ be the third vertex of $\wt T'$. Then,
$$
\sigma_\rho^{ab}(e) = \log  X_{ab} \big( \F_\rho( x),  \F_\rho( y);  \F_\rho( u),  \F_\rho( v)  \big) 
$$
in this special case. Note that the positivity of $X_{ab} \big( \F_\rho( x),  \F_\rho( y);  \F_\rho( u),  \F_\rho( v)  \big) $ is again critical. 

This definition of $\sigma_\rho^{ab}(e)$ in this special case, as well as the definition of the triangle invariants  $\tau_\rho^{abc}(U,s) $, are exactly those of the (non-generalized) Fock-Goncharov invariants of \cite{FocGon1}. The definition of the shearing invariant $\sigma_\rho^{ab}(e)$ is much more elaborate in the generic case where there are infinitely (and possibly uncountably) many  leaves of $\lambda$ passing through the branch of $\Phi$ corresponding to $e$, and requires the use of the shearing map discussed in \S \ref{subsect:SlitheringMaps}; in particular, see Fact~\ref{fact:ShearingUsingSlithering}.

\subsection{Left and right eruption maps} 
\label{subsect:Eruptions}
The notion of eruption, as developed in \cite{SunWieZha}, provides a very convenient tool to modify a   map $\F \colon \bdry \to \Flag$ in such a way that one triple-ratio invariant $X_{abc} \big( \F(x), \F(y), \F(z) \big)$ can be arbitrarily adjusted, while many other invariants $X_{a'b'c'} \big( \F(x'), \F(y'), \F(z') \big)$ and $X_{a'b'} \big( \F(x'), \F(y'); \F(u'), \F(v') \big)$ remain unchanged. 

We will only need a very weak version of these eruptions. Given a maximum-span flag triple $(E,F,G)\in \Flag^3$ and integers $a$, $b$, $c\geq 1$ with $a+b+c=n$, we have a direct sum decomposition
$$
\R^n = E^{(a)} \oplus F^{(b)} \oplus G^{(c)} .
$$
The \emph{left and right $(a,b,c)$--eruptions of amplitude $t\in \R$ along $(E,F,G)$} are the two projective maps $L^{abc}_{EFG}(t)$ and $R^{abc}_{EFG}(t)\in \PGL$ respectively defined as:
\begin{align*}
L^{abc}_{EFG}(t) &= \E^{- t}\Id_{E^{(a)}} \oplus  \Id_{F^{(b)}} \oplus  \Id_{G^{(c)}}
 \\
R^{abc}_{EFG}(t) &=  \Id_{E^{(a)}} \oplus \E^{ t} \Id_{F^{(b)}} \oplus  \Id_{G^{(c)}}.
\end{align*}

We list the more important properties of these eruption maps. 

\begin{lem}
\label{lem:EruptionsMainProperties}
 For every maximum-span flag triple $(E,F,G)\in \Flag^3$,  for every integers $a$, $b$, $c \geq 1$ with $a+b+c=n$, and for every $t \in \R$, 
\begin{enumerate}
 \item $ L_{EFG}^{abc}(t)(E) = E$,  $R_{EFG}^{abc}(t)(F)=F$;
 \item $L_{EFG}^{abc}(t)(G) = R_{EFG}^{abc}(t)(G)$;
 \item $L_{EFG}^{abc}(t)$ and $R_{EFG}^{abc}(t)$ respect the lines $E^{(1)}$, $F^{(1)}$ and $G^{(1)}$;
 \item for every integers $a'$, $b'$, $c' \geq 1$ with $a'+b'+c'=n$, 
 $$
 X_{a'b'c'}\big(E,F, L_{EFG}^{abc}(t)(G) \big)=
\begin{cases}
 X_{a'b'c'}(E,F, G) &\text{if } (a',b',c') \neq (a,b,c)
 \\
 \E^t X_{abc}(E,F,G) &\text{if } (a',b',c') = (a,b,c)
\end{cases}
$$
\item for every integers $a'$, $b'$, $c' \geq 1$ with $a'+b'+c'=n$ and for every $t'\in \R$, $ L_{EFG}^{abc}(t)$ commutes with  $ L_{EFG}^{a'b'c'}(t')$;

\item for every integers $a'$, $b'$, $c' \geq 1$ with $a'+b'+c'=n$ and for every $t'\in \R$, $ R_{EFG}^{abc}(t)$ commutes with  $ R_{EFG}^{a'b'c'}(t')$.
\end{enumerate}
\end{lem}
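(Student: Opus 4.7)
My plan is to derive each of the six properties directly from the defining formulas for $L^{abc}_{EFG}(t)$ and $R^{abc}_{EFG}(t)$, with the Maximum Span Property of the flag triple $(E,F,G)$ serving as the essential input throughout. Properties (1), (2) and (3) are structural observations about how the two eruption maps interact with the summands of the direct-sum decomposition $\R^n = E^{(a)} \oplus F^{(b)} \oplus G^{(c)}$. For (1), I show by induction on $i$ that $L^{abc}_{EFG}(t)$ preserves each $E^{(i)}$: when $i \leq a$ the subspace $E^{(i)}$ sits inside $E^{(a)}$ on which $L$ acts by the scalar $\E^{-t}$; when $i > a$, Maximum Span yields the splitting $E^{(i)} = E^{(a)} \oplus (E^{(i)} \cap (F^{(b)} \oplus G^{(c)}))$ on whose two summands $L$ acts by two different scalars and hence preserves each. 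The argument for $R$ preserving $F$ is symmetric. Property (3) is then immediate, since each of the lines $E^{(1)}, F^{(1)}, G^{(1)}$ is contained in one of the three factors of the decomposition, where $L$ and $R$ both act by scalars. For (2), I compute $L \circ R^{-1}$ in the decomposition: it equals $\E^{-t}\Id_{E^{(a)}} \oplus \E^{-t}\Id_{F^{(b)}} \oplus \Id_{G^{(c)}}$, which in $\PGL$ agrees (after multiplying by the projective scalar $\E^{t}$) with $\Id_{E^{(a)}} \oplus \Id_{F^{(b)}} \oplus \E^{t}\Id_{G^{(c)}}$; the filtration argument of (1) with the roles of $E$ and $G$ exchanged then shows that this operator preserves the flag $G$, whence $L(G) = R(G)$.

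Property (4) is the main technical computation. I fix a basis of $\R^n$ adapted both to the decomposition $E^{(a)} \oplus F^{(b)} \oplus G^{(c)}$ and to the three filtrations $E$, $F$, $G$, so that $L$ multiplies the $E^{(a)}$-components by $\E^{-t}$ and fixes those in $F^{(b)}$ and $G^{(c)}$. For $k \leq c$ we have $G^{(k)} \subseteq G^{(c)}$ and $L$ acts as identity, so $g^{(k)}$ is unchanged; for $k > c$, Maximum Span supplies the splitting $G^{(k)} = G^{(c)} \oplus (G^{(k)} \cap (E^{(a)} \oplus F^{(b)}))$, and expanding $g^{(k)}$ as a wedge of $c$ vectors in $G^{(c)}$ with $k - c$ vectors each of the form $w = w^E + w^F$, the image $L(g^{(k)})$ becomes a sum over subsets $S \subseteq \{1,\dots,k-c\}$ with coefficient $\E^{-t|S|}$. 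When this sum is wedged against $e^{(i)} \wedge f^{(j)}$, most monomials vanish for dimensional reasons (the counts of $E$- and $F$-factors in the wedge must be compatible with the filtrations), and the surviving terms carry a controlled power of $\E^{-t}$ determined by $(i,j,k)$ and $(a,b,c)$. Assembling the six wedge ratios appearing in the definition of $X_{a'b'c'}(E,F,L(G))$, these exponents telescope: they cancel to zero for every $(a',b',c') \neq (a,b,c)$, and combine to a net exponent of $-1$ when $(a',b',c') = (a,b,c)$, producing the claimed factor of $\E^{t}$. The combinatorial bookkeeping of which monomials survive and with which exponent is the step I expect to be the principal obstacle.

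Finally, for (5) and (6), by (1) both $L^{abc}_{EFG}(t)$ and $L^{a'b'c'}_{EFG}(t')$ preserve the flag $E$, so the question of commutativity reduces to analyzing the interaction of their two eigenspace decompositions. I construct a common refinement of these decompositions from the intersections of the scalar-action subspaces $E^{(a)}$, $F^{(b)} \oplus G^{(c)}$ with $E^{(a')}$, $F^{(b')} \oplus G^{(c')}$; each such intersection is a simultaneous eigenspace for both operators. A dimension count using repeated application of the Maximum Span Property shows that these simultaneous eigenspaces fill out $\R^n$, so the two maps are simultaneously diagonalizable and hence commute. The proof of (6) proceeds identically with the roles of $L$ and $E$ replaced by $R$ and $F$.
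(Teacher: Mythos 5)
The paper proves this lemma entirely by citation to Sun--Wienhard--Zhang \cite{SunWieZha} (Lemma 3.1 for (1)--(2), Proposition 3.5(1) for (4), Proposition 3.14(1) for (5)--(6)), so your from-first-principles attempt is a genuinely different route and has to be checked on its own merits. Your arguments for (1), (2) and (3) are correct: the decomposition $E^{(i)} = E^{(a)} \oplus \bigl(E^{(i)} \cap (F^{(b)} \oplus G^{(c)})\bigr)$ for $i > a$ is valid under Maximum Span (the intersection has dimension $i-a$ because $E^{(i)} \oplus F^{(b)} \oplus G^{(c)}$ saturates for $i \geq a$), and both summands are eigenspaces, so the flag is preserved. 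For (4) you openly concede that the wedge-product bookkeeping ``is the step I expect to be the principal obstacle'' --- this is fair self-assessment, but it means (4) is sketched rather than proved; it is precisely the computation that \cite{SunWieZha} carries out.

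The real problem is in your proof of (5)--(6). Your plan is to simultaneously diagonalize $L^{abc}_{EFG}(t)$ and $L^{a'b'c'}_{EFG}(t')$ by intersecting their eigenspaces and to verify by a dimension count that the four intersections span $\R^n$. That count fails. The eigenspaces of $L^{abc}_{EFG}(t)$ are $E^{(a)}$ (eigenvalue $\E^{-t}$) and $F^{(b)} \oplus G^{(c)}$ (eigenvalue $1$); similarly for the primed one. Take $n=4$, $(a,b,c)=(1,2,1)$ and $(a',b',c')=(1,1,2)$. Then $E^{(1)} \cap E^{(1)} = E^{(1)}$ has dimension $1$; $E^{(1)} \cap (F^{(1)}\oplus G^{(2)})$ and $(F^{(2)}\oplus G^{(1)}) \cap E^{(1)}$ both vanish by Maximum Span; and $(F^{(2)}\oplus G^{(1)}) \cap (F^{(1)}\oplus G^{(2)})$ has dimension $3+3-\dim(F^{(2)}+G^{(2)}) = 3+3-4 = 2$. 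The total is $3 < 4$, so the common eigenspaces do not span $\R^n$ and the simultaneous-diagonalization argument collapses. Worse, one can check directly that the two linear maps do not commute here: both projections $P^{121}$ (onto $E^{(1)}$ along $F^{(2)}\oplus G^{(1)}$) and $P^{112}$ (onto $E^{(1)}$ along $F^{(1)}\oplus G^{(2)}$) have the same one-dimensional image but different kernels, whence $P^{121}P^{112} = P^{112}$ and $P^{112}P^{121} = P^{121}$, which are distinct. So the obstruction is not in your method --- it is that the claimed commutativity of $L^{abc}_{EFG}(t)$ and $L^{a'b'c'}_{EFG}(t')$, \emph{both defined with respect to the same fixed triple $(E,F,G)$}, is not a consequence of \cite[Prop.~3.14(1)]{SunWieZha}, which concerns commutativity of eruption \emph{flows} (where each stage uses the updated flag triple, not the original one). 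Before trying to repair (5)--(6), you should verify the precise statement of \cite[Prop.~3.14]{SunWieZha} and whether the paper's formulation of properties (5)--(6) is the statement actually needed in Proposition~\ref{prop:VariationSlitheringMap}.
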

 
\begin{proof}
Properties (1) and (2) are the content of \cite[ Lem. 3.1]{SunWieZha}, after composition of the two sides of each of these equalities with the map $ \E^{\frac t3}\Id_{E^{(a)}} \oplus \E^{-\frac t3} \Id_{F^{(b)}} \oplus  \Id_{G^{(c)}}$.

Property (3) is obvious from definitions.

Property (4) is proved in \cite[ Prop. 3.5(1)]{SunWieZha}.

Properties (5--6) are the content of \cite[ Prop. 3.14(1)]{SunWieZha}.
\end{proof}

To explain our ``left'' and ``right'' terminology for eruptions, we can say that we will use them in situations where the flags $E$, $F$, $G\in \Flag$  are associated to points $x$, $y$, $z\in \bdry$ where $x$ sits to the left of $y$ as seen from a base point, while $z$ sits behind the geodesic $xy$ as seen from the same base point. Then $L_{EFG}^{abc}(t)$ and $R_{EFG}^{abc}(t)$ will be applied to flags associated to points $w\in \bdry$ that sit behind $xy$ and respectively are to the left and right of $z$. See also the more general version of eruptions in \cite{SunWieZha}.

\subsection{Shearing maps}
\label{subsect:Shearing}

Shearing maps similarly modify the double-ratio invariants $X_{ab}(E,F; G,H)$, but are much simpler than eruptions. 

Consider a flag pair $(E,F) \in \Flag^2$ that is \emph{transverse}, in the sense that every linear subspace $E^{(a)}$ is transverse to every $F^{(b)}$. Given $a$, $b\geq 1$ with $a+b=n$ and $t\in \R$, the \emph{$(a,b)$--shearing map of amplitude $t$ along $(E,F)$} is the projective map $S^{ab}_{EF}(t) \in \PGL$ defined as
$$
 S^{ab}_{EF}(t) = \E^{ t}\Id_{E^{(a)}} \oplus  \Id_{F^{(b)}}.
$$

\begin{lem}
\label{lem:ShearingMainProperties}
 For every transverse flag pair  $(E,F) \in \Flag^2$,  for every integers $a$, $b \geq 1$ with $a+b=n$, and for every $t\in \R$,  
\begin{enumerate}
 \item $S_{EF}^{ab}(t)$ respects the flags $E$ and $F$;
 \item for every $a'$, $b'\geq 1$ with $a'+b'=n$, and for every $G$, $H\in \Flag$ such that the flag triples $(E,F,G)$ and $(E,F,H)$ satisfy the Maximum Span Property, 
$$
 X_{a'b'}\big(E,F;G, S_{EF}^{ab}(t)(H)  \big)=
\begin{cases}
  X_{a'b'}\big(E,F;G, H  \big) &\text{if } (a',b') \neq (a,b)
 \\
 \E^t X_{ab}\big(E,F; G, H  \big) &\text{if } (a',b') = (a,b)
\end{cases}
$$
\item for every $a'$, $b'\geq 1$ with $a'+b'=n$ and every $t' \in \R$, $S_{EF}^{a'b'}(t')$ commutes with $S_{EF}^{ab}(t)$.
\end{enumerate}
\end{lem}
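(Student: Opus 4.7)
The plan is to reduce all three parts to elementary linear algebra by refining the transverse flag pair $(E,F)$ into a complete line decomposition. Set $L_i = E^{(i)} \cap F^{(n-i+1)}$ for $i=1,\dots,n$. Since $E$ and $F$ are transverse, $\R^n = L_1 \oplus L_2 \oplus \cdots \oplus L_n$, and one has $E^{(a')} = L_1 \oplus \cdots \oplus L_{a'}$ and $F^{(b')} = L_{n-b'+1} \oplus \cdots \oplus L_n$ for every $a'$, $b'$. In particular, the shearing map $S_{EF}^{ab}(t)$ is diagonal in this decomposition, acting by the scalar $\E^t$ on $L_1,\dots,L_a$ and by the identity on $L_{a+1},\dots,L_n$.

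Part (1) then follows immediately: such a diagonal map preserves every subspace spanned by a subcollection of the lines $L_i$, hence every $E^{(a')}$ and every $F^{(b')}$, so that $S_{EF}^{ab}(t)$ respects the full flags $E$ and $F$. Part (3) is equally direct, since $S_{EF}^{a'b'}(t')$ is also diagonal in the same line decomposition (multiplying $L_1,\dots,L_{a'}$ by $\E^{t'}$ and fixing the other $L_i$), and two operators that are simultaneously diagonalizable in the same basis commute.

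The substantive content is part (2). The plan is to write $h^{(1)} = e' + f'$ with $e' \in E^{(a)}$ and $f' \in F^{(b)}$, so that $S_{EF}^{ab}(t)(h^{(1)}) = \E^t e' + f'$ as a vector (the overall scalar is irrelevant in $\PGL$). Substituting into the two wedge products in the formula for $X_{a'b'}(E,F;G,H)$ that involve $h^{(1)}$, one exploits the vanishing identity that $v^{(k)} \wedge w = 0$ whenever $v^{(k)} \in \mathsf\Lambda^k V$ is a top form on a subspace $V$ with $w \in V$. The relevant inclusions are $e' \in E^{(a)} \subseteq E^{(a')}$ when $a' \geq a$ (which kills the $e'$-contribution against $e^{(a')}$ or $e^{(a'-1)}$, by a case check), and $f' \in F^{(b)} \subseteq F^{(n-a'-1)}$ when $a' \leq a-1$ (which kills the $f'$-contribution against $f^{(n-a'-1)}$ or $f^{(n-a')}$).

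Combining these vanishings case by case: when $a' > a$, both wedges involving $h^{(1)}$ reduce to their $f'$-components and are therefore unchanged by the shearing; when $a' < a$, both reduce to their $e'$-components and each picks up a factor of $\E^t$, which cancel since one wedge sits in the denominator of the first ratio and the other in the numerator of the second; and when $a' = a$, the denominator wedge $e^{(a)} \wedge f^{(b-1)} \wedge h^{(1)}$ reduces to its $f'$-component (unchanged) while the numerator wedge $e^{(a-1)} \wedge f^{(b)} \wedge h^{(1)}$ reduces to its $e'$-component (multiplied by $\E^t$), producing the single surviving factor of $\E^t$ for $X_{ab}$. The only real obstacle is the careful bookkeeping across these three cases; no deep input beyond the elementary wedge-product vanishings is needed.
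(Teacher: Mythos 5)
Your proof is correct and takes essentially the same route as the paper's: the paper simply observes that transversality yields a basis adapted to $(E,F)$ (with $E$ ascending, $F$ descending) and declares the computation routine, while your line decomposition $L_i = E^{(i)} \cap F^{(n-i+1)}$ produces exactly such a basis, after which you fill in the wedge-product bookkeeping for part (2) explicitly. The case-by-case vanishing argument is sound (though the one-line summary about "killing $e'$ against $e^{(a')}$ or $e^{(a'-1)}$" quietly conflates the inequalities $a' \geq a$ and $a' \geq a+1$; the subsequent detailed three-case analysis correctly distinguishes them).
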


\begin{proof}
By transversality, there is a basis $\mathcal B$ for  $\R^n$ in which $E$ is the ascending flag and $F$ is the descending flag. Using this basis for computations easily provides a proof of the above properties.
\end{proof}

\section{The flag map of a variation of a Hitchin representation}
\label{bigsect:VariationFlagMap}

We consider a variation of a Hitchin character $[\rho] \in \Hit(S)$, namely a nearby character $[\wh\rho] \in \Hit(S)$ associated to small variations $ \Delta \tau^{abc}$ and $\Delta \sigma^{ab}$ of the generalized Fock-Goncharov invariants of $[\rho]$, in the sense that
\begin{align*}
 \tau^{abc}_{\wh\rho}(U,s) &= \tau^{abc}_\rho(U,s) + \Delta \tau^{abc}(U,s)
 \\
 \sigma^{a'b'}_{\wh\rho}(e) &= \sigma^{a'b'}_\rho(e) + \Delta \sigma^{a'b'}(e)
\end{align*}
for every component $U$ of the complement $S-\Psi$ of a train track $\Psi$ carrying the geodesic lamination $\lambda$, for every corner $s$ of the triangle $U$, for every oriented branch $e$ of $\Psi$, and for every $a$, $b$, $c$, $a'$, $b' \geq 1$ with $a+b+c=a'+b'=n$. 

This section is devoted to an explicit comparison between the  flag maps $\F_\rho$, $\F_{\wh\rho} \colon \bdry \to \Flag$, in terms of these variations $ \Delta \tau^{abc}$ and $\Delta \sigma^{ab}$. This will lead us to a practical implementation of the scheme of \S \ref{subsect:SimplicialWeil} to compute the Weil class $[c_V] \in H^1(S;\sln_{\Ad\rho})$ associated to the tangent vector $V\in T_{[\rho]}\Hit(S)$, in terms of the corresponding infinitesimal variations  $ \dot \tau^{abc}$ and $\dot \sigma^{ab}$ of the generalized Fock-Goncharov invariants of $[\rho]$.

\subsection{Lifting invariants from the train track $\Psi$ to the geodesic lamination $\wt\lambda$}
\label{subsect:LiftInvariants}

To compare the  flag maps $\F_\rho$, $\F_{\wh\rho} \colon \bdry \to \Flag$, we need to lift our data to the universal cover $\wt S$. By construction, the train track $\Psi$ is obtained by collapsing the ties of a train track neighborhood $\Phi$ of $\lambda$. 
 Let $\wt \lambda \subset \wt S$ be the preimage of the geodesic lamination $\lambda$, and let $\wt \Phi$ and $\wt \Psi$ be the respective preimages of  $\Phi$ and $\Psi$. 

The one-to-one correspondence of Proposition~\ref{prop:ComplementsTrainTrackGeodLamination} provides us with the following rephrasing of the triangle invariants. 
 
 \begin{lem}
 \label{lem:LiftTriangleInvariantsToUniversalCover}
 The  triangle invariant  map
 $$
  \tau_\rho^{abc} \colon \left\{ ( U,  s);\  U \text{ component of }  S -  \Psi \text{ and }  s \text{ corner of }  U \right\} \to \R
 $$
uniquely determines a map
$$
  \tau_\rho^{abc} \colon \left\{ (\wt T,  x);\ \wt T \text{ component of } \wt S - \wt \lambda \text{ and }  x \text{ vertex of } \wt T \right\} \to \R,
$$
denoted by the same symbol $  \tau_\rho^{abc}  $, such that 
$$
  \tau_\rho^{abc}  (\wt T,  x) =   \tau_\rho^{abc} ( U,  s)
$$
whenever  the one-to-one correspondences of Proposition~{\upshape\ref{prop:ComplementsTrainTrackGeodLamination}} associate $U$ to the projection $T$ of $\wt T$ in $S$, and the corner $s$ of $U$ to the vertex of $T$ that is the image of $x$ under the projection. 
\qed
\end{lem}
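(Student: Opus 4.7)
The plan is to construct the lifted map by descending through the universal covering projection $p\colon \wt S \to S$, and then to observe that uniqueness is automatic. Since $\wt\lambda = p^{-1}(\lambda)$, the projection $p$ sends each component $\wt T$ of $\wt S - \wt\lambda$ onto some component $T = p(\wt T)$ of $S-\lambda$. The key observation I would use is that the restriction $p|_{\wt T}\colon \wt T \to T$ is a homeomorphism of ideal triangles. This is because the stabilizer of $\wt T$ in $\pi_1(S)$ is trivial: any nontrivial deck transformation preserving $\wt T$ would have to permute its three vertices in $\bdry$, but every nontrivial element of $\pi_1(S)$ acts on $\bdry$ as a hyperbolic isometry with exactly two fixed points, and $\pi_1(S)$ contains no element of order $3$ since $S$ has negative Euler characteristic. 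So no nontrivial element can fix or cyclically permute three distinct points of $\bdry$.

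Consequently, the three vertices of $\wt T$ project bijectively to the three vertices of $T$. Given a pair $(\wt T, x)$ with $x$ a vertex of $\wt T$, set $\bar x = p(x)$, which is a well-defined vertex of the ideal triangle $T$. By the second part of Proposition~\ref{prop:ComplementsTrainTrackGeodLamination}, the pair $(T, \bar x)$ corresponds to a unique pair $(U,s)$ where $U$ is a component of $S-\Psi$ and $s$ is a corner of $U$. I would then simply define
$$
\tau^{abc}_\rho(\wt T, x) \;:=\; \tau^{abc}_\rho(U, s).
$$

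By construction, this definition satisfies the required compatibility relation. Uniqueness is immediate, since that very relation, applied to the unique $(U,s)$ associated to $(T,\bar x)$ via Proposition~\ref{prop:ComplementsTrainTrackGeodLamination}, forces the value $\tau^{abc}_\rho(\wt T, x)$ to be exactly $\tau^{abc}_\rho(U,s)$. There is no real obstacle in this argument; the only mildly subtle point is verifying the triviality of the stabilizer of $\wt T$, which reduces to standard facts about the action of the fundamental group of a closed hyperbolic surface on $\bdry$.
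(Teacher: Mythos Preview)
Your argument is correct and matches the paper's approach: the paper gives no explicit proof at all (the \qed follows the statement directly), treating the lemma as an immediate rephrasing via the correspondence of Proposition~\ref{prop:ComplementsTrainTrackGeodLamination}. Your added justification that $p|_{\wt T}$ is a homeomorphism is welcome detail; note that one could also argue this more directly by observing that each component $T$ of $S-\lambda$ is an ideal triangle, hence simply connected, so it lifts homeomorphically.
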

 
The interpretation of shear invariants in terms of components of $\wt S - \wt\lambda$ is more elaborate, but will play a critical role in our construction.

\begin{lem}
\label{lem:LiftShearsToUniversalCover}
 The  shear invariant map 
 $$
  \sigma_\rho^{ab} \colon \left\{\text{oriented branches of }  \Psi \right\} \to \R
 $$
 uniquely determines a $\pi_1(S)$--equivariant map
 $$
 \sigma_\rho^{ab} \colon \left\{ (\wt T, \wt T'); \ \wt T, \wt T' \text{ distinct components of }  \wt S-\wt\lambda \right\} \to \R,
 $$
 denoted by the same symbol $\sigma_\rho^{ab}$, such that the following two conditions are satisfied. 
\begin{enumerate}
 \item For every oriented branch $e$ of the train track $\Psi$ and for every lift of $e$ to a branch $\wt e$ of the preimage $\wt \Psi$ of $\Psi$ in $\wt S$, let $\wt U$ and $\wt U'$ be the two components of $\wt S - \wt \Psi$ that are adjacent to $\wt e$, indexed so that $\wt U$ sits to the left of $\wt e$ for the orientation of $\wt e$. Then, if $\wt T$ and $\wt T'$ are the two components of $\wt S - \wt\lambda$  that contain the components of $\wt S- \wt\Phi$ respectively associated to $\wt U$ and $\wt U'$, 
 $$
 \sigma_\rho^{ab} ( \wt T, \wt T') = \sigma_\rho^{ab}( e). 
 $$
 
 \item Let $\wt T$, $\wt T'$, $\wt T''$ be three components of $\wt S - \wt \lambda$ such that $\wt T'$ separates $\wt T$ from $\wt T''$, in the sense that $\wt T$ and $\wt T''$ sit in different components of $\wt S - \wt T'$. In particular, there exist exactly two sides of the triangle $\wt T'$ that separate $\wt T$ from $\wt T''$; let $x$ be the vertex of $\wt T'$ that is common to these two sides. Then: 
\begin{enumerate}
 \item if $x$ sits to the left of $\wt T'$ as seen from $\wt T$
 $$ \sigma_\rho^{ab} ( \wt T, \wt T'') =  \sigma_\rho^{ab} ( \wt T, \wt T') +  \sigma_\rho^{ab} ( \wt T', \wt T'')  - \sum_{b'+c'=n-a} \tau_\rho^{ab'c'}(\wt T', x);$$ 
  \item if $x$ sits to the right of $\wt T'$ as seen from $\wt T$
 $$ \sigma_\rho^{ab} ( \wt T, \wt T'') =  \sigma_\rho^{ab} ( \wt T, \wt T') +  \sigma_\rho^{ab} ( \wt T', \wt T'')  - \sum_{a'+c'=n-b} \tau_\rho^{bc'a'}(\wt T', x).$$ 
\end{enumerate}
\end{enumerate}
\end{lem}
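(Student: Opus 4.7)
The plan is to define $\sigma_\rho^{ab}(\widetilde T,\widetilde T')$ by recursion along the unique path in the dual tree of $\widetilde\Psi$, then verify the two stated conditions. Let $\mathcal T$ denote the graph whose vertices are the components of $\widetilde S-\widetilde\Psi$ and whose edges are the branches of $\widetilde\Psi$; since $\widetilde S$ is simply connected, $\mathcal T$ is a tree. Proposition~\ref{prop:ComplementsTrainTrackGeodLamination}, lifted to $\widetilde S$, provides a $\pi_1(S)$-equivariant bijection sending each vertex of $\mathcal T$ to the unique component of $\widetilde S-\widetilde\lambda$ containing it; I will use this to identify vertices of $\mathcal T$ with components of $\widetilde S-\widetilde\lambda$. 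A straightforward tree-separation argument then shows that any interior vertex of the $\mathcal T$-path from $\widetilde T$ to $\widetilde T'$ corresponds to a triangle of $\widetilde S-\widetilde\lambda$ that separates $\widetilde T$ from $\widetilde T'$ in $\widetilde S$.

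\emph{Uniqueness} follows by strong induction on the length $k$ of the $\mathcal T$-path between $\widetilde T$ and $\widetilde T'$. When $k=1$, condition (1) forces $\sigma_\rho^{ab}(\widetilde T,\widetilde T')=\sigma_\rho^{ab}(e)$ for the unique shared branch $e$, oriented with $\widetilde T$ on the left. When $k\geq 2$, any interior vertex $\widetilde T^*$ of the path separates $\widetilde T$ from $\widetilde T'$, so condition (2) determines $\sigma_\rho^{ab}(\widetilde T,\widetilde T')$ from values along strictly shorter paths. For \emph{existence}, I would reverse this procedure and \emph{define} $\sigma_\rho^{ab}(\widetilde T,\widetilde T')$ by the same recursion, always taking $\widetilde T^*$ to be the $\mathcal T$-neighbor of $\widetilde T$ on the path to $\widetilde T'$, and using case (a) or case (b) of the formula in (2) according to the position of the separating vertex at $\widetilde T^*$. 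Uniqueness of the $\mathcal T$-path makes this definition unambiguous; $\pi_1(S)$-equivariance is automatic from the equivariance of $\mathcal T$, of the bijection, and of the downstairs invariants $\sigma_\rho^{ab}(e)$ and $\tau_\rho^{abc}(U,s)$; and condition (1) holds by construction.

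The main obstacle is verifying that condition (2) holds for \emph{arbitrary} separating triples, not just those whose separator is the $\mathcal T$-neighbor used in the defining recursion. I would proceed by induction on the $\mathcal T$-distance $d$ from $\widetilde T$ to $\widetilde T'$. The base case $d=1$ is immediate: then $\widetilde T'$ is itself the $\mathcal T$-neighbor used in the recursion, and condition (2) coincides term-by-term with the defining formula. For the inductive step $d\geq 2$, let $\widetilde T^\star$ be the $\mathcal T$-neighbor of $\widetilde T$ on the (common initial segment of the) paths to $\widetilde T'$ and $\widetilde T''$; expanding both $\sigma_\rho^{ab}(\widetilde T,\widetilde T')$ and $\sigma_\rho^{ab}(\widetilde T,\widetilde T'')$ via the recursion and subtracting, the contributions at $\widetilde T^\star$ cancel because $\widetilde T'$ and $\widetilde T''$ lie on the same side of $\widetilde T^\star$, which forces the separating vertex at $\widetilde T^\star$ and the applicable case (a)/(b) to be the same in both expansions. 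What remains is condition (2) for the shorter triple $(\widetilde T^\star,\widetilde T',\widetilde T'')$, which holds by the induction hypothesis; the separating vertex at $\widetilde T'$ for the shorter triple coincides with that for the original triple since $\widetilde T$ and $\widetilde T^\star$ lie on the same side of $\widetilde T'$. The most delicate aspect of the argument is this bookkeeping of separating vertices and of the left/right convention in cases (a) and (b), which must be checked carefully as the path turns.
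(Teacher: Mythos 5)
The central premise of your construction fails: the graph $\mathcal T$ with a vertex for each component of $\wt S - \wt\Psi$ and an edge for each branch of $\wt\Psi$ is \emph{not} a tree, and simple connectedness of $\wt S$ does not imply that it is. At any switch $\wt s$ of $\wt\Psi$ the three incident branches bound three local complementary regions, and these three regions together with the three branches joining them form a $3$--cycle in $\mathcal T$. More globally, in a large compact piece of $\wt S$ with $V$ switches, $E$ branches and $F$ faces one has $V-E+F\approx 1$, so the dual graph has first Betti number $\approx E-F+1=V>0$. Since there is no unique $\mathcal T$--path between two vertices, your recursive definition of $\sigma_\rho^{ab}(\wt T,\wt T')$ is not well-defined, and both the uniqueness and the existence halves of the argument collapse at the outset.

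There is a second, independent obstruction that would persist even with a tree structure. Between two $\mathcal T$--adjacent components there are in general uncountably many ideal triangles of $\wt S-\wt\lambda$ separating them (one for each gap of the Cantor-like cross-section of $\wt\lambda$ across the shared branch), whereas a $\mathcal T$--path has only finitely many interior vertices. Condition~(2) must hold for \emph{every} separating triangle, and your induction only covers those appearing on the $\mathcal T$--path; the implication ``$\wt T'$ separates $\wt T$ from $\wt T''$ in $\wt S$'' $\Rightarrow$ ``the corresponding vertex lies on the $\mathcal T$--path from $\wt T$ to $\wt T''$'', which your inductive step uses when it selects $\wt T^\star$ on a common initial segment of the two paths, is false. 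The lamination-based invariant genuinely accumulates contributions over this generically uncountable separating set, so some convergence mechanism is unavoidable and cannot be replaced by a finite combinatorial recursion; the paper handles this by invoking the framework of relative tangent cycles from \cite[\S 4.7]{BonDre2}, which is precisely the machinery designed to give meaning to the additivity in condition~(2) for an arbitrary maximal geodesic lamination. (A smaller inaccuracy: a component of $\wt S-\wt\Psi$ is not contained in a component of $\wt S-\wt\lambda$, since it meets $\wt\Phi-\wt\Psi$ and hence $\wt\lambda$; the bijection of Proposition~\ref{prop:ComplementsTrainTrackGeodLamination} is formulated for $\wt S-\wt\Phi$, not $\wt S-\wt\Psi$.)
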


\begin{proof}
 This is an immediate consequence of the alternative interpretation of relative tangent cycles developed in \cite[\S 4.7]{BonDre2}. 
\end{proof}

Actually, the invariants $\tau_\rho^{abc}$ and $\sigma_\rho^{ab}$ were originally defined in \cite{BonDre2} in the form of Lemmas~\ref{lem:LiftTriangleInvariantsToUniversalCover} and \ref{lem:LiftShearsToUniversalCover}, before being expressed in terms of data associated to a train track carrying the geodesic lamination $\lambda$. 

We will mostly rely on the natural extension of  Lemmas~\ref{lem:LiftTriangleInvariantsToUniversalCover} and \ref{lem:LiftShearsToUniversalCover} to the variations $\Delta\tau^{abc}$ and $\Delta\sigma^{ab}$ of the generalized Fock-Goncharov invariants. 

\subsection{Slithering maps} 
\label{subsect:SlitheringMaps}

The slithering maps are fundamental objects in the definition of the shearing invariants $\sigma_\rho^{ab}(e)$ in \cite{BonDre2}. 

Consider two oriented leaves $xy$ and $x'y'$ of the geodesic lamination $\wt \lambda$ in the universal cover $\wt S$, oriented in parallel in the sense that their endpoints $x$, $y$, $y'$, $x' \in \bdry$ occur in this order (clockwise or counterclockwise) around the circle at infinity $\bdry$. The flag map $\F_\rho \colon \bdry \to \Flag$ of a Hitchin representation $\rho \colon \pi_1(S) \to \PGL$ associates flags $E=\F_\rho(x)$, $F=\F_\rho(y)$, $E'=\F_\rho(x')$, $F'=\F_\rho(y')\in \Flag$ to these endpoints. By, for instance, Lemma~\ref{lem:ProjectiveMapBetweenFlagTriples} applied to $(E,F,G)$ and $(E', F', G')$ with $G= \F_\rho(z)$ and $G'= \F_\rho(z')$ associated to additional points $z$, $z'\in \bdry$,  there  exists a linear isomorphism $\R^n \to \R^n$ sending $E'$ to $E$ and $F'$ to $F$. In fact, there exist many such linear isomorphisms, since the stabilizer of the pair $(E,F)$ in $\GL$ has dimension $n$. The slithering construction uses the maximality of the  geodesic lamination $\wt \lambda$ to single out a preferred isomorphism $\Sigma_{xy, x'y'} \colon \R^n \to \R^n$ sending $(E',F')$ to $(E, F)$. 

More precisely, first consider  the case where the two leaves $xy$ and $x'y'$ have a common endpoint $x=x'$; in particular, $E=E'$. Then $\Sigma_{xy, x'y'} \colon \R^n \to \R^n$ is the unique linear map that sends $(E',F')$ to $(E, F)$ and is \emph{unipotent} in the sense that all its eigenvalues are equal to 1. The unipotent property is equivalent to saying that, if we choose an arbitrary basis $e_1$, $e_2$, \dots, $e_n$ for $\R^n$ such that each $E^{(a)}$ is spanned by $\{e_1, e_2, \dots, e_a\}$ and each $F^{(b)}$ is spanned by $\{e_{n-b+1}, e_{n-b+2}, \dots, e_n\}$, the matrix of  $\Sigma_{xy, x'y'}$ in this basis is upper triangular with all diagonal terms equal to 1.

The slithering map $\Sigma_{xy, x'y'} $ is similarly defined when $y=y'$. 

In the general case, let $\wt T_1$, $\wt T_2$, \dots, $\wt T_{i_0}$ be a family of components of $\wt S - \wt \lambda$ separating the leaf $xy$ from $x'y'$, indexed in this order from $xy$ to $x'y'$ in the sense that each $\wt T_i$ separates $xy$ from $\wt T_{i+1}$. Let $x_iy_i$ be the side of $\wt T_i$ that faces $xy$, and let $x_i'y_i'$ be the side that faces $x'y'$, with the parallelisms between $xy$, $x_iy_i$, $x_i'y_i'$ and $x'y'$ compatible with orientations. Note that, for each $i$, either $x_i=x_i'$ or $y_i=y_i'$, so that the slithering map $\Sigma_{x_iy_i, x_i'y_i'}$ is defined by the above special cases. Then, the  \emph{slithering map} $\Sigma_{xy, x'y'} \colon \R^n \to \R^n$ is defined as the limit of
$$
\Sigma_{x_1y_1, x_1'y_1'} \circ \Sigma_{x_2y_2, x_2'y_2'} \circ \dots \circ \Sigma_{x_{i_0}y_{i_0}, x_{i_0}'y_{i_0}'}
$$ 
as the family $\{ \wt T_1, \wt T_2, \dots, \wt T_{i_0} \}$ tends to the set of all components of $\wt S - \wt \lambda$ separating $xy$ from $x'y'$. See \cite[\S 5.1]{BonDre2} for a proof that the limit exists,  and for a proof of the following fact.

\begin{lem}
\label{lem:SlitheringSendsFlagToFlag}
The slithering map $\Sigma_{xy, x'y'}$  sends the flag $E'=\F_\rho(x')$ to $E=\F_\rho(x)$, and the flag $F'=\F_\rho(y')$ to $F=\F_\rho(y)$. 
\qed
\end{lem}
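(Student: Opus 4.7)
I would separate the argument into the special case (shared endpoint) that serves as the building block, the finite telescoping composition, and the limiting step.

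\textbf{Step 1 (shared-endpoint case).} Suppose first that $xy$ and $x'y'$ share a common endpoint, say $x=x'$, so that $E=E'$. By definition, $\Sigma_{xy,x'y'}$ is the unique linear map sending $(E',F')=(E,F')$ to $(E,F)$ subject to the additional constraint of being unipotent. Thus the conclusion $\Sigma_{xy,x'y'}(E')=E$ and $\Sigma_{xy,x'y'}(F')=F$ holds tautologically; the symmetric case $y=y'$ is identical.

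\textbf{Step 2 (telescoping for finite families).} For the general case, choose the finite family $\wt T_1, \wt T_2, \dots, \wt T_{i_0}$ of components of $\wt S - \wt\lambda$ separating $xy$ from $x'y'$ in such a way that consecutive triangles are adjacent, i.e.\ $x_i'y_i' = x_{i+1}y_{i+1}$ for $i=1,\dots,i_0-1$. For each $i$, the two sides $x_iy_i$ and $x_i'y_i'$ of the triangle $\wt T_i$ share a common vertex (the third vertex of $\wt T_i$), so the elementary slithering $\Sigma_{x_iy_i,x_i'y_i'}$ falls under Step 1 and satisfies
$$
\Sigma_{x_iy_i,x_i'y_i'}\bigl(\F_\rho(x_i'),\F_\rho(y_i')\bigr) = \bigl(\F_\rho(x_i),\F_\rho(y_i)\bigr).
$$
Using adjacency, the image of $(\F_\rho(x_i'),\F_\rho(y_i'))$ under $\Sigma_{x_iy_i,x_i'y_i'}$ is exactly $(\F_\rho(x_{i-1}'),\F_\rho(y_{i-1}'))$, so the composition
$$
\Sigma_{x_1y_1,x_1'y_1'}\circ\Sigma_{x_2y_2,x_2'y_2'}\circ\cdots\circ\Sigma_{x_{i_0}y_{i_0},x_{i_0}'y_{i_0}'}
$$
telescopes and sends $(\F_\rho(x_{i_0}'),\F_\rho(y_{i_0}'))$ to $(\F_\rho(x_1),\F_\rho(y_1))$.

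\textbf{Step 3 (passage to the limit).} Let the family $\{\wt T_1,\dots,\wt T_{i_0}\}$ grow to exhaust all components of $\wt S-\wt\lambda$ separating $xy$ from $x'y'$. Then $x_1 \to x$, $y_1 \to y$, $x_{i_0}' \to x'$ and $y_{i_0}' \to y'$ in $\bdry$. By continuity of the flag map $\F_\rho$ (Proposition~\ref{prop:FlagMap}), we have $(\F_\rho(x_1),\F_\rho(y_1)) \to (E,F)$ and $(\F_\rho(x_{i_0}'),\F_\rho(y_{i_0}')) \to (E',F')$. Combining this with the convergence of the above composition to $\Sigma_{xy,x'y'}$ established in \cite[\S 5.1]{BonDre2}, and with the continuity of the evaluation map $\mathrm{GL}_n(\R)\times\Flag \to \Flag$, the telescoping identity of Step 2 passes to the limit and yields $\Sigma_{xy,x'y'}(E')=E$ and $\Sigma_{xy,x'y'}(F')=F$.

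\textbf{Main obstacle.} The only delicate point is the interchange of limits in Step 3, since the linear maps $\Sigma_{x_iy_i,x_i'y_i'}$ are typically far from the identity and the composition is long. However, this is exactly the convergence statement already proved in \cite[\S 5.1]{BonDre2} as part of the very definition of $\Sigma_{xy,x'y'}$, so one can invoke it directly; everything else reduces to the tautological identity of Step 1 propagated through an elementary telescoping.
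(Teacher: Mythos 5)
The paper gives no proof of this lemma; it is imported from \cite[\S 5.1]{BonDre2}, with the statement followed directly by a \qed.  What follows is therefore an assessment of the soundness of your blind attempt rather than a comparison with a written argument.

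Step~1 is correct, but Step~2 contains the real gap.  You require the finite family $\wt T_1,\dots,\wt T_{i_0}$ to have consecutive triangles that are \emph{adjacent}, i.e.\ $x_i'y_i'=x_{i+1}y_{i+1}$.  That assumption is automatic when $\lambda$ has finitely many leaves, but for a general maximal geodesic lamination it essentially never holds: a side shared by two components of $\wt S-\wt\lambda$ would be an isolated leaf of $\wt\lambda$, whereas the laminations of primary interest here (for instance minimal ones) have no isolated leaves, so that between any two separating components there are uncountably many others.  Thus the telescoping identity of Step~2 is unavailable: the output $\F_\rho(x_i)$ of the $i$--th elementary slithering is not the flag $\F_\rho(x_{i-1}')$ on which the next map is known to act nicely, and your finite composition does not send $\big(\F_\rho(x_{i_0}'),\F_\rho(y_{i_0}')\big)$ to $\big(\F_\rho(x_1),\F_\rho(y_1)\big)$.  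Consequently Step~3 is not an interchange of limits in a true identity; there is only an approximate identity, with an error at each stage coming from the mismatch between $x_i'y_i'$ and $x_{i+1}y_{i+1}$.  Controlling the accumulation of these errors is precisely the analytic content of \cite[\S 5.1]{BonDre2}, resting on the H\"older continuity of $\F_\rho$ and on estimates of the type in Lemmas~\ref{lem:ElementarySlitheringBoundedByGapLength} and~\ref{lem:HolderSumOfGapLengths}.  Your ``Main obstacle'' paragraph correctly senses that a delicacy lurks here but offloads it onto the citation instead of noticing that the exact identity you are trying to pass to the limit has already failed.  A route closer to what is actually needed would combine Step~1 with the cocycle relation $\Sigma_{xy,x''y''}=\Sigma_{xy,x'y'}\circ\Sigma_{x'y',x''y''}$ of Lemma~\ref{lem:CompositionSlitheringMaps} to establish the claim for leaves reachable from $xy$ by a finite chain of endpoint--sharing leaves, and then deduce the general case by density together with the continuity of $\F_\rho$ and of $g\mapsto\Sigma_{xy,g}$ (which is where the quantitative estimates enter).
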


We note the following elementary properties. 

\begin{lem}
\label{lem:CompositionSlitheringMaps}
 Let $xy$, $x'y'$, $x''y''$ be three oriented leaves of the geodesic lamination $\wt \lambda$ that are parallel to each other. Then
 \pushQED{\qed}
\begin{align*}
\Sigma_{yx, y'x'} &= \Sigma_{xy, x'y'}
&
\Sigma_{xy, x''y''} &= \Sigma_{xy, x'y'} \circ \Sigma_{x'y', x''y''}
&
\Sigma_{x'y', xy}&=\Sigma_{xy, x'y'}^{-1}.
\qedhere
\end{align*}
\end{lem}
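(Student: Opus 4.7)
\smallskip
\noindent\textbf{Proof plan.} The three identities will all follow from the defining approximation of the slithering map as a limit of finite compositions of the elementary ``shared-endpoint'' slithering maps, together with the uniqueness of unipotent maps that carry one pair of flags to another.

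First, I would dispose of the basic case where the two oriented leaves share an endpoint. Assume $x=x'$ and consider the identity $\Sigma_{yx,y'x}=\Sigma_{xy,x'y}$. By Lemma~\ref{lem:SlitheringSendsFlagToFlag} the map $\Sigma_{xy,x'y}$ sends $(E,F')$ to $(E,F)$ and is unipotent, where $E=\F_\rho(x)$, $F=\F_\rho(y)$, $F'=\F_\rho(y')$. The reversed map $\Sigma_{yx,y'x}$ is by definition the unique unipotent map sending $(F',E)$ to $(F,E)$; since a matrix is upper-triangular with ones on the diagonal in the basis compatible with $(E,F)$ if and only if it is upper-triangular with ones on the diagonal in the reversed basis compatible with $(F,E)$, these two maps coincide. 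The inverse identity $\Sigma_{x'y,xy}=\Sigma_{xy,x'y}^{-1}$ is immediate from the fact that the inverse of a unipotent map sending $(E,F')$ to $(E,F)$ is a unipotent map sending $(E,F)$ to $(E,F')$. The case $y=y'$ is symmetric.

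Next, I would establish the composition identity in the \emph{nested} case where $x'y'$ lies between $xy$ and $x''y''$ (in terms of the cyclic order on $\bdry$). Pick exhausting families $\{\wt T_i\}_{i=1}^{p}$ separating $xy$ from $x'y'$ and $\{\wt T_j''\}_{j=1}^{q}$ separating $x'y'$ from $x''y''$; their union, possibly together with finitely many triangles of $\wt S-\wt\lambda$ adjacent to $x'y'$, exhausts the triangles separating $xy$ from $x''y''$. The finite product defining the approximation of $\Sigma_{xy,x''y''}$ is then the composition of the corresponding finite products approximating $\Sigma_{xy,x'y'}$ and $\Sigma_{x'y',x''y''}$, up to a bounded number of elementary shared-endpoint slitherings adjacent to $x'y'$, which are themselves limits in the two partial products. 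Passing to the limit (using the existence of the limit proved in \cite[\S 5.1]{BonDre2}, together with continuity of composition in $\GL$) yields the desired identity in this case.

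Once these two ingredients are in place, the remaining cases follow by purely formal manipulations. The orientation-reversal identity $\Sigma_{yx,y'x'}=\Sigma_{xy,x'y'}$ in the general case comes from applying the basic case to each shared-endpoint slithering map in the approximating product, since the same geometric triangles separate the two leaves regardless of their orientation. The inverse identity $\Sigma_{x'y',xy}=\Sigma_{xy,x'y'}^{-1}$ in the general case follows by reversing the order of the approximating family and applying the basic-case inverse identity to each factor, using $(AB)^{-1}=B^{-1}A^{-1}$. Finally, the composition identity in the two remaining cyclic arrangements of the three parallel leaves reduces to the nested case combined with the inverse identity: for instance, if $x''y''$ separates $xy$ from $x'y'$, then the nested case gives $\Sigma_{xy,x'y'}=\Sigma_{xy,x''y''}\circ\Sigma_{x''y'',x'y'}$, and postcomposing with $\Sigma_{x'y',x''y''}=\Sigma_{x''y'',x'y'}^{-1}$ yields the desired equality.

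The main technical delicacy is the justification that the finite products for the split families really do rearrange into a single finite product for the combined family, up to controlled boundary contributions near $x'y'$, so that passage to the limit is legitimate. This is essentially bookkeeping on the indexing of the exhausting families, and relies on the fact that the slithering limit is independent of the choice of exhaustion, as established in \cite[\S 5.1]{BonDre2}.
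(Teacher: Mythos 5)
The paper records this lemma without proof, treating it as an elementary consequence of the definition of the slithering maps, so there is no argument of the paper's to compare against; your proof takes the natural route and is essentially correct. Two small points are worth tightening. For the orientation-reversal identity in the shared-endpoint case, the justification via ``upper-triangular with ones on the diagonal in the reversed basis compatible with $(F,E)$'' is off as stated: a matrix that is upper triangular in a basis adapted to $(E,F)$ with $E$ ascending becomes \emph{lower} triangular, not upper triangular, in the reversed basis adapted to $(F,E)$. The cleaner justification is that both $\Sigma_{xy,\,xy'}$ and $\Sigma_{yx,\,y'x}$ are unipotent (all eigenvalues equal to $1$, a basis-independent condition), both preserve the flag $E=\F_\rho(x)$ of the shared endpoint, and both send $F'=\F_\rho(y')$ to $F=\F_\rho(y)$; two such maps differ by a unipotent element stabilizing both $E$ and $F$, which is necessarily the identity, so they coincide. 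Second, your caution about ``finitely many triangles of $\wt S-\wt\lambda$ adjacent to $x'y'$'' is unnecessary: since $x'y'$ is itself a leaf of $\wt\lambda$, no component of $\wt S-\wt\lambda$ straddles it, so the components separating $xy$ from $x''y''$ partition \emph{exactly} into those separating $xy$ from $x'y'$ and those separating $x'y'$ from $x''y''$, and the finite approximating products split term by term with no boundary correction before passing to the limit. With these two adjustments the remaining reductions (general orientation reversal and inversion by term-by-term application of the shared-endpoint case, and the non-nested arrangements via the nested case plus the inverse identity) are exactly right, though one should also observe that for three pairwise parallel leaves one of them necessarily separates the other two, so the nested case always applies after relabeling.
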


The slithering maps play a critical role in the definition of the shearing invariants $\sigma_\rho^{ab}$.  Let $\wt T$ and $\wt T'$ be two components of the complement $\wt S - \wt \lambda$. Index the vertices of $\wt T$ as $x$, $y$, $z\in \bdry$ \emph{clockwise} in this order around $\wt T$, and in such a way that $xy$ is the side facing $\wt T'$. Then index the vertices of $\wt T'$ as $x'$, $y'$, $z'\in \bdry$ this time \emph{counterclockwise} in this order, and so that $x'y'$ is the side facing $\wt T$. Using the correspondence of Lemma~\ref{lem:LiftShearsToUniversalCover},   the shearing invariant is defined in \cite[\S 5.2]{BonDre2} as follows. 

\begin{fact}
\label{fact:ShearingUsingSlithering}
With the vertex labelling conventions above, 
\begin{align*}
 \sigma^{ab}_\rho(\wt T, \wt T') &= \log X_{ab} \big( \F_\rho(x),  \F_\rho(y);  \F_\rho(z),  \Sigma_{xy, x'y'}(\F_\rho(z')) \big)
 \\
 &= \log X_{ab} \big( \F_\rho(x'),  \F_\rho(y');   \Sigma_{x'y', xy} (\F_\rho(z)), \F_\rho(z') \big), 
\end{align*}
 where the double-ratio $X_{ab}(E,F;G,H)$ is defined in {\upshape\S \ref{subsect:TripleDoubleRatios}}.
 \end{fact}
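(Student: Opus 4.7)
The plan is to establish the Fact by verifying that the right-hand side defines a function of $(\wt T,\wt T')$ satisfying the two characterizing properties of Lemma~\ref{lem:LiftShearsToUniversalCover}, so that the equality with $\sigma^{ab}_\rho(\wt T,\wt T')$ follows from the uniqueness clause of that lemma. Denote provisionally by $\tilde\sigma^{ab}(\wt T,\wt T')$ the first displayed expression, and first dispatch the equivalence of the two formulas. Apply the slithering $\Sigma_{x'y',xy}=\Sigma_{xy,x'y'}^{-1}$ of Lemma~\ref{lem:CompositionSlitheringMaps} diagonally to all four flags inside the first double ratio: by Lemma~\ref{lem:SlitheringSendsFlagToFlag} it sends $\F_\rho(x)$, $\F_\rho(y)$ to $\F_\rho(x')$, $\F_\rho(y')$, while $\Sigma_{xy,x'y'}(\F_\rho(z'))$ returns to $\F_\rho(z')$ and $\F_\rho(z)$ becomes $\Sigma_{x'y',xy}(\F_\rho(z))$. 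Invariance of $X_{ab}$ under the diagonal $\PGL$--action yields the second expression.

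Next I would verify the simple-branch case. When $\wt T$ and $\wt T'$ are separated by a single leaf of $\wt\lambda$, the orientation conventions (clockwise around $\wt T$, counterclockwise around $\wt T'$, and $xy$ and $x'y'$ the shared oriented edge as seen from each side) force $x=x'$ and $y=y'$; hence $\Sigma_{xy,x'y'}$ is the identity by its definition in the degenerate case of a single leaf, and the formula reduces to the standard Fock--Goncharov double-ratio definition of $\sigma^{ab}_\rho(e)$ recalled in \S\ref{subsect:DoubleTripleRatiosFockGoncharov}. This handles property~(1) of Lemma~\ref{lem:LiftShearsToUniversalCover}.

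The bulk of the work is verifying property~(2), the cocycle-type identity across an intermediate triangle $\wt T'$. Given $\wt T$, $\wt T'$, $\wt T''$ as in the statement, denote by $x_1y_1$ and $x_2y_2$ the two sides of $\wt T'$ that face $\wt T$ and $\wt T''$ respectively, and decompose
\[
\Sigma_{xy,x''y''}=\Sigma_{xy,x_1y_1}\circ\Sigma_{x_1y_1,x_2y_2}\circ\Sigma_{x_2y_2,x''y''}
\]
via Lemma~\ref{lem:CompositionSlitheringMaps}. The outer factors concern only the portions of $\wt\lambda$ lying outside $\wt T'$; applied inside the double ratio, they telescope, via another application of the equivariance argument from the first paragraph, into $\tilde\sigma^{ab}(\wt T,\wt T')+\tilde\sigma^{ab}(\wt T',\wt T'')$. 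The middle factor $\Sigma_{x_1y_1,x_2y_2}$ is a slithering between two sides of the single triangle $\wt T'$ that share the common vertex~$x$, hence is unipotent and is computable in a basis adapted to the flag pair carried by that shared vertex.

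The main obstacle is the final algebraic step: showing that the logarithm of the multiplicative correction produced by this unipotent middle factor in the fourth entry of $X_{ab}$ equals exactly $-\sum_{b'+c'=n-a}\tau^{ab'c'}_\rho(\wt T',x)$. Here the wedge-product formulas for $X_{ab}$ and $X_{abc}$ from \S\ref{subsect:TripleDoubleRatios} conspire: in a basis making the flag pair of $\wt T'$ ascending and descending, the unipotent slithering across $\wt T'$ differs from the identity by off-diagonal contributions that are themselves controlled by the triple ratios of the flag triple at the vertices of $\wt T'$, and a telescoping product over consecutive values of $b'$ collapses the ratio of modified to unmodified Plücker expressions onto the claimed sum. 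The case in which $x$ sits to the right of $\wt T'$ is treated symmetrically, with the roles of $a$ and $b$ swapped in the indexing of the triple-ratio correction, as already reflected in the statement of Lemma~\ref{lem:LiftShearsToUniversalCover}(2).
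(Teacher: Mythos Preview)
The paper does not prove this statement: it is presented as the \emph{definition} of $\sigma^{ab}_\rho(\wt T,\wt T')$, imported verbatim from \cite[\S 5.2]{BonDre2}. The sentence immediately preceding the Fact reads ``the shearing invariant is defined in \cite[\S 5.2]{BonDre2} as follows,'' and the end of \S\ref{subsect:DoubleTripleRatiosFockGoncharov} explicitly says that in the generic case the branch invariant $\sigma^{ab}_\rho(e)$ itself is defined via slithering, pointing to this very Fact. So there is nothing to prove within the paper's logical structure; the only content is the equality of the two displayed expressions, which your first paragraph handles correctly by applying $\Sigma_{x'y',xy}$ diagonally and using $\PGL$--invariance of $X_{ab}$.

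Your proposed derivation via the uniqueness clause of Lemma~\ref{lem:LiftShearsToUniversalCover} is circular in this setting: that lemma takes the branch invariants $\sigma^{ab}_\rho(e)$ as input, but those are themselves defined by the slithering formula you are trying to establish. Even bracketing the circularity, your verification of condition~(1) is not right. Condition~(1) concerns the two components $\wt T$, $\wt T'$ of $\wt S-\wt\lambda$ containing the triangle regions on either side of a branch of the train track neighborhood; in the generic situation these are separated by infinitely many leaves of $\wt\lambda$, not by a single leaf, so the slithering map is far from the identity and the reduction to the ``standard Fock--Goncharov double-ratio'' from \S\ref{subsect:DoubleTripleRatiosFockGoncharov} does not apply. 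Your check of condition~(2) is a reasonable sketch, and indeed this quasi-additivity is established in \cite{BonDre2}, but it is downstream of the definition rather than a route to it.
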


\subsection{Variation of slithering maps}
\label{subsect:VariationSlitheringMap}

We now return to the situation considered at the beginning of this \S \ref{bigsect:VariationFlagMap}, with two Hitchin representations $\rho$, $\wh\rho  \colon \pi_1(S) \to \PGL$ such that
\begin{align*}
 \tau^{abc}_{\wh\rho}(U,s) &= \tau^{abc}_\rho(U,s) + \Delta \tau^{abc}(U,s)
 \\
 \sigma^{a'b'}_{\wh\rho}(e) &= \sigma^{a'b'}_\rho(e) + \Delta \sigma^{a'b'}(e)
\end{align*}
for every component $U$ of the complement $S-\Psi$ of a train track $\Psi$ carrying the geodesic lamination $\lambda$, for every corner $s$ of the triangle $U$, for every oriented branch $e$ of $\Psi$, and for every $a$, $b$, $c$, $a'$, $b' \geq 1$ with $a+b+c=a'+b'=n$. 

Let $\wt T_0$ be a component of $\wt S-\wt \lambda$, which we will use as a base component, and let $xy$ be a leaf of the geodesic lamination $\wt\lambda$. We choose the indexing so that $x$ is to the left of $y$ in $\bdry$, as seen from $\wt T_0$. Index the vertices of $\wt T_0$ \emph{clockwise} as $u_0$, $v_0$, $w_0 \in \bdry$ so that the side $u_0v_0$ faces $xy$. The clockwise convention is designed so that the orientations of $u_0v_0$ is parallel to that of $xy$. See Figure~\ref{fig:VariationSlithering}. 

 We want to compare the slithering maps $\Sigma_{xy,u_0v_0}$, $\wh\Sigma_{xy,u_0v_0} \colon \R^n \to \R^n$ respectively associated to $\rho$ and $\wh\rho$.
 
 \begin{figure}[htbp]

\SetLabels
(  .54* .24 )  $\wt T_0$ \\
( .86 *  .52)  $\wt T_i$ \\
(  .12* .71 )  $\wt T_j$ \\
\R( 0 * .5 )  $x_i$ \\
\L(  1* .44 )  $y_i$ \\
\L(1  * .66 )  $z_i$ \\
\R( 0 * .67 )  $x_j$ \\
\L(  1*  .82)  $y_j$ \\
\R( 0 * .83 )  $z_j$ \\
\R( 0 *  .32)  $u_0$ \\
\L(  1*  .34)  $v_0$ \\
\T( .7 * 0 )  $w_0$ \\
\R( 0 *  .98)  $x$ \\
\L( 1 * .93 )  $y$ \\
\endSetLabels
\centerline{\AffixLabels{\includegraphics{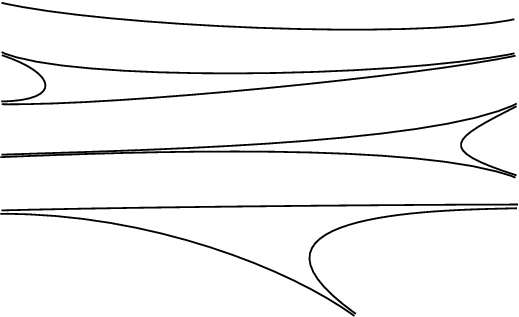}}}

\caption{The setup of Proposition~\ref{prop:VariationSlitheringMap}.}
\label{fig:VariationSlithering}
\end{figure}

As before, consider a family $\wt T_1$, $\wt T_2$, \dots, $\wt T_{i_0}$ of components of $\wt S - \wt \lambda$ separating $u_0v_0$ from $xy$, indexed in this order from $u_0v_0$ to $xy$ in the sense that each $\wt T_i$ separates $u_0v_0$ from $\wt T_{i+1}$. Index the vertices of $\wt T_i$ as $x_i$, $y_i$, $z_i \in \bdry$ in such a way that they occur counterclockwise in this order around $\wt T_i$ and that $x_iy_i$ is the side of $\wt T_i$ facing $\wt T_0$. See Figure~\ref{fig:VariationSlithering}. For $i=1$, $2$, \dots, $i_0$, let $E_i = \F_\rho(x_i)$, $F_i = \F_\rho(y_i)$, $G_i = \F_\rho(z_i) \in \Flag$ be the flags associated to the vertices of $\wt T_i$  by the flag map $\F_\rho \colon \bdry \to \Flag$, and similarly define $E_0 = \F_\rho(u_0)$, $F_0 = \F_\rho(v_0)$, $G_0 = \F_\rho(w_0) \in \Flag$. 

 By Lemma~\ref{lem:ProjectiveMapBetweenFlagTriples}, there is a unique projective map $A_0 \in \PGL$ sending the flag $E_0=\F_\rho(u_0)$ to $\F_{\wh\rho}(u_0)$, the flag $F_0=\F_\rho(v_0)$ to $\F_{\wh\rho}(v_0)$, and the line $G_0^{(1)}=\F_\rho(w_0)^{(1)}$ to $\F_{\wh\rho}(w_0)^{(1)}$. 

\begin{prop}
\label{prop:VariationSlitheringMap}
The slithering map $\wh\Sigma_{xy, u_0v_0}$ defined by $\wh\rho$  is the limit of
 $$
 A_0 \circ A(\wt T_1) \circ A(\wt T_2) \circ \dots \circ A(\wt T_{i_0}) \circ \Sigma_{xy, u_0v_0} \circ A_0^{-1} \in \GL
 $$
 as the  family $\{ \wt T_1, \wt T_2, \dots, \wt T_{i_0}\}$ tends to the set of all components of $\wt S - \wt \lambda$ separating $\wt T_0$ from $\wt T$, where
 $$
A(\wt T_i)=
\begin{cases}
\displaystyle
\Bigcirc_{a+b=n}  S^{ab}_{E_iF_i} (\Delta\sigma^{ab}(\wt T_0, \wt T_{i}))
\circ
 \Bigcirc_{a+b+c=n} L^{abc}_{E_iF_iG_i} \left( \Delta \tau^{abc} (\wt T_i, x_i) \right)
 \\ \qquad\qquad\displaystyle
 \circ \Bigcirc_{a+b+c=n} S^{a(b+c)}_{E_i G_i} ( \Delta\tau^{abc}(\wt T_i, x_i))
 \circ 
 \Bigcirc_{a+b=n} S^{ab}_{E_i G_i} (-\Delta\sigma^{ab}(\wt T_0, \wt T_i))
\\ \qquad \qquad\qquad\qquad\qquad\qquad\qquad\qquad\qquad\qquad
\text{if } xy \text{ faces the side } x_iz_i \text{ of }\wt T_i 
\\
\displaystyle
\Bigcirc_{a+b=n}  S^{ab}_{E_iF_i} (\Delta\sigma^{ab}(\wt T_0, \wt T_{i}))
\circ
\Bigcirc_{a+b+c=n} R^{abc}_{E_iF_iG_i} \left( \Delta \tau^{abc} (\wt T_i, x_i) \right)
 \\ \qquad\qquad\displaystyle
\circ \Bigcirc_{a+b+c=n} S^{(a+c)b}_{G_iF_i} ( \Delta\tau^{abc}(\wt T_i, x_i))
\circ 
\Bigcirc_{a+b=n} S^{ab}_{G_iF_i} (-\Delta\sigma^{ab}(\wt T_0, \wt T_i))
\\ \qquad \qquad\qquad\qquad\qquad\qquad\qquad\qquad\qquad\qquad
\text{if } xy \text{ faces the side } y_iz_i \text{ of }\wt T_i ,
\end{cases}
$$
where $A_0 \in \PGL$ is defined as above, where
\begin{align*}
L^{abc}_{EFG}(t) &= \E^{- t}\Id_{E^{(a)}} \oplus  \Id_{F^{(b)}} \oplus  \Id_{G^{(c)}} \in \GL
 \\
R^{abc}_{EFG}(t) &=  \Id_{E^{(a)}} \oplus \E^{ t} \Id_{F^{(b)}} \oplus  \Id_{G^{(c)}} \in \GL
\\
 S^{ab}_{EF}(t) &= \E^{ t}\Id_{E^{(a)}} \oplus  \Id_{F^{(b)}} \in \GL
\end{align*}
are the  left eruption, right eruption and shearing maps of {\upshape\S \ref{subsect:Eruptions}} and {\upshape\S \ref{subsect:Shearing}}, and where the variations $\Delta\sigma^{ab}$, $\Delta\tau^{abc}$ of generalized Fock-Goncharov invariants are expressed using the framework of Lemmas~{\upshape\ref{lem:LiftTriangleInvariantsToUniversalCover}} and {\upshape\ref{lem:LiftShearsToUniversalCover}}. 
\end{prop}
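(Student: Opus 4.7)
\medskip

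\noindent\textbf{Proof proposal.} The plan is to reduce to the case $A_0 = \Id$, prove the formula for finite approximations by induction on the number of separating triangles, then pass to the limit using the convergence estimates already established in \cite[\S 5.1]{BonDre2} for the original slithering map.

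First I would normalize. Replacing $\wh\rho$ by $A_0^{-1} \wh\rho A_0$ (and correspondingly the flag map $\F_{\wh\rho}$ by $A_0^{-1} \circ \F_{\wh\rho}$), one can assume that $\wh\rho$ and $\rho$ have flag maps that agree on the flags $E_0 = \F_\rho(u_0)$, $F_0 = \F_\rho(v_0)$ and on the line $G_0^{(1)} = \F_\rho(w_0)^{(1)}$. The outer conjugation by $A_0$ in the statement is then nothing but undoing this reduction, so it suffices to prove the identity
$$
\wh\Sigma_{xy, u_0 v_0} = \lim_{i_0 \to \infty} A(\wt T_1) \circ A(\wt T_2) \circ \dots \circ A(\wt T_{i_0}) \circ \Sigma_{xy, u_0 v_0}
$$
under the normalization $A_0 = \Id$. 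By definition, $\wh\Sigma_{xy,u_0v_0}$ is itself the limit of the finite compositions $\wh\Sigma_{x_1'y_1', u_0v_0} \circ \wh\Sigma_{x_2'y_2', x_1y_1} \circ \dots \circ \wh\Sigma_{xy, x_{i_0}y_{i_0}}$ built from the $\wh\rho$--flag data, so the task reduces to proving, for each finite approximation $\{\wt T_1, \dots, \wt T_{i_0}\}$, the identity
$$
\wh\Sigma_{xy, u_0v_0}^{(i_0)} = A(\wt T_1) \circ \dots \circ A(\wt T_{i_0}) \circ \Sigma_{xy, u_0v_0}^{(i_0)}.
$$

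I would prove this finite identity by induction on $i_0$, the base case $i_0 = 0$ being the normalization $A_0 = \Id$. For the inductive step, the key observation is that the inductive hypothesis guarantees that the flag pair at $x_iy_i$ is the same for $\rho$ and $\wh\rho$ after applying the accumulated correction $A(\wt T_1) \circ \dots \circ A(\wt T_{i-1})$; what needs adjustment, when one crosses $\wt T_i$, is (1) the flag $\F_{\wh\rho}(z_i)$ at the third vertex of $\wt T_i$, and (2) the flag pair at the opposite side of $\wt T_i$ which is shifted by the difference in shearing invariants $\Delta\sigma^{ab}(\wt T_0, \wt T_i)$ recorded by Fact~\ref{fact:ShearingUsingSlithering}. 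The triangle adjustment (1) is governed by the variations $\Delta\tau^{abc}(\wt T_i, x_i)$: Proposition~\ref{prop:TripleRatioClassifyFlagTriples} and Lemma~\ref{lem:EruptionsMainProperties}(4) show that the product of left eruptions $L^{abc}_{E_iF_iG_i}(\Delta\tau^{abc}(\wt T_i, x_i))$ or right eruptions $R^{abc}_{E_iF_iG_i}(\Delta\tau^{abc}(\wt T_i, x_i))$ (depending on whether the path toward $xy$ leaves $\wt T_i$ through side $x_iz_i$ or $y_iz_i$) transports the flag triple $(E_i, F_i, G_i)$ to the one with modified triple ratios, while fixing the two flags $E_i$, $F_i$. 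The shearing adjustment (2) is then implemented by the outer shearings $S^{ab}_{E_iF_i}(\Delta\sigma^{ab}(\wt T_0,\wt T_i))$, whose effect on double ratios across $\wt T_i$ is computed by Lemma~\ref{lem:ShearingMainProperties}(2) and the compatibility of slithering with shears coming from Lemma~\ref{lem:SlitheringSendsFlagToFlag}. The intermediate shearings $S^{a(b+c)}_{E_iG_i}(\Delta\tau^{abc})$ and $S^{ab}_{E_iG_i}(-\Delta\sigma^{ab})$ (respectively the $G_iF_i$ variants in the right case) appear because the triangle correction must be expressed on the side $x_iz_i$ (respectively $y_iz_i$) rather than on $x_iy_i$; their inverse cancels the action of the outer shear on the flag of the next side, exactly as in the ``left/right'' intuition spelled out after Lemma~\ref{lem:EruptionsMainProperties}.

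The main obstacle will be the bookkeeping in the inductive step: matching the two descriptions of the $i$--th local correction — the one coming from the definition of $\wh\Sigma_{xy, x_{i-1}y_{i-1}} \circ \wh\Sigma_{x_iy_i, xy}$ using the unipotent characterization of basic slitherings, and the one given by $A(\wt T_i)$ — requires carefully tracking the flags $(E_i, F_i, G_i)$ under the sequence of shearings and eruptions and verifying that only the prescribed double and triple ratios are changed, thanks to the commuting relations in Lemmas~\ref{lem:EruptionsMainProperties}(5,6) and \ref{lem:ShearingMainProperties}(3). Once the finite identity is established for all $i_0$, passing to the limit as $\{\wt T_1,\dots,\wt T_{i_0}\}$ exhausts the separating components follows from the convergence argument of \cite[\S 5.1]{BonDre2}: each $A(\wt T_i)$ is a perturbation of $\Id$ of the same exponential order as the basic slithering across $\wt T_i$, so the estimates controlling $\Sigma_{xy,u_0v_0}$ also control the modified product.
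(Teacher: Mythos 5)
Your proposal correctly identifies the local algebraic content — that $A(\wt T_i)$ is built from eruptions and shears so as to transport $(E_i, F_i, G_i)$ to the corresponding $\wh\rho$--flags while preserving the appropriate transversality data, which is essentially the paper's Lemma~\ref{lem:ExplicitSlithering1}. However, the global structure of your argument contains a genuine gap.

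The finite identity you propose to prove by induction,
$$
\wh\Sigma_{xy,u_0v_0}^{(i_0)} = A(\wt T_1) \circ \dots \circ A(\wt T_{i_0}) \circ \Sigma_{xy,u_0v_0}^{(i_0)},
$$
is \emph{false} for a general maximal geodesic lamination when $\{\wt T_1,\dots,\wt T_{i_0}\}$ is a proper finite subfamily. The finite products $\Sigma^{(i_0)}$ and $\wh\Sigma^{(i_0)}$ are only approximations to the true slithering maps: between $\wt T_i$ and $\wt T_{i+1}$ there are typically infinitely many skipped components of $\wt S - \wt\lambda$ whose contributions are not captured by either finite product. Worse, the correction $A(\wt T_i)$ is built from the \emph{true} shearing variations $\Delta\sigma^{ab}(\wt T_0, \wt T_i)$, which integrate the effect of all triangles between $\wt T_0$ and $\wt T_i$, not just those in the finite family; so the right-hand side mixes an exact quantity with a truncated one, and the two sides cannot be expected to match term by term. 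A consequence is that your inductive step, which asserts that the accumulated correction $A(\wt T_1)\circ\cdots\circ A(\wt T_{i-1})$ exactly transports the $\rho$--flag data at $x_iy_i$ to the $\wh\rho$--flag data, does not hold: that kind of exact transport requires conjugating by the \emph{full} slithering maps between $u_0v_0$ and $x_iy_i$, not by the truncated partial product.

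What the paper actually does is prove an exact per-triangle conjugation identity (Lemma~\ref{lem:ExplicitSlithering2}) in which $A(\wt T_i)\circ\Sigma_{x_i'y_i',x_iy_i}$ is conjugated by $\Sigma_{u_0v_0,x_iy_i}^{-1}\circ A_0^{-1}\circ\wh\Sigma_{u_0v_0,x_iy_i}$ — i.e., by the \emph{full} slithering maps, not by partial products. Recombining these exact identities gives
$$
A_0^{-1}\circ\wh\Sigma_{xy,u_0v_0}\circ A_0\circ\Sigma_{xy,u_0v_0}^{-1} = B_0\circ A(\wt T_1)\circ B_1\circ A(\wt T_2)\circ\cdots\circ A(\wt T_{i_0})\circ B_{i_0},
$$
where the $B_i$ are explicit error terms measuring the accumulated effect of the skipped triangles. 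The analytic heart of the proof (Lemma~\ref{lem:SmallSlitheringNegligibleInLimit}) is then to show that the $B_i$ are of the size $\Id + O\bigl(\sum_{\wt T'}\ell(k\cap\wt T')^\nu\bigr)$ and therefore negligible in the limit, using the estimates of Lemmas~\ref{lem:ElementarySlitheringBoundedByGapLength}, \ref{lem:HolderSumOfGapLengths}, and \ref{lem:ElementarySlitheringBoundedByGapLength2}. Your plan to ``establish the finite identity then pass to the limit'' skips exactly this error-term analysis, which is where the real work lies; without it, the passage to the limit is not justified.
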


In \S \ref{subsect:Eruptions} and \S \ref{subsect:Shearing}, we only introduced the maps $L^{abc}_{EFG}(t)$, $R^{abc}_{EFG}(t)$ and $S^{ab}_{EF}(t) $ as projective maps, but we are here  considering them as actual linear maps $\R^n \to \R^n$. 

In the definition of $A(\wt T_i) $, note that the commutativity properties of Lemmas~\ref{lem:EruptionsMainProperties}(5--6) and \ref{lem:ShearingMainProperties}(3)  guarantee that the compositions $\displaystyle\Bigcirc_{a+b=n}  S^{ab}_{EF} (\ )$, $\displaystyle \Bigcirc_{a+b+c=n} L^{abc}_{EFG} ( \  )$ and $ \displaystyle\Bigcirc_{a+b+c=n} R^{abc}_{EFG} ( \  )$ do not depend on the order in which their terms are taken. Also, although  the element $A_0 \in \PGL$ is only determined projectively, conjugating an element of $\GL$ by an element of $\PGL$ gives a well-defined element of $\GL$, so that the  formula given in the statement of Proposition~\ref{prop:VariationSlitheringMap} makes sense. 

\begin{proof}
The proof of Proposition~\ref{prop:VariationSlitheringMap} will take a while, and we split it into several steps. 

We first lift some of the mystery behind the formulas defining the terms $A(\wt T_i)$, by discussing the key property motivating these definitions. 

The generalized Fock-Goncharov invariants are defined so that 
\begin{align*}
\exp  \tau_\rho^{abc} (\wt T_i, x_i) &=  X_{abc}(E_i, F_i, G_i)
 \\
\exp  \sigma_\rho^{a'b'} (\wt T_0, \wt T_i) &=  X_{a'b'} \big( E_i, F_i ; \Sigma_{x_iy_i, u_0v_0}(G_0), G_i \big)
\end{align*}
for every $a$, $b$, $c$, $a'$, $b'\geq 1$ with $a+b+c=a'+b'=n$ (see \S \ref{subsect:DoubleTripleRatiosFockGoncharov} and Fact~\ref{fact:ShearingUsingSlithering}). 

Moving from $\rho$ to $\wh\rho$, Lemmas~\ref{lem:EruptionsMainProperties}(4) and \ref{lem:ShearingMainProperties}(2)  show that there exists a  flag $G_i'\in \Flag$ such that
\begin{align}
\label{eqn:ReconstructSlithering1}
 \exp \tau_{\wh\rho}^{abc} (\wt T_i, x_i)  &=  X_{abc}(E_i, F_i, G_i') 
 \\
 \label{eqn:ReconstructSlithering2}
\exp \sigma_{\wh\rho}^{a'b'} (\wt T_0, \wt T_i)  &= X_{a'b'} (E_i, F_i ; \Sigma_{x_iy_i, u_0v_0}(G_0), G_i')
\end{align}
for every $a$, $b$, $c$, $a'$, $b'\geq 1$ with $a+b+c=a'+b'=n$. In addition, the combination of Proposition~\ref{prop:DoubleTripleRatioClassifyFlagQuadruples} and Lemma~\ref{lem:ProjectiveMapBetweenFlagTriples} shows that $G_i'$ is unique.

\begin{lem}
\label{lem:ExplicitSlithering1}
The map $A(\wt T_i) \colon \R^n \to \R^n $ is the unique unipotent isomorphism that sends the flag $G_i$ to $G_i'$, respects the flag $E_i$ if $xy$ faces the side $x_iz_i$ of $\wt T_i$, and respects  $F_i$  if $xy$ faces the side $y_iz_i$ of $\wt T_i$. 

\end{lem}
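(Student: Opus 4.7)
The plan is to verify the three defining properties of $A(\wt T_i)$ — being unipotent, respecting the appropriate one of $E_i$ or $F_i$, and sending $G_i$ to $G_i'$ — and then to establish uniqueness via a stabilizer argument. The flag-respecting property is the easiest: in the case where $xy$ faces $x_iz_i$, each shearing $S^{ab}_{E_iF_i}$, $S^{a(b+c)}_{E_iG_i}$, and $S^{ab}_{E_iG_i}$ appearing in the formula for $A(\wt T_i)$ respects $E_i$ since its exponential scaling acts on a subspace of the form $E_i^{(*)}$, and Lemma~\ref{lem:EruptionsMainProperties}(3) guarantees the left eruptions $L^{abc}_{E_iF_iG_i}$ also respect $E_i$. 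The symmetric argument using right eruptions handles the case where $xy$ faces $y_iz_i$ and shows that $A(\wt T_i)$ respects $F_i$.

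The main obstacle is unipotency, since each individual factor is semisimple with eigenvalues different from $1$; unipotency must emerge from cancellations. My plan is to use that $A(\wt T_i)$ respects $E_i$ and analyse the induced action on the successive quotients $E_i^{(k)}/E_i^{(k-1)}$. A direct computation shows that each of $S^{ab}_{E_iF_i}(t)$, $S^{ab}_{E_iG_i}(t)$ and $S^{a(b+c)}_{E_iG_i}(t)$ acts on $E_i^{(k)}/E_i^{(k-1)}$ by the scalar $\E^t$ when $k\leq a$ and by the identity when $k>a$, while $L^{abc}_{E_iF_iG_i}(t)$ acts by $\E^{-t}$ when $k\leq a$ and by the identity otherwise; for $k>a$, the image of $v\in E_i^{(k)}$ has the form $v+(\E^{\pm t}-1)v_E$ with $v_E\in E_i^{(a)}\subset E_i^{(k-1)}$, so is trivial modulo $E_i^{(k-1)}$. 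Since each factor acts by a scalar on each graded piece, these scalars multiply across the composition. Adding the exponents, the block $\Bigcirc_{a+b=n} S^{ab}_{E_iF_i}(\Delta\sigma^{ab})$ contributes $\sum_{a\geq k}\Delta\sigma^{a(n-a)}$ on $E_i^{(k)}/E_i^{(k-1)}$, exactly cancelled by $\Bigcirc_{a+b=n} S^{ab}_{E_iG_i}(-\Delta\sigma^{ab})$, and $\Bigcirc_{a+b+c=n} L^{abc}_{E_iF_iG_i}(\Delta\tau^{abc})$ contributes $-\sum_{a\geq k}\Delta\tau^{abc}$, cancelled by $\Bigcirc_{a+b+c=n} S^{a(b+c)}_{E_iG_i}(\Delta\tau^{abc})$. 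Hence every eigenvalue of $A(\wt T_i)$ equals $1$, so $A(\wt T_i)$ is unipotent.

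To prove $A(\wt T_i)(G_i) = G_i'$, I would track the triple and double ratios through each block. The inner blocks $\Bigcirc_{a+b=n} S^{ab}_{E_iG_i}(-\Delta\sigma^{ab})$ and $\Bigcirc_{a+b+c=n} S^{a(b+c)}_{E_iG_i}(\Delta\tau^{abc})$ both fix $G_i$ as a flag and contribute nothing to $A(\wt T_i)(G_i)$; they are present solely to enforce the unipotency established in the previous paragraph. The left eruption block $\Bigcirc L^{abc}_{E_iF_iG_i}(\Delta\tau^{abc})$ then modifies $G_i$ to a flag with $X_{abc}(E_i, F_i, \cdot)$ multiplied by $\E^{\Delta\tau^{abc}}$ (Lemma~\ref{lem:EruptionsMainProperties}(4)), while preserving the line $G_i^{(1)}$ by Lemma~\ref{lem:EruptionsMainProperties}(3); this preserves every double ratio $X_{a'b'}(E_i,F_i;\Sigma_{x_iy_i,u_0v_0}(G_0),\cdot)$, which depends only on the first line of its last argument. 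Finally, $\Bigcirc S^{ab}_{E_iF_i}(\Delta\sigma^{ab})$ lies in the stabilizer of $(E_i, F_i)$ and therefore preserves all triple ratios (by Proposition~\ref{prop:TripleRatioClassifyFlagTriples}), while multiplying each double ratio by $\E^{\Delta\sigma^{a'b'}}$ (Lemma~\ref{lem:ShearingMainProperties}(2)). The resulting triple and double ratios of $(E_i, F_i, A(\wt T_i)(G_i))$ match equations~(\ref{eqn:ReconstructSlithering1})--(\ref{eqn:ReconstructSlithering2}), so Propositions~\ref{prop:TripleRatioClassifyFlagTriples}--\ref{prop:DoubleTripleRatioClassifyFlagQuadruples} force $A(\wt T_i)(G_i) = G_i'$.

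For uniqueness, if $A'$ is another unipotent isomorphism respecting $E_i$ with $A'(G_i)=G_i'$, then $B = A(\wt T_i)^{-1} A'$ is a unipotent map respecting $E_i$ and fixing $G_i$ as a flag. In a basis adapted to $E_i$, the unipotent maps preserving $E_i$ form the upper triangular unipotent group, of dimension $\binom{n}{2} = \dim\Flag$; the maximum-span hypothesis (applied with the middle index set to $0$) ensures that $G_i$ is transverse to $E_i$, placing it in the open Bruhat cell, on which the stabilizer in this connected group is $0$-dimensional and therefore trivial. Thus $B=\Id$ and $A' = A(\wt T_i)$.
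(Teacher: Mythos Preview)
Your proof is correct and follows essentially the same approach as the paper: the paper also tracks $G_i$ through the four blocks using Lemmas~\ref{lem:EruptionsMainProperties} and \ref{lem:ShearingMainProperties}, and proves unipotency by computing diagonal entries in a basis adapted to $(E_i,F_i)$, which is exactly your graded-quotient argument in coordinates. One small slip: the fact that $L^{abc}_{E_iF_iG_i}(t)$ respects the full flag $E_i$ is Lemma~\ref{lem:EruptionsMainProperties}(1), not (3), which only concerns the lines.
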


\begin{proof}
Consider the case where $xy$ faces the side $x_iz_i$ of $\wt T_i$. The other case is essentially identical. 

All the terms $S_{E_iF_i}^{ab}$, $L_{E_iF_iG_i}^{a'b'c'}$, $S_{E_iG_i}^{a''b''}$ involved in the formula
\begin{align*}
A(\wt T_i)& =
 \Bigcirc_{a+b=n}  S^{ab}_{E_iF_i} (\Delta\sigma^{ab}(\wt T_0, \wt T_{i}))
\circ
 \Bigcirc_{a+b+c=n} L^{abc}_{E_iF_iG_i} \left( \Delta \tau^{abc} (\wt T_i, x_i) \right)
 \\ 
 &\qquad\qquad\qquad
 \circ \Bigcirc_{a+b+c=n} S^{a(b+c)}_{E_i G_i} ( \Delta\tau^{abc}(\wt T_i, x_i))
 \circ 
 \Bigcirc_{a+b=n} S^{ab}_{E_i G_i} (-\Delta\sigma^{ab}(\wt T_0, \wt T_i))
\end{align*}
respect the flag $E_i$, by Lemmas~\ref{lem:EruptionsMainProperties}(1) and \ref{lem:ShearingMainProperties}(1). It follows that their composition $A(\wt T_i)$ also respects $E_i$. 

Similarly, 
$$
 \Bigcirc_{a+b+c=n} S^{a(b+c)}_{E_i G_i} ( \Delta\tau^{abc}(\wt T_i, x_i))
 \circ 
 \Bigcirc_{a+b=n} S^{ab}_{E_i G_i} (-\Delta\sigma^{ab}(\wt T_0, \wt T_i)) (G_i) = G_i. 
$$

If we set
$$G_i'' =  \Bigcirc_{a+b+c=n} L^{abc}_{E_iF_iG_i} \left( \Delta \tau^{abc} (\wt T_i, x_i) \right)(G_i),$$
Lemma~\ref{lem:EruptionsMainProperties}(4) shows that
\begin{align*}
 X_{abc}(E_i, F_i, G_i'') &= 
 \E^{ \Delta \tau^{abc}(\wt T_i, x_i)}  X_{abc}(E_i, F_i, G_i)
 \\
 &= 
 \exp \left( \tau_{\rho}^{abc} (\wt T_i, x_i)+ \Delta \tau^{abc}(\wt T_i, x_i) \right)
  \\
 &= \exp \tau_{\wh\rho}^{abc} (\wt T_i, x_i)
 \end{align*}
 for every $a$, $b$, $c \geq 1$ with $a+b+c=n$. Also, by Lemma~\ref{lem:EruptionsMainProperties}(3), $G_i$ and $G_i''$ have the same line $G_i^{(1)}=G_i^{\prime\prime(1)}$. Therefore, 
 \begin{align*}
 X_{a'b'}(E_i, F_i ; \Sigma_{x_iy_i, u_0v_0}(G_0), G_i'' ) &= X_{a'b'}(E_i, F_i ; \Sigma_{x_iy_i, u_0v_0}(G_0), G_i) 
 \\
 &= \exp \sigma_{\rho}^{a'b'} (\wt T_0, \wt T_i)
 \end{align*}
 for every  $a'$, $b'\geq 1$ with $a'+b'=n$.

 Finally, the last term $ \displaystyle\Bigcirc_{a+b=n}  S^{ab}_{E_i F_i} (\Delta\sigma^{ab}(\wt T_0, \wt T_{i}))$ respects $E_i$ and $F_i$, and sends $G_i''$ to a flag $G_i'''$ such that
 \begin{align*}
 X_{abc}(E_i, F_i, G_i''') &=  X_{abc}(E_i, F_i, G_i'')  =\exp  \tau_{\wh\rho}^{abc} (\wt T_i, x_i)
 \\
 X_{a'b'}(E_i, F_i ; \Sigma_{x_iy_i, u_0v_0}(G_0), G_i''' ) 
 &= \E^{ \Delta\sigma^{a'b'} (\wt T_0, \wt T_i)} X_{a'b'}(E_i, F_i ; \Sigma_{x_iy_i, u_0v_0}(G_0), G_i'' )
 \\
 &=  \exp \left( \sigma_{\rho}^{a'b'} (\wt T_0, \wt T_i)  + \Delta\sigma^{a'b'} (\wt T_0, \wt T_i) \right)
 \\
 &= \exp \sigma_{\wh\rho}^{a'b'} (\wt T_0, \wt T_i)
 \end{align*}
 for every $a$, $b$, $c$, $a'$, $b'\geq 1$ with $a+b+c=a'+b'=n$, using  Proposition~\ref{prop:TripleRatioClassifyFlagTriples} and Lemma~\ref{lem:ShearingMainProperties}(2). Then $G_i'''=G_i'$ by Proposition~\ref{prop:DoubleTripleRatioClassifyFlagQuadruples} and  Lemma~\ref{lem:ProjectiveMapBetweenFlagTriples}, and by definition of $G_i'$ in (\ref{eqn:ReconstructSlithering1}--\ref{eqn:ReconstructSlithering2}). 
 
 This completes the proof that $A(\wt T_i)$ respects $E_i$ and sends $G_i$ to $G_i'$. 
 
 There remains to prove that $A(\wt T_i) $ is unipotent. For this, we will use a basis  $\{ e_1, e_2, \dots, e_n\}$ of $\R^n$ that is adapted to the transverse flag pair $(E_i, F_i)$, in the sense that each space $E_i^{(a)}$ is generated by the first $a$ vectors $e_1$, $e_2$, \dots, $e_a$, and each $F_i^{(b)}$ is spanned by the last $b$ vectors $e_{n-b+1}$, $e_{n-b+2}$, \dots, $e_n$. 

By its definition, the shearing map 
$$ 
S^{ab}_{E_iF_i} (t) = \E^t \Id_{E_i^{(a)}} \oplus \Id_{F_i^{(b)}} 
$$ 
is diagonal in this basis, with its first $a$ entries equal to $\E^t$ and the remaining ones equal to~$1$. 

The left eruption map
$$
L^{abc}_{E_iF_iG_i} \left(t \right) = \E^{-t} \Id_{E_i^{(a)}} \oplus \Id_{F_i^{(b)}}  \oplus \Id_{G_i^{(c)}} 
$$
sends $e_j$ to $\E^{-t} e_j$ if $j\leq a$ and sends $e_j$ to $e_j$ if $j\geq n-c+1$. For $a<j \leq n-c$, use the direct sum decomposition $\R^n= E_i^{(a)} \oplus F_i^{(b)}  \oplus G_i^{(c)}$ and write $e_j$ as $e_j = v_a+v_b+v_c$ with 
$v_a \in E_i^{(a)} $, $v_b \in F_i^{(b)}$ and $v_c \in G_i^{(c)}$. Then
$$
L^{abc}_{E_iF_iG_i} (t ) (e_j) = \E^{-t} v_a + v_b + v_c = (\E^{-t}-1) v_a + e_j.
$$
Since $v_a \in E_i^{(a)} $ is a linear combination of $e_k$ with $k\leq a<j$, this proves that the matrix of $L^{abc}_{E_iF_iG_i} \left(t \right)$ in the basis $\{ e_1, e_2, \dots, e_n\}$ is upper triangular, with its first $a$ diagonal entries equal to $\E^{-t}$ and the remaining diagonal entries equal to 1.

Finally, the shearing map 
$$ 
S^{ab}_{E_iG_i} (t) = \E^t \Id_{E_i^{(a)}} \oplus \Id_{G_i^{(b)}} 
$$ 
sends $e_j$ to $\E^t e_j$ when $j\leq a$. When $j>a$, we again decompose $e_j$ as $e_j=v_a + v_b$ with $v_a \in E_i^{(a)} $ and $v_b \in G_i^{(b)}$. Then
$$
S^{ab}_{E_{i} G_{i}} (t)(e_j) =  \E^{t} v_a + v_b = (\E^{t}-1) v_a + e_j.
$$
This again proves that $S^{ab}_{E_{i} G_{i}} (t)$ is upper triangular in the basis $\{ e_1, e_2, \dots, e_n\}$, with its first $a$ diagonal entries equal to $\E^t$, the remaining diagonal entries equal to $1$. 

If we apply these computations to each term in
\begin{align*}
A(\wt T_i) &= 
\Bigcirc_{a+b=n}  S^{ab}_{E_iF_i} (\Delta\sigma^{ab}(\wt T_0, \wt T_{i}))
\circ
 \Bigcirc_{a+b+c=n} L^{abc}_{E_iF_iG_i} \big( \Delta \tau^{abc} (\wt T_i, x_i) \big)
 \\ 
 &\qquad\qquad
 \circ \Bigcirc_{a+b+c=n} S^{a(b+c)}_{E_iG_i} ( \Delta\tau^{abc}(\wt T_i, x_i))
 \circ 
 \Bigcirc_{a+b=n} S^{ab}_{E_iG_i} (-\Delta\sigma^{ab}(\wt T_0, \wt T_i))
\end{align*}
we see that the matrix of $A(\wt T_i)$ in the basis $\{ e_1, e_2, \dots, e_n\}$ is upper diagonal, with its $j$--th diagonal entry equal to
$$
\prod_{\substack{a+b=n\\ a\geq j}} \E^{\Delta\sigma^{ab}(\wt T_0, \wt T_{i})}
 \prod_{\substack{a+b+c=n\\ a\geq j}} \E^{- \Delta \tau^{abc} (\wt T_i, x_i) } 
 \prod_{\substack{a+b+c=n\\ a\geq j}} \E^{ \Delta \tau^{abc} (\wt T_i, x_i) }
 \prod_{\substack{a+b=n\\ a\geq j}} \E^{-\Delta\sigma^{ab}(\wt T_0, \wt T_{i})}
 =1.
$$

Therefore, $A(\wt T_i)$ is a unipotent isomorphism that respects the flag $E_i$ and sends $G_i$ to $G_i'$. The unipotent condition guarantees that this map is unique. This completes the  proof of Lemma~\ref{lem:ExplicitSlithering1} when $xy$ faces the side $x_iz_i$  of $\wt T_i$. 

The proof in the other case, where  $xy$ faces the side $y_iz_i$  of $\wt T_i$, is essentially identical. 
\end{proof}

As in the construction of the slithering maps, let $x_i'y_i'$ be the side of $\wt T_i$ that faces $xy$. In particular, either $x_i'=x_i$ and $y_i'=z_i$, or $x_i'=z_i$ and $y_i'=y_i$. 

The next step in the proof of Proposition~\ref{prop:VariationSlitheringMap} is the following computation. 
\begin{lem}
\label{lem:ExplicitSlithering2}
 $$
 \wh\Sigma_{x_i'y_i', x_iy_i} = \left( \Sigma_{u_0v_0, x_iy_i}^{-1} \circ A_0^{-1} \circ \wh\Sigma_{u_0v_0, x_iy_i} \right)^{-1} \circ A(\wt T_i) \circ \Sigma_{x_i'y_i', x_iy_i}  \circ  \left( \Sigma_{u_0v_0, x_iy_i}^{-1}  \circ  A_0^{-1} \circ \wh\Sigma_{u_0v_0, x_iy_i} \right) .
 $$
\end{lem}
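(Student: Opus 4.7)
The plan is to characterize both sides as the unique unipotent linear isomorphism of $\R^n$ that sends the pair of flags $\bigl(\F_{\wh\rho}(x_i),\F_{\wh\rho}(y_i)\bigr)$ to $\bigl(\F_{\wh\rho}(x_i'),\F_{\wh\rho}(y_i')\bigr)$. This is already the defining property of the left-hand side $\wh\Sigma_{x_i'y_i',x_iy_i}$, since $x_i'y_i'$ and $x_iy_i$ are sides of the same triangle $\wt T_i$ and hence share a common endpoint. I would abbreviate $B_i = \Sigma_{u_0v_0,x_iy_i}^{-1}\circ A_0^{-1}\circ \wh\Sigma_{u_0v_0,x_iy_i}$ and let $\Xi$ denote the right-hand side, namely $\Xi = B_i^{-1}\circ A(\wt T_i)\circ \Sigma_{x_i'y_i',x_iy_i}\circ B_i$, and then verify the two characterizing properties for $\Xi$.

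The first step is to identify the action of $B_i$ on the three flags of $\wt T_i$. Lemma~\ref{lem:SlitheringSendsFlagToFlag} applied to both $\rho$ and $\wh\rho$, together with the defining property of $A_0$, immediately give $B_i\bigl(\F_{\wh\rho}(x_i)\bigr) = E_i$ and $B_i\bigl(\F_{\wh\rho}(y_i)\bigr) = F_i$. For the third vertex, I would set $G_i'' := B_i\bigl(\F_{\wh\rho}(z_i)\bigr)$ and check that $G_i''$ satisfies the two defining equations~(\ref{eqn:ReconstructSlithering1})--(\ref{eqn:ReconstructSlithering2}) of $G_i'$; Proposition~\ref{prop:DoubleTripleRatioClassifyFlagQuadruples} and Lemma~\ref{lem:ProjectiveMapBetweenFlagTriples} then force $G_i'' = G_i'$. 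The triple-ratio identity is immediate from the invariance of $X_{abc}$ under simultaneous application of a linear isomorphism to all three flags. For the double-ratio identity, Lemma~\ref{lem:CompositionSlitheringMaps} simplifies $B_i\circ \wh\Sigma_{x_iy_i,u_0v_0}$ to $\Sigma_{x_iy_i,u_0v_0}\circ A_0^{-1}$, and the observation of \S\ref{subsect:TripleDoubleRatios} that $X_{ab}$ depends only on the one-dimensional parts of its third and fourth arguments lets me replace $A_0^{-1}\bigl(\F_{\wh\rho}(w_0)\bigr)$ by $G_0 = \F_\rho(w_0)$, since $A_0$ matches the two lines $G_0^{(1)}$ and $\F_{\wh\rho}(w_0)^{(1)}$ by definition.

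With $B_i$ understood, checking that $\Xi$ sends $\bigl(\F_{\wh\rho}(x_i),\F_{\wh\rho}(y_i)\bigr)$ to $\bigl(\F_{\wh\rho}(x_i'),\F_{\wh\rho}(y_i')\bigr)$ becomes a direct chase through the composition. I would treat the two cases "$xy$ faces $x_iz_i$" (so $x_i'=x_i$, $y_i'=z_i$) and "$xy$ faces $y_iz_i$" (so $x_i'=z_i$, $y_i'=y_i$) symmetrically. In the first case, $B_i$ sends the pair to $(E_i,F_i)$; the $\rho$-slithering $\Sigma_{x_iz_i,x_iy_i}$, sharing the endpoint $x_i$, sends $E_i$ to $E_i$ and $F_i$ to $G_i$ by its defining property; $A(\wt T_i)$ preserves $E_i$ and sends $G_i$ to $G_i'$ by Lemma~\ref{lem:ExplicitSlithering1}; finally $B_i^{-1}$ returns $(E_i,G_i')$ to $\bigl(\F_{\wh\rho}(x_i),\F_{\wh\rho}(z_i)\bigr)$, using the identification of the previous paragraph.

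The main obstacle I anticipate is establishing unipotency of $\Xi$, after which the uniqueness clause characterizing $\wh\Sigma_{x_i'y_i',x_iy_i}$ closes the argument. Since conjugation preserves eigenvalues, it suffices to show that $A(\wt T_i)\circ \Sigma_{x_i'y_i',x_iy_i}$ is unipotent. In the case $x_i'=x_i$, both factors preserve the flag $E_i$: the slithering map because it sends $\F_\rho(x_i)$ to $\F_\rho(x_i)=E_i$ by the defining property at a shared endpoint, and $A(\wt T_i)$ by Lemma~\ref{lem:ExplicitSlithering1}; and both factors are unipotent. The key observation is then that a unipotent linear isomorphism preserving a complete flag is, in any basis adapted to that flag, upper triangular with $1$'s on the diagonal (the diagonal entries being the eigenvalues of the induced action on the one-dimensional quotients of the flag, which must therefore all equal $1$). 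The product of two such matrices keeps this form, hence is unipotent. The symmetric argument using $F_i$ handles the case $y_i'=y_i$, completing the proof.
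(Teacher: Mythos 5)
Your proposal is correct and follows essentially the same route as the paper's proof: characterize $\wh\Sigma_{x_i'y_i',x_iy_i}$ as the unique unipotent map with the prescribed flag-transport property, verify that the right-hand side has that property by chasing through the composition using the identity $B_i(\wh E_i,\wh F_i,\wh G_i)=(E_i,F_i,G_i')$ (the paper establishes the same fact, just wrapped inside the computation of $\Sigma(\wh F_i)$ rather than stated up front), and establish unipotency by reducing to the conjugand $A(\wt T_i)\circ\Sigma_{x_i'y_i',x_iy_i}$. The one place you are a bit more careful than the paper is in explaining why the product of two unipotent maps respecting a common complete flag is unipotent, which the paper states without justification; your argument via upper-triangular matrices with unit diagonal in a basis adapted to $E_i$ is exactly the right way to fill that in.
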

\begin{proof}
Let us restrict attention to the case where $xy$ faces the side $x_iz_i$  of $\wt T_i$, namely where $x_i'=x_i$ and $y_i'=z_i$. As usual, the other case will be almost identical.

Let 
$$
\Sigma = \left( \Sigma_{u_0v_0, x_iy_i}^{-1} \circ A_0^{-1} \circ \wh\Sigma_{u_0v_0, x_iy_i} \right)^{-1} \circ A(\wt T_i) \circ \Sigma_{x_i'y_i', x_iy_i}  \circ  \left( \Sigma_{u_0v_0, x_iy_i}^{-1}  \circ  A_0^{-1} \circ \wh\Sigma_{u_0v_0, x_iy_i} \right) 
 $$
  denote the right hand side of the equation. Lemma~\ref{lem:ExplicitSlithering1} shows that $A(\wt T_i)$ respects the flag $E_i$ and is unipotent, and   $\Sigma_{x_i'y_i', x_iy_i} $   also  respects $E_i$ and is unipotent by definition of slithering maps. It follows that $A(\wt T_i)\circ \Sigma_{x_i'y_i', x_iy_i}$  respects $E_i$ and is unipotent. Since $\Sigma$ is conjugate to $A(\wt T_i) \circ \Sigma_{x_i'y_i', x_iy_i}$, it therefore is also unipotent. 

In an obvious adaptation of the notation to $\wh\rho$, set $\wh E_i= \F_{\wh\rho}(x_i)$, $\wh F_i= \F_{\wh\rho}(y_i)$, $\wh G_i= \F_{\wh\rho}(z_i)\in \Flag$ for $i=1$, $2$, \dots, $i_0$, and $\wh E_0= \F_{\wh\rho}(u_0)$, $\wh F_0= \F_{\wh\rho}(v_0)$, $\wh G_0= \F_{\wh\rho}(w_0)$. Similarly, let $ E_i'= \F_{\rho}(x_i')$, $ F_i'= \F_{\rho}(y_i')$, $\wh E_i'= \F_{\wh\rho}(x_i')$, $\wh F_i'= \F_{\wh\rho}(y_i')$. 
Then,
\begin{align*}
 \left( \Sigma_{u_0v_0, x_iy_i}^{-1}  \circ  A_0^{-1} \circ \wh\Sigma_{u_0v_0, x_iy_i} \right)  (\wh E_i)
 &= \Sigma_{u_0v_0, x_iy_i}^{-1}  \circ  A_0^{-1} (\wh E_0) 
 = \Sigma_{u_0v_0, x_iy_i}^{-1} ( E_0)
 = E_i
 \\
  \left( \Sigma_{u_0v_0, x_iy_i}^{-1}  \circ  A_0^{-1} \circ \wh\Sigma_{u_0v_0, x_iy_i} \right)  (\wh F_i)
 &= \Sigma_{u_0v_0, x_iy_i}^{-1}  \circ  A_0^{-1} (\wh F_0) 
 = \Sigma_{u_0v_0, x_iy_i}^{-1} ( F_0)
 = F_i.
\end{align*}
 Since we are in the case where $xy$ faces the side $x_iz_i$  of $\wt T_i$,   $E_i'=E_i$ and  it follows that
\begin{align*}
 \Sigma(\wh E_i) 
 &= \left( \Sigma_{u_0v_0, x_iy_i}^{-1} \circ A_0^{-1} \circ \wh\Sigma_{u_0v_0, x_iy_i} \right)^{-1} \circ A(\wt T_i) \circ \Sigma_{x_i'y_i', x_iy_i} (E_i)
 \\
 &= \left( \Sigma_{u_0v_0, x_iy_i}^{-1} \circ A_0^{-1} \circ \wh\Sigma_{u_0v_0, x_iy_i} \right)^{-1} \circ A(\wt T_i) (E_i) 
 \\
 &= \left( \Sigma_{u_0v_0, x_iy_i}^{-1} \circ A_0^{-1} \circ \wh\Sigma_{u_0v_0, x_iy_i} \right)^{-1} (E_i) 
 =\wh E_i. 
\end{align*}
Finally, since $F_i'=G_i$ in the case considered,
\begin{align*}
 \Sigma(\wh F_i) 
 &= \left( \Sigma_{u_0v_0, x_iy_i}^{-1} \circ A_0^{-1} \circ \wh\Sigma_{u_0v_0, x_iy_i} \right)^{-1} \circ A(\wt T_i) \circ \Sigma_{x_i'y_i', x_iy_i} (F_i)
 \\
 &= \left( \Sigma_{u_0v_0, x_iy_i}^{-1} \circ A_0^{-1} \circ \wh\Sigma_{u_0v_0, x_iy_i} \right)^{-1} \circ A(\wt T_i) (G_i) 
 \\
 &= \left( \Sigma_{u_0v_0, x_iy_i}^{-1} \circ A_0^{-1} \circ \wh\Sigma_{u_0v_0, x_iy_i} \right)^{-1} (G_i')
\end{align*}
where the flag $G_i'$ is defined as in Lemma~\ref{lem:ExplicitSlithering1}. 

Therefore, by application of the linear isomorphism $\Sigma_{u_0v_0, x_iy_i}^{-1} \circ A_0^{-1} \circ \wh\Sigma_{u_0v_0, x_iy_i}$ and remembering that the double-ratios of four flags depend only on the lines of the last two flags, the flag $\Sigma(\wh F_i)$ is such that 
\begin{align*}
 X_{abc}\big(\wh E_i, \wh F_i, \Sigma(\wh F_i)\big)
 &=  X_{abc}( E_i,  F_i, G_i')
= \exp \tau_{\wh\rho}^{abc} (\wt T_i, x_i)
 \\
 &=  X_{abc}\big(\wh E_i, \wh F_i, \wh G_i \big)
 \\
 X_{a'b'}\big(\wh E_i, \wh F_i ; \wh \Sigma_{x_iy_i, u_0v_0}(\wh G_0), \Sigma(\wh F_i) \big) 
 &=  X_{a'b'} \big( E_i,  F_i ; \Sigma_{u_0v_0, x_iy_i}^{-1} \circ A_0^{-1}( \wh G_0), G_i' \big) 
 \\
 &=  X_{a'b'} \big( E_i,  F_i ; \Sigma_{x_iy_i, u_0v_0} ( G_0), G_i' \big) 
= \exp \sigma_{\rho}^{a'b'} (\wt T_0, \wt T_i)
  \\
  &= X_{a'b'}\big(\wh E_i, \wh F_i ; \wh \Sigma_{x_iy_i, u_0v_0}(\wh G_0), \wh G_i \big) 
 \end{align*}
 for every $a$, $b$, $c$, $a'$, $b'\geq 1$ with $a+b+c=a'+b'=n$. By Proposition~\ref{prop:DoubleTripleRatioClassifyFlagQuadruples} and Lemma~\ref{lem:ProjectiveMapBetweenFlagTriples}, this proves that $\Sigma(\wh F_i) =\wh G_i = \wh F_i'$. 
 
 Therefore, $\Sigma$ is a unipotent map respecting $\wh E_i = \wh E_i'$ and sending $\wh F_i$ to $\wh F_i'$. By uniqueness of such a map, $\Sigma$ is equal to the slithering map $ \wh \Sigma_{x_i'y_i', x_iy_i}$. 
 
 This concludes the proof of Lemma~\ref{lem:ExplicitSlithering2} in the case where $xy$ faces the side $x_iz_i$  of $\wt T_i$. The other case is essentially identical.
\end{proof}

Our next step is to borrow two estimates from \cite{BonDre2}. To quantify the ``width'' of a component $\wt T$ of $\wt S-\wt\lambda$ separating $xy$ from $\wt T_0$, choose a geodesic arc $k \subset \wt S$ joining a point of $\wt T_0$ to a point of $xy$. Then, for every component $\wt T$ of $\wt S-\wt\lambda$ separating $xy$ from $\wt T_0$, we can consider the length $\ell(k\cap \wt T)$ of its intersection with $k$. Let $\Sigma_{x_{\wt T}^{\phantom{\prime}} y_{\wt T}^{\phantom{\prime}} , x_{\wt T}'y_{\wt T}'} \in \GL$ be the slithering map associated to such a component, where  $x_{\wt T}y_{\wt T}$ is the side of $\wt T$ that faces $\wt T_0$ and where $x_{\wt T}'y_{\wt T}'$ is the side facing $xy$. 

\begin{lem}
\label{lem:ElementarySlitheringBoundedByGapLength}
 There exists a number $\nu>0$ such that
 $$
\Sigma_{x_{\wt T}^{\phantom{\prime}} y_{\wt T}^{\phantom{\prime}} , x_{\wt T}'y_{\wt T}'}  = \Id_{\R^n} + O \left( \ell(k\cap \wt T)^\nu \right)
 $$
  for every component $\wt T$ of $\wt S-\wt\lambda$ separating $xy$ from $\wt T_0$. 
  
  In addition, the exponent $\nu$ and the constant hidden in the Landau symbol $O\left( \ \right)$ depend only on a compact subset of $\wt S$ containing the arc $k$, and on a compact subset containing $\rho$ in the space of Hitchin representations (assuming the geodesic lamination $\lambda$  given). 
\end{lem}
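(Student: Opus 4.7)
The plan is to express $\Sigma_{\wt T}$ as a perturbation of the identity whose size is controlled by a flag-map distance, and then bound that distance via two successive H\"older estimates. Because the two sides $x_{\wt T}y_{\wt T}$ and $x_{\wt T}'y_{\wt T}'$ share a common vertex (say $x_{\wt T}=x_{\wt T}'$, the other case being identical), the map $\Sigma_{\wt T}$ is the unique unipotent isomorphism fixing $E=\F_\rho(x_{\wt T})$ and sending $F'=\F_\rho(y_{\wt T}')$ to $F=\F_\rho(y_{\wt T})$. In any basis of $\R^n$ adapted to the transverse pair $(E,F)$, the entries of $\Sigma_{\wt T}-\Id$ are rational functions of the coordinates of $F'$ that vanish identically on $F'=F$; uniformly on a compact subset of the space of transverse flag pairs, this gives a Lipschitz bound
\[
\|\Sigma_{\wt T}-\Id\| \leq C_1\, d_{\Flag}(F,F')
\]
for any fixed Riemannian distance $d_{\Flag}$ on $\Flag$.

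Next I would combine two standard H\"older estimates. On the one hand, Labourie's Anosov property implies that $\F_\rho\colon\bdry\to\Flag$ is H\"older continuous with respect to a visual metric $d_\infty$ on $\bdry$ seen from a fixed basepoint $p_0\in\wt S$, with exponent $\alpha=\alpha(\rho)>0$ and constant that vary continuously with $\rho$ in the Hitchin component. On the other hand, hyperbolic geometry in $\wt S$ yields an exponent $\beta>0$ and a constant depending only on a compact subset containing the arc $k$ such that
\[
d_\infty(y_{\wt T},y_{\wt T}') \leq C_3\, \ell(k\cap\wt T)^\beta
\]
for every component $\wt T$ separating $xy$ from $\wt T_0$: a small crossing length forces $k$ to approach a vertex of the ideal triangle $\wt T$, which confines the remaining vertices $y_{\wt T}$ and $y_{\wt T}'$ to a narrow visual window from $p_0$. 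Chaining the three inequalities produces the lemma with exponent $\nu=\alpha\beta$.

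The main obstacle is the geometric comparison in the second display: it must be made uniform over the infinite family of triangles crossed by $k$, including pathological configurations in which $k$ grazes a vertex. This is precisely one of the two estimates that the authors borrow from \cite{BonDre2}, where it is established in the course of proving convergence of the slithering construction, and whose conclusion I would invoke directly rather than reprove. Uniformity in $\rho$ then follows because the H\"older data for Hitchin (and hence Anosov) representations vary continuously throughout the Hitchin component, while the constants $C_3$ and $\beta$ depend only on the compact subset of $\wt S$ containing $k$.
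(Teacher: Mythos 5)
The paper's own proof is literally a one-line citation, "See [BonDre2, Lem.~5.2]", so your proposal is consistent with — and more informative than — what the paper does: you unpack the shape of the argument that underlies the cited lemma (linear-algebra control of the unipotent map by a flag distance, H\"older continuity of the flag map, and a hyperbolic-geometry comparison between crossing length and visual distance), and then correctly observe that the final hyperbolic estimate, uniform over the infinite family of triangles crossed by $k$, is the real content and should simply be invoked from \cite{BonDre2} rather than reproved. Two small points worth flagging if you ever turn this sketch into a full proof: the Lipschitz constant $C_1$ in your first display must be made uniform over the (a priori unbounded) family of flag pairs $\big(\F_\rho(x_{\wt T}), \F_\rho(y_{\wt T})\big)$ arising from triangles $\wt T$ separating $xy$ from $\wt T_0$; one reduces to a compact set of flag pairs using $\rho$-equivariance together with cocompactness of the $\pi_1(S)$-action and the uniform transversality of the positive flag map. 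And in your geometric comparison $d_\infty(y_{\wt T},y_{\wt T}')\le C_3\,\ell(k\cap\wt T)^\beta$, the exponent $\beta$ can actually be taken to be $1$ when $k$ crosses $\wt T$ generically; the H\"older loss is only needed to absorb the configurations where $k$ grazes the shared vertex, which is exactly the uniformity issue handled in the cited lemma of \cite{BonDre2}.
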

\begin{proof}
 See \cite[Lem.~5.2]{BonDre2}. 
\end{proof}

\begin{lem}
\label{lem:HolderSumOfGapLengths}
 As $\wt T$ ranges over all components of $\wt S-\wt\lambda$ that separate $xy$ from $\wt T_0$, the sum
 $$
 \sum_{\wt T}  \ell(k\cap \wt T)^\nu
 $$
 is convergent for every $\nu>0$. 
 
 More precisely, for every $\nu>\nu'>0$,
  $$
 \sum_{\ell(k\cap \wt T) <\epsilon}  \ell(k\cap \wt T)^\nu = O (\epsilon^{\nu'})
 $$
for every $\epsilon>0$, where the sum is taken over all $\wt T$ with $\ell(k\cap \wt T) <\epsilon$ and where the constant hidden in the Landau symbol $O(\ )$ depends only on a compact subset of $\wt S$ containing the arc $k$ and on the exponents $\nu$, $\nu'$. 
\end{lem}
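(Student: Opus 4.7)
The plan is to establish the convergence claim first, then derive the tail estimate as a short Hölder-type inequality.

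The case $\nu\geq 1$ is routine. The arc $k$ is geodesic in $\wt S\cong\HH^2$ and each component $\wt T$ of $\wt S-\wt\lambda$ is a convex ideal triangle, so each $k\cap\wt T$ is a single connected subarc, and the subarcs corresponding to distinct components are pairwise disjoint. Since $\wt\lambda$ has measure zero in $k$, this yields $\sum_{\wt T}\ell(k\cap\wt T)\leq\ell(k)<\infty$; for $\nu\geq 1$ the uniform bound $\ell(k\cap\wt T)^\nu\leq\ell(k)^{\nu-1}\ell(k\cap\wt T)$ closes the case.

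For $0<\nu<1$ the plan is to exploit the combinatorial/geometric regularity of a geodesic lamination in a thin train track neighborhood $\Phi$ of $\lambda$, in the spirit of \cite{BonDre2}. I would parametrize the components $\wt T$ traversed by $k$ by a combinatorial depth $n\geq 0$ in an iterative refinement of $\Phi$: at each step, every branch is subdivided at its switches into narrower subbranches. The two quantitative inputs I need are, first, an exponential contraction of subbranch widths under refinement (coming from the bounded hyperbolic geometry of the ideal-triangle components of $S-\lambda$ and the compactness of $S$), giving $\ell(k\cap\wt T)=O(r^n)$ for a fixed $r<1$ whenever $\wt T$ is first exposed at depth $n$; and second, a bound on the number of components exposed at depth $n$, controlled by the fact that $k$ has finite length and hence meets only a bounded number of branches of $\Phi$ at each level. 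Combining these yields a convergent series of the form $\sum_n(\text{count at depth }n)\cdot r^{n\nu}$ for every $\nu>0$.

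The tail estimate then follows easily. Given $\nu>\nu'>0$, convergence at the exponent $\nu-\nu'>0$ gives $\sum_{\wt T}\ell(k\cap\wt T)^{\nu-\nu'}<\infty$, with the value of this sum depending only on a compact subset of $\wt S$ containing $k$ (and on $\nu,\nu'$). Whenever $\ell(k\cap\wt T)<\epsilon$ we have $\ell(k\cap\wt T)^{\nu'}\leq\epsilon^{\nu'}$, so
\[
\sum_{\ell(k\cap\wt T)<\epsilon}\ell(k\cap\wt T)^\nu \;\leq\; \epsilon^{\nu'}\sum_{\wt T}\ell(k\cap\wt T)^{\nu-\nu'} \;=\; O(\epsilon^{\nu'}),
\]
with the required uniformity of the implicit constant.

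The main obstacle will be the counting step in the $\nu<1$ case: showing, for a fixed finite-length arc $k$, that the number of complementary components newly exposed at refinement depth $n$ grows slowly enough to be dominated by the exponential width-contraction, and uniformly so as $\nu\to 0^+$. This is where compactness of $S$, maximality of $\lambda$, and finiteness of $\ell(k)$ all enter essentially; without them, exponential contraction alone would only yield convergence above some positive threshold exponent.
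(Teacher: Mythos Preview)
The paper does not actually prove this lemma; it simply cites \cite[Lem.~5.3]{BonDre2}. Your sketch is essentially the standard argument behind that reference, and indeed the same ingredients reappear later in this paper (see Sublemma~\ref{sublem:EstimateWeilZipperOpening0}): one introduces the combinatorial depth $r(\wt T)$ of each gap in a fixed train track neighborhood, shows from negative curvature that $\ell(k\cap\wt T)\leq D\,\E^{-C\,r(\wt T)}$, and then bounds the number of gaps at each depth. Your derivation of the tail estimate from convergence at exponent $\nu-\nu'$ is clean and correct.

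The one point where your reasoning is slightly off is the counting step. The bound on the number of components ``exposed at depth $n$'' does \emph{not} come from $k$ having finite length; that would only tell you the total number of gaps is countable, with no control on how many sit at a given depth. The correct mechanism is purely combinatorial: each gap $k\cap\wt T$ with $r(\wt T)=n$ corresponds to a spike of one of the finitely many complementary triangles of $S-\lambda$ that has travelled through exactly $n$ branches of the train track before reaching the branch containing (the relevant piece of) $k$. Since there are only finitely many such spikes on the compact surface, and $k$ meets only finitely many branches, the number of gaps at each fixed depth $n$ is bounded by a constant independent of $n$. This is precisely what makes $\sum_n (\text{const})\cdot r^{n\nu}$ converge for \emph{every} $\nu>0$, not just above some threshold.
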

\begin{proof}
 See \cite[Lem.~5.3]{BonDre2}. 
\end{proof}

\begin{lem}
\label{lem:ElementarySlitheringBoundedByGapLength2}
For the exponent $\nu$ of  Lemma~{\upshape\ref{lem:ElementarySlitheringBoundedByGapLength}}, 
 $$
 A(\wt T_i)=   \Id_{\R^n} + O \left( \ell(k\cap \wt T_i)^\nu \right)
 $$
 where the constant hidden in the Landau symbol $O(\ )$ depends only on a compact subset of $\wt S$ containing the arc $k$, and on a compact subset of the space of Hitchin representations containing both $\rho$ and $\wh\rho$.
\end{lem}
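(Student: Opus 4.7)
The natural approach is to invert the conjugation identity of Lemma~\ref{lem:ExplicitSlithering2}. Setting
$$
C_i = \Sigma_{u_0v_0, x_iy_i}^{-1} \circ A_0^{-1} \circ \wh\Sigma_{u_0v_0, x_iy_i} \in \GL,
$$
that identity rearranges to
$$
A(\wt T_i) = C_i \circ \wh\Sigma_{x_i'y_i', x_iy_i} \circ C_i^{-1} \circ \Sigma_{x_i'y_i', x_iy_i}^{-1}.
$$
The plan is then to combine the elementary slithering bound of Lemma~\ref{lem:ElementarySlitheringBoundedByGapLength}, applied to both $\rho$ and $\wh\rho$ (selecting a single compact neighborhood of $\rho$ in the space of Hitchin representations that contains $\wh\rho$, yielding a common exponent $\nu>0$), with a uniform bound on $\|C_i^{\pm 1}\|$.

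The main obstacle is precisely this uniform boundedness of $C_i$ and $C_i^{-1}$ as $\wt T_i$ ranges over the components of $\wt S - \wt\lambda$ separating $xy$ from $\wt T_0$. I would handle it by regarding the slithering maps $\Sigma_{u_0v_0, x_iy_i}$ and $\wh\Sigma_{u_0v_0, x_iy_i}$ as limits of finite compositions of elementary slitherings, each of which is of the form $\Id_{\R^n} + O(\ell(k\cap \wt T)^\nu)$ by Lemma~\ref{lem:ElementarySlitheringBoundedByGapLength}. The H\"older-type convergence of $\sum_{\wt T}\ell(k\cap \wt T)^\nu$ from Lemma~\ref{lem:HolderSumOfGapLengths} then bounds the operator norms of the partial products (and of their inverses) by $\exp\bigl(C \sum_{\wt T}\ell(k\cap \wt T)^\nu\bigr)$, uniformly in $i$ and with constants depending only on the compact subsets named in the statement. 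Since $A_0$ is independent of $i$, it follows that $\|C_i^{\pm 1}\|$ is uniformly bounded.

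With these two ingredients in hand, the conclusion is essentially formal: conjugation by a bounded operator preserves $O(\ell^\nu)$ estimates, and an inverse of $\Id_{\R^n}+O(\ell^\nu)$ is again of the form $\Id_{\R^n}+O(\ell^\nu)$ in the relevant regime, so
$$
A(\wt T_i) = \bigl(\Id_{\R^n} + O(\ell(k\cap \wt T_i)^\nu)\bigr)\bigl(\Id_{\R^n} + O(\ell(k\cap \wt T_i)^\nu)\bigr) = \Id_{\R^n} + O\bigl(\ell(k\cap \wt T_i)^\nu\bigr),
$$
with the constants controlled in the desired uniform way.
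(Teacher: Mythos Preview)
Your proof is correct and follows essentially the same approach as the paper: the paper also rearranges Lemma~\ref{lem:ExplicitSlithering2} to express $A(\wt T_i)$ as $C_i \circ \wh\Sigma_{x_i'y_i', x_iy_i} \circ C_i^{-1} \circ \Sigma_{x_i'y_i', x_iy_i}^{-1}$ with the same conjugator $C_i$, bounds $C_i^{\pm1}$ uniformly by combining Lemmas~\ref{lem:ElementarySlitheringBoundedByGapLength} and~\ref{lem:HolderSumOfGapLengths}, and then concludes from the elementary slithering estimate applied to both $\rho$ and $\wh\rho$.
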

\begin{proof}
 We can rewrite the equality of Lemma~\ref{lem:ExplicitSlithering2} as 
 $$
 A(\wt T_i) =  \left( \Sigma_{u_0v_0, x_iy_i}^{-1} \circ A_0^{-1} \circ \wh\Sigma_{u_0v_0, x_iy_i} \right) \circ  \wh\Sigma_{x_i'y_i', x_iy_i} \circ 
 \left( \Sigma_{u_0v_0, x_iy_i}^{-1} \circ A_0^{-1} \circ \wh\Sigma_{u_0v_0, x_iy_i} \right)^{-1} \circ \Sigma_{x_i'y_i', x_iy_i}^{-1} .
  $$
  
  By Lemma~\ref{lem:ElementarySlitheringBoundedByGapLength}, $ \wh\Sigma_{x_i'y_i', x_iy_i} =  \Id_{\R^n} + O \left( \ell(k\cap \wt T_i)^\nu \right)$. 
  
  Also, the slithering map  $\Sigma_{u_0v_0, x_iy_i}$ is defined as the (infinite) composition of the elementary slithering maps $\Sigma_{\wt T}$ associated to the components $\wt T$ separating $x_iy_i$ from $\wt T_0$. By the combination of Lemmas~\ref{lem:ElementarySlitheringBoundedByGapLength} and \ref{lem:HolderSumOfGapLengths}, it follows that  $\Sigma_{u_0v_0, x_iy_i}$ is uniformly bounded in $\GL$. The same holds for $\Sigma_{u_0v_0, x_iy_i}^{-1}$, $\wh\Sigma_{u_0v_0, x_iy_i}$ and $\wh\Sigma_{u_0v_0, x_iy_i}^{-1}$. Therefore, the terms  $ \Sigma_{u_0v_0, x_iy_i}^{-1} \circ A_0^{-1} \circ \wh\Sigma_{u_0v_0, x_iy_i} $ and their inverses are uniformly bounded in $\GL$. it follows that 
 $$
  \left( \Sigma_{u_0v_0, x_iy_i}^{-1} \circ A_0^{-1} \circ \wh\Sigma_{u_0v_0, x_iy_i} \right) \circ  \wh\Sigma_{x_i'y_i', x_iy_i} \circ 
 \left( \Sigma_{u_0v_0, x_iy_i}^{-1} \circ A_0^{-1} \circ \wh\Sigma_{u_0v_0, x_iy_i} \right)^{-1} =  \Id_{\R^n} + O \left( \ell(k\cap \wt T_i)^\nu \right).
 $$
 Since  $ \Sigma_{x_i'y_i', x_iy_i} =  \Id_{\R^n} + O \left( \ell(k\cap \wt T_i)^\nu \right)$  by another application of Lemma~\ref{lem:ElementarySlitheringBoundedByGapLength}, this concludes the proof of Lemma~\ref{lem:ElementarySlitheringBoundedByGapLength2}. 
\end{proof}

We now return to the proof of Proposition~\ref{prop:VariationSlitheringMap}. 
By another manipulation of the equality of Lemma~\ref{lem:ExplicitSlithering2},
$$
A_0^{-1} \circ \wh\Sigma_{x_i'y_i',u_0v_0}  \circ  A_0 \circ \Sigma_{x_i'y_i', u_0v_0}^{-1} = \left(A_0^{-1} \circ  \wh \Sigma_{x_iy_i, u_0v_0} \circ A_0 \circ \Sigma_{x_iy_i, u_0v_0}^{-1} \right) \circ A(\wt T_i) .
 $$
 
 Similarly, let $B_i \in \GL$ be such that
$$
A_0^{-1} \circ \wh\Sigma_{x_{i+1}y_{i+1}, u_0v_0}  \circ  A_0 \circ \Sigma_{x_{i+1}y_{i+1}, u_0v_0}^{-1} = \left(A_0^{-1} \circ  \wh \Sigma_{x_i'y_i',u_0v_0} \circ A_0 \circ \Sigma_{x_i'y_i',u_0v_0}^{-1} \right) \circ B_i .
 $$
 Namely, 
 $$
 B_i =   \Sigma_{x_i'y_i',u_0v_0} \circ A_0^{-1} \circ \wh \Sigma_{x_i'y_i',u_0v_0}^{-1}\circ \wh\Sigma_{x_{i+1}y_{i+1}, u_0v_0}  \circ  A_0 \circ \Sigma_{x_{i+1}y_{i+1}, u_0v_0}^{-1}.
 $$
 
 Then, 
 $$
 A_0^{-1} \circ \wh\Sigma_{xy, u_0v_0}  \circ  A_0 \circ \Sigma_{xy, u_0v_0}^{-1} = B_0 \circ A(\wt T_1) \circ B_1 \circ A(\wt T_2) \circ B_2 \circ \dots\circ A(\wt T_{i_0}) \circ B_{i_0}
 $$
 if we define
\begin{align*}
  B_0 &=    \wh\Sigma_{x_{1}y_{1}, u_0v_0}  \circ  A_0 \circ \Sigma_{x_{1}y_{1}, u_0v_0}^{-1}
  \\
   B_{i_0} &=   \Sigma_{x_{i_0}'y_{i_0}',u_0v_0} \circ A_0^{-1} \circ \wh \Sigma_{x_{i_0}'y_{i_0}',u_0v_0}^{-1}\circ \wh\Sigma_{xy, u_0v_0}  \circ  A_0 \circ \Sigma_{xy, u_0v_0}^{-1}.
\end{align*}

\begin{lem}
\label{lem:SmallSlitheringNegligibleInLimit}
 As the  family $\{ \wt T_1, \wt T_2, \dots, \wt T_{i_0}\}$ tends to the set of all components of $\wt S - \wt \lambda$ separating $xy$ from $\wt T_0$, the two terms
 $$
  B_0 \circ A(\wt T_1) \circ B_1 \circ A(\wt T_2) \circ B_2 \circ \dots\circ A(\wt T_{i_0}) \circ B_{i_0}
 $$
 and
 $$
 A(\wt T_1) \circ A(\wt T_2) \circ \dots \circ A(\wt T_{i_0})
 $$
 have the same limit.  
\end{lem}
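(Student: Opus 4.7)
The plan is to show that each correction factor $B_i$ tends to the identity in $\GL$ as the family $\{\wt T_1,\dots,\wt T_{i_0}\}$ densifies, at a rate controlled by the widths $\ell(k\cap \wt T)$ of those components $\wt T$ of $\wt S-\wt\lambda$ that have been omitted from the family. The summability result of Lemma~\ref{lem:HolderSumOfGapLengths} will then guarantee that the total error introduced by inserting the $B_i$'s into the product is negligible in the limit, so that the two products have the same limit.

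The first step is to rewrite each $B_i$ in a form that makes its proximity to $\Id$ explicit. For $1\leq i\leq i_0-1$, applying the composition law of Lemma~\ref{lem:CompositionSlitheringMaps} to factor
$$
\wh\Sigma_{x_{i+1}y_{i+1}, u_0v_0} = \wh\Sigma_{x_{i+1}y_{i+1}, x_i'y_i'} \circ \wh\Sigma_{x_i'y_i', u_0v_0},
$$
and similarly $\Sigma_{u_0v_0, x_{i+1}y_{i+1}}=\Sigma_{u_0v_0, x_i'y_i'}\circ \Sigma_{x_i'y_i', x_{i+1}y_{i+1}}$, the explicit formula defining $B_i$ reduces to a conjugate of the slithering map $\wh\Sigma_{x_{i+1}y_{i+1}, x_i'y_i'}$ by the factor $\wh\Sigma_{x_i'y_i', u_0v_0}\circ A_0\circ \Sigma_{u_0v_0, x_i'y_i'}$, times a correction $\Sigma_{x_i'y_i', x_{i+1}y_{i+1}}$. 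The boundary cases $B_0$ and $B_{i_0}$ admit analogous expressions, involving respectively the slitherings over the components between $\wt T_0$ and $\wt T_1$, and between $\wt T_{i_0}$ and the leaf $xy$.

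The point is that $x_i'y_i'$ and $x_{i+1}y_{i+1}$ are separated only by those components $\wt T$ of $\wt S-\wt\lambda$ lying between $\wt T_i$ and $\wt T_{i+1}$, none of which lies in the selected family; denote this collection by $\mathcal G_i$. Writing $\wh\Sigma_{x_{i+1}y_{i+1}, x_i'y_i'}$ and $\Sigma_{x_i'y_i', x_{i+1}y_{i+1}}$ as limits of products of elementary slithering maps over the components of $\mathcal G_i$, Lemmas~\ref{lem:ElementarySlitheringBoundedByGapLength} and \ref{lem:HolderSumOfGapLengths} yield
$$
\bigl\| \wh\Sigma_{x_{i+1}y_{i+1}, x_i'y_i'} -\Id\bigr\| + \bigl\|\Sigma_{x_i'y_i', x_{i+1}y_{i+1}} -\Id\bigr\| = O\Bigl(\sum_{\wt T\in\mathcal G_i} \ell(k\cap \wt T)^\nu\Bigr).
$$
Combined with the uniform $\GL$-boundedness of the conjugating factor (itself a consequence of Lemmas~\ref{lem:ElementarySlitheringBoundedByGapLength} and \ref{lem:HolderSumOfGapLengths} applied to the full slitherings $\wh\Sigma_{x_i'y_i', u_0v_0}$ and $\Sigma_{u_0v_0, x_i'y_i'}$), this gives $\|B_i-\Id\| = O\bigl(\sum_{\wt T\in\mathcal G_i} \ell(k\cap \wt T)^\nu\bigr)$. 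Since $\bigcup_i\mathcal G_i$ is precisely the set of components omitted from the family, summing over $i$ gives
$$
\sum_{i=0}^{i_0} \|B_i-\Id\| \leq C \sum_{\wt T \notin\{\wt T_1,\dots,\wt T_{i_0}\}} \ell(k\cap \wt T)^\nu,
$$
and as the family becomes dense every omitted component satisfies $\ell(k\cap \wt T)<\epsilon$ with $\epsilon\to 0$, so Lemma~\ref{lem:HolderSumOfGapLengths} forces the right-hand side to tend to $0$.

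To conclude, the partial products $A(\wt T_{j_1})\circ \dots \circ A(\wt T_{j_2})$ are uniformly bounded in $\GL$, since Lemma~\ref{lem:ElementarySlitheringBoundedByGapLength2} gives $A(\wt T_i)=\Id+O(\ell(k\cap\wt T_i)^\nu)$ and Lemma~\ref{lem:HolderSumOfGapLengths} controls the total. Writing $B_i=\Id+\delta_i$ with $\sum_i\|\delta_i\|\to 0$, and expanding $B_0\circ A(\wt T_1)\circ B_1\circ \dots \circ A(\wt T_{i_0})\circ B_{i_0}$ as $A(\wt T_1)\circ \dots \circ A(\wt T_{i_0})$ plus terms each containing at least one $\delta_i$ sandwiched between bounded partial products, the total error is bounded by a uniform constant times $\sum_i\|\delta_i\|$ and therefore tends to $0$. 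The main obstacle will be executing the quantitative estimate on $\|B_i-\Id\|$: it requires the conjugating factors $\wh\Sigma_{x_i'y_i', u_0v_0}\circ A_0\circ\Sigma_{u_0v_0, x_i'y_i'}$ and their inverses to stay in a compact subset of $\GL$ uniformly in $i$ and in the choice of family, which again rests on the delicate H\"older-type summability provided by Lemma~\ref{lem:HolderSumOfGapLengths}.
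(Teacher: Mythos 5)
Your proof is correct and follows essentially the same route as the paper: rewrite each $B_i$ as a bounded conjugate of the slithering map over the gap between $\wt T_i$ and $\wt T_{i+1}$, deduce $\|B_i-\Id\|=O\bigl(\sum_{\wt T\in\mathcal G_i}\ell(k\cap\wt T)^\nu\bigr)$ from Lemmas~\ref{lem:ElementarySlitheringBoundedByGapLength} and~\ref{lem:HolderSumOfGapLengths}, and propagate the error through the product using the uniform boundedness of the partial products of the $A(\wt T_i)$. The paper organizes the final step as a telescoping sum $C_0-C_{i_0}=\sum_i(C_i-C_{i+1})$, each summand isolating one factor $B_i-\Id$, whereas you phrase it as a multinomial expansion; these are the same argument, though the telescoping version avoids the (small, and fixable) gap in your claim that the expansion error is a uniform constant times $\sum_i\|\delta_i\|$ rather than $\prod_i(1+\|\delta_i\|)-1$.
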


\begin{proof}
We first rewrite
\begin{align*}
 B_i &=   \Sigma_{x_i'y_i',u_0v_0} \circ A_0^{-1} \circ \wh \Sigma_{x_i'y_i',u_0v_0}^{-1}\circ \wh\Sigma_{x_{i+1}y_{i+1}, u_0v_0}  \circ  A_0 \circ \Sigma_{x_{i+1}y_{i+1}, u_0v_0}^{-1}
 \\
&=  \left( \Sigma_{x_i'y_i',u_0v_0} \circ A_0^{-1} \circ \wh \Sigma_{x_i'y_i',u_0v_0}^{-1} \right) \circ \wh\Sigma_{x_{i+1}y_{i+1}, x_i'y_i'} 
\\
&\qquad\qquad\qquad\qquad
\circ  \left( \Sigma_{x_i'y_i',u_0v_0} \circ A_0^{-1} \circ \wh \Sigma_{x_i'y_i',u_0v_0}^{-1} \right)^{-1} 
 \circ \Sigma_{x_{i+1}y_{i+1}, x_i'y_i'}^{-1} .
\end{align*}

The same argument as in the proof of Lemma~\ref{lem:ElementarySlitheringBoundedByGapLength2} then gives that
$$
B_i = \Id_{\R^n} + O\left( \sum_{\wt T'} \ell(k \cap \wt T')^\nu \right)
$$
where $\wt T'$ ranges over all components of $\wt S- \wt\lambda$ that separate $\wt T_i$ from $\wt T_{i+1}$, if $i<i_0$, and over all components of $\wt S- \wt\lambda$ that separate $\wt T_{i_0}$ from $xy$ if $i=i_0$.

Now, consider
$$
C_i = A(\wt T_1) \circ A(\wt T_2) \circ \dots \circ A(\wt T_i) \circ B_i \circ A(\wt T_{i+1}) \circ B_{i+1} \circ \dots \circ A(\wt T_{i_0}) \circ B_{i_0},
$$
obtained from 
$$C_0= B_0 \circ A(\wt T_1) \circ B_1 \circ A(\wt T_2) \circ B_2 \circ \dots\circ  A(\wt T_{i_0}) \circ B_{i_0}
$$ by omitting the $B_j$ with $j<i$. Then, using Lemmas~\ref{lem:ElementarySlitheringBoundedByGapLength2} and \ref{lem:HolderSumOfGapLengths} as well as the above estimate for the $B_j$ to bound the terms $ A(\wt T_1) \circ A(\wt T_2) \circ \dots \circ A(\wt T_i)$ and $  A(\wt T_{i+1}) \circ B_{i+1} \circ \dots \circ A(\wt T_{i_0}) \circ B_{i_0}$,
\begin{align*}
 C_{i} -C_{i+1} 
 &= A(\wt T_1) \circ A(\wt T_2) \circ \dots \circ A(\wt T_i) \circ (B_i - \Id_{\R^n}) \circ A(\wt T_{i+1}) \circ B_{i+1} \circ \dots \circ A(\wt T_{i_0}) \circ B_{i_0}
 \\
 &= O\left( \sum_{T} \ell(k \cap \wt T)^\nu \right)
\end{align*}
where the sum is over all the components $\wt T$ of $\wt S- \wt\lambda$ that separate $\wt T_i$ from $\wt T_{i+1}$. 

By induction, we conclude that
$$
 C_0 -C_{i_0}  = O\left( \sum_{T\not\in \mathcal T} \ell(k \cap \wt T)^\nu \right)
$$
where the sum is now over all the components $\wt T$ of $\wt S- \wt\lambda$ that separate $xy$ from  $\wt T_0$ and do not belong to the family $\mathcal T = \{ \wt T_1, \wt T_2, \dots, \wt T_{i_0} \}$. By Lemma~\ref{lem:HolderSumOfGapLengths}, $C_0 -C_{i_0} $ therefore converges to 0 as the family $\mathcal T$ tends to the set of all components of $\wt S- \wt\lambda$ separating $xy$ from $\wt T_0$. Since
\begin{align*}
 C_0 &= B_0 \circ A(\wt T_1) \circ B_1 \circ A(\wt T_2) \circ \dots\circ A(\wt T_{i_0}) \circ B_{i_0}
 \\
 C_{i_0} &=  A(\wt T_1) \circ A(\wt T_2) \circ \dots \circ A(\wt T_{i_0}),
\end{align*}
this completes the proof of Lemma~\ref{lem:SmallSlitheringNegligibleInLimit}. 
\end{proof}

Because 
$$
B_0 \circ A(\wt T_1) \circ B_1 \circ A(\wt T_2) \circ \dots\circ A(\wt T_{i_0}) \circ B_{i_0} = A_0^{-1} \circ \wh\Sigma_{xy, u_0v_0}  \circ  A_0 \circ \Sigma_{xy, u_0v_0}^{-1} 
$$
is actually independent of the family $\mathcal T = \{ \wt T_1, \wt T_2, \dots, \wt T_{i_0} \}$, this shows that $A_0^{-1} \circ \wh\Sigma_{xy, u_0v_0}  \circ  A_0 \circ \Sigma_{xy, u_0v_0}^{-1} $ is the limit of $A(\wt T_1) \circ A(\wt T_2) \circ \dots \circ A(\wt T_{i_0})$ as $\mathcal T$ tends to the set of all components of $\wt S- \wt\lambda$ separating $xy$ from $\wt T_0$.

Equivalently, $\wh\Sigma_{xy, u_0v_0}$ is the limit of 
$$
A_0 \circ A(\wt T_1) \circ A(\wt T_2) \circ \dots \circ A(\wt T_{i_0}) \circ \Sigma_{xy, u_0v_0} \circ A_0^{-1}
$$
 as the family $ \{ \wt T_1, \wt T_2, \dots, \wt T_{i_0} \}$ tends to the set of all components of $\wt S- \wt\lambda$ separating $xy$ from $\wt T_0$. This completes the proof of Proposition~\ref{prop:VariationSlitheringMap}. 
\end{proof}

\subsection{Infinitesimal variation of flag maps}
\label{subsect:InfinitesimalVariationFlagMap}

Let $t\mapsto [\rho_t] \in \Hit(S)$ be a smooth curve in the Hitchin component, tangent to a vector $V \in T_{[\rho]} \Hit(S)$ at the point $[\rho] = [\rho_0]$. Our overall goal is to describe the Weil class $[c_V] \in H^1(S; \sln_{\Ad\rho})$ of this tangent vector $V$ in terms of the directional derivatives
\begin{align*}
 \dot \tau^{abc}_V(T,x) &= \frac{d}{dt} \tau^{abc}_{[\rho_t]}(T,x) _{|t=0}
 \\
 \dot \sigma^{ab}_V(e) &= \frac{d}{dt} \sigma^{ab}_{[\rho_t]}(e) _{|t=0}
\end{align*}
of its generalized Fock-Goncharov invariants. In order to implement the methods of \S \ref{subsect:SimplicialWeil}, we will rely on the computations of \S \ref{subsect:VariationSlitheringMap} to determine the corresponding infinitesimal variation of the flag map $\F_{\rho_t} \colon \bdry \to \PGL$, namely the tangent vector
$$
\frac{d}{dt} \F_{\rho_t} (w)_{|t=0} \in T_{ \F_{\rho} (w)}  \Flag
$$ 
for $w\in \bdry$. Actually, we will not need this computation for all $w\in \bdry$, just for those $w$ in the dense subset consisting of all vertices of the components of $\wt S- \wt\lambda$.

The formulas of \S \ref{subsect:VariationSlitheringMap} involved the eruption and shearing maps 
\begin{align*}
L^{abc}_{EFG}(t) &= \E^{- t}\Id_{E^{(a)}} \oplus  \Id_{F^{(b)}} \oplus  \Id_{G^{(c)}}
 \\
R^{abc}_{EFG}(t) &=  \Id_{E^{(a)}} \oplus \E^{ t} \Id_{F^{(b)}} \oplus  \Id_{G^{(c)}}
\\
 S^{ab}_{EF}(t) &= \E^{ t}\Id_{E^{(a)}} \oplus  \Id_{F^{(b)}} .
\end{align*}
While these were well-defined linear isomorphisms $\R^n \to \R^n$ in  \S \ref{subsect:VariationSlitheringMap}, it is now convenient to resume considering them projectively, as elements of $\PGL$, and to take the derivatives
\begin{align}
\label{eqn:InfinitesimalLeftEruptionDef}
 \dot L^{abc}_{EFG} &= \frac{d}{dt}  L^{abc}_{EFG}(t) _{|t=0}= \tfrac{-b-c}{n} \Id_{E^{(a)}} \oplus \tfrac{a}{n}  \Id_{F^{(b)}} \oplus  \tfrac{a}{n} \Id_{G^{(c)}} \in \mathfrak{pgl}_n(\R)=\sln
 \\
 \label{eqn:InfinitesimalRightEruptionDef}
 \dot R^{abc}_{EFG} &=  \frac{d}{dt}  R^{abc}_{EFG}(t) _{|t=0}= \tfrac{-b}{n} \Id_{E^{(a)}} \oplus\tfrac{a+c}{n} \Id_{F^{(b)}} \oplus  \tfrac{-b}{n} \Id_{G^{(c)}}\in \sln
 \\
 \label{eqn:InfinitesimalShearDef}
 \dot  S^{ab}_{EF} &= \frac{d}{dt}  S^{ab}_{EF}(t)_{|t=0} =  \tfrac{b}{n} \Id_{E^{(a)}} \oplus  \tfrac{-a}{n} \Id_{F^{(b)}} \in \sln.
\end{align}
Note that, in this context, one needs to rescale the original formulas to represent $ L^{abc}_{EFG}(t)$, $ R^{abc}_{EFG}(t)$, $ S^{ab}_{EF}(t) \in \PGL$ by elements of $\SL$, in order to take advantage of the isomorphism between the Lie algebras $\mathfrak{pgl}_n(\R)$ and $\sln$.

For a flag $F\in \Flag$ and $\dot A \in \sln$, let
$$
\dot A F = \frac{d}{dt} A_t F_{|t=0} \in T_F \Flag
$$
denote the vector tangent to the curve $t \mapsto A_t F \in \Flag$ defined by any curve $t \mapsto A_t \in \PGL$ tangent to $\dot A = \frac{d}{dt} A_t{}_{|t=0}$ at $A_0 = \Id_{\R^n}$. 

To simplify the formulas, we normalize the Hitchin representations $\rho_t\colon \pi_1(S) \to \PGL$ as follows. Arbitrarily choose a base component $\wt T_0$ among all components of $\wt S - \wt\lambda$. Index its vertices as $x_{\wt T_0}$, $y_{\wt T_0}$, $z_{\wt T_0} \in \bdry$ counterclockwise in this order around $\wt T_0$. By Lemma~\ref{lem:ProjectiveMapBetweenFlagTriples}, there is a projective map $A_t\in \PGL$ sending the flag $\F_{\rho_t}(x_{\wt T_0})$ to $\F_\rho(x_{\wt T_0})$, the flag $\F_{\rho_t}(y_{\wt T_0})$ to $\F_\rho(y_{\wt T_0})$, and the line $\F_{\rho_t}(z_{\wt T_0})^{(1)}$ to $\F_\rho(z_{\wt T_0})^{(1)}$. After conjugating $\rho_t$ with $A_t$, which replaces the flag map $\F_{\rho_t}$ with $A_t \circ \F_{\rho_t}$, we can therefore arrange that  the flags $\F_{\rho_t}(x_{\wt T_0})$, $\F_{\rho_t}(y_{\wt T_0}) \in \Flag$  and the line $\F_{\rho_t}(z_{\wt T_0})^{(1)}\in \mathbb{RP}^{n-1}$ are independent of $t$, namely respectively equal to $\F_{\rho}(x_{\wt T_0})$, $\F_{\rho}(y_{\wt T_0})$, $\F_{\rho}(z_{\wt T_0})^{(1)}$.

Finally, for every component $\wt T$ of $\wt S-\wt\lambda$ that is different from the base component $\wt T_0$, we counterclockwise index the vertices of $\wt T$ as $x_{\wt T}$, $y_{\wt T}$, $z_{\wt T} \in \bdry$ in such a way that $x_{\wt T}y_{\wt T}$ is the side of $\wt T$ that faces $\wt T_0$. Then, for each component $\wt T$ of $\wt S-\wt\lambda$ (including $\wt T_0$), we consider the flags $E_{\wt T} = \F_\rho(x_{\wt T})$, $F_{\wt T} = \F_\rho(y_{\wt T})$, $G_{\wt T} = \F_\rho(z_{\wt T})\in \Flag$.

\begin{prop}
\label{prop:InfinitesimalGapFormula}
 Let $\wt T$ be a component of $\wt S - \wt \lambda$ that is distinct from the base component $\wt T_0$. Then, for every vertex $w\in \{ x_{\wt T}, y_{\wt T}, z_{\wt T} \}$ of $\wt T$, 
\begin{align*}
 \frac{d}{dt} \F_{\rho_t}(w)_{|t=0}  &=  \dot B(\wt T_0)  \F_\rho(w)
 + \sum_{\wt T' \text{ between } \wt T_0 \text{ and } \wt T} \dot A(\wt T')  \F_\rho(w)
 + \dot B(\wt T)  \F_\rho(w)
\end{align*}
where the sum is over all components $\wt T'$ of $\wt S-\wt\lambda$ separating $\wt T_0$ from $\wt T$, where 
$$
\dot A(\wt T') =
\begin{cases}
\displaystyle
 \sum_{a+b+c=n} \dot\tau^{abc}_V(\wt T', x_{\wt T'}) \left( \dot L_{E_{\wt T'}F_{\wt T'}G_{\wt T'}}^{abc} + \dot S^{a(b+c)}_{E_{\wt T'}G_{\wt T'}}  \right)
 \\
 \displaystyle
 \qquad\qquad + \sum_{a+b=n} \dot \sigma_V^{ab}(\wt T_0, \wt T') \left(  \dot S^{ab}_{E_{\wt T'}F_{\wt T'}} -  \dot S^{ab}_{E_{\wt T'}G_{\wt T'}}  \right)
 \\
 \qquad\qquad  \qquad\qquad  \qquad\qquad  \qquad\qquad  \text{if } \wt T \text{ faces the side } x_{\wt T'}z_{\wt T'}\text{ of }\wt T'
 \\
 \displaystyle
  \sum_{a+b+c=n} \dot\tau_V^{abc}(\wt T', x_{\wt T'}) \left( \dot R_{E_{\wt T'}F_{\wt T'}G_{\wt T'}}^{abc} + \dot S^{(a+c)b}_{G_{\wt T'}F_{\wt T'}}  \right)
 \\
 \displaystyle
 \qquad\qquad + \sum_{a+b=n} \dot \sigma_V^{ab}(\wt T_0, \wt T') \left(  \dot S^{ab}_{E_{\wt T'}F_{\wt T'}} -  \dot S^{ab}_{G_{\wt T'}F_{\wt T'}}  \right)
 \\
 \qquad\qquad  \qquad\qquad  \qquad\qquad  \qquad\qquad  \text{if } \wt T \text{ faces the side } y_{\wt T'}z_{\wt T'}\text{ of } \wt T',
\end{cases}
$$
where
$$
\dot B(\wt T_0) =
\begin{cases}
0
& \text{if } \wt T \text{ faces the side } x_{\wt T_0}y_{\wt T_0}\text{ of }\wt T_0
\\
\displaystyle
 \sum_{a+b+c=n} \dot\tau_V^{abc}(\wt T_0, x_{\wt T_0})  \dot L_{E_{\wt T_0}F_{\wt T_0}G_{\wt T_0}}^{abc} 
& \text{if } \wt T \text{ faces the side } x_{\wt T_0}z_{\wt T_0}\text{ of } \wt T_0
 \\
 \displaystyle
 \sum_{a+b+c=n} \dot\tau_V^{abc}(\wt T_0, x_{\wt T_0})  \dot R_{E_{\wt T_0}F_{\wt T_0}G_{\wt T_0}}^{abc} 
& \text{if } \wt T \text{ faces the side } y_{\wt T_0}z_{\wt T_0}\text{ of }\wt T_0,
\end{cases}
$$
and where
$$
\dot B(\wt T) =
\begin{cases}
 0
 & \text{if } w= x_{\wt T}
 \\
 0
 & \text{if } w= y_{\wt T}
 \\
 \displaystyle
 \sum_{a+b=n} \dot \sigma_V^{ab}(\wt T_0, \wt T)  \dot S^{ab}_{E_{\wt T}F_{\wt T}} 
+  \sum_{a+b+c=n} \dot\tau_V^{abc}(\wt T, x_{\wt T}) \dot L_{E_{\wt T}F_{\wt T}G_{\wt T}}^{abc} 
 \\
 \displaystyle
\quad=   \sum_{a+b=n} \dot \sigma_V^{ab}(\wt T_0, \wt T)  \dot S^{ab}_{E_{\wt T}F_{\wt T}} 
+\sum_{a+b+c=n} \dot\tau_V^{abc}(\wt T, x_{\wt T}) \dot R_{E_{\wt T}F_{\wt T}G_{\wt T}}^{abc} 
&\text{if } w=z_{\wt T}.
\end{cases}
$$
\end{prop}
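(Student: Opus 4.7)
The plan is to apply Proposition~\ref{prop:VariationSlitheringMap} with $\wh\rho=\rho_t$ and differentiate the resulting formula at $t=0$. For each vertex $w$ of $\wt T$, I choose a leaf $\xi$ of $\wt\lambda$ adjacent to $\wt T$ whose closure contains $w$: namely, $\xi=x_{\wt T}y_{\wt T}$ when $w\in\{x_{\wt T},y_{\wt T}\}$, and either $\xi=x_{\wt T}z_{\wt T}$ or $\xi=y_{\wt T}z_{\wt T}$ when $w=z_{\wt T}$ (the two choices will account for the two equivalent formulas for $\dot B(\wt T)$). Let $\eta$ be the side of $\wt T_0$ facing $\wt T$, and orient $\xi$ and $\eta$ in parallel so that the vertex of $\eta$ paired with $w$ under slithering is some $w_0\in\{x_{\wt T_0},y_{\wt T_0}\}$. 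This is always achievable because $z_{\wt T_0}$ is an endpoint of at most one of the two vertices of $\eta$, and reversing the parallel orientations swaps which endpoint of $\eta$ is paired with $w$. By the normalization, $\F_{\rho_t}(w_0)=\F_\rho(w_0)$ for all $t$, and Lemma~\ref{lem:SlitheringSendsFlagToFlag} applied to $\wh\rho$ yields $\F_{\rho_t}(w)=\wh\Sigma_{\xi,\eta}(\F_\rho(w_0))$.

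Applying Proposition~\ref{prop:VariationSlitheringMap} and writing $\wh\Sigma_{\xi,\eta}=\lim A_0\circ A(\wt T_1)\circ\cdots\circ A(\wt T_{i_0})\circ\Sigma_{\xi,\eta}\circ A_0^{-1}$, where $\wt T_1,\dots,\wt T_{i_0}$ are the components of $\wt S-\wt\lambda$ separating $\eta$ from $\xi$, we have $A_0=\Id$ and each $A(\wt T_i)=\Id$ at $t=0$. The product rule then yields
\[\tfrac{d}{dt}\F_{\rho_t}(w)\bigr|_{t=0}=\dot A_0\,\F_\rho(w)-\Sigma_{\xi,\eta}(\dot A_0\,\F_\rho(w_0))+\sum_i\dot A(\wt T_i)\,\F_\rho(w),\]
using $\Sigma_{\xi,\eta}(\F_\rho(w_0))=\F_\rho(w)$. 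Differentiating the four-fold composition defining $A(\wt T_i)$ and using $S^{ab}_{EF}(0)=L^{abc}_{EFG}(0)=R^{abc}_{EFG}(0)=\Id$ produces exactly the two cases of $\dot A(\wt T')$ in the statement, depending on whether $\xi$ faces the side $x_iz_i$ or $y_iz_i$ of $\wt T_i$.

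The identification $\dot A_0=\dot B(\wt T_0)$, together with the vanishing of the middle correction term, requires a case analysis by the side of $\wt T_0$ facing $\wt T$. When $\wt T$ faces $x_{\wt T_0}y_{\wt T_0}$, the normalization makes all three defining constraints for $A_0$ trivially satisfied by $\Id$, so uniqueness (Lemma~\ref{lem:ProjectiveMapBetweenFlagTriples}) gives $A_0=\Id$ and $\dot B(\wt T_0)=0$. When $\wt T$ faces $x_{\wt T_0}z_{\wt T_0}$, $A_0$ must fix the full flag $E_{\wt T_0}$, fix the line $F_{\wt T_0}^{(1)}$, and send $G_{\wt T_0}$ to $\F_{\rho_t}(z_{\wt T_0})$; by Lemma~\ref{lem:EruptionsMainProperties}(1,3,4) the product $\Bigcirc_{a+b+c=n}L^{abc}_{E_{\wt T_0}F_{\wt T_0}G_{\wt T_0}}(\Delta\tau^{abc}(\wt T_0,x_{\wt T_0}))$ satisfies these constraints, so uniqueness forces $A_0$ to equal this product. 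Our orientation choice makes $w_0=x_{\wt T_0}$, and $\dot A_0$ projectively fixes $\F_\rho(w_0)=E_{\wt T_0}$, so the correction term vanishes and differentiation produces the $\dot L$-formula. The case where $\wt T$ faces $y_{\wt T_0}z_{\wt T_0}$ is analogous with $R$-eruptions and $w_0=y_{\wt T_0}$.

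Finally, when $w\in\{x_{\wt T},y_{\wt T}\}$ the triangle $\wt T$ is not in the separating chain and $\dot B(\wt T)=0$. When $w=z_{\wt T}$ the chain contains $\wt T$ as its last element, adding a term $\dot A(\wt T)\,\F_\rho(z_{\wt T})$. Inspecting the formula of Proposition~\ref{prop:VariationSlitheringMap} for $A(\wt T)$, the two shearing factors $S^{a(b+c)}_{E_{\wt T}G_{\wt T}}$ and $S^{ab}_{E_{\wt T}G_{\wt T}}$ projectively fix $G_{\wt T}=\F_\rho(z_{\wt T})$ by Lemma~\ref{lem:ShearingMainProperties}(1), so their infinitesimal generators annihilate $\F_\rho(z_{\wt T})$ in the tangent space $T_{\F_\rho(z_{\wt T})}\Flag$. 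Only the $\dot L^{abc}_{E_{\wt T}F_{\wt T}G_{\wt T}}$ and $\dot S^{ab}_{E_{\wt T}F_{\wt T}}$ contributions survive, producing exactly $\dot B(\wt T)\,\F_\rho(z_{\wt T})$. The principal technical obstacle is the orientation bookkeeping needed to reconcile the clockwise labeling of Proposition~\ref{prop:VariationSlitheringMap} with the counterclockwise labeling used here, and in particular to verify that the choice of $w_0$ described above indeed makes $\dot A_0\F_\rho(w_0)=0$ in each case.
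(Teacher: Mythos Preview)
Your overall strategy matches the paper's: apply Proposition~\ref{prop:VariationSlitheringMap} with $\wh\rho=\rho_t$ and differentiate. However, there is a genuine gap in your handling of the $A_0$ contribution. Your claim that ``reversing the parallel orientations swaps which endpoint of $\eta$ is paired with $w$'' is false: by Lemma~\ref{lem:CompositionSlitheringMaps} we have $\Sigma_{yx,y'x'}=\Sigma_{xy,x'y'}$, so the pairing between endpoints of $\xi$ and $\eta$ is purely geometric and cannot be altered by relabeling. Concretely, when $\wt T$ faces the side $x_{\wt T_0}z_{\wt T_0}$ of $\wt T_0$, the side $\eta$ has endpoints $x_{\wt T_0}$ and $z_{\wt T_0}$, and the vertex $y_{\wt T}$ of $\xi=x_{\wt T}y_{\wt T}$ is necessarily paired with $z_{\wt T_0}$. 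Thus for $w=y_{\wt T}$ you are forced to take $w_0=z_{\wt T_0}\notin\{x_{\wt T_0},y_{\wt T_0}\}$, the normalization does \emph{not} give $\F_{\rho_t}(w_0)=\F_\rho(w_0)$, and your starting identity $\F_{\rho_t}(w)=\wh\Sigma_{\xi,\eta}(\F_\rho(w_0))$ is simply wrong in this case.

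The paper avoids this difficulty by never invoking the normalization at $w_0$: it uses instead the identity $A_0^{-1}\bigl(\F_{\rho_t}(w_0)\bigr)=\F_\rho(w_0)$, which holds by the very definition of $A_0$ for \emph{every} endpoint $w_0$ of $\eta$. This collapses the expression to $\F_{\rho_t}(w)=A_0\circ A(\wt T_1)\circ\cdots\circ A(\wt T_{i_0})\bigl(\F_\rho(w)\bigr)$, and your correction term never appears. A second, independent gap is that you apply the product rule to an \emph{infinite} composition (a limit over increasing families $\{\wt T_1,\dots,\wt T_{i_0}\}$) without justification; the paper handles this by noting that the construction admits a holomorphic extension in the variation parameters, which legitimizes term-by-term differentiation of the limit.
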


\begin{proof}
As in  Proposition~\ref{prop:VariationSlitheringMap}, index the vertices of the base triangle $\wt T_0$ as $u_0$, $v_0$, $w_0$, {clockwise} in this order, in such a way that $\wt T$ faces the side $u_0v_0$ of $\wt T_0$.

 We clearly need to distinguish cases. Let us first focus on the case where $\wt T$ faces the side $x_{\wt T_0}y_{\wt T_0}$ of $\wt T_0$. In particular, $u_0=y_{\wt T_0}$  and $v_0 = x_{\wt T_0}$. 
 
 Let us apply Proposition~\ref{prop:VariationSlitheringMap}  to compute the slithering map $\Sigma_{x_{\wt T}y_{\wt T}, u_0v_0}^t$ associated to  $\wh\rho = \rho_t$. Since  $u_0=y_{\wt T_0}$  and $v_0 = x_{\wt T_0}$,  the map $A_0 \in \PGL$ occurring in the formula is equal to the identity, by our normalization that the flag map $\F_{\rho_t}$ respects the flags $E_{\wt T_0}$, $F_{\wt T_0}$ and the line $G_{\wt T_0}^{(1)}$. 
 
 Let $\wt T_1$, $\wt T_2$, \dots, $\wt T_{i_0}$ be a family of components of $\wt S-\wt\lambda$ separating $\wt T$ from $\wt T_0$, indexed in this order from $\wt T_0$ to $\wt T$. Then, if $\Delta_t \tau^{abc}=\tau^{abc}_{\rho_t} - \tau^{abc}_\rho$ and $\Delta_t \sigma^{ab}=\sigma^{ab}_{\rho_t} - \sigma^{ab}_\rho$ denote the variations of generalized Fock-Goncharov invariants, Proposition~\ref{prop:VariationSlitheringMap} asserts that the slithering map $\Sigma_{x_{\wt T}y_{\wt T}, u_0v_0}^t$ is the limit of
 $$
  A_t(\wt T_1) \circ A_t(\wt T_2) \circ \dots \circ A_t(\wt T_{i_0}) \circ \Sigma_{x_{\wt T}y_{\wt T}, u_0v_0}  \in \GL
 $$
 as the  family $\{ \wt T_1, \wt T_2, \dots, \wt T_{i_0}\}$ tends to the set of all components of $\wt S - \wt \lambda$ separating $\wt T$ from $\wt T_0$, where
 $$
A_t(\wt T_i)=
\begin{cases}
\displaystyle
\Bigcirc_{a+b=n}  S^{ab}_{E_{\wt T_i}F_{\wt T_i}} (\Delta_t\sigma^{ab}(\wt T_0, \wt T_{i}))
\circ
 \Bigcirc_{a+b+c=n} L^{abc}_{E_{\wt T_i}F_{\wt T_i}G_{\wt T_i}} \left( \Delta_t \tau^{abc} (\wt T_i, x_i) \right)
 \\ \qquad\qquad\displaystyle
 \circ \Bigcirc_{a+b+c=n} S^{a(b+c)}_{E_{\wt T_i} G_{\wt T_i}} ( \Delta_t\tau^{abc}(\wt T_i, x_i))
 \circ 
 \Bigcirc_{a+b=n} S^{ab}_{E_{\wt T_i} G_{\wt T_i}} (-\Delta_t\sigma^{ab}(\wt T_0, \wt T_i))
\\ \qquad \qquad\qquad\qquad\qquad\qquad\qquad\qquad\qquad\qquad
\text{if } \wt T \text{ faces the side } x_iz_i \text{ of }\wt T_i 
\\
\displaystyle
\Bigcirc_{a+b=n}  S^{ab}_{E_{\wt T_i}F_{\wt T_i}} (\Delta_t\sigma^{ab}(\wt T_0, \wt T_{i}))
\circ
\Bigcirc_{a+b+c=n} R^{abc}_{E_{\wt T_i}F_{\wt T_i}G_{\wt T_i}} \left( \Delta_t \tau^{abc} (\wt T_i, x_i) \right)
 \\ \qquad\qquad\displaystyle
\circ \Bigcirc_{a+b+c=n} S^{(a+c)b}_{G_{\wt T_i}F_{\wt T_i}} ( \Delta_t\tau^{abc}(\wt T_i, x_i))
\circ 
\Bigcirc_{a+b=n} S^{ab}_{G_{\wt T_i}F_{\wt T_i}} (-\Delta_t\sigma^{ab}(\wt T_0, \wt T_i))
\\ \qquad \qquad\qquad\qquad\qquad\qquad\qquad\qquad\qquad\qquad
\text{if } \wt T \text{ faces the side } y_iz_i \text{ of }\wt T_i .
\end{cases}
$$ 

Now, the constructions of \cite{BonDre2} and the proof of Proposition~\ref{prop:VariationSlitheringMap} admit automatic holomorphic extensions if we allow the variations $\Delta \tau^{abc}$ and $\Delta\sigma^{ab}$ of generalized Fock-Goncharov invariants to be complex with a small enough real part, as in \cite{Bon96} for the case $n=2$ and \cite{MalMarMazZha} for any $n$. The existence of this holomorphic extension enables us to take the derivative term-by-term in the limit and to prove that, for every flag $F\in \Flag$, the tangent vector
$$
\frac d{dt} \Sigma_{x_{\wt T}y_{\wt T}, u_0v_0}^t(F)_{|t=0} \in T_{\Sigma_{x_{\wt T}y_{\wt T}, u_0v_0}(F)} \Flag
$$
is equal to the limit of 
$$
\left( \sum_{i=1}^{i_0} \dot A(\wt T_i) \right) \Sigma_{x_{\wt T}y_{\wt T}, u_0v_0}(F)
$$
as the family $\{ \wt T_1, \wt T_2, \dots, \wt T_{i_0} \}$ tends to the set of all components of $\wt S-\wt\lambda$ separating $\wt T$ from the base component $\wt T_0$, where 
$$
\dot A(\wt T_i) = \frac d{dt} A_t(\wt T_i)_{|t=0}.
$$
An application of the chain rule  shows that $\dot A(\wt T_i)$ is as indicated in the statement of Proposition~\ref{prop:InfinitesimalGapFormula}. 

We can rephrase this property by saying that 
$$
\frac d{dt} \Sigma_{x_{\wt T}y_{\wt T}, u_0v_0}^t(F)_{|t=0} = \left( \sum_{\wt T' \text{ between } \wt T_0 \text{ and } \wt T} \dot A(\wt T') \right) \Sigma_{x_{\wt T}y_{\wt T}, u_0v_0}(F)
$$
for every flag $F\in \Flag$. 

We now consider the subcase where the vertex $w$ is equal to $x_{\wt T}$. We can then apply the above conclusion to $F= \F_\rho(u_0)$. Since we are in the case where $u_0=y_{\wt T_0}$, the flag $F$ is also equal to $\F_{\rho_t}(u_0)$ for every $t$ by our normalization convention for the Hitchin representations $\rho_t$.  Then,  by definition of the slithering map, 
$$
 \Sigma_{x_{\wt T}y_{\wt T}, u_0v_0}^t(F)
 =   \Sigma_{x_{\wt T}y_{\wt T}, u_0v_0}^t \big(  \F_{\rho_t}(u_0) \big)
 =  \F_{\rho_t}(x_{\wt T}) .
$$
 The special case $t=0$ yields
 $$
  \Sigma_{x_{\wt T}y_{\wt T}, u_0v_0}(F)  =  \F_{\rho}(x_{\wt T}).
 $$
 
 Reporting these values into our earlier computation, we obtain that
 $$
\frac d{dt} \F_{\rho_t}(x_{\wt T})_{|t=0} =  \sum_{\wt T' \text{ between } \wt T_0 \text{ and } \wt T} \dot A(\wt T')  \F_{\rho}(x_{\wt T}).
$$

This proves Proposition~\ref{prop:InfinitesimalGapFormula} in the case where $\wt T$ faces the side $x_{\wt T_0} y_{\wt T_0}$ of $\wt T_0$ and $w=x_{\wt T}$. 

Considering  the flag $F= \F_\rho(v_0)= \F_\rho(x_{\wt T_0})$ instead similarly provides the proof of Proposition~\ref{prop:InfinitesimalGapFormula} when $w=y_{\wt T}$, still assuming that $\wt T$ faces the side $x_{\wt T_0} y_{\wt T_0}$ of $\wt T_0$. 

When $w=z_{\wt T}$, we apply Proposition~\ref{prop:VariationSlitheringMap} to the leaf $x_{\wt T} z_{\wt T}$ and take term-by-term derivatives in the limit as above. After  observing that $\Sigma_{x_{\wt T}z_{\wt T}, u_0v_0} \big( \F_\rho(v_0) \big) = \F_{\rho_t}(z_{\wt T})$, we conclude that
 $$
\frac d{dt} \F_{\rho_t}(z_{\wt T})_{|t=0} =  \sum_{\wt T' \text{ between } \wt T_0 \text{ and } \wt T} \dot A(\wt T')  \F_{\rho}(z_{\wt T}) + \dot A(\wt T) \F_{\rho}(z_{\wt T}) 
$$
with
\begin{align*}
\dot A(\wt T) 
&=  \sum_{a+b+c=n} \dot\tau_V^{abc}(\wt T, x_{\wt T}) \left( \dot L_{E_{\wt T}F_{\wt T}G_{\wt T}}^{abc} + \dot S^{a(b+c)}_{E_{\wt T}G_{\wt T}}  \right)
 \\
& \qquad\qquad 
+ \sum_{a+b=n} \dot \sigma_V^{ab}(\wt T_0, \wt T) \left(  \dot S^{ab}_{E_{\wt T}F_{\wt T}} -  \dot S^{ab}_{E_{\wt T}G_{\wt T}}  \right) .
\end{align*}

By construction, the shearing map $S^{ab}_{E_{\wt T}G_{\wt T}}(t) \in \PGL$ respects $G_{\wt T} =  \F_{\rho}(z_{\wt T}) $. Therefore, $\dot S^{ab}_{E_{\wt T}G_{\wt T}} \F_{\rho}(z_{\wt T}) =0$ and the term $ \dot A(\wt T) \F_{\rho}(z_{\wt T}) $ is equal to $ \dot B(\wt T) \F_{\rho}(z_{\wt T}) $ with 
$$
 \dot B(\wt T) =  \sum_{a+b+c=n} \dot\tau_V^{abc}(\wt T, x_{\wt T}) \dot L_{E_{\wt T}F_{\wt T}G_{\wt T}}^{abc} 
 +\sum_{a+b=n} \dot \sigma_V^{ab}(\wt T_0, \wt T)  \dot S^{ab}_{E_{\wt T}F_{\wt T}} 
$$
as in the statement of Proposition~\ref{prop:InfinitesimalGapFormula}. (Note that, although $ \dot A(\wt T) \F_{\rho}(z_{\wt T}) = \dot B(\wt T) \F_{\rho}(z_{\wt T}) $, the terms  $ \dot A(\wt T)$, $ \dot B(\wt T) \in \sln$ are usually different.) This shows that
$$
\frac d{dt} \F_{\rho_t}(z_{\wt T})_{|t=0} =  \sum_{\wt T' \text{ between } \wt T_0 \text{ and } \wt T} \dot A(\wt T')  \F_{\rho}(z_{\wt T}) + \dot B(\wt T) \F_{\rho}(z_{\wt T}) 
$$
as in  Proposition~\ref{prop:InfinitesimalGapFormula}.   

Now, we could have used the side $y_{\wt T}z_{\wt T}$ of $\wt T$ instead of $x_{\wt T}z_{\wt T}$. This would lead us to the same statement as above, but with
$$
 \dot B(\wt T) =  \sum_{a+b+c=n} \dot\tau_V^{abc}(\wt T, x_{\wt T}) \dot R_{E_{\wt T}F_{\wt T}G_{\wt T}}^{abc} 
 +\sum_{a+b=n} \dot \sigma_V^{ab}(\wt T_0, \wt T)  \dot S^{ab}_{E_{\wt T}F_{\wt T}} . 
$$
This proves the equality of the two expressions for $ \dot B(\wt T) $ given in the statement of Proposition~\ref{prop:InfinitesimalGapFormula} (which can also be tracked down to the identity $L^{abc}_{EFG}(G) = R^{abc}_{EFG}(G)$ of Lemma~\ref{lem:EruptionsMainProperties}). 

This concludes the proof of Proposition~\ref{prop:InfinitesimalGapFormula} in the case where $\wt T$ faces the side $x_{\wt T_0}y_{\wt T_0}$ of $\wt T_0$. 

Let us now consider the case  where $\wt T$ faces the side $x_{\wt T_0}z_{\wt T_0}$. In this case, $u_0=x_{\wt T_0}$ and $v_0 = z_{\wt T_0}$. The formula for $\Sigma_{x_{\wt T}y_{\wt T}, u_0v_0}$ provided by Proposition~\ref{prop:VariationSlitheringMap} then involves the unique projective map $A_0^t\in \PGL$ sending the flag $E_{\wt T_0}=\F_\rho(x_{\wt T_0})$ to $E_{\wt T_0}^t=\F_{\rho_t}(x_{\wt T_0})$, the flag $G_{\wt T_0}=\F_\rho(z_{\wt T_0})$ to $G_{\wt T_0}^t=\F_{\rho_t}(z_{\wt T_0})$, and the line  $F_{\wt T_0}^{(1)}=\F_\rho(y_{\wt T_0})^{(1)}$ to $F_{\wt T_0}^{t\kern .2em (1)}= \F_{\rho_t}(y_{\wt T_0})^{(1)}$. 

By definition of Fock-Goncharov invariants and since $E_{\wt T_0}^t=E_{\wt T_0}$ and $F_{\wt T_0}^t=F_{\wt T_0}$, the flag $G_{\wt T_0}^t$ is such that
$$
X_{abc}(E_{\wt T_0}, F_{\wt T_0}, G^t_{\wt T_0}) = \exp \tau_{\rho_t}^{abc} (\wt T_0, x_0)
$$
for every $a$, $b$, $c\geq 1$ with $a+b+c=n$. In addition, the line  $ G_{\wt T_0}^{t\kern .2em (1)}$ coincides with $ G_{\wt T_0}^{(1)}$ by our normalization of the flag maps $\F_{\rho_t}$. 

On the other hand,  Lemma~\ref{lem:EruptionsMainProperties}(4) shows that the composition 
$$
\Bigcirc_{a+b+c=n} L_{E_{\wt T_0}F_{\wt T_0}G_{\wt T_0}} \big(\Delta_t \tau^{abc}(\wt T_0, x_0) \big)
$$ 
also sends $G_{\wt T_0}$ to a flag $\wh G_{\wt T_0}^t$ such that 
$$
X_{abc}(E_{\wt T_0}, F_{\wt T_0}, \wh G^t_{\wt T_0}) = \exp \tau_{\rho_t}^{abc} (\wt T_0, x_0)
$$
for every $a$, $b$, $c\geq 1$ with $a+b+c=n$. In addition, the line $\wh G_{\wt T_0}^{t\kern .2em (1)}$ coincides with $ G_{\wt T_0}^{(1)}$ by  Lemma~\ref{lem:EruptionsMainProperties}(3). It then immediately follows from Proposition~\ref{prop:TripleRatioClassifyFlagTriples} and Lemma~\ref{lem:ProjectiveMapBetweenFlagTriples} that $G_{\wt T_0}^t = \wh G_{\wt T_0}^t$. 

Since, by construction, the above composition of left eruptions also leaves $E_{\wt T_0}$ invariant, as well as the line $F_{\wt T_0}^{(1)}$, this proves that
$$
A_0^t= \Bigcirc_{a+b+c=n} L_{E_{\wt T_0}F_{\wt T_0}G_{\wt T_0}} \big(\Delta_t \tau^{abc}(\wt T_0, x_0) \big). 
$$ 

In the case where $w=x_{\wt T}$, 
$$
 \F_{\rho_t}(x_{\wt T}) = \Sigma_{x_{\wt T}y_{\wt T}, u_0v_0}^t \big( \F_{\rho_t}(u_0) \big) 
= \Sigma_{x_{\wt T}y_{\wt T}, u_0v_0}^t \circ A_0^t \big( \F_{\rho}(u_0) \big)  .
$$
 Proposition~\ref{prop:VariationSlitheringMap} then shows that $ \F_{\rho_t}(x_{\wt T}) $ is the limit of
$$
A_0^t \circ  A_t(\wt T_1) \circ A_t(\wt T_2) \circ \dots \circ A_t(\wt T_{i_0}) \big( \F_{\rho}(x_{\wt T}) \big) .
$$

Taking the derivative term-by-term in the limit as before, we conclude  that 
$$
\frac d{dt}  \F_{\rho_t}(x_{\wt T}) _{|t=0} 
= \dot B(\wt T_0) \big( \F_{\rho}(x_{\wt T}) \big)+ \sum_{\wt T' \text{ between } \wt T_0 \text{ and } \wt T} \dot A(\wt T')  \F_{\rho}(x_{\wt T})
$$
with
$$
\dot B(\wt T_0) = \frac d{dt} A_0^t{}_{|t=0} =   \sum_{a+b+c=n} \dot\tau^{abc}(\wt T_0, x_{\wt T_0})  \dot L_{E_{\wt T_0}F_{\wt T_0}G_{\wt T_0}}^{abc} . 
$$
This proves Proposition~\ref{prop:InfinitesimalGapFormula} in this subcase, where $\wt T$ faces the side $x_{\wt T_0}z_{\wt T_0}$ of $\wt T_0$ and $w=x_{\wt T}$. 

The argument is essentially identical in the subcase when $w=y_{\wt T}$, using now the fact that $\F_{\rho_t}(y_{\wt T}) =  \Sigma_{x_{\wt T}y_{\wt T}, u_0v_0}^t \circ A_0^t \big( \F_{\rho}(v_0) \big) $. 

In the subcase where $w=z_{\wt T}$, the same analysis as in the case where $\wt T$ faces  $x_{\wt T_0}y_{\wt T_0}$ provides an additional term
$$
\frac d{dt} A_t(\wt T) \F_{\rho}(z_{\wt T})_{|t=0} = \dot A(\wt T) \F_{\rho}(z_{\wt T}) = \dot B(\wt T) \F_{\rho}(z_{\wt T} )
$$
as in the statement of Proposition~\ref{prop:InfinitesimalGapFormula} .

This concludes the proof of Proposition~\ref{prop:InfinitesimalGapFormula}  in the case where $\wt T$ faces the side $x_{\wt T_0}z_{\wt T_0}$ of $\wt T_0$.

The proof when $\wt T$ faces the side $y_{\wt T_0}z_{\wt T_0}$ of $\wt T_0$ is essentially identical. 
\end{proof}

\section{The Weil cohomology class associated to a tangent vector $V \in T_{[\rho]} \Hit(S)$}
\label{bigsect:CohomologyClassTgtVector}

We now have all the analytic tools needed to implement the framework of \S \ref{subsect:SimplicialWeil}, and express the Weil cohomology class $[c_V] \in H^1(S; \sln_{\Ad\rho})$ associated to a tangent vector $V\in T_{[\rho]} \Hit(S)$ as a simplicial cohomology class for a suitable triangulation of the surface $S$. As in \S \ref{subsect:InfinitesimalVariationFlagMap}, we represent $V$ as the vector tangent to a curve $t\mapsto [\rho_t] \in \Hit(S)$ at  $[\rho] = [\rho_0]$, and we consider the directional derivatives 
\begin{align*}
 \dot \tau_V^{abc}(T,x) &= \frac{d}{dt} \tau^{abc}_{[\rho_t]}(T,x) _{|t=0}
 \\
 \dot \sigma_V^{ab}(e) &= \frac{d}{dt} \sigma^{ab}_{[\rho_t]}(e) _{|t=0}
\end{align*}
of the corresponding generalized Fock-Goncharov invariants. We also normalize the Hitchin representations $\rho_t$ so that, at the vertices of the base component $\wt T_0$, the flags $\F_{\rho_t}(x_{\wt T_0})$, $\F_{\rho_t}(y_{\wt T_0}) \in \Flag$ and the line $\F_{\rho_t}(z_{\wt T_0})^{(1)} \in \mathbb{RP}^{n-1}$ are independent of $t$. 

\subsection{Barriers for the geodesic lamination $\lambda$} 
\label{subsect:Barriers}

Consider a triangulation $\Sigma$ of the surface $S$, which we lift to a triangulation $\wt \Sigma$ of the universal cover $\wt S$. As in \S \ref{subsect:SimplicialWeil}, we want to define for each vertex $\wt v$ of $\wt \Sigma$ a projective basis $ \B_t(\wt v)$ such that
\begin{enumerate}
 \item the choice of $\mathcal B_t(\wt v)$ is $\rho_t$--equivariant, in the sense that $ \B_t(\gamma \wt v)= \rho_t(\gamma) \left(  \B_t(\wt v) \right)$ for every $\gamma \in \pi_1(S)$ and every vertex $\wt v$ of $\wt \Sigma$;
 \item for every vertex $\wt v$, the projective basis $\mathcal B_t(\wt v)$ depends differentiably on $t$.
\end{enumerate}

For this, following an idea of Sun-Zhang \cite{SunZha}, we  introduce additional data for the geodesic lamination $\lambda$ and impose a very mild transversality condition for the triangulation~$\Sigma$. 

The additional data is the following. In each connected component $T$ of $S-\lambda$,  we choose a Y-shaped \emph{barrier} that is the union $\beta_T$ of a point $\pi_T$ in the ideal triangle $T$ and of three infinite curves that respectively go from $\pi_T$ to each of the vertices at infinity of $T$; for instance, these three arms of $\beta_T$ can be the geodesics joining $\pi_T$ to the  vertices of $T$. Lift all these barriers $\beta_T$  to barriers $\beta_{\widetilde T}$ for the components $\widetilde T$ of $\widetilde S- \widetilde \lambda$. See Figure~\ref{fig:Barrier}. 

\begin{figure}[htbp]

\SetLabels
\E( .47 *.69  )  $\pi_{\wt T}$ \\
( .46 * .26 ) $\beta_{\wt T}$  \\
( .83 *  .5)  $\beta_{\wt T}$ \\
\L( 1 * 0 ) $x_{\wt T}$  \\
\L( 1 * 1 ) $y_{\wt T}$  \\
\E\R( .0 * .5 ) $z_{\wt T}$  \\
( .85 * .38 ) $\wt v$  \\
\endSetLabels
\centerline{\AffixLabels{\includegraphics{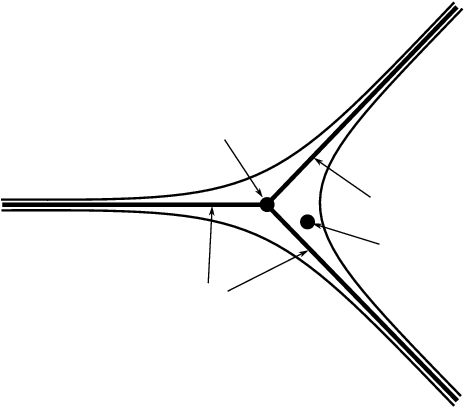}}}

\caption{A barrier in a component $\wt T$ of $\wt S- \wt\lambda$}
\label{fig:Barrier}
\end{figure}

We then require that each vertex of the triangulation $\Sigma$ is disjoint from the geodesic lamination $\lambda$ and its barriers $\beta_T$. This condition is easily attained by a small perturbation of $\Sigma$.

We can now use this data to define the projective basis $\B_t(\widetilde v)$ associated to each vertex $\wt v$ of the triangulation $\wt\Sigma$. By construction, $\wt v$ is contained in a component $\wt T$ of $\wt S - \wt\lambda$ and is disjoint from its barrier $\beta_{\wt T}$. This barrier splits $\wt T$ into three smaller triangles, each of which is delimited by one side of $\wt T$ and by two arms of the barrier $\beta_{\wt T}$. This naturally associates to $\wt v$ a side of $\wt T$. Let $x_{\wt v}$, $y_{\wt v}\in \bdry $ be the two endpoints of this side, and let $z_{\wt v}\in\bdry$ be the third vertex of $\wt T$, choosing the indexing so that $x_{\wt v}$, $y_{\wt v}$ and $z_{\wt v}$ occur in this order counterclockwise around $\wt T$. See Figure~\ref{fig:Barrier}. 

The projective basis $\B_t(\widetilde v)= \{ e_1, e_2, \dots, e_n\}$ is then defined as the unique projective basis such that: 
\begin{enumerate}
 \item the flag $\F_t(x_{\wt v})\in \Flag$ is the ascending flag of this basis, in the sense that the linear subspace $\F_t(x_{\wt v})^{(a)}$ is spanned by $e_1$, $e_2$, \dots, $e_a$, for every $a=1$, $2$, \dots, $n$;
 \item the flag $\F_t(y_{\wt v})$ is its descending flag, in the sense that $\F_t(y_{\wt v})^{(b)}$ is spanned by $e_{n-b+1}$, $e_{n-b+2}$, \dots, $e_n$, for every $b=1$, $2$, \dots, $n$;
 \item the line $\F_t(z_{\wt v})^{(1)}$ is spanned by the vector $e_1 + e_2 + \dots + e_n$. 
\end{enumerate}

Proposition~\ref{prop:InfinitesimalGapFormula} now provides us with a computation of the derivative
$$
 \frac d{dt} \F_{\rho_t}(w)_{|t=0} \in T_{\F_\rho(w)} \Flag
$$
for each vertex $w \in \{x_{\wt v},y_{\wt v}, z_{\wt v} \}$ of $\wt T$. This will enable us to determine
$$
c^0(\wt v) = \frac d{dt} \B_t(\widetilde v)\B_0(\widetilde v)^{-1}_{|t=0} \in \sln.
$$

Interpreting $c^0$ as a 0--cocycle $c^0 \in C^0(\wt S; \sln)$ for the simplicial cohomology associated to the triangulation $\Sigma$, Lemma~\ref{lem:ProjectiveBasisDefinesWeilCocyle} shows that the Weil class $[c_V] \in H^1(S; \sln_{\Ad\rho})$ associated to the tangent vector  $V\in T_{[\rho]} \Hit(S)$ is represented by the 1--cocycle $c_V = dc^0 \in C^1(\wt S; \sln)$.  Namely, for every oriented edge $\wt k$ of the triangulation $\wt \Sigma$ going from the vertex $\wt v_-$ to the vertex $\wt v_+$, 
$$
c_V(\wt k) =   \frac d{dt} \B_t(\widetilde v_+)\B_0(\widetilde v_+)^{-1}_{|t=0} -  \frac d{dt} \B_t(\widetilde v_-)\B_0(\widetilde v_-)^{-1}_{|t=0} \in \sln. 
$$

\subsection{A triangulation adapted to a  train track neighborhood}
\label{subsect:Triangulation}

 Let $\Phi$ be a train track neighborhood of $\lambda$ and let $\Psi$ be its associated train track, as in \S \ref{subsect:GeodLamTrainTracks}. In order to connect the computational scheme outlined in the previous section to the generalized Fock-Goncharov invariants associated to $\Psi$, we will use a triangulation $\Sigma$ of the surface $S$ that is well adapted to $\Phi$. This will be particularly convenient to compute the cup-product occurring  in \S \ref{subsect:Atiyah-Bott-Goldman} for the definition of the Atiyah-Bott-Goldman symplectic form.

Given the train track neighborhood $\Phi$, we can choose the barrier $\beta_T$ so that each point where it enters $\Phi$ is located very close to  a corner of the complement $S-\Phi$, corresponding to a switch of $\Phi$; see Figure~\ref{fig:TriangleAndTrainTrack} in \S \ref{subsect:GeodLamTrainTracks}. To simplify subsequent exposition, we can even arrange that this entry point is systematically located in the  branch incoming on the left side at that switch. See Figure~\ref{fig:Triangulation}. 
 
 We then decompose each branch of $\Phi$ into rectangles by splitting it along finitely many ties, including the switch ties at the end of the branch. Finally, we obtain a triangulation of $\Phi$, with all vertices on its boundary, by subdividing each rectangle into two triangles meeting along an arbitrary diagonal of the rectangle. 
 
 We need to be a little careful in the description of this triangulation of $\Phi$ near a switch. First of all, the first rectangle of the outgoing branch at that point is actually a pentagon, because of the additional vertex coming from the corner where the two incoming branches meet; this pentagon therefore needs two additional edges decomposing it into three triangles. To ease the exposition, we decide to choose these two edges so that they are disjoint from this additional vertex. See Figure~\ref{fig:Triangulation}.  
 
 Also, by our convention, the last rectangle of the branch incoming on the left at a switch meets a barrier $\beta_T$. Again to simplify further exposition, we decide to split this rectangle along a diagonal that is disjoint from the part of $\beta_T$ that is near the switch. See Figure~\ref{fig:Triangulation}. 

\begin{figure}[htbp]

\SetLabels
( .65 * 0.53 )  $\beta_T$ \\
( -.03 * .5)  $\Phi$ \\
( .7 * .9 )  $\Phi$ \\
( .7 * .05 )  $\Phi$ \\
\endSetLabels
\centerline{\AffixLabels{\includegraphics{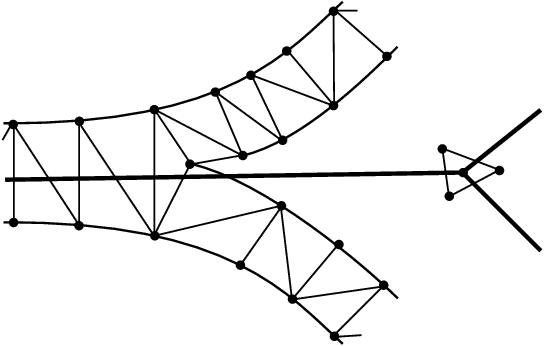}}}

\caption{A triangulation adapted to the train track neighborhood $\Phi$}
\label{fig:Triangulation}
\end{figure}

Finally, we extend this triangulation of the train track neighborhood $\Phi$ to a triangulation $\Sigma$ of the whole surface $S$. We choose the triangulation so that its vertices are disjoint from the barriers, and so that the triangulation is fine enough that every edge that is not contained in $\Phi$ meets the barriers $\beta_T$ in at most one point. 

A key feature of this triangulation is that almost every face has at least one side that is disjoint from the geodesic lamination $\lambda$ and from the barriers $\beta_T$. The only exceptions are the faces that contain the centers $\pi_T$ of the barriers $\beta_T$, plus two faces near each switch. This property will come in handy when we compute cup-products in \S \ref{bigsect:ComputeEstimateCupProduct}.

\subsection{Evaluation of the Weil cocycle on the edges of the triangulation}
\label{subsect:EvaluateCocyle}

Lift the triangulation $\Sigma$ that we just constructed to a triangulation $\wt \Sigma$ of the universal cover $\wt S$. It was specially designed so that, for the cocycle $c_V\in C^1(\wt S; \sln)$  representing the Weil class $[c_V] \in H^1(S; \sln_{\Ad\rho})$ that we constructed in \S \ref{subsect:Barriers},  the evaluation of $c_V$ over the oriented edges of $\wt \Sigma$ is relatively simple.

Many edges $\wt k$ of this triangulation are disjoint from the geodesic lamination $\wt \lambda$ and from the barriers $\beta_{\wt T}$. If,  for an arbitrary orientation, $\wt v_+$ and $\wt v_-$ are the positive and negative endpoints of such an edge, the projective bases $\mathcal B_t(\wt v_\pm)$ constructed in \S \ref{subsect:Barriers} coincide and it follows that
$$
c_V(\wt k) =   \frac d{dt} \B_t(\widetilde v_+)\B_0(\widetilde v_+)^{-1}_{|t=0} -  \frac d{dt} \B_t(\widetilde v_-)\B_0(\widetilde v_-)^{-1}_{|t=0} =0. 
$$
For future reference, we repeat this as follows.
\begin{lem}
\label{lem:EvaluateWeilEdgeDisjoinLaminationBarrier}
 If $\wt k$ is an oriented edge of the triangulation $\wt\Sigma$ that is disjoint from the geodesic lamination $\wt \lambda$ and from the barriers $\beta_{\wt T}$, then $ c_V(\wt k) =0$. \qed
\end{lem}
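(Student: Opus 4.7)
The plan is to show that if the edge $\wt k$ avoids both $\wt\lambda$ and the barriers, then the projective bases $\mathcal B_t(\wt v_-)$ and $\mathcal B_t(\wt v_+)$ at its two endpoints are literally equal as functions of $t$, so that the two derivative terms defining $c_V(\wt k)$ cancel identically. The argument is a direct unpacking of the construction in \S \ref{subsect:Barriers}.

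First, I would use path-connectedness: by hypothesis, $\wt k$ is a continuous arc lying in the open set $\wt S - (\wt\lambda \cup \bigcup_{\wt T}\beta_{\wt T})$, so its endpoints $\wt v_-$ and $\wt v_+$ lie in the same connected component of this open set. In particular, they belong to the same component $\wt T$ of $\wt S - \wt\lambda$ and, within $\wt T$, they lie in the same connected region of $\wt T\setminus \beta_{\wt T}$. Since $\beta_{\wt T}$ decomposes $\wt T$ into three subregions, one for each side of $\wt T$, both endpoints are assigned the same side, hence the same triple of vertices: $x_{\wt v_-}=x_{\wt v_+}$, $y_{\wt v_-}=y_{\wt v_+}$, $z_{\wt v_-}=z_{\wt v_+}$.

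Second, I would recall from \S \ref{subsect:Barriers} that the projective basis $\mathcal B_t(\wt v)$ is uniquely determined by the triple $(x_{\wt v},y_{\wt v},z_{\wt v})$ together with the flag map $\F_{\rho_t}$, via the ascending/descending flag conditions and the line condition on $z_{\wt v}$. Consequently $\mathcal B_t(\wt v_-)=\mathcal B_t(\wt v_+)$ for every $t$, and the same equality holds for $\mathcal B_0(\wt v_\pm)^{-1}$. Substituting into the formula
\[
c_V(\wt k)=\frac d{dt}\mathcal B_t(\wt v_+)\mathcal B_0(\wt v_+)^{-1}\big|_{t=0}-\frac d{dt}\mathcal B_t(\wt v_-)\mathcal B_0(\wt v_-)^{-1}\big|_{t=0}
\]
displayed immediately before the lemma yields $c_V(\wt k)=0$.

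There is no real obstacle here: the lemma is tautological given the construction of \S \ref{subsect:Barriers}, and the only thing to verify is the topological observation that a path avoiding $\wt\lambda$ and all barriers keeps its endpoints in the same subregion of a triangle $\wt T$. The value of the lemma is not in its proof but in the simplification it provides later, in \S \ref{bigsect:ComputeEstimateCupProduct}, where the support of the cocycle $c_V$ is thereby confined to the few edges of $\wt\Sigma$ that cross $\wt\lambda$ or a barrier.
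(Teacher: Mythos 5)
Your argument is correct and is precisely the paper's own: the paper (in the sentence immediately preceding the lemma) simply observes that the projective bases $\mathcal B_t(\wt v_\pm)$ coincide when $\wt k$ avoids $\wt\lambda$ and the barriers, so the two derivative terms cancel. You have merely spelled out the path-connectedness observation underlying that coincidence, which the paper leaves implicit.
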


The next simplest case is that of an oriented edge $\wt k$ of $\wt\Sigma$ that is disjoint from the geodesic lamination $\wt\lambda$ but meets a barrier $\beta_{\wt T}$. By construction of the triangulation, $\wt k$ is contained in the closure of the complement of the train track neighborhood $\Phi$, and it meets $\beta_{\wt T}$ in exactly one point. 

This case splits into two subcases according to whether $\wt k$ turns to the left or to the right with respect to $\beta_{\wt T}$. More precisely, let $\wt v_+$ and $\wt v_-$ be the positive and negative endpoints of $\wt k$.  As in \S \ref{subsect:Barriers}, the component of $\wt T - \beta_{\wt T}$ that contains the negative endpoint $\widetilde v_{-}$ of $\wt k$ is adjacent to a side $x_{\wt T}y_{\wt T}$ of $\wt T$, whose orientation is the boundary orientation coming from $\wt T$. Let $z_{\wt T}$ denote the third vertex of $\wt T$. Then, either the positive endpoint $\wt v_+$ is in the component of $\wt T - \beta_{\wt T}$ that is adjacent to $x_{\wt T}z_{\wt T}$, in which case we say that \emph{$\wt k$ turns to the left}, or $\wt v_+$ is in the component of $\wt T - \beta_{\wt T}$ that is adjacent to $y_{\wt T}z_{\wt T}$ and  \emph{$\wt k$ turns to the right}. See Figure~\ref{fig:TriangleTurn}.

\begin{figure}[htbp]

\SetLabels
( .53 *.57  )  $\wt k$ \\
( .66 *  .44)  $\wt k'$ \\
\E\R(  0* .48 )  $x_{\wt T}$ \\
\E\L(  1* 1 )  $z_{\wt T}$ \\
\E\L(  1 * 0 )  $y_{\wt T}$ \\
\endSetLabels
\centerline{\AffixLabels{\includegraphics{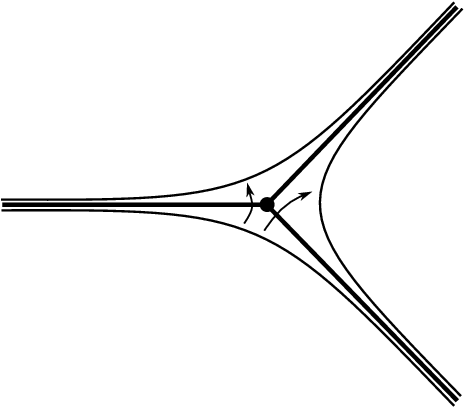}}}

\caption{The oriented arcs $\wt k$ and $\wt k'$ respectively turn left and right.}
\label{fig:TriangleTurn}
\end{figure}

\begin{lem}
\label{lem:EvaluateWeilEdgeMeetingBarrier}
If the oriented edge $\wt k$ of $\wt\Sigma$  is disjoint from the geodesic lamination $\wt\lambda$ and meets the barrier $\beta_{\wt T}$ in exactly one point,
$$
c_V(\wt k) =
\begin{cases}
\displaystyle
\sum_{a+b+c=n} \dot\tau_V^{abc}(\wt T,  x_{\wt T}) \dot L_{E_{\wt T}F_{\wt T}G_{\wt T}}^{abc}  &\text{ if } \wt k \text{ turns to the left}
 \\
 \displaystyle
 \sum_{a+b+c=n} \dot\tau_V^{abc}(\wt T,  x_{\wt T}) \dot R_{E_{\wt T}F_{\wt T}G_{\wt T}}^{abc}  
 &\text{ if } \wt k \text{ turns to the right}
\end{cases}
$$
with $E_{\wt T} = \F_\rho(x_{\wt T})$, $F_{\wt T} = \F_\rho(y_{\wt T})$, $G_{\wt T} = \F_\rho(z_{\wt T}) \in \Flag$. 
\end{lem}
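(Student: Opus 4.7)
The plan is to compute $c^0(\wt v_-)$ and $c^0(\wt v_+)$ separately in $\sln$ and then subtract. Since $\wt k$ is disjoint from $\wt\lambda$ and meets the barrier $\beta_{\wt T}$ in exactly one point, both endpoints lie in the same component $\wt T$ of $\wt S-\wt\lambda$, but the barrier splits $\wt T$ into three cells which carry different cyclic shifts of the triple $(x_{\wt T},y_{\wt T},z_{\wt T})$. With the lemma's conventions, $\wt v_-$ is associated to the triple $(E_{\wt T},F_{\wt T},G_{\wt T}^{(1)})$; a left turn assigns $(G_{\wt T},E_{\wt T},F_{\wt T}^{(1)})$ to $\wt v_+$, while a right turn assigns $(F_{\wt T},G_{\wt T},E_{\wt T}^{(1)})$. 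Interpreting $\B_t(\wt v)$ as an element of $\PGL$, the projective map $\B_t(\wt v)\B_0(\wt v)^{-1}$ is, by construction, the unique one carrying the $t=0$ triple to its $t$--deformation (uniqueness from Lemma~\ref{lem:ProjectiveMapBetweenFlagTriples}), so $c^0(\wt v)$ will be characterized as the unique element of $\sln$ whose infinitesimal action on the three flags of that triple recovers the derivatives at $t=0$ of the appropriate evaluations of $\F_{\rho_t}$.

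The main input will be Proposition~\ref{prop:InfinitesimalGapFormula}. Writing $E=E_{\wt T}$, $F=F_{\wt T}$, $G=G_{\wt T}$ for brevity, this proposition expresses these derivatives as $\dot E=X\cdot E$, $\dot F=X\cdot F$, and $\dot G=X\cdot G+\dot B_3\cdot G$, where $X:=\dot B(\wt T_0)+\sum_{\wt T'}\dot A(\wt T')$ is common to both endpoints, and where $\dot B_3:=\dot B(\wt T)|_{w=z_{\wt T}}$ admits two equivalent forms, one built from the left eruptions $\dot L^{abc}_{EFG}$ and one from the right eruptions $\dot R^{abc}_{EFG}$. Each $\dot S^{ab}_{EF}$ stabilizes both $E$ and $F$; each $\dot L^{abc}_{EFG}$ stabilizes $E$ and acts by a scalar on the lines $F^{(1)}$ and $G^{(1)}$ (hence fixes them); and each $\dot R^{abc}_{EFG}$ stabilizes $F$ and acts by a scalar on $E^{(1)}$ and $G^{(1)}$. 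Combining these stabilization patterns with the infinitesimal version of Lemma~\ref{lem:ProjectiveMapBetweenFlagTriples}, which asserts that the stabilizer in $\PGL$ of a maximum-span triple consisting of two full flags and a line is trivial, each $c^0(\wt v)$ will be uniquely pinned down.

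Carrying this out, the eruption terms in $\dot B_3$ contribute trivially to the action on the line $G^{(1)}$, so only the shearing terms survive and $c^0(\wt v_-)=X+\sum_{a+b=n}\dot\sigma^{ab}_V(\wt T_0,\wt T)\dot S^{ab}_{EF}$. In the left-turn case, the $\dot L$--form of $\dot B_3$ is the one whose stabilization pattern is compatible with the triple $(G,E,F^{(1)})$ of $\wt v_+$, yielding $c^0(\wt v_+)=X+\sum_{a+b=n}\dot\sigma^{ab}_V(\wt T_0,\wt T)\dot S^{ab}_{EF}+\sum_{a+b+c=n}\dot\tau^{abc}_V(\wt T,x_{\wt T})\dot L^{abc}_{EFG}$; in the right-turn case the triple $(F,G,E^{(1)})$ forces the $\dot R$--form instead. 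Subtracting cancels both $X$ and the shearing terms and produces exactly the formula in the statement. The main obstacle will be selecting the correct $\dot L$-- versus $\dot R$--representative of $\dot B_3$ at $\wt v_+$: although the two agree on their action on $G$ by Lemma~\ref{lem:EruptionsMainProperties}(2), they are genuinely distinct elements of $\sln$, and the uniqueness argument only accepts the one whose stabilization pattern matches the triple attached to $\wt v_+$. This forced choice, dictated by the turning direction of $\wt k$, is the source of the left/right asymmetry in the statement.
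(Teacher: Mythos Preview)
Your approach—characterizing $c^0(\wt v)$ as the unique element of $\sln$ whose infinitesimal action on the triple attached to $\wt v$ reproduces the flag derivatives supplied by Proposition~\ref{prop:InfinitesimalGapFormula}—is sound and is in fact more streamlined than the paper's route. But there is a genuine gap in the execution: you tacitly assume that the lemma's vertex labeling $(x_{\wt T}, y_{\wt T}, z_{\wt T})$, which is dictated by the cell of $\wt T-\beta_{\wt T}$ containing $\wt v_-$, coincides with the labeling in Proposition~\ref{prop:InfinitesimalGapFormula}, which is dictated by the side of $\wt T$ facing the base component $\wt T_0$. These need not agree, since $\wt v_-$ may sit in any of the three cells. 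When they differ, your assertions $\dot E=X\cdot E$, $\dot F=X\cdot F$, $\dot G=X\cdot G+\dot B_3\cdot G$ are false as written: the extra term $\dot B_3$ appears on whichever of $E,F,G$ corresponds to the Proposition's $z$-vertex, and it is expressed through a cyclically shifted flag triple and a rotated $\dot\tau$. The case $\wt T=\wt T_0$ is also outside the scope of Proposition~\ref{prop:InfinitesimalGapFormula} altogether.

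The paper handles precisely this by a two-step argument: it first treats the special case $\wt T=\wt T_0$ with matching labels, where the normalization forces $c^0(\wt v_-)=0$ directly and $\B_t(\wt v_+)$ is computed from the eruption family acting on $\B_0(\wt v_+)$; it then reduces the general case to this one by conjugating the whole family $\rho_t$ by the projective map $A_t$ that realigns $(x_{\wt T},y_{\wt T},z_{\wt T})$ with the base triple. Your direct method can be completed either by invoking that same conjugation, or by a three-case analysis on the cyclic offset between the two labelings; each case closes using the Rotation Condition $\dot\tau^{abc}(\wt T,x')=\dot\tau^{bca}(\wt T,y')$ together with the $\sln$-identity $\dot L^{bca}_{EFG}=-\dot R^{abc}_{GEF}$, but none of that bookkeeping is present in your writeup.
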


\begin{proof}
[Proof of Lemma~{\upshape\ref{lem:EvaluateWeilEdgeMeetingBarrier}} in a special case]
 First consider the very special case where $\wt T$ is the base component $\wt T_0$ used to define the bases $\mathcal B_t(\wt v)$, and the vertices $x_{\wt T}$, $y_{\wt T}$, $z_{\wt T}$ are respectively equal to the vertices $x_{\wt T_0}$, $y_{\wt T_0}$, $z_{\wt T_0}$. We will then use the fact that we have normalized the Hitchin representations $\rho_t$ so that the flag $\F_{\rho_t}(x_{\wt T_0})$, the flag $\F_{\rho_t}(y_{\wt T_0})$ and the line $\F_{\rho_t}(z_{\wt T_0})^{(1)}$ are independent of $t$, and therefore respectively equal to $E_{\wt T_0}$, $F_{\wt T_0}$ and $G_{\wt T_0}^{(1)}$. 
 
 In particular, the flag $\F_{\rho_t}(z_{\wt T_0})\in \Flag$ is such that
 $$
 X_{abc} \big( E_{\wt T_0}, F_{\wt T_0} , \F_{\rho_t}(z_{\wt T_0}) \big) = \exp \tau^{abc}_{\rho_t} (\wt T_0, x_{\wt T_0})
 $$
 for every $a$, $b$, $c \geq 1$ with $a+b+c=n$. In addition, the line $\F_{\rho_t}(z_{\wt T_0})^{(1)}$ is equal to $G_{\wt T_0}^{(1)}$. From the combination of Proposition~\ref{prop:TripleRatioClassifyFlagTriples} and Lemma~\ref{lem:ProjectiveMapBetweenFlagTriples},  $\F_{\rho_t}(z_{\wt T_0})$ is completely determined by these two properties. Lemma~\ref{lem:EruptionsMainProperties}(3--4)  shows that the flag $\Bigcirc_{a+b+c=n} L_{E_{\wt T_0}F_{\wt T_0}G_{\wt T_0}}^{abc} \big( \Delta_t \tau^{abc} (\wt T_0, x_{\wt T_0}) \big)(G_{\wt T_0})$ satisfies the same properties. It follows that, for the variation $\Delta_t \tau^{abc} (\wt T_0, x_{\wt T_0}) $ of the triangle invariant, 
\begin{align*}
 \F_{\rho_t}(z_{\wt T_0}) 
 &=
 \Bigcirc_{a+b+c=n} L_{E_{\wt T_0}F_{\wt T_0}G_{\wt T_0}}^{abc} \big( \Delta_t \tau^{abc} (\wt T_0, x_{\wt T_0}) \big)(G_{\wt T_0})
 \\
 &=\displaystyle
 \Bigcirc_{a+b+c=n} R_{E_{\wt T_0}F_{\wt T_0}G_{\wt T_0}}^{abc} \big( \Delta_t \tau^{abc} (\wt T_0, x_{\wt T_0}) \big)(G_{\wt T_0}),
\end{align*}
where the second equality follows from Lemma~\ref{lem:EruptionsMainProperties}(2). 

We are now ready to compute
$$
c_V(\wt k) =   \frac d{dt} \B_t(\widetilde v_+)\B_0(\widetilde v_+)^{-1}_{|t=0} -  \frac d{dt} \B_t(\widetilde v_-)\B_0(\widetilde v_-)^{-1}_{|t=0} .
$$

By definition, the projective basis  $\B_t(\widetilde v_-)$ is the one for which $ \F_{\rho_t}(x_{\wt T_0}) $ is the ascending flag, $ \F_{\rho_t}(y_{\wt T_0}) $ is the descending flag, and the line $ \F_{\rho_t}(z_{\wt T_0}) ^{(1)}$ is spanned by the sum of the basis vectors. Since these objects are all independent of $t$, so is the basis  $\B_t(\widetilde v_-)$, and
$$
\frac d{dt} \B_t(\widetilde v_-)\B_0(\widetilde v_-)^{-1}_{|t=0} =0.
$$

For the projective basis $ \B_t(\widetilde v_+)$, we need to distinguish whether $\wt k $ turns left or right, although these two subcases are not fundamentally different. 

If $\wt k$ turns left,  $\B_t(\widetilde v_+)$ is the projective basis for which $ \F_{\rho_t}(z_{\wt T_0}) $ is the ascending flag, $ \F_{\rho_t}(x_{\wt T_0}) $ is the descending flag, and the line $ \F_{\rho_t}(y_{\wt T_0}) ^{(1)}$ is spanned by the sum of the basis vectors. Set
$$
B_t =  \Bigcirc_{a+b+c=n} L_{E_{\wt T_0}F_{\wt T_0}G_{\wt T_0}}^{abc} \big( \Delta_t \tau^{abc} (\wt T_0, x_{\wt T_0}) \big) .
$$
Then
\begin{align*}
 \F_{\rho_t}(z_{\wt T_0})  &= B_t  \F_\rho(z_{\wt T_0}) 
 \\
 \F_{\rho_t}(x_{\wt T_0})  &=  \F_{\rho}(x_{\wt T_0})  = B_t  \F_\rho(x_{\wt T_0}) 
 \\
 \F_{\rho_t}(y_{\wt T_0})^{(1)}  &=  \F_{\rho}(y_{\wt T_0})^{(1)}  = B_t  \F_\rho(y_{\wt T_0}) ^{(1)}
\end{align*}
where the first equality is just a rephrasing of our earlier computation, the second and fourth equalities are consequences of our normalization, and the remaining two come from the properties of left eruptions stated in Lemma~\ref{lem:EruptionsMainProperties}(1--2). It follows that $\B_t(\widetilde v_+) = B_t \B_0(\widetilde v_+)$, and 
$$
\frac d{dt} \B_t(\widetilde v_+)\B_0(\widetilde v_+)^{-1}_{|t=0} 
= \frac d{dt} B_t{}_{|t=0} =  \sum_{a+b+c=n} \dot\tau_V^{abc}(\wt T_0,  x_{\wt T_0}) \dot L_{E_{\wt T_0}F_{\wt T_0}G_{\wt T_0}}^{abc}  .
$$
Therefore,
\begin{align*}
 c_V(\wt k) &=   \frac d{dt} \B_t(\widetilde v_+)\B_0(\widetilde v_+)^{-1}_{|t=0} -  \frac d{dt} \B_t(\widetilde v_-)\B_0(\widetilde v_-)^{-1}_{|t=0}
 \\
&=\sum_{a+b+c=n} \dot\tau_V^{abc}(\wt T,  x_{\wt T}) \dot L_{E_{\wt T}F_{\wt T}G_{\wt T}}^{abc}   
\end{align*}
in this subcase where $\wt k$ turns to the left, and $\wt T= \wt T_0$, $x_{\wt T}=x_{\wt T_0}$, $y_{\wt T}=y_{\wt T_0}$, $z_{\wt T}=z_{\wt T_0}$. This proves Lemma~\ref{lem:EvaluateWeilEdgeMeetingBarrier} in this subcase. 

The argument is very similar for the subcase when $\wt k$ turns to the right, in which case $\B_t(\widetilde v_+) = C_t \B_0(\widetilde v_+)$ with
$$
C_t =  \Bigcirc_{a+b+c=n} R_{E_{\wt T_0}F_{\wt T_0}G_{\wt T_0}}^{abc} \big( \Delta_t \tau^{abc} (\wt T_0, x_{\wt T_0}) \big) .
$$

 This concludes the proof of Lemma~\ref{lem:EvaluateWeilEdgeMeetingBarrier} in the special case where $\wt T= \wt T_0$, $x_{\wt T}=x_{\wt T_0}$, $y_{\wt T}=y_{\wt T_0}$, $z_{\wt T}=z_{\wt T_0}$. 
 \end{proof}
 
 \begin{proof}
[Proof of Lemma~{\upshape\ref{lem:EvaluateWeilEdgeMeetingBarrier}} in the general case]
 
 In the general case, consider the unique projective map $A_t\in \PGL$ sending the flag  $E_{\wt T_0}= \F_\rho(x_{\wt T_0})$ to $E_{\wt T}^t= \F_{\rho_t}(x_{\wt T})$, the flag $F_{\wt T_0}= \F_\rho(y_{\wt T_0})$ to $F_{\wt T}^t= \F_{\rho_t}(y_{\wt T})$, and the line $G_{\wt T_0}^{(1)}= \F_\rho(z_{\wt T_0})^{(1)}$ to $G_{\wt T}^{t\kern .1em (1)}= \F_{\rho_t}(z_{\wt T})^{(1)}$, as provided by Lemma~\ref{lem:ProjectiveMapBetweenFlagTriples}. We then use $A_t$ to conjugate $\rho_t$ to another representative $\rho_t' \colon \pi_1(S) \to \PGL$  of the Hitchin character $[\rho_t]=[\rho_t'] \in \Hit(S)$, defined by the property that
 $$
 \rho_t'(\gamma) = A_t^{-1} \rho_t(\gamma) A_t \in \PGL
 $$
for every $\gamma \in \pi_1(S)$.
 
 The flag map of this new Hitchin representation $\rho_t'$ is then $\F_{\rho_t'}= A_t^{-1} \circ \F_{\rho_t}$, if we use the same symbols to denote $A_t \in \PGL$ and its action on $\Flag$. As a consequence, the projective basis $\mathcal B_t'(\wt v)$ associated to each vertex $\wt v$ by the flag map $\F_{\rho_t'}$ is equal to the image $A_t^{-1} \mathcal B_t(\wt v)$ of the earlier basis $\mathcal B_t(\wt v)$ under the action of $A_t^{-1}$. We can then use this new projective basis to compute
 \begin{align*}
c_V(\wt k)
&=\frac d{dt} \mathcal B_t(\wt v_+)\mathcal B_0(\wt v_+)^{-1}{}_{|t=0} - \frac d{dt} \mathcal B_t(\wt v_-)\mathcal B_0(\wt v_-)^{-1}{}_{|t=0}
\\
&= \frac d{dt} A_t \mathcal B_t'(\wt v_+)\mathcal B_0'(\wt v_+)^{-1}A_0^{-1}{}_{|t=0} - \frac d{dt} A_t \mathcal B_t'(\wt v_-)\mathcal B_0'(\wt v_-)^{-1}A_0^{-1}{}_{|t=0}
\\
&= \frac d{dt} A_t A_0^{-1} {}_{|t=0} + \frac d{dt} A_0 \mathcal B_t'(\wt v_+)\mathcal B_0'(\wt v_+)^{-1}A_0^{-1}{}_{|t=0} 
\\
&\qquad\qquad
-\frac d{dt} A_t A_0^{-1} {}_{|t=0} - \frac d{dt} A_0 \mathcal B_t'(\wt v_-)\mathcal B_0'(\wt v_-)^{-1}A_0^{-1}{}_{|t=0} 
\\
&= A_0 \left( \frac d{dt}  \mathcal B_t'(\wt v_+)\mathcal B_0'(\wt v_+)^{-1}{}_{|t=0} 
- \frac d{dt}  \mathcal B_t'(\wt v_-)\mathcal B_0'(\wt v_-)^{-1}{}_{|t=0} \right) A_0^{-1}.
\end{align*}

The flag $\F_{\rho_t'}(x_{\wt T}) = E_{\wt T_0}$, the flag $\F_{\rho_t'}(y_{\wt T}) = F_{\wt T_0}$ and the line $\F_{\rho_t'}(z_{\wt T})^{(1)} = G_{\wt T_0}^{(1)}$ are independent of $t$. We can therefore apply the special case to this new normalization $\rho_t'$ of the Hitchin representations representing the characters $[\rho_t]=[\rho_t'] \in \Hit(S)$. For instance, when $\wt k$ turns to the left, this gives
\begin{align*}
 c_V(\wt k)
&= A_0 \left( \frac d{dt}  \mathcal B_t'(\wt v_+)\mathcal B_0'(\wt v_+)^{-1}{}_{|t=0} 
- \frac d{dt}  \mathcal B_t'(\wt v_-)\mathcal B_0'(\wt v_-)^{-1}{}_{|t=0} \right) A_0^{-1} 
\\
&= A_0 \left( \sum_{a+b+c=n} \dot\tau_V^{abc}(\wt T,  x_{\wt v_-}) \dot L_{E_{\wt T_0}F_{\wt T_0}G_{\wt T_0}}^{abc} \right) A_0^{-1} 
\\
&= \sum_{a+b+c=n} \dot\tau_V^{abc}(\wt T,  x_{\wt v_-})  \dot L_{(A_0E_{\wt T_0}) (A_0 F_{\wt T_0})(A_0G_{\wt T_0})}^{abc} 
\\
&= \sum_{a+b+c=n} \dot\tau_V^{abc}(\wt T,  x_{\wt v_-})  \dot L_{E_{\wt T} F_{\wt T}G_{\wt T}}^{abc} .
\end{align*}
This proves the lemma when $\wt k$ turns to the left. 

The argument is  identical when $\wt k$ turns to the right, by replacing left eruptions with right eruptions, and concludes the proof of Lemma~\ref{lem:EvaluateWeilEdgeMeetingBarrier}. 
\end{proof}

Lemmas \ref{lem:EvaluateWeilEdgeDisjoinLaminationBarrier} and \ref{lem:EvaluateWeilEdgeMeetingBarrier} take care of all the edges of the triangulation $\wt\Sigma$ that are not contained in the train track neighborhood $\wt\Phi$. Among the edges of the triangulations of $\wt\Phi$, many of them were constructed as ties of the train track neighborhood. We will later see that analyzing the images  of these tie edges under the 1--cocycle $c_V$ will enable us to determine the images  under $c_V$ of all the edges of the triangulation $\wt \Sigma$ that are contained in $\wt\Phi$. 

\begin{lem}
\label{lem:EvaluateWeilTie}
 Let $\wt k$ be an oriented tie of the train track neighborhood $\wt\Phi$ that is generic, in the sense that it is not contained in a switch tie, and let $\wt T_-$ and $\wt T_+$ be the components of $\wt S-\wt\lambda$ that respectively contain the negative and positive endpoints of $\wt k$. For every component $\wt T'$ of $\wt S - \wt\lambda$ that separates $\wt T_-$ from $\wt T_+$, counterclockwise label its vertices as $x_{\wt T'}$, $y_{\wt T'}$, $z_{\wt T'}\in \bdry$ in such a way that  the side $x_{\wt T'}y_{\wt T'}$ faces $\wt T_-$. Consider the flags $E_{\wt T'}=\F_\rho(x_{\wt T'})$, $F_{\wt T'}=\F_\rho(y_{\wt T'})$, $G_{\wt T'}=\F_\rho(z_{\wt T'})\in \Flag$ for each such component $\wt T'$, and use the same labelling conventions for $\wt T_+$. Then, 
 \begin{align*}
c_V(\wt k) &=  \sum_{\wt T' \text{ between } \wt T_- \text{ and } \wt T_+} \dot A(\wt T') 
 +  \sum_{a+b=n} \dot \sigma_V^{ab}(\wt T_-, \wt T_+)  \dot S^{ab}_{E_{\wt T_+}F_{\wt T_+}}  \in \sln
\end{align*}
where the sum is over all components $\wt T'$ of $\wt S-\wt\lambda$ separating $\wt T_-$ from $\wt T_+$ and where, as in Proposition~{\upshape\ref{prop:InfinitesimalGapFormula}}, 
$$
\dot A(\wt T') =
\begin{cases}
\displaystyle
 \sum_{a+b+c=n} \dot\tau_V^{abc}(\wt T', x_{\wt T'}) \left( \dot L_{E_{\wt T'}F_{\wt T'}G_{\wt T'}}^{abc} + \dot S^{a(b+c)}_{E_{\wt T'}G_{\wt T'}}  \right)
 \\
 \displaystyle
 \qquad\qquad + \sum_{a+b=n} \dot \sigma_V^{ab}(\wt T_-, \wt T') \left(  \dot S^{ab}_{E_{\wt T'}F_{\wt T'}} -  \dot S^{ab}_{E_{\wt T'}G_{\wt T'}}  \right)
 \\
 \qquad\qquad  \qquad\qquad  \qquad\qquad  \qquad\qquad  \text{if } \wt T_+ \text{ faces the side } x_{\wt T'}z_{\wt T'}\text{ of }\wt T'
 \\
 \displaystyle
  \sum_{a+b+c=n} \dot\tau_V^{abc}(\wt T', x_{\wt T'}) \left( \dot R_{E_{\wt T'}F_{\wt T'}G_{\wt T'}}^{abc} + \dot S^{(a+c)b}_{G_{\wt T'}F_{\wt T'}}  \right)
 \\
 \displaystyle
 \qquad\qquad + \sum_{a+b=n} \dot \sigma_V^{ab}(\wt T_-, \wt T') \left(  \dot S^{ab}_{E_{\wt T'}F_{\wt T'}} -  \dot S^{ab}_{G_{\wt T'}F_{\wt T'}}  \right)
 \\
 \qquad\qquad  \qquad\qquad  \qquad\qquad  \qquad\qquad  \text{if } \wt T_+ \text{ faces the side } y_{\wt T'}z_{\wt T'}\text{ of } \wt T'.
\end{cases}
$$
\end{lem}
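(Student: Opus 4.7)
The strategy is to reduce to the case $\wt T_0=\wt T_-$ by an $\Ad$--equivariance argument, then extract $c_V(\wt k)$ by applying Proposition~\ref{prop:InfinitesimalGapFormula} at $\wt T=\wt T_+$ and using the infinitesimal form of Lemma~\ref{lem:ProjectiveMapBetweenFlagTriples}.

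\emph{Reduction.} Following the ``general case'' argument in the proof of Lemma~\ref{lem:EvaluateWeilEdgeMeetingBarrier}, conjugate each $\rho_t$ by the unique projective map $A_t\in\PGL$ sending $\bigl(\F_\rho(x_{\wt T_0}),\F_\rho(y_{\wt T_0}),\F_\rho(z_{\wt T_0})^{(1)}\bigr)$ to $\bigl(\F_{\rho_t}(x_{\wt T_-}),\F_{\rho_t}(y_{\wt T_-}),\F_{\rho_t}(z_{\wt T_-})^{(1)}\bigr)$, with the counterclockwise labelling of $\wt T_-$ chosen so that $x_{\wt T_-}y_{\wt T_-}$ is the side facing $\wt T_+$. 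A direct calculation like the one in that proof shows that this operation replaces $c_V(\wt k)$ by $A_0^{-1}c_V(\wt k)A_0$. On the right-hand side of the claimed identity, the same conjugation turns each flag into its image under $A_0^{-1}$ and each shearing/eruption operator into its $\Ad A_0^{-1}$--conjugate, while the real scalars $\dot\sigma^{ab}_V$ and $\dot\tau^{abc}_V$, being functions of the character $[\rho_t]$, are unaffected. Both sides therefore transform by $\Ad A_0^{-1}$, so we may assume $\wt T_0=\wt T_-$, with the labelling convention for $\wt T_0$ matching the one for $\wt T_-$ in Lemma~\ref{lem:EvaluateWeilTie}. Under this normalization the triple at $\wt T_-$ is constant in $t$ and, by construction of the triangulation in \S\ref{subsect:Triangulation}, the vertex $\wt v_-$ lies in the component of $\wt T_- - \beta_{\wt T_-}$ adjacent to the side $x_{\wt T_-}y_{\wt T_-}$. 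Consequently $\B_t(\wt v_-)$ is constant, so
$$c_V(\wt k)=\dot M_0^+:=\frac d{dt}\B_t(\wt v_+)\B_0(\wt v_+)^{-1}\Big|_{t=0}\in\sln.$$

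\emph{Characterization via Proposition~\ref{prop:InfinitesimalGapFormula}.} The infinitesimal version of Lemma~\ref{lem:ProjectiveMapBetweenFlagTriples} (the trivial infinitesimal stabilizer statement for a maximum--span flag triple) shows that $\dot M_0^+\in\sln$ is uniquely characterized by its action on the triple $(E_{\wt T_+},F_{\wt T_+},G_{\wt T_+}^{(1)})$; by definition of $\B_t(\wt v_+)$,
$$\dot M_0^+\cdot E_{\wt T_+}=\tfrac d{dt}\F_{\rho_t}(x_{\wt T_+})\big|_{t=0},\quad\text{and similarly for }F_{\wt T_+}\text{ and }G_{\wt T_+}^{(1)}.$$
Since $\wt T_+$ faces the side $x_{\wt T_0}y_{\wt T_0}$ of $\wt T_0=\wt T_-$, Proposition~\ref{prop:InfinitesimalGapFormula} has $\dot B(\wt T_0)=0$. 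Writing $\dot A:=\sum_{\wt T'\text{ between }\wt T_-,\wt T_+}\dot A(\wt T')$, and observing that the formula for $\dot A(\wt T')$ in that proposition (using reference $\wt T_0$) matches the one in Lemma~\ref{lem:EvaluateWeilTie} (using reference $\wt T_-$) once $\wt T_0=\wt T_-$, the proposition yields
$$\tfrac d{dt}\F_{\rho_t}(x_{\wt T_+})\big|_{t=0}=\dot A\cdot E_{\wt T_+},\qquad \tfrac d{dt}\F_{\rho_t}(y_{\wt T_+})\big|_{t=0}=\dot A\cdot F_{\wt T_+},$$
$$\tfrac d{dt}\F_{\rho_t}(z_{\wt T_+})^{(1)}\big|_{t=0}=\bigl(\dot A+\dot B(\wt T_+)\bigr)\cdot G_{\wt T_+}^{(1)}.$$

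\emph{Matching with the candidate.} Let
$$\dot N:=\dot A+\sum_{a+b=n}\dot\sigma^{ab}_V(\wt T_-,\wt T_+)\,\dot S^{ab}_{E_{\wt T_+}F_{\wt T_+}}\in\sln.$$
By Lemma~\ref{lem:ShearingMainProperties}(1), each $\dot S^{ab}_{E_{\wt T_+}F_{\wt T_+}}$ infinitesimally preserves both flags $E_{\wt T_+}$ and $F_{\wt T_+}$, so $\dot N$ matches $\dot A$ in its action on these two flags. For the line $G_{\wt T_+}^{(1)}$, the eruption terms $\dot L^{abc}_{E_{\wt T_+}F_{\wt T_+}G_{\wt T_+}}$ (or $\dot R^{abc}$) that appear in $\dot B(\wt T_+)$ scale the whole subspace $G_{\wt T_+}^{(c)}$ by a constant, hence preserve the line $G_{\wt T_+}^{(1)}$ projectively by Lemma~\ref{lem:EruptionsMainProperties}(3), so they contribute $0$ to $\dot B(\wt T_+)\cdot G_{\wt T_+}^{(1)}$. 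The remaining shearing part of $\dot B(\wt T_+)$ uses coefficients $\dot\sigma^{ab}_V(\wt T_0,\wt T_+)=\dot\sigma^{ab}_V(\wt T_-,\wt T_+)$ and is therefore exactly the shearing contribution of $\dot N$. Thus $\dot N$ satisfies the same three defining relations as $\dot M_0^+$, and the uniqueness characterization gives $\dot M_0^+=\dot N$, which is the formula of Lemma~\ref{lem:EvaluateWeilTie}.

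\emph{Main obstacle.} The delicate point is the reduction step, where one must verify two things. First, that conjugating $\rho_t$ by $A_t$ genuinely replaces $c_V(\wt k)$ by $A_0^{-1}c_V(\wt k)A_0$ with no extra coboundary term; this holds because the coboundary correction produced by such a conjugation (see the proof of Lemma~\ref{lem:EvaluateWeilEdgeMeetingBarrier}) is the same at $\wt v_+$ and $\wt v_-$, and cancels when taking the difference along the edge $\wt k$. Second, that the counterclockwise labelling conventions for $\wt T_0$, $\wt T_+$, and each separating $\wt T'$ align consistently so that the formula for $\dot A(\wt T')$ in Proposition~\ref{prop:InfinitesimalGapFormula} coincides with the one in Lemma~\ref{lem:EvaluateWeilTie}; this requires careful tracking of the orientation of the barriers, of the placement of $\wt v_\pm$ within the components of $\wt T_\pm - \beta_{\wt T_\pm}$, and of the face of each $\wt T'$ towards which $\wt T_+$ lies.
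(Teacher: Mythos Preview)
Your proof is correct and follows essentially the same approach as the paper's own proof: reduce by conjugation to the special case $\wt T_0=\wt T_-$ with $\wt T_+$ facing the side $x_{\wt T_0}y_{\wt T_0}$, use that the basis at $\wt v_-$ is constant, apply Proposition~\ref{prop:InfinitesimalGapFormula} to identify the infinitesimal action on $(E_{\wt T_+},F_{\wt T_+},G_{\wt T_+}^{(1)})$, and then invoke the uniqueness from Lemma~\ref{lem:ProjectiveMapBetweenFlagTriples} together with the observations that $\dot S^{ab}_{E_{\wt T_+}F_{\wt T_+}}$ fixes both flags and $\dot L^{abc}_{E_{\wt T_+}F_{\wt T_+}G_{\wt T_+}}$ fixes the line $G_{\wt T_+}^{(1)}$. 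The only cosmetic difference is that the paper assembles a single element $\dot C$ and verifies it acts correctly on all three objects, whereas you phrase the same step as matching a candidate $\dot N$ against $\dot M_0^+$ via an infinitesimal uniqueness characterization; these are the same argument.
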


\begin{proof}
[Proof of Lemma~{\upshape\ref{lem:EvaluateWeilTie}} in a special case]

We again consider a very special case, where $\wt T_-$ is the base component $\wt T_0$, and  where $\wt T_+$ faces the side $x_{\wt T_0}y_{\wt T_0}$ of $\wt T_0$. 
 
 A key property of generic ties  is that the components of $\wt k-\wt\lambda$ containing each of the endpoints of $\wt k$ are disjoint from the barriers. (Note that this fails at each switch for the boundary tie of the left incoming branch.)

 In particular, for the negative endpoint $\wt u_-$ of $\wt k$, the projective basis $\mathcal B_t(\wt u_-)$ is determined by the properties that the flag $\F_{\rho_t}(x_{\wt T_0})$ is its ascending flag, the flag $\F_{\rho_t}(y_{\wt T_0})$ is its descending flag, and the line $\F_{\rho_t}(z_{\wt T_0})^{(1)}$ is spanned by the sum of the basis elements. By our normalization of the Hitchin representations $\rho_t$, these two flags and this line are independent of  $t$. It follows that 
$$
\frac d{dt} \B_t(\widetilde u_-)\B_0(\widetilde u_-)^{-1}_{|t=0} =0.
$$

For the positive endpoint $\wt u_+$ of $\wt k$, the projective basis $\mathcal B_t(\wt u_+)$ is determined by the properties that the flag $\F_{\rho_t}(x_{\wt T_+})$ is its ascending flag, the flag $\F_{\rho_t}(y_{\wt T_+})$ is its descending flag, and the line $\F_{\rho_t}(z_{\wt T_+})^{(1)}$ is spanned by the sum of the basis elements.

To compute the variation of $\mathcal B_t (\wt u_+)$, we use the formula
\begin{align*}
 \frac{d}{dt} \F_{\rho_t}(w)_{|t=0}  &= 
  \sum_{\wt T' \text{ between } \wt T_0 \text{ and } \wt T_+} \dot A(\wt T')  \F_\rho(w)
 + \dot B(\wt T_+)  \F_\rho(w)
\end{align*} 
 of Proposition~\ref{prop:InfinitesimalGapFormula} for each $w \in \{x_{\wt T_+}, y_{\wt T_+}, z_{\wt T_+}\}$, where $ \dot A(\wt T') $ is as repeated in the statement of Lemma~\ref{lem:EvaluateWeilTie}, and where
 $$
\dot B(\wt T_+) =
\begin{cases}
 0
 & \text{if } w= x_{\wt T_+}
 \\
 0
 & \text{if } w= y_{\wt T_+}
 \\ \displaystyle
 \sum_{a+b=n} \dot \sigma_V^{ab}(\wt T_0, \wt T_+)  \dot S^{ab}_{E_{\wt T_+}F_{\wt T_+}} 
+  \sum_{a+b+c=n} \dot\tau_V^{abc}(\wt T_+, x_{\wt T_+}) \dot L_{E_{\wt T_+}F_{\wt T_+}G_{\wt T_+}}^{abc} 
&\text{if } w=z_{\wt T_+}.
\end{cases}
$$

Note that  $ \dot S^{ab}_{E_{\wt T_+}F_{\wt T_+}} \F_\rho(x_{\wt T_+}) =0$ and $ \dot S^{ab}_{E_{\wt T_+}F_{\wt T_+}} \F_\rho(y_{\wt T_+}) =0$  since the shearing map $S^{ab}_{E_{\wt T_+}F_{\wt T_+}}(t) \in \PGL$ fixes the flags $E_{\wt T_+}=\F_\rho(x_{\wt T_+}) $ and $F_{\wt T_+}=\F_\rho(y_{\wt T_+}) $. Also, the left eruption $L_{E_{\wt T_+}F_{\wt T_+}G_{\wt T_+}}^{abc} (t)$ respects the line $G_{\wt T_+}^{(1)} $, and therefore $\dot L_{E_{\wt T_+}F_{\wt T_+}G_{\wt T_+}}^{abc} G_{\wt T_+}^{(1)} =0$. It follows that
\begin{align*}
 \frac{d}{dt} \F_{\rho_t}(x_{\wt T_+})_{|t=0}  &=   \dot C \F_\rho(x_{\wt T_+})  \in T_{E_{\wt T_+}} \Flag
 \\
  \frac{d}{dt} \F_{\rho_t}(y_{\wt T_+})_{|t=0}  &=   \dot C \F_\rho(y_{\wt T_+})  \in T_{F_{\wt T_+}}\Flag
  \\
  \frac{d}{dt} \F_{\rho_t}(z_{\wt T_+})^{(1)}{}_{|t=0}  &=   \dot C \F_\rho(z_{\wt T_+}) ^{(1)} \in T_{G_{\wt T_+}^{(1)}} \mathbb{RP}^{n-1}
\end{align*}
for the same element 
$$
\dot C = 
 \sum_{\wt T' \text{ between } \wt T_0 \text{ and } \wt T_+} \dot A(\wt T') +  \sum_{a+b=n} \dot \sigma_V^{ab}(\wt T_0, \wt T_+)  \dot S^{ab}_{E_{\wt T_+}F_{\wt T_+}}  \in \sln. 
$$

Since the projective basis $ \mathcal B_t(\wt v_+)$ is determined by the flags $ \F_{\rho_t}(x_{\wt T_+})$,  $\F_{\rho_t}(y_{\wt T_+})$ and the line $ \F_{\rho_t}(z_{\wt T_+})^{(1)}$, it follows that 
$$
\frac d{dt} \mathcal B_t(\wt v_+)_{|t=0} = \dot C  \mathcal B_0(\wt v_+).
$$

Therefore,
$$
c_V(\wt k) = \frac d{dt} \mathcal B_t(\wt v_+) \mathcal B_0(\wt v_+)^{-1}_{|t=0} 
- \frac d{dt} \mathcal B_t(\wt v_-) \mathcal B_0(\wt v_-)^{-1}_{|t=0} = \dot C,
$$
which is exactly what we were supposed to prove in this special case  where $\wt T_-$ is the base component $\wt T_0$ and  $\wt T_+$ faces the side $x_{\wt T_0}y_{\wt T_0}$ of $\wt T_0$. 
\end{proof}

\begin{proof}
 [Proof of Lemma~{\upshape\ref{lem:EvaluateWeilTie}} in the general case]
 
 The general case can be deduced from the special case by the same argument that we used for Lemma \ref{lem:EvaluateWeilEdgeMeetingBarrier}. 
 
 Namely  consider the projective map $A_t\in \PGL$ sending the flag  $\F_\rho(x_{\wt T_0})$ to $\F_{\rho_t}(x_{\wt T_-})$, the flag $\F_\rho(y_{\wt T_0})$ to $ \F_{\rho_t}(y_{\wt T_-})$, and the line $ \F_\rho(z_{\wt T_0})^{(1)}$ to $\F_{\rho_t}(z_{\wt T_-})^{(1)}$. If the Hitchin representation $\rho_t'$ is obtained by conjugating $\rho_t$ with $A_t$, the flags $\F_{\rho_t}(x_{\wt T_-})$, $\F_{\rho_t}(y_{\wt T_-})$ and the line $\F_{\rho_t}(z_{\wt T_-})^{(1)}$ are now independent of $t$, and we can apply the special case to $\rho_t'$. As in the proof of Lemma~\ref{lem:EvaluateWeilEdgeMeetingBarrier}, the formula for $\rho_t$ is then obtained from the formula for $\rho_t'$ by a simple conjugation by $A_0$. 
\end{proof}

\subsection{Opening zippers}
\label{subsect:OpeningZippers}

An obvious problem with the formula of Lemma~\ref{lem:EvaluateWeilTie} is that it involves an infinite sum, which would be cumbersome to directly use in cup-product computations. So, as in \cite{SozBon, Zey},  we will use an approximation based on the idea of opening zippers in a train track neighborhood to make it arbitrarily close to the geodesic lamination~$\lambda$. 

The operation of zipper opening replaces a train track neighborhood $\Phi$ of the geodesic lamination $\lambda$ by one that more closely approximates $\lambda$. More precisely, let $a$ be an arc in $\Phi$ that starts at a switch point of $\Phi$, is disjoint from $\lambda$ and transverse to the ties of $\Phi$, and ends on a tie that is not a switch tie. Splitting $\Phi$ along $a$ gives a new train track neighborhood $\Phi'$ of $\lambda$. We say that $\Phi'$ is obtained from $\Phi$ by \emph{zipper opening along $a$}. See Figure~\ref{fig:ZipperOpening}, and \cite[\S 1.7, \S 2.4]{PenHar} for details.

Instead of a single arc $a$, we can also split $\Phi$ along  a family of (necessarily disjoint) such arcs issued from distinct switch points of $\Phi$. In particular, for a given integer $\delta\geq 1$ we can take, for each switch point $s$, an arc $a_s$ as above that is issued from $s$ and crosses exactly $\delta$ switch ties. If the train track neighborhood $\Phi_\delta$ is obtained from $\Phi$ by zipper opening along the union of these arcs $a_s$, we say that $\Phi_\delta$ is obtained from $\Phi$ by \emph{zipper opening up to depth $\delta$}. One easily sees that, up to isotopy,  $\Phi_\delta$ depends only on $\Phi$ and $\delta$. 

\begin{figure}[htbp]

\SetLabels
( .31 * .91 )  $a$ \\
( .6 * .63 )  $\Phi$ \\
( .6 * .07 )  $\Phi'$ \\
( .98 * .56 )  $\lambda$ \\
( .98 * .0 )  $\lambda$ \\
\endSetLabels
\centerline{\AffixLabels{\includegraphics{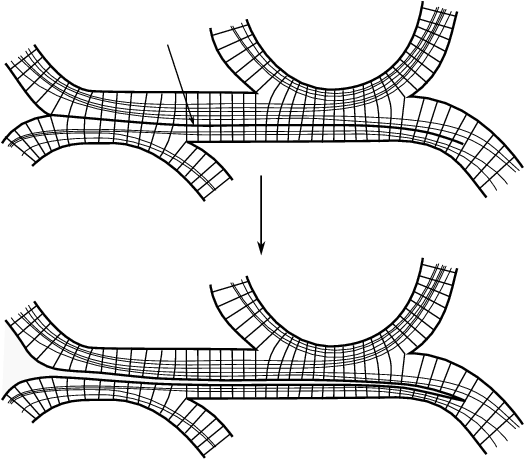}}}

\caption{Opening a zipper}
\label{fig:ZipperOpening}
\end{figure}

\begin{lem}
\label{lem:EstimateWeilZipperOpening}
 Let $\Phi_0$ be a train track neighborhood of the geodesic lamination $\lambda$,  let $\Phi_\delta$ be obtained  from $\Phi_0$ by a zipper opening up to depth $\delta$, and let $\wt\Phi_\delta$ be the preimage of $\Phi_\delta$ in the universal cover $\wt S$. Then, for every tie $\wt k$ of $\wt\Phi_\delta$ and with the conventions of Lemma~{\upshape\ref{lem:EvaluateWeilTie}}, there exists a constant $C>0$ such that
 $$
 c_V(\wt k) =  \sum_{a+b=n} \dot \sigma_V^{ab}(\wt T_-, \wt T_+)  \dot S^{ab}_{E_{\wt T_+}F_{\wt T_+}}  + O ( \E^{-C\delta} ) \in \sln
 $$
 where $C$ and the other constant hidden in the Landau symbol $O(\ )$ depend only on the Hitchin representation $\rho$,  the tangent vector $V\in T_{[\rho]} \Hit(S)$, the original train track neighborhood $\Phi_0$, and a compact subset of $\wt S$ containing the tie $\wt k$. 
\end{lem}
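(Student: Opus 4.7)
My plan is to apply Lemma~\ref{lem:EvaluateWeilTie} directly to the tie $\wt k$ of the refined train track neighborhood $\wt\Phi_\delta$ (which, like $\wt\Phi_0$, is a train track neighborhood of $\lambda$). This immediately yields
\begin{align*}
c_V(\wt k)
&= \sum_{\wt T'} \dot A(\wt T')
+ \sum_{a+b=n} \dot\sigma_V^{ab}(\wt T_-, \wt T_+)\, \dot S^{ab}_{E_{\wt T_+}F_{\wt T_+}},
\end{align*}
where the first sum runs over all components $\wt T'$ of $\wt S-\wt\lambda$ separating $\wt T_-$ from $\wt T_+$. Since the desired main term is already present, the task reduces to bounding the remainder $\Xi_\delta := \sum_{\wt T'} \dot A(\wt T')$ by $O(\E^{-C\delta})$.

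The first step will be a differential analog of Lemma~\ref{lem:ElementarySlitheringBoundedByGapLength2}: for an exponent $\nu>0$ and a constant $C_1$ depending only on $\rho$, $V$ and a compact region containing $\wt k$,
$$\|\dot A(\wt T')\|_{\sln} \leq C_1 \, \ell(k \cap \wt T')^\nu$$
for every $\wt T'$ separating $\wt T_-$ from $\wt T_+$. Recall from the proof of Proposition~\ref{prop:InfinitesimalGapFormula} that $\dot A(\wt T') = \frac{d}{dt}A_t(\wt T')|_{t=0}$, with $A_t(\wt T')$ the family from Proposition~\ref{prop:VariationSlitheringMap}. To produce the bound I will invoke the holomorphic extension $z\mapsto \rho_z$ of the real path $t\mapsto \rho_t$ (exactly as in the proof of Proposition~\ref{prop:InfinitesimalGapFormula}, following \cite{Bon96, MalMarMazZha}): Lemma~\ref{lem:ElementarySlitheringBoundedByGapLength2} applied in this complex disk gives $A_z(\wt T') = \Id + O(\ell(k\cap\wt T')^\nu)$ uniformly in $z$, and Cauchy's inequality then transfers the $O(\ell^\nu)$ bound to the derivative at $z=0$.

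The second step is geometric: each tie $\wt k$ of $\wt\Phi_\delta$ sits inside a tie of $\wt\Phi_0$, with endpoints trapped between leaves of $\wt\lambda$ that share at least $\delta$ consecutive switch-crossings inside $\wt\Phi_0$. By the uniform expansion (Anosov property) of the one-sided shift on train routes carried by $\wt\Phi_0$, which follows from the maximality of $\lambda$, the transverse width of the corresponding cylinder of leaves --- and in particular $\ell(\wt k)$ itself --- decays as $O(\E^{-C_2\delta})$ with $C_2 > 0$ depending only on $\Phi_0$. Consequently $\ell(k\cap \wt T') \leq \ell(\wt k) = O(\E^{-C_2\delta})$ for every $\wt T'$ appearing in $\Xi_\delta$.

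Combining these two ingredients with Lemma~\ref{lem:HolderSumOfGapLengths} for some $\nu'\in(0,\nu)$ then yields
$$\|\Xi_\delta\| \leq C_1 \sum_{\wt T'} \ell(k\cap \wt T')^\nu \leq C_1 \!\!\!\sum_{\ell(k\cap\wt T)<C_3\E^{-C_2\delta}}\!\! \ell(k\cap\wt T)^\nu = O\bigl(\E^{-C_2\nu'\delta}\bigr),$$
which is the lemma with $C = C_2\nu'$. I expect the main obstacle to be the exponential contraction $\ell(\wt k) = O(\E^{-C\delta})$: although morally a consequence of the hyperbolic dynamics of $\lambda$, producing the bound with a constant depending only on $\Phi_0$ (and not on the particular arcs along which the zipper openings are performed) requires a careful unwinding of the zipper operation in terms of the transverse measures on $\lambda$, plausibly via Perron--Frobenius-type control on the train track transition matrix. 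The differential slithering bound in step one, by contrast, should be routine once the holomorphic extension is in place.
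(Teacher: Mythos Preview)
Your overall architecture---apply Lemma~\ref{lem:EvaluateWeilTie} and then bound the remainder $\Xi_\delta=\sum_{\wt T'}\dot A(\wt T')$ via a Cauchy/Pick estimate on a holomorphic extension---is exactly what the paper does. But both of your numbered steps have gaps.

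The decisive one is Step~2. The claim $\ell(\wt k)=O(\E^{-C_2\delta})$ is \emph{false}. A tie $\wt k$ of $\wt\Phi_\delta$ is a sub-arc of a tie $\wt k_0$ of $\wt\Phi_0$; when one endpoint of $\wt k$ lies on the original boundary $\partial\wt\Phi_0$, the component $\wt T_-$ containing it has combinatorial depth $r(\wt T_-)=0$ and $\wt k$ contains the full gap $\wt k_0\cap\wt T_-$, whose length is bounded below independently of~$\delta$. What you actually need---and what the paper proves---is that each component $\wt T'$ \emph{strictly between} $\wt T_-$ and $\wt T_+$ has $r(\wt T')\geq\delta$: otherwise the zipper arc issued from the switch at the tip of that spike of $\wt T'$ would have reached $\wt k_0$ and produced an additional boundary point of $\wt\Phi_\delta$ inside~$\wt k$. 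The negative-curvature estimate $\ell(\wt k_0\cap\wt T')\leq D''\E^{-C''r(\wt T')}$ (Sublemma~\ref{sublem:EstimateWeilZipperOpening0} in the paper) then gives the bound you want on each individual gap, and Lemma~\ref{lem:HolderSumOfGapLengths} sums them. So replace your global width estimate by this combinatorial-depth argument.

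Step~1 also skates over a real issue. The $\dot A(\wt T')$ appearing in Lemma~\ref{lem:EvaluateWeilTie} involves $\dot\sigma_V^{ab}(\wt T_-,\wt T')$, whereas the family $A_t(\wt T')$ from the proof of Proposition~\ref{prop:InfinitesimalGapFormula} (to which Lemma~\ref{lem:ElementarySlitheringBoundedByGapLength2} was applied) is built with the fixed base $\wt T_0$ and involves $\dot\sigma_V^{ab}(\wt T_0,\wt T')$. These are different elements of $\sln$, and since $\wt T_-$ moves with $\delta$, the uniformity of your constant $C_1$ is not automatic. The paper handles this by writing $\dot A(\wt T')=\dot A'(\wt T')+\dot A''(\wt T')$: the piece $\dot A'$ (base $\wt T_0$) is bounded exactly as you propose, while $\dot A''$ is a combination of differences $\dot S^{ab}_{E_{\wt T'}F_{\wt T'}}-\dot S^{ab}_{E_{\wt T'}G_{\wt T'}}$ controlled via the H\"older continuity of the flag map, with a coefficient $\dot\sigma_V^{ab}(\wt T_0,\wt T_-)=O(r(\wt T_-))=O(\delta)$ that gets absorbed into the exponential. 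Your direct route can be made to work by re-running the proof of Lemma~\ref{lem:ElementarySlitheringBoundedByGapLength2} with base $\wt T_-$ and checking that the constants depend only on a compact containing~$\wt k$ (they do, since the relevant slithering maps and the normalizing map $A_0$ are controlled once the vertices of $\wt T_-$ lie in a fixed compact of $\bdry$), but this has to be argued rather than asserted.
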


\begin{proof} By construction, the tie $\wt k$ of $\wt\Phi_\delta$ is contained in a tie $\wt k_0$ of the original train track $\Phi_0$. 

In addition to our earlier estimates, the argument is based on a classical measure of the combinatorial complexity of a component of $\wt k_0 - \wt\lambda$ with respect to $\wt\Phi_0$. Such a component is of the form $\wt k_0 \cap \wt T$ for some component $\wt T$ of $\wt S-\wt\lambda$. If none of the endpoints $\wt k_0 \cap \wt T$ is an endpoint of $\wt k_0$, the two sides of $\wt T$ passing through these endpoints are asymptotic to each other on one side of $\wt k_0$, and on the other side follow a common path of $r(\wt T)\geq 1$ branches of $\wt \Phi_0$ before diverging at some switch of $\wt \Phi_0$. This number $r(\wt T)$ is the \emph{combinatorial depth} of  $\wt k_0 \cap \wt T$ in the train track neighborhood $\wt\Phi_0$. By convention, $r(\wt T)=0$ if $\wt k_0 \cap \wt T$ contains one of the endpoints of $\wt k_0$. 

This combinatorial depth actually played a crucial role in the proof of the  estimates of Lemmas~\ref{lem:ElementarySlitheringBoundedByGapLength} and \ref{lem:HolderSumOfGapLengths} that we borrowed from \cite{BonDre2}. One key ingredient was the following:

\begin{sublem}
\label{sublem:EstimateWeilZipperOpening0}
 There exist constants $C'$, $C''$, $D'$, $D''>0$, depending only on the negatively curved metric $m_0$ that we used to define geodesic laminations and on the train track neighborhood $\Phi_0$,   such that
 $$
 D' \E^{-C' r(\wt T)} \leq \ell( \wt k_0 \cap \wt T ) \leq D'' \E^{-C'' r(\wt T)}
 $$
 for every component $ \wt k_0 \cap \wt T$ of $ \wt k_0 \cap \wt\lambda$. 
\end{sublem}
\begin{proof}
 Before they diverge at some switch of $\wt\Phi_0$, the two sides of $\wt T$ travel together over a length that is bounded above and below by a constant times $r(\wt T)$. The estimate then follows from an estimate using the negative curvature of $m_0$. The constants $C'$, $C''$, $D'$, $D''>0$ depend on curvature bounds for $m_0$, on the ``lengths'' of the branches of $\Phi_0$, and on a lower bound on the angles between the leaves of $\lambda$ and the ties of $\Phi_0$. 
\end{proof}

We are now ready to begin the proof of Lemma~\ref{lem:EstimateWeilZipperOpening}. From the formula 
 \begin{align*}
c_V(\wt k) &= \sum_{a+b=n} \dot \sigma_V^{ab}(\wt T_-, \wt T_+)  \dot S^{ab}_{E_{\wt T_+}F_{\wt T_+}}  
+ \sum_{\wt T' \text{ between } \wt T_- \text{ and } \wt T_+} \dot A(\wt T') 
\end{align*}
 of Lemma~\ref{lem:EvaluateWeilTie}, we need to show that
$$
 \sum_{\wt T' \text{ between } \wt T_- \text{ and } \wt T_+} \dot A(\wt T')   =  O ( \E^{-C\delta} )  
$$
for some constant $C>0$. 

For this, we will use the estimates that already appeared in our proofs of Propositions~\ref{prop:VariationSlitheringMap} and \ref{prop:InfinitesimalGapFormula}. However, the quantity 
$$
\dot A(\wt T') =
\begin{cases}
\displaystyle
 \sum_{a+b+c=n} \dot\tau_V^{abc}(\wt T', x_{\wt T'}) \left( \dot L_{E_{\wt T'}F_{\wt T'}G_{\wt T'}}^{abc} + \dot S^{a(b+c)}_{E_{\wt T'}G_{\wt T'}}  \right)
 \\
 \displaystyle
 \qquad\qquad + \sum_{a+b=n} \dot \sigma_V^{ab}(\wt T_-, \wt T') \left(  \dot S^{ab}_{E_{\wt T'}F_{\wt T'}} -  \dot S^{ab}_{E_{\wt T'}G_{\wt T'}}  \right)
 \\
 \qquad\qquad  \qquad\qquad  \qquad\qquad  \qquad\qquad  \text{if } \wt T_+ \text{ faces the side } x_{\wt T'}z_{\wt T'}\text{ of }\wt T'
 \\
 \displaystyle
  \sum_{a+b+c=n} \dot\tau_V^{abc}(\wt T', x_{\wt T'}) \left( \dot R_{E_{\wt T'}F_{\wt T'}G_{\wt T'}}^{abc} + \dot S^{(a+c)b}_{G_{\wt T'}F_{\wt T'}}  \right)
 \\
 \displaystyle
 \qquad\qquad + \sum_{a+b=n} \dot \sigma_V^{ab}(\wt T_-, \wt T') \left(  \dot S^{ab}_{E_{\wt T'}F_{\wt T'}} -  \dot S^{ab}_{G_{\wt T'}F_{\wt T'}}  \right)
 \\
 \qquad\qquad  \qquad\qquad  \qquad\qquad  \qquad\qquad  \text{if } \wt T_+ \text{ faces the side } y_{\wt T'}z_{\wt T'}\text{ of } \wt T',
\end{cases}
$$
appearing in Lemma~\ref{lem:EvaluateWeilTie} is not exactly identical to the similar quantity occurring in Proposition~\ref{prop:InfinitesimalGapFormula}, which we rewrite here as
$$
\dot A'(\wt T') =
\begin{cases}
\displaystyle
 \sum_{a+b+c=n} \dot\tau_V^{abc}(\wt T', x_{\wt T'}) \left( \dot L_{E_{\wt T'}F_{\wt T'}G_{\wt T'}}^{abc} + \dot S^{a(b+c)}_{E_{\wt T'}G_{\wt T'}}  \right)
 \\
 \displaystyle
 \qquad\qquad + \sum_{a+b=n} \dot \sigma_V^{ab}(\wt T_0, \wt T') \left(  \dot S^{ab}_{E_{\wt T'}F_{\wt T'}} -  \dot S^{ab}_{E_{\wt T'}G_{\wt T'}}  \right)
 \\
 \qquad\qquad  \qquad\qquad  \qquad\qquad  \qquad\qquad  \text{if } \wt T_+ \text{ faces the side } x_{\wt T'}z_{\wt T'}\text{ of }\wt T'
 \\
 \displaystyle
  \sum_{a+b+c=n} \dot\tau_V^{abc}(\wt T', x_{\wt T'}) \left( \dot R_{E_{\wt T'}F_{\wt T'}G_{\wt T'}}^{abc} + \dot S^{(a+c)b}_{G_{\wt T'}F_{\wt T'}}  \right)
 \\
 \displaystyle
 \qquad\qquad + \sum_{a+b=n} \dot \sigma_V^{ab}(\wt T_0, \wt T') \left(  \dot S^{ab}_{E_{\wt T'}F_{\wt T'}} -  \dot S^{ab}_{G_{\wt T'}F_{\wt T'}}  \right)
 \\
 \qquad\qquad  \qquad\qquad  \qquad\qquad  \qquad\qquad  \text{if } \wt T_+ \text{ faces the side } y_{\wt T'}z_{\wt T'}\text{ of } \wt T'.
\end{cases}
$$
The difference is that the base triangle $\wt T_0$ is replaced by the component $\wt T_-$, which depends on the tie $\wt k$ of the train track neighborhood $\wt \Phi_\delta$. Some additional care is therefore needed to guarantee  uniformity for the estimates. 

For this, we first consider the case where $\wt T_-$ separates $\wt T_+$ from $\wt T_0$. We then use the quasi-additivity property of Proposition~\ref{lem:LiftShearsToUniversalCover} which, after taking derivatives, tells us that
$$
\dot \sigma_V^{ab}(\wt T_0, \wt T') =
\begin{cases}
\displaystyle
 \dot \sigma_V^{ab}(\wt T_0, \wt T_-) + \dot \sigma_V^{ab}(\wt T_-, \wt T') - \sum_{b' + c'=n-a} \dot \tau_V^{ab'c'}(\wt T_-, x_{\wt T_-} )
 \\
  \qquad\qquad  \qquad\qquad  \qquad\qquad  \qquad\qquad  \text{if } \wt T' \text{ faces the side } x_{\wt T_-}z_{\wt T_-}\text{ of }\wt T_-
 \\
 \displaystyle
  \dot \sigma_V^{ab}(\wt T_0, \wt T_-) + \dot \sigma_V^{ab}(\wt T_-, \wt T') - \sum_{a' + c'=n-b} \dot \tau_V^{bc'a'}(\wt T_-, y_{\wt T_-} )
  \\
    \qquad\qquad  \qquad\qquad  \qquad\qquad  \qquad\qquad  \text{if } \wt T' \text{ faces the side } y_{\wt T_-}z_{\wt T_-}\text{ of }\wt T_-.
\end{cases}
$$

This enables us to write the term $\dot A(\wt T')$ of  Lemma~\ref{lem:EvaluateWeilTie} as
$$
\dot A(\wt T') = \dot A'(\wt T') + \dot A''(\wt T')
$$
 where $ \dot A'(\wt T') $ is as above and
 $$
  \dot A''(\wt T') =
  \begin{cases}
 \displaystyle
- \sum_{a+b=n}  \dot \sigma_V^{ab}(\wt T_0, \wt T_-) \left(  \dot S^{ab}_{E_{\wt T'}F_{\wt T'}} -  \dot S^{ab}_{E_{\wt T'}G_{\wt T'}}  \right) 
\\
\displaystyle
\qquad\qquad\qquad+ \sum_{a+b+c=n} \dot \tau_V^{abc}(\wt T_-, x_{\wt T_-} )  \left(  \dot S^{a(b+c)}_{E_{\wt T'}F_{\wt T'}} -  \dot S^{a(b+c)}_{E_{\wt T'}G_{\wt T'}}  \right)
 \\
 \qquad\qquad  \qquad\qquad  \qquad\qquad  \qquad\qquad  \text{if } \wt T_+ \text{ faces the side } x_{\wt T'}z_{\wt T'}\text{ of }\wt T'
 \\
 \displaystyle
 -  \sum_{a+b=n} \dot \sigma_V^{ab}(\wt T_0, \wt T_-) \left(  \dot S^{ab}_{E_{\wt T'}F_{\wt T'}} -  \dot S^{ab}_{G_{\wt T'}F_{\wt T'}}  \right)
 \\
\displaystyle
\qquad\qquad\qquad+ \sum_{a+b+c=n} \dot \tau_V^{abc}(\wt T_-, x_{\wt T_-} ) \left(  \dot S^{(a+c)b}_{E_{\wt T'}F_{\wt T'}} -  \dot S^{(a+c)b}_{G_{\wt T'}F_{\wt T'}}  \right)
 \\
 \qquad\qquad  \qquad\qquad  \qquad\qquad  \qquad\qquad  \text{if } \wt T_+ \text{ faces the side } y_{\wt T'}z_{\wt T'}\text{ of } \wt T'.
\end{cases}
 $$
 
 This will enable us to split the analysis into two steps.
 
\begin{sublem} 
\label{sublem:EstimateWeilZipperOpening1}
For $\dot A'(\wt T') $ as above, there exists $C>0$ such that
$$
 \sum_{\wt T' \text{ between } \wt T_- \text{ and } \wt T_+} \dot A'(\wt T')   =  O ( \E^{-C\delta} ) ,
$$
where the constant $C>0$ and the constant hidden in the Landau symbol $O(\ )$ depend only on a compact subset of $\wt S$ containing $\wt k$, on the Hitchin representation $\rho$, and on the tangent vector $V\in T_{[\rho]}\Hit(S)$. 
\end{sublem}
 
\begin{proof}
This is a simple consequence of the estimates that we  used in the proofs of Propositions~\ref{prop:VariationSlitheringMap} and \ref{prop:InfinitesimalGapFormula}.  

Indeed, in the proof of Proposition~\ref{prop:InfinitesimalGapFormula}, $\dot A'(\wt T') $ (called $\dot A(\wt T') $ in that statement) occurred as the derivative at $t=0$ of a quantity $ A_t(\wt T') $ defined in that proof, which makes sense as a holomorphic function of  $t\in \C$ with $|t|<1$. Lemma~\ref{lem:ElementarySlitheringBoundedByGapLength2} asserts that $ A_t(\wt T') = \Id_{\R^n}+ O\left( \ell(\wt k \cap \wt T')^\nu \right)$ for some $\nu$. An application of Pick's lemma then shows that
$$
 \sum_{\wt T' \text{ between } \wt T_- \text{ and } \wt T_+} \dot A'(\wt T')   =  \sum_{\wt T' \text{ between } \wt T_- \text{ and } \wt T_+}  O\left( \ell(\wt k \cap \wt T')^\nu \right) .
$$

Sublemma~\ref{sublem:EstimateWeilZipperOpening0} shows  that $\ell(\wt k \cap \wt T') = O(\E^{-C'' r(\wt T')})$ for some constant $C''>0$ and, by definition of zipper openings, $r(\wt T')\geq \delta$ for each component $ \wt T'$ occurring in the above sum.  It then follows from the second part of Lemma~\ref{lem:HolderSumOfGapLengths} that 
$$
\sum_{\wt T' \text{ between } \wt T_- \text{ and } \wt T_+}  O\left( \ell(\wt k \cap \wt T')^\nu \right) 
= O(\E^{-\nu' C'' \delta}) = O(\E^{-C \delta})
$$
for any $\nu'<\nu$ and the constant $C=\nu'C''$. This concludes the proof of Sublemma~\ref{sublem:EstimateWeilZipperOpening1}.
\end{proof}
 
 \begin{sublem} 
\label{sublem:EstimateWeilZipperOpening2}
 For $\dot A''(\wt T') $ as above, there exists $C>0$ such that
$$
 \sum_{\wt T' \text{ between } \wt T_- \text{ and } \wt T_+} \dot A''(\wt T')   = O(\E^{-C \delta}) ,
 $$
where the exponent $\nu$ and the constant hidden in the Landau symbol $O(\ )$ depend only on a compact subset of $\wt S$ containing $\wt k$, on the Hitchin representation $\rho$, and on the tangent vector $V\in T_{[\rho]}\Hit(S)$. 
\end{sublem}

\begin{proof}
In order to bound $\dot A''(\wt T') $, let us first focus on the case where $\wt T_+$ faces the side $x_{\wt T'}z_{\wt T'}$ of $\wt T'$. The other case will be identical.  

In this case, 
  \begin{align*}
 \dot A''(\wt T') & =- \sum_{a+b=n}  \dot \sigma_V^{ab}(\wt T_0, \wt T_-) \left(  \dot S^{ab}_{E_{\wt T'}F_{\wt T'}} -  \dot S^{ab}_{E_{\wt T'}G_{\wt T'}}  \right) 
\\
&
\qquad\qquad\qquad+ \sum_{a+b+c=n} \dot \tau_V^{abc}(\wt T_-, x_{\wt T_-} )  \left(  \dot S^{a(b+c)}_{E_{\wt T'}F_{\wt T'}} -  \dot S^{a(b+c)}_{E_{\wt T'}G_{\wt T'}}  \right).
\end{align*}

Because the flag map $\F_\rho \colon \bdry \to \Flag$ is H\"older continuous \cite[Thm. 4.1]{Lab}, 
\begin{align*}
\dot S^{ab}_{E_{\wt T'}F_{\wt T'}} -  \dot S^{ab}_{E_{\wt T'}G_{\wt T'}} 
&= O\Big( d \big(  (E_{\wt T'}, F_{\wt T'}), (E_{\wt T'}, G_{\wt T'}) \big) \Big)
\\
&= O\big( d (  x_{\wt T'}y_{\wt T'}, x_{\wt T'}z_{\wt T'})^\nu \big)
\\
&= O\big( \ell( \wt k \cap \wt T' )^\nu \big)
\end{align*}
for some $\nu>0$, where we use the same symbol $d(\ , \ )$ to denote the distance in the space of flag pairs for the first line, and the distance in the space of geodesics of $\wt S$ for the second line.

The coefficients  $\dot \tau_V^{abc}(\wt T_-, x_{\wt T_-} ) $ are obviously uniformly  bounded, since they take only finitely many values. 

To bound the coefficients $ \dot \sigma_V^{ab}(\wt T_0, \wt T') $, a combinatorial argument using the Switch Relation of \S \ref{subsect:GenFocGonInvariants} (compare \cite[Lem.~6]{Bon97Top}) shows that 
$$
 \dot \sigma_V^{ab}(\wt T_0, \wt T_-) = O\big( r(\wt T_-) \big) $$
with the constant depending only on the tangent vector $V\in T_{[\rho]} \Hit(S)$. Also, $r(T_-)\leq \delta$ by definition of zipper openings. Therefore, 
$$
 \dot \sigma_V^{ab}(\wt T_0, \wt T_-) = O (\delta).  $$

Combining all these estimates gives us that
$$
 \dot A''(\wt T')  = O \left( \delta \ell( \wt k \cap \wt T' )^\nu  \right) ,
$$
 in the case where $\wt T_+$ faces the side $x_{\wt T'}z_{\wt T'}$ of $\wt T'$. 

The same argument gives us an identical  estimate
in the other case, where $\wt T_+$ faces the side $y_{\wt T'}z_{\wt T'}$ of $\wt T'$. 

Now, by another application of Sublemma~\ref{sublem:EstimateWeilZipperOpening0} and Lemma~\ref{lem:HolderSumOfGapLengths},
$$
 \sum_{\wt T' \text{ between } \wt T_- \text{ and } \wt T_+} \dot A''(\wt T')   
 =  \sum_{\wt T' \text{ between } \wt T_- \text{ and } \wt T_+}  O \big(  \delta \ell( \wt k \cap \wt T' )^\nu \big) 
  = O(\delta \E^{-\nu' C'' \delta}) = O(\E^{-C \delta})
$$
for any $\nu'<\nu$ and $C<\nu' C''$. This concludes the proof of Sublemma~\ref{sublem:EstimateWeilZipperOpening2}. 
\end{proof}
 
By taking the smallest of the two, we can arrange that the constants $C$ of  Sublemmas \ref{sublem:EstimateWeilZipperOpening1} and \ref{sublem:EstimateWeilZipperOpening2} coincide. These statements  then prove that
 \begin{align*}
c_V(\wt k) &= \sum_{a+b=n} \dot \sigma^{ab}(\wt T_-, \wt T_+)  \dot S^{ab}_{E_{\wt T_+}F_{\wt T_+}}  
+ \sum_{\wt T' \text{ between } \wt T_- \text{ and } \wt T_+} \dot A'(\wt T') + \sum_{\wt T' \text{ between } \wt T_- \text{ and } \wt T_+} \dot A''(\wt T') 
\\
&=  \sum_{a+b=n} \dot \sigma^{ab}(\wt T_-, \wt T_+)  \dot S^{ab}_{E_{\wt T_+}F_{\wt T_+}}  + O(\E^{-C \delta})
\end{align*}
and completes the proof of Lemma~\ref{lem:EstimateWeilZipperOpening} in the special case considered, where $\wt T_-$ separates $\wt T_0$ from $\wt T_+$.

We will deduce the general case from this special case, but we need some care to guarantee the uniformity of the estimates. The statement of Lemma~\ref{lem:EstimateWeilZipperOpening} involves a compact subset of $\wt S$ containing the tie $\wt k$. This compact subset $K$ meets only finitely branches of the original train track neighborhood $\wt \Phi_0$. 

For each oriented branch $e$ of $\wt\Phi_0$ (namely a branch of $\wt\Phi_0$ with an orientation of the corresponding branch of the train track $\wt \Psi_0$) meeting $K$, the orientation of $e$ determines an orientation for the leaves of $\wt\lambda$ that pass through $e$, and these orientations are all parallel. Among these leaves, let $g_e^\Left$ be the one that is most to the left. Since the fundamental group $\pi_1(S)$ acts minimally on the space of geodesics of $\wt S$, these exists $\gamma_e \in \pi_1(S)$ such that $\gamma_e g_e^\Left$ is oriented to the left as seen from the base triangle $\wt T_0$. This guarantees that the images under $\gamma_e$ of all the leaves of $\wt\lambda$ passing through $e$ are in the same component of $\wt S - \wt T_0$ and are oriented to the left as seen from $\wt T_0$. 

If $\wt k$ is an oriented tie of $\wt \Phi_\delta$ contained in $K$, let $e$ be the branch of $\wt\Phi_0$ that contains it. Orient $e$ so that the orientation that it induces on the leaves of $\wt\lambda$ passing through $\wt k$ points to the left of the orientation of $\wt k$. Then, $\gamma_e \wt T_-$ separates $\wt T_0$ from $\gamma_e \wt T_+$, and we can therefore apply the special case to the tie $\gamma_e \wt k$.  This gives
$$
c_V(\gamma_e \wt k) =  \sum_{a+b=n} \dot \sigma_V^{ab}(\gamma_e \wt T_-, \gamma_e \wt T_+)  \dot S^{ab}_{E_{\gamma_e \wt T_+} F_{\gamma_e \wt T_+}}  + O(\E^{-C \delta}),
$$
where the exponent $\nu$ and the constant in $O(\ )$ depend only on the union on the finitely many compact subsets $\gamma_{e'}K$ as $e'$ ranges over all branches of $\wt\Phi_0$ meeting $K$ (as well as on the Hitchin representation $\rho$ and the tangent vector $V \in T_{[\rho]}\Hit(S)$). Then
\begin{align*}
 c_V( \wt k) &= \Ad\rho(\gamma_e^{-1}) \big( c_V(\gamma_e \wt k) \big)
 \\
 &=  \sum_{a+b=n} \dot \sigma_V^{ab}(\gamma_e \wt T_-, \gamma_e \wt T_+)  \Ad\rho(\gamma_e^{-1}) (\dot S^{ab}_{E_{\gamma_e \wt T_+} F_{\gamma_e \wt T_+}} ) + O(\E^{-C \delta})
 \\
 &=  \sum_{a+b=n} \dot \sigma_V^{ab}( \wt T_-,  \wt T_+)  \dot S^{ab}_{E_{ \wt T_+} F_{ \wt T_+}}  + O(\E^{-C \delta}),
 \end{align*}
where the   constant in $O(\ )$ now includes, in addition, the distorsion in $\sln$ coming from the finitely many isomorphisms $ \Ad\rho(\gamma_{e'}^{-1}) $ that can occur. For the last equality, use the $\pi_1(S)$--equivariance property of Lemma~\ref{lem:LiftShearsToUniversalCover}.   

This completes the proof of Lemma~\ref{lem:EstimateWeilZipperOpening}.
\end{proof}

\section{Computing and estimating the cup-product}
\label{bigsect:ComputeEstimateCupProduct}

In \S \ref{subsect:Atiyah-Bott-Goldman}, we saw that the Atiyah-Bott-Goldman symplectic form is defined in terms of the evaluation on the fundamental class $[S] \in H_2(S; \Z)$ of the cup-product $[c_{V_1}] \cupp [c_{V_2}]\in H^2(S;\R)$ of the Weil classes $[c_{V_1}]$, $[c_{V_2}] \in H^1(S; \sln_{\Ad\rho})$ associated to two tangent vectors $V_1$, $V_2 \in T_{[\rho]} \Hit(S)$. Now that we have an explicit description of the cocycles $c_{V_1}$, $c_{V_2} \in C^1(S; \sln)$, we will use the triangulation of \S \ref{subsect:Triangulation} to compute, or at least estimate in a first step, this cup-product. 

\subsection{A remark on the computation of cup-products}
\label{subsect:RemarkCupProduct}

We begin with a general observation on cup-products in dimension 2. This property was used without  discussion in \cite{SozBon, Zey}, but does not seem to be well-known and therefore may deserve some explanation. 

Let $\Delta_1 = \{(t_0, t_1)\in \R^2; t_0 + t_1 =1\}$ and $\Delta_2 = \{(t_0, t_1, t_2) \in \R^3; t_0 + t_1 +t_2=1\}$ denote the standard 1-- and 2--simplex. A singular 2--simplex $ f \colon \Delta_2 \to S$ has three sides: the \emph{front side} is the 1--simplex $F^{\mathrm{front}}  f \colon \Delta_1 \to S$ defined by $F^{\mathrm{front}} f (t_0, t_1) = (t_0, t_1, 0)$, the \emph{middle side} $F^{\mathrm{mid}}  f$ is defined by $F^{\mathrm{mid}} f (t_0, t_1) = (t_0, 0, t_1)$, and the \emph{back side} $F^{\mathrm{back}}  f$ is defined by $F^{\mathrm{back}} f (t_0, t_1) = ( 0, t_0, t_1)$. Then the boundary $\partial f$ is the 1--chain $F^{\mathrm{back}} f - F^{\mathrm{mid}} f + F^{\mathrm{front}} f$. 

The currently popular definition (see for instance \cite[\S 3.2]{Hat}) of the cup-product $[c_1] \cupp [c_2]\in H^2(S;\R)$ of two homology classes $[c_1]$, $[c_2] \in H^1(S; \sln_{\Ad\rho})$ is the class of the 2--cochain $c_1 \cupp c_2$ defined by
$$
c_1 \cupp c_2 ( f ) = K \big( c_1(F^{\mathrm{front}} \wt f ), c_2(F^{\mathrm{back}} \wt f ) \big) \in \R
$$
where $K \colon \sln \times \sln \to \R$ is the Killing form and where $\wt f$ is an arbitrary lift of $ f$ to the universal cover $\wt S$. The invariance of the Killing form under the adjoint representation guarantees that this does not depend on the choice of the lift $\wt f$. This definition has the advantage of simplicity, but the antisymmetry of the cup-product then becomes a nontrivial result (see for instance \cite[Th.~3.14]{Hat}). 

We now express the fundamental class $[S] \in H_2(S; \Z)$ by a 2--chain based on a triangulation $\Sigma$ of the oriented  surface $S$. For each face $ f$ of this triangulation, choose an arbitrary parametrization of this face as a singular 2--simplex $ f \colon \Delta_2 \to S$, in such a way that the orientation of the front and back sides  $F^{\mathrm{front}} f$, $F^{\mathrm{back}} f$  coincide with the boundary orientation of $ f$ (while the orientation of the middle side $F^{\mathrm{mid}} f$ is the opposite of this boundary orientation); we are here orienting the 1--simplex $\Delta_1$ from $(1,0)$ to $(0,1)$. Let the 2--chain  
$$
\gamma = \sum_ f  f \in C_2(S)
$$ 
be the sum of the simplices $ f$ thus associated to the faces of the triangulation $\Sigma$.  However, this chain is in general not closed. 

An edge $k$ of the triangulation $\Sigma$ separates two faces $ f_1$ and $ f_2$ of $\Sigma$, and therefore is the image of a side $F^{i_1(k)} f_1$ and a side $F^{i_2(k)} f_2$ with $i_1(k)$, $i_2(k) \in \{ \mathrm{front}, \mathrm{back}, \mathrm{mid}\}$. If  $F^{i_1(k)} f_1$ and $F^{i_2(k)} f_2$ induce the same orientation on $k$,  we can arrange that $F^{i_1(k)} f_1=F^{i_2(k)} f_2$ as maps $\Delta_1 \to S$. Note that this happens precisely when $i_1(k)$, $i_2(k)$ are not in the same subset $\{ \mathrm{front}, \mathrm{back}\}$ or $\{ \mathrm{mid}\}$. Consequently,  the two sides  $F^{i_1(k)} f_1$ and $F^{i_2(k)}f_2$ occur with opposite signs in the expressions of $\partial  f_1$ and $\partial f_2$, and they  cancel out in the expression of $\partial \gamma$. In other words, we can arrange that the contribution of the edge $k$ to $\partial \gamma$ is equal to 0, in this case where $F^{i_1(k)} f_1$ and $F^{i_2(k)} f_2$ induce the same orientation on $k$. 

When  $F^{i_1(k)} f_1$ and $F^{i_2(k)} f_2$ induce opposite orientations on $k$, there exists a singular 2-simplex $ f_k \colon \Delta_2 \to S$, with image contained in $k$, whose front side $F^{\mathrm{front}}  f_k$ coincides with $F^{i_1(k)} f_1$, whose back  side $F^{\mathrm{back}} f_k$ coincides with  $F^{i_2(k)} f_2$, and whose middle side $F^{\mathrm{mid}} f_k$ is constant, valued at the initial vertex of the front side $F^{\mathrm{front}} f_k= F^{i_1(k)} f_1$ (which is also the terminal vertex of the back side $F^{\mathrm{back}}  f_k=F^{i_2(k)} f_2$).  Also, let $ f_k' \colon \Delta_2 \to S$ be constant, valued at the same initial vertex of $F^{\mathrm{front}} f_k$. Then, 
\begin{align*}
\partial( f_k +  f_k') &= F^{\mathrm{back}}  f_k - F^{\mathrm{mid}}  f_k + F^{\mathrm{front}}  f_k + F^{\mathrm{back}}  f_k' - F^{\mathrm{mid}}  f_k' + F^{\mathrm{front}}  f_k'
\\
&= F^{i_1(k)} f_1 +  F^{i_2(k)} f_2
\end{align*}
since  $F^{\mathrm{back}} f_k= F^{i_2(k)} f_2$, $F^{\mathrm{front}}  f_k=F^{i_1(k)} f_1$, and $F^{\mathrm{mid}}  f_k$, $ F^{\mathrm{back}}  f_k'$, $ F^{\mathrm{mid}}  f_k'$, $ F^{\mathrm{front}}  f_k'$ are all equal to the same constant map. 

To unify the two cases, let $\gamma_k \in C_2(S)$ be defined by
$$
\gamma_k=
\begin{cases}
-f_k - f_k' &\text{ if } i_1(k), i_2(k) \in \{ \mathrm{front}, \mathrm{back}\}
 \\
f_k + f_k' &\text{ if }  i_1(k) = i_2(k) \in \{ \mathrm{mid} \}
\\
0&\text{ otherwise}.
\end{cases}
$$

We have done everything so that the 2--chain
$$
\gamma_S = \sum_ f  f + \sum_k \gamma_k \in C_2(S; \Z)
$$
is now closed, where the sums are over all faces and over all edges of the triangulation $\Sigma$, respectively. A degree argument applied to a point in the interior of a face shows that the homology class $[\gamma_S] \in H_2 (S;\Z)$ is equal to the fundamental class $[S]$.  

\begin{lem}
\label{lem:GeneralFormulaCupProduct}
 For any two cocycles $c_1$, $c_2 \in C^1(S; \sln_{\Ad\rho})$, the evaluation of the cup-product $[c_1] \cupp [c_2]\in H^2(S;\R)$ over the fundamental class $[S] \in H_2 (S;\Z)$ is equal to 
 $$
 \langle [c_1] \cupp [c_2], [S] \rangle = \tfrac12 \sum_ f  \Big( K \big( c_1(F^{\mathrm{front}} \wt f ), c_2(F^{\mathrm{back}} \wt f ) \big) - K \big( c_2(F^{\mathrm{front}} \wt f ), c_1(F^{\mathrm{back}} \wt f ) \big)  \Big),
 $$
 where the sum ranges over all faces $f$ of the triangulation $\Sigma$ considered as $2$--simplices whose parametrization is compatible with the orientation of $S$. 
\end{lem}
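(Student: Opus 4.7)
The plan is to exploit graded commutativity of the cup-product combined with the symmetry of the Killing form. Because $K$ is symmetric and $\Ad\rho$-invariant, the standard graded-commutativity argument for cup-products adapts to the twisted setting to yield $[c_1]\cupp[c_2] = -[c_2]\cupp[c_1]$ in $H^2(S;\R)$. Consequently,
$$\langle [c_1]\cupp[c_2],[S]\rangle = \tfrac12\,\big((c_1\cupp c_2)-(c_2\cupp c_1)\big)(\gamma_S),$$
where $\gamma_S = \sum_f f + \sum_k \gamma_k$ is the explicit 2-cycle representing $[S]$ built just before the statement. Expanding the right-hand side, the face terms $\sum_f f$ produce exactly the antisymmetrized expression on the right of the lemma, so my task reduces to showing that each correction chain $\gamma_k$ contributes zero to the antisymmetrized pairing.

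To see this I would use two elementary observations about singular 1-cocycles. First, every constant 1-simplex $\sigma_v$ is the boundary of the constant 2-simplex at $v$, hence $c_i(\sigma_v)=0$ for $i=1,2$; this kills the contribution of $f_k'$ outright, since all three of its faces are constant. Second, the non-constant correction $f_k$ has sides $a=F^{\mathrm{front}}\wt f_k$ and $b=F^{\mathrm{back}}\wt f_k$ both supported in a common lift $\wt k$ of the edge $k$ to $\wt S$, parametrizing $\wt k$ in opposite directions. Since $\wt k$ is contractible, the 1-chain $a+b$ in $C_*(\wt k)$ is a 1-cycle in a contractible space and hence a boundary, which gives $c_i(b)=-c_i(a)$ for $i=1,2$. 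Combining with the symmetry of $K$,
$$\big(c_1\cupp c_2 - c_2\cupp c_1\big)(f_k)=K(c_1(a),-c_2(a))-K(c_2(a),-c_1(a))=0,$$
and the formula of the lemma follows.

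The one technical point that demands real attention is the graded commutativity step in the twisted setting: the chain-level homotopy between $c_1\cupp c_2$ and (minus the swap-pullback of) $c_2\cupp c_1$ is built from natural simplicial operations, and its $\pi_1(S)$-equivariance follows from naturality together with the $\Ad\rho$-invariance of the swap on $\sln\otimes\sln$, but this ought to be written out carefully before one applies the symmetric pairing $K$ to descend to $\R$-valued cohomology. Once this is in hand, the remainder of the argument reduces to the two elementary observations above; no further delicate analysis is required.
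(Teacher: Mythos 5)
Your argument is correct and follows essentially the same route as the paper's proof: represent $[S]$ by the explicit cycle $\gamma_S$, use antisymmetry of the cup-product at the cohomology level (which, as you note, descends from the swap on $\sln\otimes\sln$ composed with the symmetric Killing form) to pass to the antisymmetrized formula, and then check that each correction chain $\gamma_k$ contributes zero. The only cosmetic difference is in how you obtain $c_i(F^{\mathrm{back}}\wt f_k)=-c_i(F^{\mathrm{front}}\wt f_k)$: you observe directly that these two singular $1$--simplices form a cycle in the contractible set $\wt k$, whereas the paper extracts the same identity from $\partial\wt f_k$ together with the vanishing of $c_i$ on the constant middle side; both are immediate and interchangeable.
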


\begin{proof}
 Let us represent $[S]$ by the cycle $\gamma_S$ as above. Then
 $$
 c_1 \cupp c_2(\gamma_S) = \sum_ f  K \big( c_1(F^{\mathrm{front}} \wt f ), c_2(F^{\mathrm{back}} \wt f ) \big)
+ \sum_{k} c_1\cupp c_2(\wt \gamma_k),
 $$
 where $\wt \gamma_k \in C_2(\wt S)$ is the lift of $\gamma_k$ contained in an arbitrary lift $\wt k$ of the edge $k$. 
 
To make the formula more symmetric, we now take advantage of the  nontrivial property (see for instance \cite[Th.~3.14]{Hat}) that the cup-product is antisymmetric at the homological level (although not at the chain level). Namely, $[c_2] \cupp [c_1] =- [c_1] \cupp [c_2] $ in $H^2(S;\R)$. This enables us to rewrite the formula as
\begin{align*}
  \langle [c_1] \cupp [c_2], [S] \rangle
  &= \tfrac12 \big(   \langle [c_1] \cupp [c_2], [S] \rangle -   \langle [c_2] \cupp [c_1], [S] \rangle \big)
  \\
  &=   \tfrac12 \sum_ f  \Big( K \big( c_1(F^{\mathrm{front}} \wt f ), c_2(F^{\mathrm{back}} \wt f ) \big) - K \big( c_2(F^{\mathrm{front}} \wt f ), c_1(F^{\mathrm{back}} \wt f ) \big)  \Big) 
  \\
 & \qquad \qquad\qquad\qquad\qquad
 + \tfrac12 \sum_k \left(c_1 \cupp c_2 - c_2 \cupp c_1 \right)(\wt \gamma_k). 
\end{align*}

We claim that $\left(c_1 \cupp c_2 - c_2 \cupp c_1 \right)(\wt \gamma_k)=0$ for every edge $k$. This is certainly clear when $\gamma_k=0$. Otherwise, $\gamma_k = \pm(f_k + f_k')$. 

Since $c(\kappa)=0$ for any constant 1--simplex $\kappa$ and any 1--cocycle $c$ (as a chain, $\kappa$ is the boundary of a constant 2--simplex), the evaluations of $c_1 \cupp c_2 $ and $ c_2 \cupp c_1$ on the constant simplex $\wt f_k'$ are both equal to $0$. 

Since $\partial \wt f_k = F^{\mathrm{back}} \wt f_k -F^{\mathrm{mid}} \wt f_k +F^{\mathrm{front}} \wt f_k $, we have that
$$
c_i (F^{\mathrm{back}} \wt f_k) = -c_i (F^{\mathrm{front}} \wt f_k) + c_i (F^{\mathrm{mid}} \wt f_k) = -c_i (F^{\mathrm{front}} \wt f_k)
$$
for each $i=1$, $2$, since $c_i$ is closed and the middle side $F^{\mathrm{mid}} \wt f_k$ is constant. It follows that
\begin{align*}
\left(c_1 \cupp c_2 - c_2 \cupp c_1 \right)(\wt f_k) 
&= K \big( c_1(F^{\mathrm{front}} \wt f_k ), c_2(F^{\mathrm{back}} \wt f_k ) \big) - K \big( c_2(F^{\mathrm{front}} \wt f_k ), c_1(F^{\mathrm{back}} \wt f_k ) \big)
\\
&=- K \big( c_1(F^{\mathrm{front}} \wt f_k ), c_2(F^{\mathrm{front}} \wt f_k ) \big) + K \big( c_2(F^{\mathrm{front}} \wt f_k ), c_1(F^{\mathrm{front}} \wt f_k ) \big) =0 .
\end{align*}

This proves that $\left(c_1 \cupp c_2 - c_2 \cupp c_1 \right)(\wt \gamma_k)=0$ for every edge $k$, and completes the computation. 
\end{proof}

The formula of Lemma~\ref{lem:GeneralFormulaCupProduct} will enable us to take advantage of the following symmetry. 

\begin{lem}
 \label{lem:CupProductIndependentParam}
 The formula of Lemma~{\upshape\ref{lem:GeneralFormulaCupProduct}} is independent of the parametrization of each face $ f$ of the triangulation $\Sigma$ as a singular $2$--simplex $ f \colon \Delta_2 \to S$ (as long as this parametrization is compatible with the orientation of $S$, in the sense that the orientation of the front and back sides coincides with the boundary orientation). 
\end{lem}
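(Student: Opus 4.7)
The plan is to observe that a given face $f$ of $\Sigma$ admits exactly three parametrizations $f \colon \Delta_2 \to S$ compatible with the orientation of $S$, corresponding to cyclic permutations of its vertices, and then to verify by a short algebraic computation that each of these three parametrizations contributes the same summand to the formula of Lemma~\ref{lem:GeneralFormulaCupProduct}. Since the three remaining (orientation-reversing) permutations are excluded by the compatibility requirement, this will be enough to conclude.

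To carry out the verification, I would fix a face with vertices $v_0$, $v_1$, $v_2$ listed counterclockwise for the orientation of $S$, choose a lift $\wt f$ with vertices $\wt v_0$, $\wt v_1$, $\wt v_2$ in $\wt S$, and abbreviate $a_i = c_i(\wt v_0\wt v_1)$, $b_i = c_i(\wt v_1\wt v_2)$, $\gamma_i = c_i(\wt v_2\wt v_0)$ for $i=1$, $2$, where $\wt v_j\wt v_k$ denotes the edge oriented from $\wt v_j$ to $\wt v_k$. Since each $c_i$ is closed, evaluating $dc_i$ on $\wt f$ yields the key cocycle identity $a_i + b_i + \gamma_i = 0$.

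The three orientation-compatible parametrizations, corresponding to the cyclic permutations $(v_0, v_1, v_2)$, $(v_1, v_2, v_0)$, $(v_2, v_0, v_1)$ of the vertices, contribute respectively the terms
\begin{align*}
T_1 &= \tfrac12 \bigl( K(a_1, b_2) - K(a_2, b_1) \bigr),\\
T_2 &= \tfrac12 \bigl( K(b_1, \gamma_2) - K(b_2, \gamma_1) \bigr),\\
T_3 &= \tfrac12 \bigl( K(\gamma_1, a_2) - K(\gamma_2, a_1) \bigr)
\end{align*}
to the sum of Lemma~\ref{lem:GeneralFormulaCupProduct}. Substituting $\gamma_i = -a_i - b_i$ into $T_2$ and $T_3$ and invoking the symmetry of the Killing form, the diagonal contributions $K(a_1, a_2)$ and $K(b_1, b_2)$ cancel on the nose, and a one-line check reduces both $T_2$ and $T_3$ to $T_1$. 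The choice of lift $\wt f$ is immaterial, by the $\Ad\rho$--invariance of $K$.

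I do not anticipate any significant obstacle beyond this short algebraic verification; the point is essentially that the antisymmetrization built into the formula of Lemma~\ref{lem:GeneralFormulaCupProduct} already absorbs the cyclic-permutation ambiguity, once combined with the cocycle condition and the symmetry of the Killing form.
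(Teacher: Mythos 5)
Your proof is correct and takes essentially the same route as the paper: both exploit the cocycle identity on the boundary of the face (which you phrase as $a_i+b_i+\gamma_i=0$ and the paper phrases as $c_i(F^{\mathrm{back}}f') = -c_i(F^{\mathrm{front}}f)-c_i(F^{\mathrm{back}}f)$) together with the symmetry and bilinearity of the Killing form; the paper simply verifies invariance under a single rotation rather than writing out all three cyclic contributions $T_1,T_2,T_3$ explicitly.
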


\begin{proof}
 We need to show invariance if we ``rotate'' such a parametrization $ f \colon \Delta_2 \to S$ to a new parametrization $ f' \colon \Delta_2 \to S$ such that the front side $F^{\mathrm{front}} f'$ is equal to the back side $F^{\mathrm{back}} f$, the middle side $F^{\mathrm{mid}} f'$ is the front side $F^{\mathrm{front}} f$ with the orientation reversed, and the back side $F^{\mathrm{back}} f'$ is the middle side $F^{\mathrm{mid}} f$ with the orientation reversed.  Then
\begin{align*}
 c_i (F^{\mathrm{front}} f') &= c_i (F^{\mathrm{back}} f)
 \\
 c_i (F^{\mathrm{back}} f') &= -c_i (F^{\mathrm{mid}} f) = -c_i (F^{\mathrm{front}} f)-c_i (F^{\mathrm{back}} f) 
\end{align*}
for each $i=1$, $2$. Expanding the contribution
$$
K \big( c_1(F^{\mathrm{front}} \wt f' ), c_2(F^{\mathrm{back}} \wt f' ) \big) - K \big( c_2(F^{\mathrm{front}} \wt f' ), c_1(F^{\mathrm{back}} \wt f' ) \big) 
$$
of this new parametrization $ f'$ and using the bilinearity and symmetry  of the Killing form $K$, we readily see that it is equal to the contribution
$$
K \big( c_1(F^{\mathrm{front}} \wt f ), c_2(F^{\mathrm{back}} \wt f ) \big) - K \big( c_2(F^{\mathrm{front}} \wt f ), c_1(F^{\mathrm{back}} \wt f ) \big) 
$$
of the old parametrization $ f$. 
\end{proof}

\subsection{General setup}
\label{subsect:SetUpComputation}

We now return to our computation of the evaluation 
$
\left\langle [c_{V_1}] \cupp [c_{V_2}], [S]  \right\rangle \in \R
$
of the cup-product $[c_{V_1}] \cupp [c_{V_2}]\in H^2(S;\R)$ on the fundamental class $[S] \in H_2(S; \Z)$, for the Weil classes $[c_{V_1}]$, $[c_{V_2}] \in H^1(S; \sln_{\Ad\rho})$  corresponding to tangent vectors  $V_1$, $V_2 \in T_{[\rho]} \Hit(S)$. For this we will use the description of the classes $[c_{V_1}]$, $[c_{V_2}]$ given in \S \ref{subsect:Barriers} and \S \ref{subsect:EvaluateCocyle}, as well as  the triangulation $\Sigma$ introduced in \S \ref{subsect:Triangulation}. 

A consequence of Lemma~\ref{lem:CupProductIndependentParam} is that, in the formula for $\left\langle [c_{V_1}] \cupp [c_{V_2}], [S]  \right\rangle$ given by Lemma~\ref{lem:GeneralFormulaCupProduct},  the contribution of most faces of the triangulation $\Sigma$  is equal to 0. 

\begin{lem}
\label{lem:CupProductNoContribution}
 Suppose that at least one side of the face $f$ of the triangulation $\Sigma$  is disjoint from the geodesic lamination $\lambda$ and from the barriers $\beta_T$. Then the contribution of $f$ to the formula for $\left\langle [c_{V_1}] \cupp [c_{V_2}], [S]  \right\rangle$ given by Lemma~{\upshape\ref{lem:GeneralFormulaCupProduct}} is equal to $0$.
\end{lem}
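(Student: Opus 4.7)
The plan is to exploit the freedom in parametrizing the face $f$, together with the vanishing of the Weil cocycle on edges disjoint from $\wt\lambda$ and the barriers. By Lemma~\ref{lem:GeneralFormulaCupProduct}, the contribution of $f$ to $\langle [c_{V_1}] \cupp [c_{V_2}], [S]\rangle$ is
$$
\tfrac12 \Big( K\big(c_{V_1}(F^{\mathrm{front}}\wt f), c_{V_2}(F^{\mathrm{back}}\wt f)\big) - K\big(c_{V_2}(F^{\mathrm{front}}\wt f), c_{V_1}(F^{\mathrm{back}}\wt f)\big)\Big),
$$
so it suffices to show that one of these Killing-form terms can be made to involve a vanishing cocycle factor.

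First I would observe that, given any oriented face $f$ of $\Sigma$ and any side $k$ of $f$, there exists a parametrization $f\colon \Delta_2\to S$ compatible with the orientation of $S$ such that $k=F^{\mathrm{front}}f$. Indeed, the three compatible parametrizations of a fixed oriented triangle correspond to the three cyclic orderings of its vertices, and the three resulting front sides exhaust the three sides of the triangle. By Lemma~\ref{lem:CupProductIndependentParam}, the value of the contribution is independent of the choice of compatible parametrization, so I am free to use this one.

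Next I would apply this to the distinguished side $k$ of $f$ which is disjoint from $\wt\lambda$ and from the barriers $\beta_{\wt T}$ (after lifting to $\wt S$). By Lemma~\ref{lem:EvaluateWeilEdgeDisjoinLaminationBarrier}, both $c_{V_1}(F^{\mathrm{front}}\wt f)=c_{V_1}(\wt k)=0$ and $c_{V_2}(F^{\mathrm{front}}\wt f)=c_{V_2}(\wt k)=0$. By bilinearity of the Killing form, each of the two terms in the displayed contribution vanishes, so the contribution of $f$ is $0$.

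There is no real obstacle here, since all the work is already done in Lemmas~\ref{lem:CupProductIndependentParam} and \ref{lem:EvaluateWeilEdgeDisjoinLaminationBarrier}; the only point that requires a brief check is that every side of an oriented $2$--simplex can be realized as the front side of some orientation-compatible parametrization, which is the elementary combinatorial observation above.
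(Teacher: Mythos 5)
Your proof is correct and follows the same route as the paper: invoke Lemma~\ref{lem:CupProductIndependentParam} to reparametrize so the distinguished side becomes the front side, then apply Lemma~\ref{lem:EvaluateWeilEdgeDisjoinLaminationBarrier} to conclude the two cocycle values vanish. The only difference is that you spell out the elementary combinatorial observation that every side of the triangle can be realized as a front side, which the paper leaves implicit.
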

\begin{proof}
Using the flexibility provided by  Lemma~\ref{lem:CupProductIndependentParam}, we can choose the parametrization of $f$ so that its front face $F^{\mathrm{front}}f$ is disjoint from $\lambda$ and from the barriers $\beta_T$.  Then, Lemma~\ref{lem:EvaluateWeilEdgeDisjoinLaminationBarrier} shows that 
 $c_{V_1}(\wt F^{\mathrm{front}}f)=c_{V_2}(\wt F^{\mathrm{front}}f)=0$, and it  follows that the contribution of $f$ to the formula of  Lemma~\ref{lem:GeneralFormulaCupProduct} is equal to 0.
\end{proof}

In the triangulation $\Sigma$ of \S \ref{subsect:Triangulation}, there are only a few exceptional faces whose  three sides all meet the union of the geodesic lamination $\lambda$ and the barriers $\beta_T$, and may therefore have a nontrivial contribution to $\left\langle [c_{V_1}] \cupp [c_{V_2}], [S]  \right\rangle$.

The first type is that of the face $ f$ that contains the central point $\pi_T$ of a barrier $\beta_T$. By our assumption that each edge of $\Sigma$ that is outside of the  train track neighborhood meets the barriers in at most one point, each side of $ f$ meets the barriers in exactly one point.

The second type occurs near the switches of $\Phi$. More precisely, near each switch, there are two faces of $\Sigma$ whose sides all meet the union  of $\lambda$ and of the barriers $\beta_T$. See Figure~\ref{fig:TriangulationNearSwitch} below. 

By inspection of the construction of the triangulation $\Sigma$, these triangles, one for each component of $\Phi-S$ and two for each switch, are the only ones whose contribution to $\left\langle [c_{V_1}] \cupp[c_{V_2}], [S]  \right\rangle $ may be nonzero. We will now compute the contribution of these faces.

\subsection{The contribution of the complement of the train track}
\label{subsect:ContributionTriangle}

We consider the first type of faces of $\Sigma$ that may have a nontrivial contribution to $\left\langle [c_{V_1}] \cupp[c_{V_2}], [S]  \right\rangle $, namely a face $ f$ that contains the center $\pi_T$ of a barrier $\beta_T$. By construction of the triangulation $\Sigma$ in \S \ref{subsect:Triangulation}, there is exactly one such face in each component of the complement $S-\Phi$ of the train track neighborhood $\Phi$. 

Consider the component $T$ of $S-\lambda$ containing this face $ f$. For the train track $\Psi$ associated to the train track neighborhood $\Phi$,  let $U$ be the component of $S-\Psi$ associated to $T$ by the correspondence of Proposition~\ref{prop:ComplementsTrainTrackGeodLamination}. Arbitrarily pick a boundary corner $s_U$ of $U$, corresponding to a switch of $\Psi$.

Lift $T$ to  a component $\wt T$ of $\wt S - \wt\lambda$, and $ f$ to a triangle $\wt  f$ contained in $\wt T$. Also, let $x_{\wt T} \in \bdry$ be the vertex of $\wt T$ associated to the corner $s_U$ by the correspondence of Proposition~\ref{prop:ComplementsTrainTrackGeodLamination}, and index the other two vertices as $y_{\wt T}$, $z_{\wt T}$ in such a way that  $x_{\wt T}$, $y_{\wt T}$, $z_{\wt T} \in \bdry$ occur counterclockwise in this order around $\wt T$. As usual, let $E_{\wt T}=\F_\rho(x_{\wt T})$, $F_{\wt T}=\F_\rho(y_{\wt T})$, $G_{\wt T}=\F_\rho(z_{\wt T})\in \Flag$ be the flags associated to these vertices by the flag map $\F_\rho$. 

Finally, let $\dot \tau_V^{abc}(U, s_U) \in \R$ denote the directional derivative of the triangle invariant  $\tau_{\rho}^{abc}(U, s_U)$ in the direction of the tangent vector  $V \in T_{[\rho]}\Hit(S)$. Namely,
$$
\dot \tau_V^{abc}(U, s_U) = \frac d{dt}  \tau^{abc}_{\rho_t}(U, s_U)_{|t=0} 
$$
for every curve $t\mapsto [\rho_t] $ with $\rho_0=\rho$ and $\frac d{dt} [\rho_t]_{|t=0}=V$. 

\begin{figure}[htbp]

\SetLabels
( .65 * .55 )  $\wt f$ \\
\L(  .7* .39 )  $F^{\mathrm{front}}\wt f$ \\
(  .67* .71 )   $F^{\mathrm{back}}\wt f$\\
\R( .3 * .39 )   $F^{\mathrm{mid}}\wt f$\\
\T(0  * 0 )   $x_{\wt T}$\\
\T( 1 * 0 )   $y_{\wt T}$\\
\B( .53 *  1)   $z_{\wt T}$\\
\B( .23 *  .07)   $\beta_{\wt T}$\\
\endSetLabels
\centerline{\AffixLabels{\includegraphics{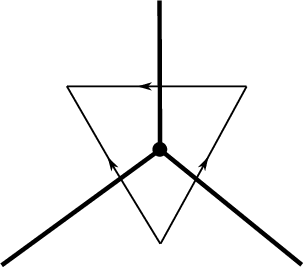}}}

\caption{A face $\wt f$ containing the center $\pi_{\wt T}$ of a barrier $\beta_{\wt T}$}
\label{fig:FaceContainsBarrierCenter}
\end{figure}

\begin{lem}
\label{lem:ContribTriangleToCupProd1}
 The contribution of the face $f$ to the formula for $\left\langle [c_{V_1}] \cupp [c_{V_2}], [S]  \right\rangle$ given by Lemma~{\upshape\ref{lem:GeneralFormulaCupProduct}} is equal to
$$
\tfrac12
\sum_{\substack{a+b+c=n\\a'+b'+c'=n}}
\left( \dot\tau_{V_1}^{abc}(U, s_U) \,\dot\tau_{V_2}^{a'b'c'}(U, s_U) - \dot\tau_{V_2}^{abc}(U, s_U) \, \dot\tau_{V_1}^{a'b'c'}(U, s_U) \right) K\bigl(\dot R_{E_{\wt T}F_{\wt T}G_{\wt T}}^{abc}, \dot L_{E_{\wt T}F_{\wt T}G_{\wt T}}^{a'b'c'} \bigr),
$$
where $K \colon \sln \times \sln \to \R$ is the Killing form, and where $\dot L_{EFG}^{abc}$ and $\dot R_{EFG}^{abc} $ are the infinitesimal left and right eruptions of \S {\upshape \ref{subsect:InfinitesimalVariationFlagMap}}. 
\end{lem}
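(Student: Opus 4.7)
The plan is to evaluate $c_V$ on each of the three sides of the face $\wt f$ using Lemma~\ref{lem:EvaluateWeilEdgeMeetingBarrier}, and then apply the cup-product formula of Lemma~\ref{lem:GeneralFormulaCupProduct}. First I set up notation: the face $\wt f$ is contained in $\wt T$ and contains $\pi_{\wt T}$ in its interior, and by the construction in \S\ref{subsect:Triangulation} each edge of $\Sigma$ outside the train track neighborhood meets the barriers in at most one point, so each side of $\wt f$ is disjoint from $\wt \lambda$ and crosses exactly one arm of $\beta_{\wt T}$. Let $R_x$, $R_y$, $R_z$ be the three components of $\wt T -\beta_{\wt T}$, with $R_\ast$ adjacent to the side of $\wt T$ opposite the vertex $\ast_{\wt T}$, and let $v_x$, $v_y$, $v_z$ be the three vertices of $\wt f$ in these respective regions. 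A small loop around $\pi_{\wt T}$ encounters the regions in the cyclic order $R_z$, $R_x$, $R_y$, so the orientation of $S$ makes the counterclockwise cyclic order on the vertices of $\wt f$ equal to $v_z$, $v_x$, $v_y$.

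I then apply Lemma~\ref{lem:EvaluateWeilEdgeMeetingBarrier} to each of the three counterclockwise sides of $\wt f$. A direct case-check using the combinatorial turning rule shows that all three edges turn to the right. For the edge $v_z \to v_x$ the negative endpoint lies in $R_z$, adjacent to the side of $\wt T$ traversed from $x_{\wt T}$ to $y_{\wt T}$ in boundary orientation, so the local labels of the lemma coincide with $x_{\wt T}, y_{\wt T}, z_{\wt T}$, and
\[
c_V(v_z \to v_x) = \sum_{a+b+c=n} \dot\tau_V^{abc}(U,s_U)\, \dot R^{abc}_{E_{\wt T}F_{\wt T}G_{\wt T}}.
\]
For the edge $v_y \to v_z$ the local labels are cyclically rotated to $z_{\wt T}, x_{\wt T}, y_{\wt T}$, and combining with the Rotation Condition $\dot\tau_V^{abc}(\wt T,z_{\wt T})=\dot\tau_V^{bca}(U,s_U)$ yields
\[
c_V(v_y \to v_z)=\sum_{a+b+c=n} \dot\tau_V^{bca}(U,s_U)\,\dot R^{abc}_{G_{\wt T}E_{\wt T}F_{\wt T}}.
\]

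The key algebraic observation is the identity
\[
\dot R^{abc}_{GEF} = -\dot L^{bca}_{EFG}
\]
in $\sln$, which is immediate from the definitions \eqref{eqn:InfinitesimalLeftEruptionDef}--\eqref{eqn:InfinitesimalRightEruptionDef}: both sides unpack to $\tfrac{a+c}{n}\Id_{E^{(b)}}-\tfrac{b}{n}\Id_{F^{(c)}}-\tfrac{b}{n}\Id_{G^{(a)}}$. Applying this identity to the expression for $c_V(v_y \to v_z)$ and reindexing by $(a',b',c')=(b,c,a)$ converts it into
\[
c_V(v_y \to v_z) = -\sum_{a'+b'+c'=n}\dot\tau_V^{a'b'c'}(U,s_U)\,\dot L^{a'b'c'}_{E_{\wt T}F_{\wt T}G_{\wt T}}.
\]

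To finish, I invoke Lemma~\ref{lem:CupProductIndependentParam} to parametrize $\wt f$ with $F^{\mathrm{front}}\wt f$ equal to the side $v_y \to v_z$ and $F^{\mathrm{back}}\wt f$ equal to the side $v_z \to v_x$, which is compatible with the boundary orientation since $v_y, v_z, v_x$ is a cyclic rotation of the counterclockwise order. Lemma~\ref{lem:GeneralFormulaCupProduct} then expresses the contribution of $f$ as $\tfrac12\bigl[K(c_{V_1}(F^{\mathrm{front}}\wt f),c_{V_2}(F^{\mathrm{back}}\wt f))-K(c_{V_2}(F^{\mathrm{front}}\wt f),c_{V_1}(F^{\mathrm{back}}\wt f))\bigr]$, which by the computations above is a bilinear combination of $\dot L^{a'b'c'}_{E_{\wt T}F_{\wt T}G_{\wt T}}$ (from the front) and $\dot R^{abc}_{E_{\wt T}F_{\wt T}G_{\wt T}}$ (from the back); expanding, using the symmetry of the Killing form $K(\dot L,\dot R)=K(\dot R,\dot L)$, and swapping dummy indices reproduces the stated formula. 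The main obstacle is the careful bookkeeping of index permutations arising simultaneously from the Rotation Condition and from the flag relabeling in the identity $\dot R^{abc}_{GEF}=-\dot L^{bca}_{EFG}$; once these are tracked correctly, the remainder is routine bilinear algebra.
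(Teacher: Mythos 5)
Your proof is correct, and the overall shape is the same as the paper's: pick a parametrization of $\wt f$, evaluate $c_V$ on two sides via Lemma~\ref{lem:EvaluateWeilEdgeMeetingBarrier}, and plug into Lemma~\ref{lem:GeneralFormulaCupProduct}. The technical bookkeeping, though, is handled differently. The paper parametrizes $\wt f$ so that $F^{\mathrm{front}}\wt f$ turns right and $F^{\mathrm{mid}}\wt f$ turns left (both with local labels equal to the global $x_{\wt T},y_{\wt T},z_{\wt T}$), then uses the cocycle identity $c_{V_i}(F^{\mathrm{mid}})=c_{V_i}(F^{\mathrm{front}})+c_{V_i}(F^{\mathrm{back}})$ together with the antisymmetry of the bilinear expression to trade the back side for the mid side; this produces $\dot R$ from the front and $\dot L$ from the mid immediately. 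You instead choose the rotated parametrization in which front and back both turn right (which is cleaner in that no cocycle manipulation is needed), and you trade the flag--relabeling from the front side into an $\dot L$--expression using the identity $\dot R^{abc}_{GEF}=-\dot L^{bca}_{EFG}$ together with the Rotation Condition $\dot\tau_V^{abc}(\wt T,z_{\wt T})=\dot\tau_V^{bca}(U,s_U)$. The identity is correct (it is the infinitesimal analogue of Lemma~\ref{lem:EruptionsMainProperties}(2), after projecting to $\sln$), and the final re-indexing $(a',b',c')=(b,c,a)$ and rearrangement is just bilinear algebra, as you say. Both routes are valid and of roughly equal length; yours has the mild advantage of avoiding the cocycle identity, at the cost of the extra index rotation.
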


\begin{proof}  The barrier $\beta_{\wt T}$ divides $\wt T$ into three triangles, each adjacent to one side of $\wt T$. With the flexibility provided by Lemma~\ref{lem:CupProductIndependentParam}, choose the parametrization of the face $f$ so that the front side $F^{\mathrm{front}}\wt f$ goes from the component of $\wt T-\beta_{\wt T}$ that is adjacent to $x_{\wt T}y_{\wt T}$ to the component adjacent to $y_{\wt T}z_{\wt T}$. Then the back side $F^{\mathrm{back}}\wt f$ goes from the component adjacent to $y_{\wt T}z_{\wt T}$ to the component adjacent to $x_{\wt T}z_{\wt T}$, and the middle side $F^{\mathrm{mid}}\wt f$ goes from the component adjacent to $x_{\wt T}y_{\wt T}$ to the component adjacent to $x_{\wt T}z_{\wt T}$. 

The contribution of $f$ to $\left\langle [c_{V_1}] \cupp [c_{V_2}], [S]  \right\rangle$ is then equal to 
$$
 \tfrac12  \Big( K \big( c_{V_1}(F^{\mathrm{front}} \wt f ), c_{V_2}(F^{\mathrm{back}} \wt f ) \big) - K \big( c_{V_2}(F^{\mathrm{front}} \wt f ), c_{V_1}(F^{\mathrm{back}} \wt f ) \big)  \Big),
$$
which is also equal to 
$$
 \tfrac12  \Big( K \big( c_{V_1}(F^{\mathrm{front}} \wt f ), c_{V_2}(F^{\mathrm{mid}} \wt f ) \big) - K \big( c_{V_2}(F^{\mathrm{front}} \wt f ), c_{V_1}(F^{\mathrm{mid}} \wt f ) \big)  \Big),
$$
since $c_{V_i}(F^{\mathrm{mid}} \wt f ) =c_{V_i}(F^{\mathrm{front}} \wt f )  + c_{V_i}(F^{\mathrm{back}} \wt f ) $ as  each $c_{V_i}$ is closed. 

Lemma~\ref{lem:EvaluateWeilEdgeMeetingBarrier} then provides
\begin{align*}
 c_{V}(F^{\mathrm{front}} \wt f ) &= \sum_{a+b+c=n} \dot \tau_V^{abc}(\wt T, x_{\wt T}) \dot R_{E_{\wt T}F_{\wt T}G_{\wt T}}^{abc}
 \\
  c_{V}(F^{\mathrm{mid}} \wt f ) &= \sum_{a'+b'+c'=n} \dot \tau_V^{a'b'c'}(\wt T, x_{\wt T}) \dot L_{E_{\wt T}F_{\wt T}G_{\wt T}}^{a'b'c'}
\end{align*}
for each $V=V_1$ or $V_2$. Since $\dot \tau_V^{abc}(\wt T, x_{\wt T}) =\dot \tau_V^{abc}(U, s_U) $ by the correspondence of Lemma~\ref{lem:LiftTriangleInvariantsToUniversalCover}, the result immediately follows.
\end{proof}

We now compute the coefficient $K\bigl(\dot R_{E_{\wt T}F_{\wt T}G_{\wt T}}^{abc}, \dot L_{E_{\wt T}F_{\wt T}G_{\wt T}}^{a'b'c'} \bigr)$. 

\begin{lem}
\label{lem:ComputeKillingFormLeftRight}
For any maximum-span flag triple $(E,F,G) \in \Flag^3$, 
 $$
 K\bigl(\dot R_{EFG}^{abc}, \dot L_{EFG}^{a'b'c'} \bigr) =
\begin{cases}
2a'b & \text{if } a\geq a' \text{ or } b \leq b'
 \\
  2ab' -2bc'+2b'c  &\text{if } a\leq a', b \geq b' \text{ and } c \geq c'
 \\
  2ab' +2ac'-2a'c  &\text{if } a\leq a', b \geq b' \text{ and } c \leq c'.
 \end{cases}
 $$
\end{lem}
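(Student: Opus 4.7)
Since $K(X,Y) = 2n\,\mathrm{tr}(XY)$ on $\sln$, my strategy is to compute the trace of $\dot R^{abc}_{EFG}\,\dot L^{a'b'c'}_{EFG}$ directly. The first step is to express each infinitesimal eruption as an affine function of a projection:
\[
\dot L^{a'b'c'}_{EFG} = \tfrac{a'}{n}\Id - \Pi^L, \qquad \dot R^{abc}_{EFG} = \Pi^R - \tfrac{b}{n}\Id,
\]
where $\Pi^L\in\mathrm{End}(\R^n)$ projects onto $E^{(a')}$ along $F^{(b')}\oplus G^{(c')}$ and $\Pi^R\in\mathrm{End}(\R^n)$ projects onto $F^{(b)}$ along $E^{(a)}\oplus G^{(c)}$; these decompositions are well defined by the Maximum Span Property. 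Using $\mathrm{tr}(\Pi^L) = a'$ and $\mathrm{tr}(\Pi^R) = b$, a straightforward expansion yields the master identity
\[
K(\dot R^{abc}_{EFG}, \dot L^{a'b'c'}_{EFG}) = 2a'b - 2n\,\mathrm{tr}(\Pi^R \Pi^L),
\]
reducing the lemma to the evaluation of $\mathrm{tr}(\Pi^R\Pi^L)$.

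The first line of the formula is immediate. If $a\geq a'$ then $E^{(a')}\subseteq E^{(a)}\subseteq\ker\Pi^R$, so $\Pi^R\Pi^L = 0$; and if $b\leq b'$, cyclicity of trace gives $\mathrm{tr}(\Pi^R\Pi^L) = \mathrm{tr}(\Pi^L\Pi^R) = 0$ because $F^{(b)}\subseteq F^{(b')}\subseteq\ker\Pi^L$. In both situations $K = 2a'b$.

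In the remaining cases $a\leq a'$ and $b\geq b'$, the heart of the argument is to show that the appropriate restriction of $\Pi^R\Pi^L$ is an honest projection, so that its trace equals its rank. When $c\geq c'$ I consider $\Pi^R\Pi^L|_{F^{(b)}}\colon F^{(b)}\to F^{(b)}$, and the relevant subspace is $W = E^{(a')}\cap(F^{(b)}+G^{(c')})$, which I will show is the image of $\Pi^L|_{F^{(b)}}$. Repeated applications of the Maximum Span Property then produce two key facts: (i) each $v\in W$ has a unique decomposition $v = f + g'$ with $f\in F^{(b)}$ and $g'\in G^{(c')}$, and since $G^{(c')}\subseteq G^{(c)}$ this is also the decomposition along $F^{(b)}\oplus(E^{(a)}\oplus G^{(c)})$, so $\Pi^R(v) = f$; and (ii) the identity $\Pi^L\Pi^R|_W = \Id_W$ follows from $E^{(a')}\cap(F^{(b')}+G^{(c')}) = 0$. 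Together these imply that $\Pi^R\Pi^L|_{F^{(b)}}$ is idempotent, with image isomorphic to $W$ via the injection $\Pi^R|_W$ (injective because $E^{(a')}\cap G^{(c')} = 0$). An inclusion-exclusion computation using $E^{(a')}+F^{(b)}+G^{(c')} = \R^n$ evaluates $\dim W = b - b'$, so $\mathrm{tr}(\Pi^R\Pi^L) = b - b'$. The dual case $c\leq c'$ is handled by the symmetric argument on $\Pi^L\Pi^R|_{E^{(a')}}$, with the subspace $E^{(a')}\cap(F^{(b)} + G^{(c)})$ playing the role of $W$; its dimension is $a' - a$, giving $\mathrm{tr}(\Pi^R\Pi^L) = a' - a$.

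Substituting these trace values into the master identity and using $n = a+b+c = a'+b'+c'$ to eliminate $n$ recovers the two remaining lines of the formula: $2a'b - 2n(b-b')$ simplifies to $2ab' - 2bc' + 2b'c$, and $2a'b - 2n(a'-a)$ simplifies to $2ab' + 2ac' - 2a'c$. The main obstacle is the careful bookkeeping of Maximum Span identities required both to establish the idempotency and to identify the correct image subspace in each of the two remaining cases; once those structural facts are in hand, the rest of the proof is elementary dimension counting and algebra.
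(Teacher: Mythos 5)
Your proposal is correct. It starts from the same reduction as the paper: expressing both infinitesimal eruptions as a scalar multiple of the identity plus a projection, deriving the master identity $K\bigl(\dot R^{abc}_{EFG}, \dot L^{a'b'c'}_{EFG}\bigr) = 2a'b - 2n\,\mathrm{Tr}(\Pi^R\Pi^L)$, and dispatching the $a\geq a'$ and $b\leq b'$ cases by noting that the relevant composition vanishes identically. Where you diverge from the paper is in the two nontrivial cases $a\leq a'$, $b\geq b'$: the paper writes $\Pi^L\Pi^R$ (respectively $\Pi^R\Pi^L$) as an explicit matrix in a basis adapted to $E^{(a)}\oplus F^{(b)}\oplus G^{(c)}$ (respectively $E^{(a')}\oplus F^{(b')}\oplus G^{(c')}$), observes that the matrix is triangular, and reads off the diagonal to find the trace. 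You instead argue structurally: you identify the image of the restricted projection as an intersection subspace $W$, prove that the restriction of the composite to the right ambient subspace is idempotent by checking $\Pi^L\Pi^R|_W = \Id_W$, and then compute the trace as the rank $\dim W$ via inclusion--exclusion and the Maximum Span Property. Both routes are valid and give the same answer; yours is coordinate-free and somewhat more conceptual, explaining geometrically what the trace is counting, while the paper's basis computation is more elementary and makes it transparent which terms vanish. One small remark: your cases (like the paper's) overlap, e.g.\ when $a = a'$ or $c = c'$, and one should check consistency on the overlaps; this holds, as a short computation using $a+b+c = a'+b'+c'$ confirms, but it is worth being aware of.
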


Note the remarkable, and unexpected, property that $ K\bigl(\dot R_{EFG}^{abc}, \dot L_{EFG}^{a'b'c'} \bigr)$ is independent of the maximum-span flag triple $\bigl( E,F,G\bigr)$ (whereas Proposition~\ref{prop:TripleRatioClassifyFlagTriples} shows that  the moduli space of these triples has dimension $\frac12(n-1)(n-2)$). 

\begin{proof}
 This is essentially \cite[Lem.~6.7]{SunZha}, which is here restated with different notation and conventions. 
 
By definition, 
\begin{align*}
  \dot L_{EFG}^{a'b'c'} &= \tfrac{-b'-c'}n \Id_{E^{(a')}} \oplus \tfrac{a'}n \Id_{F^{(b')}} \oplus \tfrac{a'}n \Id_{G^{(c')}}
 \\ &= \tfrac {a'} n \Id_{\R^n} - \Id_{E^{(a')}} \oplus 0\, \Id_{F^{(b')}} \oplus0\, \Id_{G^{(c')}}  =   \tfrac {a'} n \Id_{\R^n} -P_{EFG}^{a'b'c'} 
 \\
 \dot R_{EFG}^{abc} &= \tfrac{-b}n \Id_{E^{(a)}} \oplus \tfrac{a+c}n \Id_{F^{(b)}} \oplus \tfrac{-b}n \Id_{G^{(c)}}
 \\ &= 0\,\Id_{E^{(a)}} \oplus  \Id_{F^{(b)}} \oplus 0\,\Id_{G^{(c)}} - \tfrac bn \Id_{\R^n} = P_{FGE}^{bca} - \tfrac bn \Id_{\R^n}
\end{align*}
for the two projection maps
\begin{align*} 
P_{EFG}^{a'b'c'} &= \Id_{E^{(a')}} \oplus 0\, \Id_{F^{(b')}} \oplus 0\,\Id_{G^{(c')}} 
\\
P_{FGE}^{bca} &= 0\,\Id_{E^{(a)}} \oplus  \Id_{F^{(b)}} \oplus 0\,\Id_{G^{(c)}} .
\end{align*}

As a consequence, 
\begin{align*}
 K\bigl(\dot R_{EFG}^{abc}, \dot L_{EFG}^{a'b'c'} \bigr) &= 2n\,\mathrm{Tr}\, \dot R_{EFG}^{abc}\dot L_{EFG}^{a'b'c'}
 \\
 &=2n\, \mathrm{Tr} \left(P_{FGE}^{bca} - \tfrac bn \Id_{\R^n} \right) \left(  \tfrac {a'} n \Id_{\R^n} -P_{EFG}^{a'b'c'} \right)
 \\
 &= - 2n\, \mathrm{Tr}\, P_{FGE}^{bca}P_{EFG}^{a'b'c'} 
+ 2a' \,\mathrm{Tr}\, P_{FGE}^{bca} +2 b\, \mathrm{Tr}\, P_{EFG}^{a'b'c'}
-2 \tfrac{a'b}{n}  \mathrm{Tr}\,  \Id_{\R^n}
\\
 &=  -2n\, \mathrm{Tr}\, P_{FGE}^{bca}P_{EFG}^{a'b'c'} 
+ 2 a' b + 2b a'
- 2\tfrac{a'b}{n} n
\\
 &= -2n\, \mathrm{Tr}\, P_{FGE}^{bca}P_{EFG}^{a'b'c'} 
 + 2 a' b .
\end{align*}

To compute $ \mathrm{Tr}\, P_{FGE}^{bca}P_{EFG}^{a'b'c'} $, we distinguish cases. Note that these cases will overlap. 

\medskip
\noindent\textsc{Case 1.} $a \geq a'$. 

The image of the projection $P_{EFG}^{a'b'c'}$ is equal to $E^{(a')} \subset E^{(a)}$, and is therefore contained in the kernel $E^{(a)} \oplus G^{(c)}$ of $P_{FGE}^{bca}$. Therefore,
$$
\mathrm{Tr}\, P_{FGE}^{bca}P_{EFG}^{a'b'c'}  = \mathrm{Tr}\, 0 =0
$$
and
$$
 K\bigl(\dot R_{EFG}^{abc}, \dot L_{EFG}^{a'b'c'} \bigr)  = -2n\, \mathrm{Tr}\, P_{FGE}^{bca}P_{EFG}^{a'b'c'} 
 + 2 a' b = 2a'b.
$$

\medskip
\noindent\textsc{Case 2.} $b \leq b'$. 

Similarly, the image $F^{(b)}$ of  $P_{FGE}^{bca} $ is contained in the kernel $F^{(b')} \oplus G^{(c')}$ of $P_{EFG}^{a'b'c'}  $, which implies that
$$
\mathrm{Tr}\, P_{FGE}^{bca}P_{EFG}^{a'b'c'}  =\mathrm{Tr}\, P_{EFG}^{a'b'c'} P_{FGE}^{bca} = \mathrm{Tr}\, 0 =0 
$$
and
$$
 K\bigl(\dot R_{EFG}^{abc}, \dot L_{EFG}^{a'b'c'} \bigr)  = -2n\, \mathrm{Tr}\, P_{FGE}^{bca}P_{EFG}^{a'b'c'} 
 + 2 a' b = 2a'b.
$$

\medskip
\noindent\textsc{Case 3.} $a\leq a'$, $b\geq b'$ and $c\geq c'$. 
Here we need to be more explicit. Let $\{e_1, e_2, \dots, e_n\}$ be a basis for $\R^n$ that is adapted to the flag $E$, in the sense that each subspace $E^{(a'')}$ is spanned by the first $a''$ vectors $e_1$, $e_2$, \dots, $e_{a''}$. Similarly, let $\{f_1, f_2, \dots, f_n \}$ be a basis adapted to $F$, and let $\{g_1, g_2, \dots, g_n \}$ be a basis adapted to $G$. We will express the composition $P_{EFG}^{a'b'c'} P_{FGE}^{bca}$ in the basis $\mathcal B= \{e_1, e_2, \dots, e_a, f_1, f_2, \dots, f_b, g_1, g_2, \dots, g_c\}$ for $\R^n=E^{(a)} \oplus F^{(b)} \oplus G^{(c)}$. 

The map $P_{FGE}^{bca}$ sends each $e_1$, $e_2$, \dots, $e_a$ and each $g_1$, $g_2$, \dots, $g_c$ to 0. Also, for $i \leq b'\leq b$, 
$ P_{EFG}^{a'b'c'} P_{FGE}^{bca}(f_i)=P_{EFG}^{a'b'c'}  (f_i)=0$. Therefore, in the matrix expressing $ P_{EFG}^{a'b'c'}P_{FGE}^{bca} $ in the basis $\mathcal B$, the only columns that may be nonzero are those corresponding to the images of the basis elements $f_{b'+1}$, $f_{b'+2}$, \dots, $f_b$ (with no such column when $b=b'$). 

Express these $f_i$, with $b'<i\leq b$, in the basis $\mathcal B'= \{e_1, e_2, \dots, e_{a'}, f_1, f_2, \dots, f_{b'}, g_1, g_2, \dots, g_{c'}\}$. Namely,
$$
f_i = \sum_{j=1}^{a'} \lambda_{ij}e_j + \sum_{j=1}^{b'} \mu_{ij} f_j + \sum_{j=1}^{c'} \nu_{ij} g_j
$$
for some coefficients $\lambda_{ij}$, $\mu_{ij}$, $\nu_{ij} \in \R$. Then,
\begin{align*}
P_{EFG}^{a'b'c'} P_{FGE}^{bca}(f_i)
 &=P_{EFG}^{a'b'c'}  (f_i)
 = P_{EFG}^{a'b'c'} \left(  \sum_{j=1}^{a'} \lambda_{ij}e_j + \sum_{j=1}^{b'} \mu_{ij} f_j + \sum_{j=1}^{c'} \nu_{ij} g_j \right)
 \\
 &=  \sum_{j=1}^{a'} \lambda_{ij}e_j
 = f_i - \sum_{j=1}^{b'} \mu_{ij} f_j - \sum_{j=1}^{c'} \nu_{ij} g_j 
 \end{align*}
 which, because $i \leq b$, $b'\leq b$ and $c'\leq c$, expresses $P_{EFG}^{a'b'c'} P_{FGE}^{bca}(f_i)$ in the basis $\mathcal B$. 
 
 Therefore, in the matrix expressing $P_{EFG}^{a'b'c'} P_{FGE}^{bca}$ in the basis $\mathcal B$, the column corresponding to $f_i$ with $b'<i\leq b$ has an entry $1$ on the diagonal and possibly a few more nonzero entries above and below this diagonal. As a consequence, the diagonal entries of this matrix consist of $a+b'$ terms 0, followed by $b-b'$ terms 1, and then another $c$ terms $0$.  
 
 This proves that 
 $$
\mathrm{Tr}\, P_{FGE}^{bca}P_{EFG}^{a'b'c'} 
=\mathrm{Tr}\,P_{EFG}^{a'b'c'} P_{FGE}^{bca}
=b-b'. 
$$

Therefore, 
\begin{align*}
  K\bigl(\dot R_{EFG}^{abc}, \dot L_{EFG}^{a'b'c'} \bigr) 
  &= -2n \mathrm{Tr}\, P_{FGE}^{bca}P_{EFG}^{a'b'c'}  + 2a'b
  \\
   &=-2n(b-b') + 2a'b= 2ab' -2bc'+2b'c.
\end{align*}
by using the properties that $a+b+c=a'+b'+c'=n$.  

We now reach the last case.

\medskip
\noindent\textsc{Case 4.} $a\leq a'$, $b\geq b'$ and $c\leq c'$. 

The argument is very similar to the previous one. The difference is that we now express $P_{FGE}^{bca}P_{EFG}^{a'b'c'}  $ in the basis $\mathcal B'= \{e_1, e_2, \dots, e_{a'}, f_1, f_2, \dots, f_{b'}, g_1, g_2, \dots, g_{c'}\}$ for $\R^n$. 

The vectors  $f_1$, $f_2$, \dots, $f_{b'}$, $g_1$, $g_2$, \dots, $g_{c'}$ are sent to 0 by the projection $P_{EFG}^{a'b'c'} $. Also, for $i \leq a \leq a'$, $P_{FGE}^{bca}P_{EFG}^{a'b'c'} (e_i) =P_{FGE}^{bca}(e_i) = 0$. Therefore, in the matrix expressing  $P_{EFG}^{abc}P_{FGE}^{b'c'a'}  $ in the basis $\mathcal B'$, the columns corresponding to $e_1$, $e_2$, \dots, $e_{a}$, $f_1$, $f_2$, \dots, $f_{b'}$, $g_1$, $g_2$, \dots, $g_c$ are all equal to 0. 

Express the remaining vectors $e_i$, with $a<i\leq a'$, in our earlier basis $\B$ as
$$
e_i = \sum_{j=1}^{a} \lambda'_{ij}e_j + \sum_{j=1}^{b} \mu'_{ij} f_j + \sum_{j=1}^{c} \nu'_{ij} g_j
$$ 
for some coefficients $\lambda_{ij}'$, $\mu_{ij}'$, $\nu_{ij}' \in \R$. Then, as in the previous case, its image $P_{FGE}^{bca}P_{EFG}^{a'b'c'} (e_i)$ can be expressed in the basis $\B'$ by
\begin{align*}
P_{FGE}^{bca}P_{EFG}^{a'b'c'}  (e_i)
 &=P_{FGE}^{bca} (e_i)
 =P_{FGE}^{bca} \left(  \sum_{j=1}^{a} \lambda'_{ij}e_j + \sum_{j=1}^{b} \mu'_{ij} f_j + \sum_{j=1}^{c} \nu'_{ij} g_j \right)
 \\
 &=\sum_{j=1}^{b} \mu'_{ij} f_j 
 = e_i - \sum_{j=1}^{a} \lambda'_{ij}e_j - \sum_{j=1}^{c} \nu'_{ij} g_j .
 \end{align*}
 Because $i>a$ this shows that, in the matrix expressing $P_{FGE}^{bca}P_{EFG}^{a'b'c'} $ in the basis $\B'$, the diagonal entry of the $i$--th column with $a<i\leq a'$ is equal to 1. 
 
 The conclusion is again that the diagonal entries of this matrix consist of $a$ terms $0$, followed by $a'-a$ terms 1 and then $b'+c'$ terms 0. 
As a consequence, 
$$
\mathrm{Tr}\,P_{FGE}^{bca}P_{EFG}^{a'b'c'}  = a'-a
$$
and
\begin{align*}
  K\bigl(\dot R_{EFG}^{abc}, \dot L_{EFG}^{a'b'c'} \bigr) 
  &= -2n \mathrm{Tr}\, P_{FGE}^{bca}P_{EFG}^{a'b'c'}  + 2a'b
  \\
   &=-2n(a'-a) + 2a'b= 2ab' +2ac'-2a'c,
\end{align*}
using again the properties that $a+b+c=a'+b'+c'=n$.  
\end{proof}

We can now return to the contribution of the face $f$ to the formula of Lemma~\ref{lem:GeneralFormulaCupProduct}. 

\begin{lem}
\label{lem:ContribTriangleToCupProd2}
Let $f$ be the face of the triangulation $\Sigma$ that contains the center $\pi_T$ of the barrier $\beta_T$ of $T$ and, in the complement $S-\Psi$ of the train track $\Psi$,  let $U$ be the component of  $S-\Psi$  corresponding to the component of $S-\Phi$ that contains $f$. Then, for an arbitrary corner $s_U$ of the triangle $U$, the contribution of $f$ to the evaluation $\left\langle [c_{V_1}] \cupp [c_{V_2}], [S] \right\rangle$ of the cup product is equal to
 $$
\tfrac12 \sum_{\substack{a+b+c=n\\a'+b'+c'=n}}
 C^{abc}_{a'b'c'}\left( \dot\tau_{V_1}^{abc}(U, s_U) \,\dot\tau_{V_2}^{a'b'c'}(U, s_U) -\dot\tau_{V_2}^{abc}(U, s_U) \, \dot\tau_{V_1}^{a'b'c'}(U, s_U)  \right)
$$
where, as in Lemma~{\upshape \ref{lem:ContribTriangleToCupProd1}}, $\dot\tau_{V}^{abc}(U, s_U) $ denotes the directional derivative of $\tau_{\rho}^{abc}(U, s_U) $ in the direction of the tangent vector $V\in T_{[\rho]}\Hit(S)$ and where
$$
C^{abc}_{a'b'c'}=
\begin{cases}
a'b-ab' &\text{if } a\leq a', b\leq b' \text{ and } c\geq c'\\
ac'-a'c &\text{if } a\leq a', b\geq b' \text{ and } c\leq c'\\
b'c-bc' &\text{if } a\leq a', b\geq b' \text{ and } c\geq c'\\
b'c-bc' &\text{if } a\geq a', b\leq b' \text{ and } c\leq c'\\
ac'-a'c &\text{if } a\geq a', b\leq b' \text{ and } c\geq c'\\
a'b -ab'&\text{if } a\geq a', b\geq b' \text{ and } c\leq c'.
\end{cases}
$$
\end{lem}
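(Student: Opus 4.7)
The plan is to combine Lemma~\ref{lem:ContribTriangleToCupProd1} with the explicit evaluation of the Killing form in Lemma~\ref{lem:ComputeKillingFormLeftRight}, exploiting the antisymmetry in $(V_1,V_2)$ of the expression we obtained. Starting from Lemma~\ref{lem:ContribTriangleToCupProd1}, abbreviate $K(a,b,c;a',b',c') = K\bigl(\dot R_{E_{\wt T}F_{\wt T}G_{\wt T}}^{abc}, \dot L_{E_{\wt T}F_{\wt T}G_{\wt T}}^{a'b'c'}\bigr)$ and $\dot\tau_i = \dot\tau_{V_i}$. The contribution of $f$ reads
\begin{align*}
\tfrac12\!\!\!\sum_{\substack{a+b+c=n\\a'+b'+c'=n}}\!\!\!\big(\dot\tau_1^{abc}(U,s_U)\,\dot\tau_2^{a'b'c'}(U,s_U) - \dot\tau_2^{abc}(U,s_U)\,\dot\tau_1^{a'b'c'}(U,s_U)\big)\,K(a,b,c;a',b',c').
\end{align*}

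First I would relabel indices in the second term by swapping $(a,b,c) \leftrightarrow (a',b',c')$, so that the whole expression rewrites as
\begin{align*}
\tfrac12 \sum \dot\tau_1^{abc}(U,s_U)\,\dot\tau_2^{a'b'c'}(U,s_U)\,\bigl(K(a,b,c;a',b',c') - K(a',b',c';a,b,c)\bigr).
\end{align*}
On the other hand, the target expression, namely $\tfrac12 \sum C^{abc}_{a'b'c'}\bigl(\dot\tau_1^{abc}\dot\tau_2^{a'b'c'} - \dot\tau_2^{abc}\dot\tau_1^{a'b'c'}\bigr)$, can be rewritten by the same index-swap as $\sum \dot\tau_1^{abc}\,\dot\tau_2^{a'b'c'}\,C^{abc}_{a'b'c'}$ provided $C^{abc}_{a'b'c'}$ is antisymmetric under the exchange $(a,b,c)\leftrightarrow(a',b',c')$; a direct inspection of the six cases defining $C^{abc}_{a'b'c'}$ confirms this antisymmetry (the six cases are paired: cases 1 and 6, cases 2 and 5, cases 3 and 4 swap into each other with a sign).

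It therefore suffices to verify the identity
\begin{align*}
C^{abc}_{a'b'c'} = \tfrac12\bigl(K(a,b,c;a',b',c') - K(a',b',c';a,b,c)\bigr)
\end{align*}
for every admissible $(a,b,c),(a',b',c') \in \Theta_n$. This I would do by running through the six cases in the definition of $C^{abc}_{a'b'c'}$. In each case one reads off $K(a,b,c;a',b',c')$ from Lemma~\ref{lem:ComputeKillingFormLeftRight}, then determines which branch of that same lemma applies to $K(a',b',c';a,b,c)$ (the roles of the triples being swapped), and subtracts. For example, when $a\le a'$, $b\ge b'$ and $c\le c'$, Lemma~\ref{lem:ComputeKillingFormLeftRight} gives $K(a,b,c;a',b',c') = 2ab'+2ac'-2a'c$, while $K(a',b',c';a,b,c) = 2ab'$ because the swapped triple satisfies $a'\ge a$; the difference, halved, is $ac'-a'c$, matching $C^{abc}_{a'b'c'}$. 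The other five cases are handled by the same mechanical bookkeeping, using the equality $a+b+c=a'+b'+c'=n$ wherever needed.

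The only subtle step is the case analysis itself, and in particular making sure that the conditions defining $C^{abc}_{a'b'c'}$ and those defining the three branches of the Killing form formula are consistently matched after swapping the primed and unprimed triples; I expect no genuine obstacle beyond the need for care in tracking the inequalities. Once this table is checked, the lemma follows immediately.
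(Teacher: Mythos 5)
Your proof is correct and takes essentially the same approach as the paper: antisymmetrize the expression from Lemma~\ref{lem:ContribTriangleToCupProd1} to isolate the combination $\tfrac12\bigl(K(a,b,c;a',b',c')-K(a',b',c';a,b,c)\bigr)$, and then verify case by case from Lemma~\ref{lem:ComputeKillingFormLeftRight} that this equals $C^{abc}_{a'b'c'}$. The minor cosmetic difference is that you perform an extra index relabeling to absorb the $\tfrac12$, whereas the paper keeps the $\tfrac14$-prefactor form and reads off the same identity directly.
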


\begin{proof} Using the antisymmetry of the formula of Lemma~\ref{lem:ContribTriangleToCupProd1}, it can be rewritten as
\begin{align*}
\tfrac14
\sum_{\substack{a+b+c=n\\a'+b'+c'=n}}
&\left( \dot\tau_{V_1}^{abc}(U, s_U) \,\dot\tau_{V_2}^{a'b'c'}(U, s_U) - \dot\tau_{V_2}^{abc}(U, s_U) \, \dot\tau_{V_1}^{a'b'c'}(U, s_U) \right) 
\\
&\qquad\qquad\qquad
\left( K\bigl(\dot R_{E_{\wt T}F_{\wt T}G_{\wt T}}^{abc}, \dot L_{E_{\wt T}F_{\wt T}G_{\wt T}}^{a'b'c'}\bigr) - K\bigl(\dot R_{E_{\wt T}F_{\wt T}G_{\wt T}}^{a'b'c'}, \dot L_{E_{\wt T}F_{\wt T}G_{\wt T}}^{abc} \bigr) 
\right).
\end{align*}
The result then follows from a case-by-case application of Lemma~\ref{lem:ComputeKillingFormLeftRight}.
\end{proof}

Note the rotational symmetry $C^{bca}_{b'c'a'}=C^{abc}_{a'b'c'}$ of  the constants $C^{abc}_{a'b'c'}$ which shows that the output of Lemma~\ref{lem:ContribTriangleToCupProd2} is, as expected, independent of the choice of the corner $s_U$ for $U$. 

\subsection{The contribution of the switches of the train track}
\label{subsect:ContributionSwitch}

Near a switch $s$ of the train track $\Psi$ or, more precisely, near the corresponding switch tie of the  train track neighborhood $\Phi$, there are exactly two faces $f_1$ and $f_2$ of the triangulation $\Sigma$ which may have a nontrivial contribution to the evaluation $\left\langle [c_{V_1}] \cupp [c_{V_2}], [S] \right\rangle$, because their  sides all meet the union of the geodesic lamination $\lambda$ and of the barriers $\beta_T$ (see Lemma~\ref{lem:CupProductNoContribution}). These are illustrated in Figure~\ref{fig:TriangulationNearSwitch}.

Let $k_s^\Left$ and $k_s^\Right$ be two generic ties of the incoming branches $e_s^\Left$ and $e_s^\Right$, respectively, and orient these ties to the left as seen from the switch $s$. Also, let $k_s^{\mathrm{switch}}$ be a small arc meeting  in one point the branch of the barrier $\beta_T$ that enters $\Phi$ near $s$, and again orient this arc to the left as seen from $s$. See Figure~\ref{fig:TriangulationNearSwitch}. 

\begin{figure}[htbp]

\SetLabels
( .2 * .52 ) $f_2$  \\
( .25 * .39 ) $f_1$  \\
( .08 * .4 ) $F^{\mathrm{back}}f_2$  \\
( .37 *.3  )    $F^{\mathrm{front}}f_1$ \\
(  .3* .63 )  $F^{\mathrm{front}}f_2$ \\
( .5 * .425 ) $F^{\mathrm{back}}f_1$ \\
(.79 * .44 ) $\beta_T$ \\
( .56 * .16 ) $k_s^\Left$\\
( .53 * .75 ) $k_s^\Right$\\
( .71 * .53 ) $k_s^{\mathrm{switch}}$\\
\endSetLabels
\centerline{\AffixLabels{\includegraphics{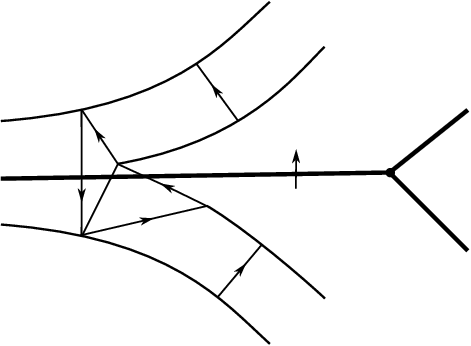}}}

\caption{The contribution of a switch}
\label{fig:TriangulationNearSwitch}
\end{figure}

Lift the switch tie $s$ to a switch tie $\wt s$ of the train track neighborhood $\wt \Phi$ in the universal cover $\wt S$. Then lift $k_s^\Left$, $k_s^\Right$, $k_s^{\mathrm{switch}}$ to oriented arcs $\wt k_s^\Left$, $\wt k_s^\Right$, $\wt k_s^{\mathrm{switch}}$ near $\wt s$. 

\begin{lem}
\label{lem:ContribSwitchToCupProd1}
The contribution of the switch $s$ to  the evaluation $\left\langle [c_{V_1}] \cupp [c_{V_2}], [S] \right\rangle$, namely the sum of the respective contributions of the faces $f_1$ and $f_2$ to the formula of Lemma~{\upshape\ref{lem:GeneralFormulaCupProduct}},  is equal to 
\begin{align*}
& \tfrac12 \Bigl( K\bigl(c_{V_1}(\wt k_s^\Left),  c_{V_2}(\wt k_s^\Right) \bigr)-K\bigl(c_{V_2}(\wt k_s^\Left),  c_{V_1}(\wt k_s^\Right) \bigr)  \Bigr) 
\\
& \qquad
+\tfrac12 \Bigl( K\bigl(c_{V_1}(\wt k_s^\Left),  c_{V_2}(\wt k_s^{\mathrm{switch}}) \bigr)-K\bigl(c_{V_2}(\wt k_s^\Left),  c_{V_1}(\wt k_s^{\mathrm{switch}}) \bigr)  \Bigr) 
\\
&\qquad
-\tfrac12 \Bigl( K\bigl(c_{V_1}(\wt k_s^\Right),  c_{V_2}(\wt k_s^{\mathrm{switch}}) \bigr)-K\bigl(c_{V_2}(\wt k_s^\Right),  c_{V_1}(\wt k_s^{\mathrm{switch}}) \bigr)  \Bigr) .
\end{align*}
\end{lem}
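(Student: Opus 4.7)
The plan is to use the freedom in parametrization granted by Lemma~\ref{lem:CupProductIndependentParam}, combined with the closedness of $c_{V_1}$ and $c_{V_2}$ and the vanishing principle of Lemma~\ref{lem:EvaluateWeilEdgeDisjoinLaminationBarrier}, to re-express the values of the cocycles on the front and back sides of $\wt f_1$ and $\wt f_2$ in terms of their values on the three reference arcs $\wt k_s^\Left$, $\wt k_s^\Right$, $\wt k_s^{\mathrm{switch}}$.

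First I will analyze the local combinatorics of the triangulation $\Sigma$ near the switch $s$, following the construction of \S\ref{subsect:Triangulation}. The two triangles $f_1$ and $f_2$ share a common edge, and together with a small handful of neighboring edges of $\Sigma$ they form a planar subcomplex in which every edge is of one of four types: transverse to $\wt\lambda$ inside the left incoming branch (hence cohomologous rel endpoints to $\wt k_s^\Left$), transverse to $\wt\lambda$ inside the right incoming branch (cohomologous to $\wt k_s^\Right$), transverse to the barrier $\beta_{\wt T}$ near $\wt s$ (cohomologous to $\wt k_s^{\mathrm{switch}}$), or entirely disjoint from $\wt\lambda$ and from the barriers. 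By Lemma~\ref{lem:EvaluateWeilEdgeDisjoinLaminationBarrier}, both $c_{V_1}$ and $c_{V_2}$ vanish on edges of the fourth type.

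I will then choose parametrizations of $f_1$ and $f_2$ as singular $2$-simplices compatible with the orientation of $S$, so that, using closedness of each cocycle together with the vanishing on type-four edges, one obtains
\begin{align*}
c_V(F^{\mathrm{front}}\wt f_1)&=c_V(\wt k_s^\Left), &
c_V(F^{\mathrm{back}}\wt f_1)&=c_V(\wt k_s^\Right)+c_V(\wt k_s^{\mathrm{switch}}),\\
c_V(F^{\mathrm{front}}\wt f_2)&=c_V(\wt k_s^\Right), &
c_V(F^{\mathrm{back}}\wt f_2)&=-c_V(\wt k_s^{\mathrm{switch}})
\end{align*}
for each $V\in\{V_1,V_2\}$. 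Each of these four identities is proved by applying $c_V$ to a suitable null-homologous loop bounded by one side of $\wt f_i$, by the relevant reference arcs (with signs dictated by the convention ``oriented to the left as seen from $\wt s$''), and by connecting edges of the fourth type.

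Substituting these identities into the sum over $f_1$ and $f_2$ of the contributions given by Lemma~\ref{lem:GeneralFormulaCupProduct}, and expanding by bilinearity and symmetry of the Killing form $K$, produces exactly the three antisymmetrized Killing-form pairs appearing in the statement, with the signs $+,+,-$ as claimed. The main obstacle is the first step: pinning down, in the picture of Figure~\ref{fig:TriangulationNearSwitch}, which side of each $\wt f_i$ plays which role, and verifying that the specific choices made in \S\ref{subsect:Triangulation}---splitting the last left-incoming rectangle along a diagonal disjoint from the barrier near $s$, and subdividing the first outgoing pentagon along two edges disjoint from its added corner vertex---really do ensure that every required connecting edge lies in the fourth type.
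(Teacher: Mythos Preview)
Your overall strategy is exactly the paper's: use closedness of $c_{V_i}$ together with Lemma~\ref{lem:EvaluateWeilEdgeDisjoinLaminationBarrier} to rewrite the side-values of $\wt f_1,\wt f_2$ in terms of the three reference arcs, then expand bilinearly in the formula of Lemma~\ref{lem:GeneralFormulaCupProduct}. The final algebraic expansion you describe is also correct.

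However, the four displayed identities cannot all hold. From your values one gets
\[
c_V(F^{\mathrm{mid}}\wt f_1)=c_V(\wt k_s^\Left)+c_V(\wt k_s^\Right)+c_V(\wt k_s^{\mathrm{switch}}),
\qquad
c_V(F^{\mathrm{mid}}\wt f_2)=c_V(\wt k_s^\Right)-c_V(\wt k_s^{\mathrm{switch}}).
\]
Since $f_1$ and $f_2$ share an edge in an oriented surface, one side of $\wt f_1$ must match one side of $\wt f_2$ with the appropriate sign; checking all nine front/mid/back pairings shows that none of them agree. So no parametrization of the two faces in the triangulation of \S\ref{subsect:Triangulation} realises your identities, and the ``suitable null-homologous loops'' you invoke for the two back sides do not exist. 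Concretely, no single side of $\wt f_1$ crosses both the right-incoming leaves and the barrier while avoiding the left-incoming leaves.

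The paper takes the shared edge to be the middle side of both faces and obtains instead
\[
c_V(F^{\mathrm{back}}\wt f_1)=c_V(\wt k_s^{\mathrm{switch}}),
\qquad
c_V(F^{\mathrm{back}}\wt f_2)=-c_V(\wt k_s^\Left)-c_V(\wt k_s^{\mathrm{switch}})-c_V(\wt k_s^\Right),
\]
the second identity coming from $c_V(F^{\mathrm{mid}}\wt f_2)=-c_V(F^{\mathrm{mid}}\wt f_1)$ rather than from a direct homology with a reference arc. Substituting these correct values into Lemma~\ref{lem:GeneralFormulaCupProduct} and expanding gives the same three antisymmetrised Killing pairings you reached; your conclusion is right, only the intermediate labeling needs to be repaired.
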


\begin{proof}
Choose the parametrization of the faces $f_1$, $f_2$ so that the sides  $F^{\mathrm{front}}f_1$,  $F^{\mathrm{back}}f_1$,  $F^{\mathrm{front}}f_2$,  $F^{\mathrm{back}}f_2$ are as indicated in  Figure~\ref{fig:TriangulationNearSwitch}.  Then, in the  triangulation $\wt \Sigma$ of the universal covering $\wt S$, lift $f_1$ and $f_2$ and their parametrizations to the corresponding two faces  $\wt f_1$ and $\wt f_2$ that are near $\wt s$. 

By construction, there is an arc $\wt k_+$ that is disjoint from the union of the geodesic lamination $\wt \lambda$ and the barriers $\beta_{\wt T}$ and goes from the positive endpoint of $F^{\mathrm{front}} \wt f_1$ to the positive endpoint of $\wt k_s^\Left$; for instance, the arc can be chosen in the boundary of $\wt\Phi$. Similarly, pick an arc $\wt k_-$ that is disjoint from the union of $\wt \lambda$ and the  $\beta_{\wt T}$ and goes from the negative endpoint of $F^{\mathrm{front}} \wt f_1$ to the negative endpoint of $\wt k_s^\Left$. Then, the chain $F^{\mathrm{front}} \wt f_1 + \wt k_+ - \wt k_s^\Left - \wt k_-$ is homologous to 0, and
$$
c_{V_i}(F^{\mathrm{front}} \wt f_1 ) = -c_{V_i}(\wt k_+ ) + c_{V_i}( \wt k_s^\Left) + c_{V_i}(\wt k_-) = c_{V_i}( \wt k_s^\Left) 
$$
for each $i=1$, $2$, since $c_{V_i}$ is closed and $c_{V_i}(\wt k_\pm ) =0$ by Lemma~\ref{lem:EvaluateWeilEdgeDisjoinLaminationBarrier}. 

Similarly, 
$$
c_{V_i}(F^{\mathrm{front}} \wt f_2 ) = c_{V_i}( \wt k_s^\Right)
$$
and
$$
c_{V_i}(F^{\mathrm{back}} \wt f_1 ) = c_{V_i}( \wt k_s^{\mathrm{switch}}).
$$

Also, the images of the sides $ F^{\mathrm{mid}}\wt f_1 $ and $F^{\mathrm{mid}}\wt f_2 $ are equal to the same edge of the triangulation $\Sigma$, but induce opposite orientations on this edge. It follows that $ c_{V_i} ( F^{\mathrm{mid}}\wt f_1 ) = -  c_{V_i} ( F^{\mathrm{mid}}\wt f_2 )$ for each $i=1$, $2$, since $c_{V_i}$ is closed. Using twice more the fact that $c_{V_i}$ is closed, 
\begin{align*}
 c_{V_i} ( F^{\mathrm{back}}\wt f_2 ) 
 &= c_{V_i} ( F^{\mathrm{mid}}\wt f_2 )  - c_{V_i} ( F^{\mathrm{front}}\wt f_2 ) 
 = - c_{V_i} ( F^{\mathrm{mid}}\wt f_1 )  - c_{V_i} ( F^{\mathrm{front}}\wt f_2 ) 
  \\
   &=- c_{V_i} ( F^{\mathrm{front}}\wt f_1 )  - c_{V_i} ( F^{\mathrm{back}}\wt f_1 )  - c_{V_i} ( F^{\mathrm{front}}\wt f_2 ) 
   \\
  &=- c_{V_i}( \wt k_s^\Left)    - c_{V_i}( \wt k_s^{\mathrm{switch}})  - c_{V_i}( \wt k_s^\Right) .
\end{align*}

 In the formula of Lemma~\ref{lem:GeneralFormulaCupProduct}, the sum of the contributions of $f_1$ and $f_2$ to $\left\langle [c_{V_1}] \cupp [c_{V_2}], [S] \right\rangle$ is equal to
\begin{align*}
&\tfrac12 \Bigl( K\bigl(c_{V_1}(F^{\mathrm{front}} \wt f_1),  c_{V_2}(F^{\mathrm{back}} \wt f_1) \bigr)-K\bigl(c_{V_2}(F^{\mathrm{front}} \wt f_1),  c_{V_1}(F^{\mathrm{back}} \wt f_1) \bigr)  \Bigr) 
\\
&\qquad
+ \tfrac12 \Bigl( K\bigl(c_{V_1}(F^{\mathrm{front}} \wt f_2),  c_{V_2}(F^{\mathrm{back}} \wt f_2) \bigr)-K\bigl(c_{V_2}(F^{\mathrm{front}} \wt f_2),  c_{V_1}(F^{\mathrm{back}} \wt f_2) \bigr)  \Bigr) .
\end{align*}
Substituting the above values for $c_{V_i}(F^{\mathrm{front}} \wt f_1 ) $, $c_{V_i}(F^{\mathrm{back}} \wt f_1 ) $, $c_{V_i}(F^{\mathrm{front}} \wt f_2 ) $ and $c_{V_i}(F^{\mathrm{back}} \wt f_2 ) $ and rearranging terms gives the formula stated in the lemma.
\end{proof}

For $i=1$, $2$, the values of $c_{V_i}(\wt k_s^{\mathrm{switch}})$, $c_{V_i}(\wt k_s^{\mathrm{left}})$ and $c_{V_i}(\wt k_s^{\mathrm{right}}) \in \sln$ are provided by Lemmas~\ref{lem:EvaluateWeilEdgeMeetingBarrier} and \ref{lem:EvaluateWeilTie}. However, the formulas of Lemma~\ref{lem:EvaluateWeilTie} for $c_{V_i}(\wt k_s^{\mathrm{left}})$ and $c_{V_i}(\wt k_s^{\mathrm{right}})$ involve infinite sums, which is not very convenient if we want to input these values in the formula of Lemma~\ref{lem:ContribSwitchToCupProd1}. We will consequently use the approximation of \S \ref{subsect:OpeningZippers}, and assume that $\Phi=\Phi_\delta$ is obtained from a first train track neighborhood $\Phi_0$ by opening zippers  up to depth $\delta\geq 1$. 

As before, lift the switch tie $s$ of $\Phi$ to a switch tie $\wt s$ of the preimage $\wt \Phi \subset \wt S$ of $\Phi$. The correspondence of Proposition~\ref{prop:ComplementsTrainTrackGeodLamination} associates a component $\wt T$ of $\wt S-\wt\lambda$ to the component of $\wt S - \wt\Phi$ that has a corner at $\wt s$, and also associates a vertex $x_{\wt T}\in \bdry$ of $\wt T$ to that corner.  Index the other two vertices of $\wt T$ as $y_{\wt T}$ and $z_{\wt T} \in \bdry$ so that, as usual, $x_{\wt T}$, $y_{\wt T}$ and $z_{\wt T}$ occur counterclockwise in this order around $\wt T$.

\begin{lem}
 \label{lem:ContribSwitchToCupProd2}
Let $\Phi$ be obtained from a first train track neighborhood $\Phi_0$ by opening zippers up to depth $\delta$.  Then, with the above data, there is a constant $C$ such that the contribution of the switch $s$ to $\left\langle [c_{V_1}] \cupp [c_{V_2}], [S] \right\rangle$ is of the form
\begin{align*}
 & \tfrac12 \sum_{\substack{a+b=n\\a'+b'=n}} \bigl( \dot \sigma_{V_1}^{ab}(e_s^\Left)  \dot \sigma_{V_2}^{a'b'}(e_s^\Right) -  \dot \sigma_{V_2}^{ab}(e_s^\Left)  \dot \sigma_{V_1}^{a'b'}(e_s^\Right)   \bigr) K\bigl( \dot S^{ab}_{E_{\wt T}G_{\wt T}}, \dot S^{a'b'}_{E_{\wt T}F_{\wt T}} \bigr)
 \\
 &\qquad
   -\tfrac12 \sum_{\substack{a+b=n\\a'+b'+c'=n}} \bigl( \dot \sigma_{V_1}^{ab}(e_s^{\mathrm{right}})  \dot \tau_{V_2}^{a'b'c'}(T,x) -  \dot \sigma_{V_2}^{ab}(e_s^{\mathrm{right}})  \dot  \tau_{V_1}^{a'b'c'}(T,x)   \bigr) K\bigl( \dot S^{ab}_{E_{\wt T}G_{\wt T}}, \dot L^{a'b'c'}_{E_{\wt T}F_{\wt T}G_{\wt T}} \bigr)
 \\
 &\qquad
   + \tfrac12 \sum_{\substack{a+b=n\\a'+b'+c'=n}} \bigl( \dot \sigma_{V_1}^{ab}(e_s^{\mathrm{left}})  \dot \tau_{V_2}^{a'b'c'}(T,x) -  \dot \sigma_{V_2}^{ab}(e_s^{\mathrm{left}})  \dot  \tau_{V_1}^{a'b'c'}(T,x)   \bigr) K\bigl( \dot S^{ab}_{E_{\wt T}F_{\wt T}}, \dot L^{a'b'c'}_{E_{\wt T}F_{\wt T}G_{\wt T}} \bigr)
   \\
   &\qquad
 + O(\E^{-C \delta}).
\end{align*}
The constant $C>0$ and the constant hidden in the Landau symbol $O(\ )$ depend only on the train track neighborhood $\Phi_0$,  on the Hitchin representation $\rho$ and on the tangent vectors $V_1$, $V_2 \in T_{[\rho]}\Hit(S)$. 
\end{lem}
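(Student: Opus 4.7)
The starting point is Lemma~\ref{lem:ContribSwitchToCupProd1}, which already expresses the switch's contribution as a combination of three antisymmetrized Killing-form pairings of $c_{V_1}$ and $c_{V_2}$ evaluated on the auxiliary arcs $\wt k_s^\Left$, $\wt k_s^\Right$ and $\wt k_s^{\mathrm{switch}}$. My plan is to substitute our earlier evaluations of $c_V$ on these arcs, expand using bilinearity of $K$, and collect terms. The $O(\E^{-C\delta})$ error in the statement will then come for free from the two tie evaluations.

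For $\wt k_s^\Left$ and $\wt k_s^\Right$, which are generic ties of the opened train track $\wt\Phi_\delta$, I would apply Lemma~\ref{lem:EstimateWeilZipperOpening} to obtain
\[
c_V(\wt k) = \sum_{a+b=n} \dot\sigma_V^{ab}(\wt T_-,\wt T_+)\,\dot S^{ab}_{E_{\wt T_+}F_{\wt T_+}} + O(\E^{-C\delta}),
\]
where $\wt T_\pm$ are the components of $\wt S-\wt\lambda$ on either side of the tie. By construction one of $\wt T_\pm$ equals the triangle $\wt T$ with $x_{\wt T}$ at the switch, and the other is the neighbor across the corresponding incoming branch. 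Via Lemma~\ref{lem:LiftShearsToUniversalCover}(1), the bookkeeping step identifies $\dot\sigma_V^{ab}(\wt T_-,\wt T_+)$ with $\dot\sigma_V^{ab}(e_s^\Left)$ or $\dot\sigma_V^{ab}(e_s^\Right)$, possibly after using the Orientation Reversal Relation $\sigma_\rho^{ab}(\bar e) = \sigma_\rho^{ba}(e)$, and identifies the flag pair in $\dot S^{ab}$ as either $(E_{\wt T},F_{\wt T})$ or $(E_{\wt T},G_{\wt T})$ depending on which of the two sides $x_{\wt T}y_{\wt T}$ or $z_{\wt T}x_{\wt T}$ of $\wt T$ borders the tie. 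For $\wt k_s^{\mathrm{switch}}$, which is disjoint from $\wt\lambda$ and meets $\beta_{\wt T}$ in a single point, Lemma~\ref{lem:EvaluateWeilEdgeMeetingBarrier} applies; the setup of \S\ref{subsect:Triangulation} (every barrier enters $\Phi$ through the branch incoming on the left at its switch) forces $\wt k_s^{\mathrm{switch}}$ to turn left, so
\[
c_V(\wt k_s^{\mathrm{switch}}) = \sum_{a'+b'+c'=n} \dot\tau_V^{a'b'c'}(T,x)\,\dot L^{a'b'c'}_{E_{\wt T}F_{\wt T}G_{\wt T}},
\]
with no error term, and with $\dot\tau_V^{a'b'c'}(\wt T,x_{\wt T}) = \dot\tau_V^{a'b'c'}(T,x)$ by Lemma~\ref{lem:LiftTriangleInvariantsToUniversalCover}.

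Substituting these three expressions into Lemma~\ref{lem:ContribSwitchToCupProd1} and using bilinearity and the symmetry $K(X,Y)=K(Y,X)$, the three antisymmetrized pairings produce exactly the three Killing-form coefficients displayed in the statement. The errors combine into a single $O(\E^{-C\delta})$ because the other factors ($\dot\sigma$'s, $\dot\tau$'s and the generators $\dot S$, $\dot L$) are all bounded uniformly in terms of $\rho$ and $V_1,V_2$, and only finitely many of them appear. I expect the main obstacle to be the second step: correctly matching the local vertex labels $(x_{\wt T_+},y_{\wt T_+},z_{\wt T_+})$ used by Lemma~\ref{lem:EstimateWeilZipperOpening} with the global labels $(x_{\wt T},y_{\wt T},z_{\wt T})$ and deciding, for each tie, which flag pair and which index ordering survive once the Orientation Reversal Relation has been applied. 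This is purely a matter of tracking orientations, but it is where the asymmetry between $\wt k_s^\Left$ and $\wt k_s^\Right$ in the final formula comes from, and it is easy to commit a sign or flag-label mistake there.
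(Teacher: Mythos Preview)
Your proposal is correct and follows essentially the same route as the paper: start from Lemma~\ref{lem:ContribSwitchToCupProd1}, plug in the approximations of Lemma~\ref{lem:EstimateWeilZipperOpening} for the two ties and the exact formula of Lemma~\ref{lem:EvaluateWeilEdgeMeetingBarrier} for the barrier arc, and expand bilinearly. The paper's proof is in fact terser than yours on the bookkeeping of flag labels and orientations.

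One small point you should make explicit: the constants in Lemma~\ref{lem:EstimateWeilZipperOpening} depend on a compact subset of $\wt S$ containing the tie, and the generators $\dot S^{ab}_{E_{\wt T}F_{\wt T}}$, $\dot S^{ab}_{E_{\wt T}G_{\wt T}}$ depend on the chosen lift $\wt s$. The paper handles this by first observing that the whole displayed expression is $\Ad\rho$--invariant (since the Killing form is), so one is free to choose each lift $\wt s$ inside a fixed compact subset of $\wt S$ depending only on $\Phi_0$; this is what makes the error term uniform over all switches and all $\delta$. Your remark that ``the other factors are bounded uniformly'' is not quite the right justification without this observation.
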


\begin{proof}
Because the Killing form $K(\ , \ )$ is invariant under the adjoint representation, the proposed formula is independent of the lift $\wt s$ of the switch tie $s$. We can therefore assume that $\wt s$ is contained in a compact subset of $\wt S$ depending only on $\Phi_0$. Then,  Lemmas~\ref{lem:EvaluateWeilEdgeMeetingBarrier} and \ref{lem:EstimateWeilZipperOpening} provide  
\begin{align*}
c_{V_i}(\wt k_s^{\mathrm{switch}}) &= \sum_{a+b+c=n} \dot\tau^{abc}_{V_i}(T, x) \dot L^{abc}_{E_{\wt T}F_{\wt T}G_{\wt T}}
\\
 c_{V_i}(\wt k_s^{\mathrm{left}}) &= \sum_{a+b=n} \dot \sigma_{V_i}^{ab}(e_s^{\mathrm{left}}) \dot S^{ab}_{E_{\wt T}F_{\wt T}} + O(\E^{-C \delta})
 \\
 c_{V_i}(\wt k_s^{\mathrm{right}}) &= \sum_{a+b=n} \dot \sigma_{V_i}^{ab}(e_s^{\mathrm{right}}) \dot S^{ab}_{E_{\wt T}G_{\wt T}}  + O(\E^{-C \delta}),
\end{align*}
where the constant $C>0$ and the constants hidden in the Landau symbols $O(\ )$ depend only on that compact subset,  on the Hitchin representation $\rho$ and on the tangent vectors $V_1$, $V_2 \in T_{[\rho]}\Hit(S)$. 

The result then follows by entering these expressions in the formula of Lemma~\ref{lem:ContribSwitchToCupProd1}. 
\end{proof}

The Killing form terms are given by the following formulas.

\begin{lem}
\label{lem:ComputeKillingFormShearLeftRight}
For every maximum-span flag triple  $(E,F,G) \in \Flag^3$,
\begin{align*}
 K\left( \dot S^{ab}_{EG}, \dot S^{a'b'}_{EF} \right)&=
\begin{cases}
2a'b  &\text{if } a\geq a'
\\
 2ab' &\text{if } a\leq a'
\end{cases}
\\
 K\left( \dot S^{ab}_{EG}, \dot L^{a'b'c'}_{EFG} \right)&=
\begin{cases}
 -2a'b   &\text{if } a\geq a' 
 \\
 -2a(b'+c')   &\text{if } a\leq a' 
\end{cases}
\\
 K\left( \dot S^{ab}_{EF},  \dot L^{a'b'c'}_{EFG} \right)&=
\begin{cases}
- 2a'b  &\text{if } a\geq a'
 \\
-2a(b'+c')  &\text{if } a\leq a'  .
\end{cases}
\end{align*}
\end{lem}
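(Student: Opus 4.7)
The plan is to imitate the strategy used in the proof of Lemma~\ref{lem:ComputeKillingFormLeftRight}. Recall that the Killing form on $\sln$ is $K(X,Y) = 2n\, \mathrm{Tr}(XY)$, and that $\dot S^{ab}_{EF}$, $\dot S^{ab}_{EG}$, $\dot L^{a'b'c'}_{EFG}$ are all traceless. The idea is to rewrite each of them as a projection plus a scalar multiple of the identity. From the definitions in (\ref{eqn:InfinitesimalLeftEruptionDef}--\ref{eqn:InfinitesimalShearDef}), one directly sees
\begin{align*}
 \dot S^{ab}_{EF} &= Q^{ab}_{EF} - \tfrac{a}{n} \Id_{\R^n},
 &
 \dot S^{ab}_{EG} &= Q^{ab}_{EG} - \tfrac{a}{n} \Id_{\R^n},
 &
 \dot L^{a'b'c'}_{EFG} &= \tfrac{a'}{n} \Id_{\R^n} - P^{a'b'c'}_{EFG},
\end{align*}
where $Q^{ab}_{EF} = \Id_{E^{(a)}} \oplus 0\,\Id_{F^{(b)}}$ is the projection onto $E^{(a)}$ parallel to $F^{(b)}$, similarly for $Q^{ab}_{EG}$, and $P^{a'b'c'}_{EFG} = \Id_{E^{(a')}} \oplus 0\,\Id_{F^{(b')}} \oplus 0\,\Id_{G^{(c')}}$.

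Expanding each Killing form and using $\mathrm{Tr}(Q^{ab}_{EF}) = \mathrm{Tr}(Q^{ab}_{EG}) = a$, $\mathrm{Tr}(P^{a'b'c'}_{EFG}) = a'$, and $\mathrm{Tr}(\Id_{\R^n}) = n$, the four cross terms collapse to a single constant, giving
\begin{align*}
 K(\dot S^{ab}_{EG}, \dot S^{a'b'}_{EF}) &= 2n\, \mathrm{Tr}(Q^{ab}_{EG} Q^{a'b'}_{EF}) - 2aa',\\
 K(\dot S^{ab}_{EG}, \dot L^{a'b'c'}_{EFG}) &= -2n\, \mathrm{Tr}(Q^{ab}_{EG} P^{a'b'c'}_{EFG}) + 2aa',\\
 K(\dot S^{ab}_{EF}, \dot L^{a'b'c'}_{EFG}) &= -2n\, \mathrm{Tr}(Q^{ab}_{EF} P^{a'b'c'}_{EFG}) + 2aa'.
\end{align*}
So the three formulas will follow at once from a computation of each trace.

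In all three traces, the second projection has image $E^{(a')}$, and the first projection is the identity on $E^{(a)}$. This leads to a clean dichotomy. When $a \geq a'$, the image $E^{(a')}$ of the second projection is contained in $E^{(a)}$, so the first projection acts as the identity on it; hence the product of projections equals the second projection and its trace equals $a'$. When $a \leq a'$, choose a basis $\{e_1, \dots, e_n\}$ adapted to $E$ with $e_1, \dots, e_a$ spanning $E^{(a)}$ and $e_1, \dots, e_{a'}$ spanning $E^{(a')}$; then for $i \leq a$ both projections fix $e_i$, contributing $1$ to the diagonal, while for $a < i \leq a'$ the second projection fixes $e_i$ but the first sends it into $E^{(a)}$ (hence zero contribution to the $e_i$-diagonal entry), and for $i > a'$ the image of $e_i$ under either composition lies in $E^{(a)}$ and again contributes zero. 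The resulting trace is $a$.

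Substituting $\mathrm{Tr} = \min(a,a')$ back into the expressions above and using $b = n-a$, $b'+c' = n-a'$ produces the formulas in the statement. No obstacle is expected: the main subtlety is bookkeeping the $2aa'$ constant term coming from the cross-multiplications of projections and identities, and observing that none of the formulas actually depends on the triple ratios of $(E,F,G)$---a feature already present in Lemma~\ref{lem:ComputeKillingFormLeftRight} and reflecting the fact that dimensions of intersections of the relevant subspaces are forced by the Maximum Span Property alone.
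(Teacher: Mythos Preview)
Your proof is correct and follows the same overall strategy as the paper: write each infinitesimal shear or eruption as a projection plus a scalar multiple of the identity, expand the Killing form, and reduce to the trace of a product of projections. The difference is in which projections you choose. The paper writes $\dot S^{ab}_{EG} = \tfrac{b}{n}\Id - P_{GEF}^{ba0}$ and $\dot S^{a'b'}_{EF} = -\tfrac{a'}{n}\Id + P_{EFG}^{a'b'0}$ (and similarly for the other two), and then feeds the resulting products into the four-case trace formula $\mathrm{Tr}\,P_{FGE}^{\beta\gamma\alpha}P_{EFG}^{\alpha'\beta'\gamma'}$ established in the proof of Lemma~\ref{lem:ComputeKillingFormLeftRight}, together with its $F\leftrightarrow G$ counterpart. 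You instead choose both projections to have image in the $E$--flag (your $Q^{ab}_{EG}$, $Q^{a'b'}_{EF}$, $P^{a'b'c'}_{EFG}$ all project onto $E^{(\cdot)}$), which makes the trace computation entirely self-contained: since each product has image in $E^{(a)}$ and is the identity on $E^{(\min(a,a'))}$, the trace is just $\min(a,a')$, with no further case analysis. This is a minor but genuine simplification over the paper's route.
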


\begin{proof}
 We use the computations of the proof of Lemma~\ref{lem:ComputeKillingFormLeftRight}. As in that proof consider, for any maximum-span flag triple $(E,F,G) \in \Flag^3$, the projection
 $$
P_{EFG}^{abc} = \Id_{E^{(a)}} \oplus 0 \, \Id_{F^{(b)}}  \oplus 0 \, \Id_{G^{(c)}}. 
 $$
Then, 
\begin{align*}
 \dot S_{EF}^{ab} &= \tfrac bn \Id_{E^{(a)}} \oplus  \tfrac {-a}n \Id_{F^{(b)}} 
 = \tfrac bn \Id_{\R^n} - P_{FGE}^{b0a}
 = \tfrac {-a}n \Id_{\R^n} + P_{EFG}^{ab0}
 \\
 \dot S_{EG}^{ab} &= \tfrac bn \Id_{E^{(a)}} \oplus  \tfrac {-a}n \Id_{G^{(b)}} 
  = \tfrac bn \Id_{\R^n} - P_{GEF}^{ba0}
 = \tfrac {-a}n \Id_{\R^n} + P_{EFG}^{a0b}
 \\
 \dot L_{EFG}^{abc} &= \tfrac {-b-c}n \Id_{E^{(a)}} \oplus  \tfrac an \Id_{F^{(b)}}  \oplus  \tfrac an \Id_{G^{(c)}} = \tfrac an \Id_{\R^n} - P_{EFG}^{abc}.
\end{align*}

In the proof of Lemma~\ref{lem:ComputeKillingFormLeftRight}, we computed that
$$
\mathrm{Tr}\, P_{FGE}^{\beta\gamma\alpha}P_{EFG}^{\alpha'\beta'\gamma'}  =
\begin{cases}
 0 &\text{if } \alpha\geq \alpha' \text{ or } \beta\leq \beta'
 \\
 \beta-\beta' &\text{if } \alpha\leq \alpha' ,\  \beta \geq \beta' \text{ and } \gamma\geq \gamma'
 \\
 \alpha'-\alpha &\text{if } \alpha\leq \alpha' ,\  \beta \geq \beta' \text{ and } \gamma\leq \gamma'.
\end{cases}
$$
Exchanging the roles of $F$ and $G$, and noting that $P_{GFE}^{\gamma\beta\alpha} =P_{GEF}^{\gamma\alpha\beta}$ and $P_{EGF}^{\alpha'\gamma'\beta'} =P_{EFG}^{\alpha'\beta'\gamma'}$, this also implies that
$$
\mathrm{Tr}\, P_{GEF}^{\gamma\alpha\beta}P_{EFG}^{\alpha'\beta'\gamma'}  =
\begin{cases}
 0 &\text{if } \alpha\geq \alpha' \text{ or } \gamma\leq \gamma'
 \\
 \gamma-\gamma' &\text{if } \alpha\leq \alpha' ,\  \gamma \geq \gamma' \text{ and } \beta\geq \beta'
 \\
 \alpha'-\alpha &\text{if } \alpha\leq \alpha' ,\  \gamma \geq \gamma' \text{ and } \beta\leq \beta'.
\end{cases}
$$

Therefore,
\begin{align*}
 K\left( \dot S^{ab}_{EG}, \dot S^{a'b'}_{EF} \right)&=2n\, 
 \mathrm{Tr}\, \left(  \tfrac bn \Id_{\R^n} - P_{GEF}^{ba0} \right) \left(    \tfrac {-a'}n \Id_{\R^n} + P_{EFG}^{a'b'0} \right)
 \\
 &=- \tfrac{2a'b}{n} \mathrm{Tr}\, \Id_{\R^n}  + 2b  \mathrm{Tr}\, P_{EFG}^{a'b'0} + 2a'  \mathrm{Tr}\,P_{GEF}^{ba0} - 2n\, \mathrm{Tr}\,P_{GEF}^{ba0} P_{EFG}^{a'b'0} 
 \\
 &=- 2a'b  + 2a'b + 2a'b - 2n\, \mathrm{Tr}\,P_{GEF}^{ba0} P_{EFG}^{a'b'0} 
 \\
 &=
\begin{cases}
2a'b  &\text{if } a\geq a'
\\
 2ab' &\text{if } a\leq a',
\end{cases}
\end{align*}
using the property that $a+b=a'+b'=n$. 

Similarly,
\begin{align*}
 K\left( \dot S^{ab}_{EG}, \dot L^{a'b'c'}_{EFG} \right)&=2n\, 
 \mathrm{Tr}\, \left( \tfrac bn \Id_{\R^n} - P_{GEF}^{ba0} \right) \left(   \tfrac {a'}n \Id_{\R^n} - P_{EFG}^{a'b'c'} \right)
 \\
 &= 2a'b - 2a'b -2a'b + 2n \,  \mathrm{Tr}\, P_{GEF}^{ba0} P_{EFG}^{a'b'c'} 
 \\
 &=
\begin{cases}
 -2a'b   &\text{if } a\geq a' \text{ or } b \leq c'
 \\
 -2a(b'+c')   &\text{if } a\leq a' \text{ and } b \geq c'.
\end{cases}
\end{align*}
This can be simplified by noticing that, if $a\leq a'$, then necessarily
$$
b = n-a \geq n-a' >n-a'-b'=c', 
$$
so that the second condition is irrelevant. Therefore,
$$
K\left( \dot S^{ab}_{EG}, \dot L^{a'b'c'}_{EFG} \right)=
\begin{cases}
 -2a'(n-a)   &\text{if } a\geq a' 
 \\
 -2a(n-a')   &\text{if } a\leq a' .
\end{cases}
$$

The same argument gives
\begin{align*}
 K\left( \dot S^{ab}_{EF}, \dot L^{a'b'c'}_{EFG} \right)&=2n\, 
 \mathrm{Tr}\, \left(  \tfrac bn \Id_{\R^n} - P_{FGE}^{b0a} \right) \left(   \tfrac {a'}n \Id_{\R^n} - P_{EFG}^{a'b'c'} \right)
 \\
 &= 2a'b -2 a'b -2a'b + 2n\, \mathrm{Tr}\, P_{FGE}^{b0a} P_{EFG}^{a'b'c'}
 \\
 &=
\begin{cases}
- 2a'b  &\text{if } a\geq a' \text{ or } b \leq b'
 \\
-2a(b'+c')  &\text{if } a\leq a'  \text{ and } b \geq b'
\end{cases}
 \\
 &=
\begin{cases}
- 2a'b  &\text{if } a\geq a' 
 \\
-2a(b'+c')  &\text{if } a\leq a'  .
\end{cases}
\end{align*}

This concludes the proof.  
\end{proof}

\subsection{An approximate formula}
\label{subsect:ApproximateFormula}

We can now put everything together. 

For a switch $s$ of a train track $\Psi$, let $U_s$ denote the component of $S-\Psi$ that has a corner at $s$. Also, arbitrarily choose a corner $s_U$ (corresponding to a switch of $\Psi$) for each component $U$ of $S-\Psi$. 

Every oriented branch $e$ of $\Psi$ and every $a$, $b\geq 1$ with $a+b=n$, the corresponding generalized Fock-Goncharov invariant defines a function  $\sigma^{ab} (e): \Hit(S) \to \R$, and therefore a 1--form $d\sigma^{ab} (e) \in \Omega^1 \big (\Hit(S) \big)$. Similarly, every component $U$ of $S-\Psi$ and every triple of integers $a$, $b$, $c\geq 1$ with $a+b+c=n$ provides another 1--form $d\tau^{abc} (U, s_U) \in \Omega^1 \big (\Hit(S) \big)$. 

\begin{lem}
\label{lem:ApproximateFormula}
Let $\Phi_\delta$ be obtained from a first train track neighborhood $\Phi_0$  of the geodesic lamination $\lambda$ by opening zippers up to depth $\delta\geq1$, and let $\Psi_\delta$ be the train track corresponding to $\Phi_\delta$. At each switch $s$ of $\Psi_\delta$, orient the incoming switches $e_s^\Left$ and $e_s^\Right$ toward that switch. Then,  the Atiyah-Bott-Goldman symplectic form $\omega \in \Omega^2 \big(\Hit(S) \big)$ at $[\rho] \in \Hit(S)$ is of the form  
\begin{align*}
 \omega_{[\rho]} &= 
 \sum_{\substack{U\text{component}\\ \text{of }S-\Psi_\delta}} \sum_{\substack{a+b+c=n\\a'+b'+c'=n}} \tfrac12 C^{abc}_{a'b'c'} \, d\tau^{abc}(U, s_U) \wedge d\tau^{a'b'c'}(U, s_U) 
\\
&\qquad
+\sum_{\substack{s\text{ switch}\\\text{of } \Psi_\delta}}  
\sum_{\substack{a+b=n\\a'+b'=n}} C^{ab}_{a'b'}  \, d\sigma^{ab}(e_s^{\mathrm{left}})  \wedge d\sigma^{a'b'}(e_s^{\mathrm{right}}) 
\\
&\qquad
-\sum_{\substack{s\text{ switch}\\\text{of } \Psi_\delta}}  \sum_{\substack{a+b=n\\a'+b'+c'=n}} C^{ab}_{a'(b'+c')} \, d\sigma^{ab}(e_s^{\mathrm{left}})  \wedge d\tau^{a'b'c'}(U_s, s) 
\\
&\qquad
+\sum_{\substack{s\text{ switch}\\\text{of } \Psi_\delta}}  \sum_{\substack{a+b=n \\a'+b'+c'=n}} C^{ab}_{a'(b'+c')} \, d\sigma^{ab}(e_s^{\mathrm{right}})  \wedge d\tau^{a'b'c'}(U_s, s) 
\\
&\qquad
+ O(\E^{-C\delta}),
\end{align*}
 where the constants $C^{abc}_{a'b'c'} $ and $C^{ab}_{a'b'}$ are given by
\begin{align*}
C^{abc}_{a'b'c'}&=
\begin{cases}
a'b-ab' &\text{if } a\leq a', b\leq b' \text{ and } c\geq c'\\
ac'-a'c &\text{if } a\leq a', b\geq b' \text{ and } c\leq c'\\
b'c-bc' &\text{if } a\leq a', b\geq b' \text{ and } c\geq c'\\
b'c-bc' &\text{if } a\geq a', b\leq b' \text{ and } c\leq c'\\
ac'-a'c &\text{if } a\geq a', b\leq b' \text{ and } c\geq c'\\
a'b -ab'&\text{if } a\geq a', b\geq b' \text{ and } c\leq c'
\end{cases}
\\
 C^{ab}_{a'b'} &=
\begin{cases}
a'b  &\text{if } a\geq a'
\\
 ab' &\text{if } a\leq a',
\end{cases}
\end{align*}
and where the constant $C>0$ and the constant hidden in the Landau symbol $O(\ )$ depend only on a Hitchin representation $\rho \colon \pi_1(S) \to \PGL$ representing the  character $[\rho] \in \Hit(S)$ and on the original train track neighborhood $\Phi_0$. 
\end{lem}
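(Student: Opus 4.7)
The plan is to evaluate the $2$--form $\omega$ on a pair of tangent vectors $V_1,V_2\in T_{[\rho]}\Hit(S)$ by using the explicit description of the Weil cocycles $c_{V_1}$, $c_{V_2}\in C^1(\wt S;\sln)$ constructed in \S\ref{bigsect:CohomologyClassTgtVector} and feeding them into the cup--product formula of Lemma~\ref{lem:GeneralFormulaCupProduct}, face by face, for the triangulation $\Sigma$ adapted to $\Phi_\delta$ introduced in \S\ref{subsect:Triangulation}. Because Lemma~\ref{lem:CupProductNoContribution} kills the contribution of any face having at least one side disjoint from $\lambda\cup\bigcup_T\beta_T$, only two kinds of faces contribute: the unique face $f_U$ containing the center $\pi_T$ of each barrier $\beta_T$ (one per component $U$ of $S-\Psi_\delta$), and the two faces $f_1^s$, $f_2^s$ flanking each switch tie.

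For the barrier faces, Lemma~\ref{lem:ContribTriangleToCupProd2} directly yields the contribution of $f_U$ in terms of the directional derivatives $\dot\tau_{V_i}^{abc}(U,s_U)$ with the coefficients $C^{abc}_{a'b'c'}$ of the statement; recognizing the antisymmetric bilinear expression as the evaluation on $(V_1,V_2)$ of the $2$--form $\sum_{a+b+c=n,\,a'+b'+c'=n}\tfrac12 C^{abc}_{a'b'c'}\,d\tau^{abc}(U,s_U)\wedge d\tau^{a'b'c'}(U,s_U)$ gives the first summand of the lemma once we sum over $U$. For the switch faces, I plan to combine Lemma~\ref{lem:ContribSwitchToCupProd2} with the Killing form computations of Lemma~\ref{lem:ComputeKillingFormShearLeftRight}. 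Substituting these values into the switch contribution and observing that each of the three Killing quantities collapses, up to the sign $-2$, to the same combinatorial constant $C^{ab}_{a'b'}$ (with $b'$ replaced by $b'+c'=n-a'$ in the two mixed terms) produces the three remaining summands of the claimed formula after reinterpretation of the antisymmetric bilinear forms as $d\sigma\wedge d\sigma$ and $d\sigma\wedge d\tau$ wedge products evaluated on $(V_1,V_2)$.

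The only inexact part of the computation lies in the switch contributions, which are controlled only up to $O(\E^{-C\delta})$ by Lemma~\ref{lem:ContribSwitchToCupProd2}. Since there are finitely many switches $s$ of $\Psi_\delta$ and the constants in each local estimate depend only on the fixed data $\Phi_0$, $\rho$, $V_1$, $V_2$ (the depth of zipper opening only improves the bound), summing these errors preserves the $O(\E^{-C\delta})$ bound uniformly in $\delta$. The independence of the first summand on the arbitrary corner choice $s_U$ follows from the rotational symmetry $C^{bca}_{b'c'a'}=C^{abc}_{a'b'c'}$ of the constants, which was pointed out after Lemma~\ref{lem:ContribTriangleToCupProd2}; independence on the lifts used to evaluate cocycles is immediate from the $\Ad\rho$--invariance of the Killing form.

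The main obstacle will be purely bookkeeping: tracking the signs and the correspondences $\tau^{abc}_V(\wt T,x_{\wt T})=\dot\tau^{abc}_V(U,s_U)$, $\sigma^{ab}_V(\wt T_-,\wt T_+)=\dot\sigma^{ab}_V(e)$ between the universal-cover invariants used in \S\ref{bigsect:CohomologyClassTgtVector} and the train-track invariants used here, and verifying that the particular combinations of Killing values produced by Lemma~\ref{lem:ComputeKillingFormShearLeftRight} line up cleanly with the single family $C^{ab}_{a'b'}$. Beyond this, all analytic content has already been concentrated in Lemma~\ref{lem:EstimateWeilZipperOpening}, which is the only source of the error term, so no further convergence work is required.
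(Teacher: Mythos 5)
Your proposal is correct and follows essentially the same route as the paper: combine Lemmas~\ref{lem:ContribTriangleToCupProd2}, \ref{lem:ContribSwitchToCupProd2}, and \ref{lem:ComputeKillingFormShearLeftRight}, then read off the coefficients as the constants $C^{abc}_{a'b'c'}$ and $C^{ab}_{a'b'}$. The paper's proof is terser (it states the resulting bilinear pairing without explicitly walking through the intermediate invocations of Lemmas~\ref{lem:GeneralFormulaCupProduct} and \ref{lem:CupProductNoContribution} as you do), but the substance is identical. One small imprecision: the first of the three Killing quantities in Lemma~\ref{lem:ComputeKillingFormShearLeftRight} collapses to $+2\,C^{ab}_{a'b'}$, not $-2$; only the two mixed $K(\dot S,\dot L)$ terms carry the $-2$, which then combines with the explicit $\mp\tfrac12$ prefactors in Lemma~\ref{lem:ContribSwitchToCupProd2} to produce the signs in the statement.
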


\begin{proof}

Let $V_1$, $V_2 \in T_{[\rho]} \Hit(S)$ be two tangent vectors of the Hitchin component at $[\rho] \in \Hit(S)$, respectively associated to variations $\dot \tau_{V_1}$, $\dot \sigma_{V_1}$ and $\dot\tau_{V_2}$, $\dot\sigma_{V_2}$ of the generalized Fock-Goncharov invariants of $[\rho]$. Lemmas~\ref{lem:ContribTriangleToCupProd2}, \ref{lem:ContribSwitchToCupProd2} and \ref{lem:ComputeKillingFormShearLeftRight}  then give us that the pairing of $\omega(V_1, V_2) = \left\langle [c_{V_1}] \cupp [c_{V_2}], [S]  \right\rangle $ of $V_1$ and $V_2$ under the symplectic form $\omega$ is of the form
\begin{align*}
&\sum_{\substack{U\text{component}\\ \text{of }S-\Psi_\delta}} \sum_{\substack{a+b+c=n\\a'+b'+c'=n}} \tfrac12 C^{abc}_{a'b'c'} \left( \dot\tau_{V_1}^{abc}(U, s_U) \,\dot\tau_{V_2}^{a'b'c'}(U, s_U) 
 -\dot\tau_{V_2}^{abc}(U, s_U) \, \dot\tau_{V_1}^{a'b'c'}(U, s_U)  \right) 
\\
&\qquad
+\sum_{\substack{s\text{ switch}\\\text{of } \Psi_\delta}}  \sum_{\substack{a+b=n\\a'+b'=n}} C^{ab}_{a'b'}  \left( \dot \sigma_{V_1}^{ab}(e_s^{\mathrm{left}})  \dot \sigma_{V_2}^{a'b'}(e_s^{\mathrm{right}}) -  \dot \sigma_{V_2}^{ab}(e_s^{\mathrm{left}})  \dot \sigma_{V_1}^{a'b'}(e_s^{\mathrm{right}})   \right) 
\\
&\qquad
-\sum_{\substack{s\text{ switch}\\\text{of } \Psi_\delta}}  \sum_{\substack{a+b=n\\a'+b'+c'=n}} C^{ab}_{a'(b'+c')} \left( \dot \sigma_{V_1}^{ab}(e_s^{\mathrm{left}})  \dot \tau_{V_2}^{a'b'c'}(U_s, s) -  \dot \sigma_{V_2}^{ab}(e^{\mathrm{left}})  \dot  \tau_{V_1}^{a'b'c'}(U_s, s_U)   \right)
\\
&\qquad
+\sum_{\substack{s\text{ switch}\\\text{of } \Psi_\delta}}  \sum_{\substack{a+b=n\\a'+b'+c'=n}} C^{ab}_{a'(b'+c')} \left( \dot \sigma_{V_1}^{ab}(e_s^{\mathrm{right}})  \dot \tau_{V_2}^{a'b'c'}(U_s, s) -  \dot \sigma_{V_2}^{ab}(e^{\mathrm{right}})  \dot  \tau_{V_1}^{a'b'c'}(U_s, s)   \right)
\\
&\qquad
+ O(\E^{-C\delta}),
\end{align*}
which is exactly the property that we needed. 
\end{proof}

We will  show that the above estimate is actually an equality, with the error term $ O(\E^{-C\delta})$ equal to 0.

\subsection{Invariance under zipper opening} For a train track neighborhood $\Phi_\delta$ of $\lambda$ obtained from a first train track neighborhood $\Phi_0$ by opening zippers up to depth $\delta$, Lemma~\ref{lem:ApproximateFormula} estimates the symplectic form $\omega$ at $[\rho] \in \Hit(S)$ as 
$$
\omega_{[\rho]} = A (\Phi_\delta) +  O(\E^{-C\delta})
$$
where, for any train track neighborhood $\Phi$ of $\lambda$ with associated train track $\Psi$, 
\begin{align*}
A(\Phi)&=\sum_{\substack{U\text{component}\\ \text{of }S-\Psi}} \sum_{\substack{a+b+c=n\\a'+b'+c'=n}} \tfrac12 C^{abc}_{a'b'c'} \, d\tau^{abc}(U,s_U) \wedge d\tau^{a'b'c'}(U,s_U) 
\\
&\qquad
+\sum_{\substack{s\text{ switch}\\\text{of } \Psi}}  \sum_{\substack{a+b=n\\a'+b'=n}} C^{ab}_{a'b'}  \, d\sigma^{ab}(e_s^{\mathrm{left}})  \wedge d \sigma^{a'b'}(e_s^{\mathrm{right}}) 
\\
&\qquad
-\sum_{\substack{s\text{ switch}\\\text{of } \Psi}}  \sum_{\substack{a+b=n\\a'+b'+c'=n}} C^{ab}_{a'(b'+c')} \, d\sigma^{ab}(e_s^{\mathrm{left}})  \wedge d \tau^{a'b'c'}(U_s, s) 
\\
&\qquad
+\sum_{\substack{s\text{ switch}\\\text{of } \Psi}}  \sum_{\substack{a+b=n\\a'+b'+c'=n}} C^{ab}_{a'(b'+c')} \, d \sigma^{ab}(e_s^{\mathrm{right}})  \wedge d \tau^{a'b'c'}(U_s, s).
\end{align*}

We show that this approximation is invariant under zipper opening. 

\begin{lem}
\label{lem:InvarianceZipperOpening}
If the train track neighborhood $\Phi'$ is obtained from $\Phi$ by a zipper opening along an arc $a$, then
$$A(\Phi')=A(\Phi).$$
\end{lem}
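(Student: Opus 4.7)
The plan is to reduce to the case of an \emph{elementary} zipper opening, where the arc $a$ crosses exactly one switch tie of $\Phi$. Any zipper opening along a longer arc can be realized as a finite sequence of such elementary moves, obtained by progressively shortening $a$ one switch tie at a time, so invariance under the general operation follows from invariance under an elementary one. For an elementary opening at a switch $s_0$ of the train track $\Psi$ associated to $\Phi$, the combinatorial change from $\Psi$ to $\Psi'$ is local: one switch $s_0$ of $\Psi$ is replaced by one or two switches of $\Psi'$, the branches incident to $s_0$ are reorganized, and one component of $S-\Psi$ is split (or merged with an adjacent one) to produce the new configuration of components of $S-\Psi'$.

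First I would describe explicitly the dictionary between the generalized Fock-Goncharov invariants for $\Phi$ and those for $\Phi'$. The triangle invariants $\tau^{abc}$ correspond under the bijection of Proposition~\ref{prop:ComplementsTrainTrackGeodLamination} between components of $S-\Psi$ and $S-\Psi'$ and components of $S-\lambda$, so the new triangle invariants are either identical to old ones or are obtained by relabeling corners. The shearing invariants $\sigma^{ab}(e')$ on branches of $\Psi'$ near the zipper are expressible in terms of the shearing invariants of $\Psi$ via the Switch Relations of \S\ref{subsect:GenFocGonInvariants}, since the Switch Relations reflect the cocycle condition of Lemma~\ref{lem:LiftShearsToUniversalCover}(2). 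Away from the zipper, the invariants are completely unchanged, so all corresponding contributions to $A(\Phi)$ and $A(\Phi')$ match term-by-term.

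Second, I would compute the difference $A(\Phi')-A(\Phi)$ using this dictionary. Only terms indexed by the switches and triangles affected by the zipper remain, and substituting the Switch Relations expresses every new shearing invariant as an old shearing invariant plus a linear combination of triangle invariants. The differential $d\sigma^{ab}$ thereby decomposes into contributions of $d\sigma^{ab}$ on the original branches and of $d\tau^{a'b'c'}$ on the affected triangle. Expanding the wedge products yields three kinds of surviving terms: shear-shear, shear-triangle, and triangle-triangle. Proving that their sum vanishes reduces to a finite collection of purely combinatorial identities among the coefficients $C^{abc}_{a'b'c'}$ and $C^{ab}_{a'b'}$.

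The main obstacle will be the bookkeeping for these algebraic identities: since $C^{abc}_{a'b'c'}$ and $C^{ab}_{a'b'}$ are defined piecewise by the ordering of the indices, one must check cancellations across multiple overlapping cases, and keep track of the orientation conventions on branches (whose reversal is governed by the Orientation Reversal Relation $\sigma^{ab}_\rho(\bar e)=\sigma^{ba}_\rho(e)$). A useful sanity check, which may also shorten the argument, is to note that by Lemma~\ref{lem:ApproximateFormula} we have $A(\Phi_\delta)=\omega_{[\rho]}+O(\E^{-C\delta})$; once Lemma~\ref{lem:InvarianceZipperOpening} is proved for elementary openings, $A(\Phi_\delta)$ is independent of $\delta$, and letting $\delta\to\infty$ shows that $A(\Phi_0)$ equals $\omega_{[\rho]}$ exactly, which will then yield Theorem~\ref{thm:MainThm} as $\Phi_0$ was arbitrary.
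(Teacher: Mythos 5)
Your proposal takes essentially the same route as the paper: reduce to a single switch-tie crossing, translate the new invariants back to old ones via the Switch Relation (together with the Orientation Reversal Relation and the identification of triangle invariants), expand $A(\Phi)-A(\Phi')$ into shear--shear, shear--triangle, and triangle--triangle contributions, and verify that everything cancels by combinatorial identities among the coefficients. What you leave as ``the main obstacle'' — the bookkeeping — is precisely what the paper's proof carries out: it splits into two local configurations (plus mirror images), writes out the eight lines of the expanded difference, and cancels them in pairs using the symmetries $C^{ab}_{a'b'}=C^{a'b'}_{ab}=C^{ba}_{b'a'}$ and the rotational symmetry $C^{abc}_{a'b'c'}=C^{bca}_{b'c'a'}$. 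Your closing ``sanity check'' about $A(\Phi_\delta)=\omega+O(\E^{-C\delta})$ forcing $A(\Phi_0)=\omega$ is correct but is really the content of the subsequent Theorem~\ref{thm:MainThm2}, not of this lemma. One small inaccuracy: in an elementary opening, the local picture before and after each contains \emph{two} switches (not one replaced by one or two), as in Figures~\ref{fig:ZipperOpening1} and~\ref{fig:ZipperOpening2}, and both contribute to the difference — but this does not affect the structure of the argument.
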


\begin{proof} By definition of zipper openings, the  arc $a$   starts at a switch point, is disjoint from the geodesic lamination, and is transverse to the ties of $\Phi$. If $a$ is disjoint from the switch ties of $\Phi$ (except at its initial point), then the combinatorial structure of the associated train track is unchanged, and $A(\Phi')=A(\Phi)$.

We can therefore assume, without loss of generality, that $a$ crosses the switch ties of $\Phi$ in exactly one point. Then, there are several cases to consider, according to the way $a$ crosses that switch tie. The upshot is that  the train tracks $\Psi$ and $\Psi'$ respectively associated to $\Phi$ and $\Phi'$ differ by one of the  moves of Figures~\ref{fig:ZipperOpening1} and \ref{fig:ZipperOpening2} or their mirror images. 

\begin{figure}[htbp]

\SetLabels
(  .4* .7 )  $s_1$ \\
(  .76* .7 )  $s_2$ \\
(.05  * .84 )  $e_{s_1}^\Left$ \\
( .05 * .65 )  $e_{s_1}^\Right$ \\
(  .58* .8 )  $e_{s_1}^\Out=e_{s_2}^\Out$ \\
( .96 * .84 )  $e_{s_2}^\Right$ \\
( .97 * .65 )  $e_{s_2}^\Left$ \\
(  .4* .27 )  $s_1'$ \\
(  .15* .07 )  $s_2'$ \\
(.15  * .4 )  $e_{s_1'}^\Left$ \\
( .07 *- .03 )  $e_{s_2'}^\Out$ \\
\R(  .22* .2 )  $e_{s_1'}^\Right=e_{s_2'}^\Right$ \\
( .95 * .3 )  $e_{s_1'}^\Out$ \\
( .97 * .11 )  $e_{s_2'}^\Left$ \\
\endSetLabels
\centerline{\AffixLabels{\includegraphics{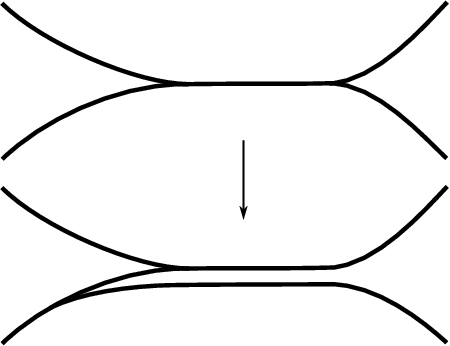}}}

\caption{Opening zippers I}
\label{fig:ZipperOpening1}
\end{figure}

\begin{figure}[htbp]

\SetLabels
(  .42* .78 )  $s_1$ \\
(  .76* .8 )  $s_2$ \\
(.65  * .89 )  $e_{s_1}^\Right$ \\
( .05 * .65 )  $e_{s_1}^\Out$ \\
(  .61* .68 )  $e_{s_1}^\Left=e_{s_2}^\Out$ \\
( .96 * .84 )  $e_{s_2}^\Right$ \\
( .97 * .65 )  $e_{s_2}^\Left$ \\
(  .42* .27 )  $s_1'$ \\
(  .15* .07 )  $s_2'$ \\
(.69  * .4 )  $e_{s_1'}^\Right$ \\
( .07 *- .03 )  $e_{s_2'}^\Out$ \\
\R(  .3* .24 )  $e_{s_1'}^\Out=e_{s_2'}^\Right$ \\
( .95 * .3 )  $e_{s_1'}^\Left$ \\
( .97 * .11 )  $e_{s_2'}^\Left$ \\
\endSetLabels
\centerline{\AffixLabels{\includegraphics{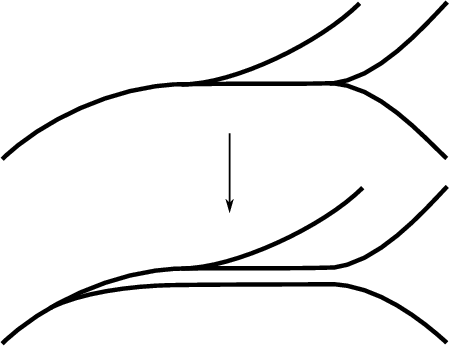}}}

\caption{Opening zippers II}
\label{fig:ZipperOpening2}
\end{figure}

We first look at the case of Figure~\ref{fig:ZipperOpening1}. Then, $A(\Phi)$ and $A(\Phi')$ differ only in the contribution of the two switches $s_1$ and $s_2$ appearing in the picture. The contribution to $A(\Phi)$ of the switches $s_1$, $s_2$ is equal to
\begin{align*}
 &
 \sum_{\substack{a+b=n\\a'+b'=n}} C^{ab}_{a'b'} \, d \sigma^{ab}(e_{s_1}^{\mathrm{left}}) \wedge d \sigma^{a'b'}(e_{s_1}^{\mathrm{right}}) 
 +
 \sum_{\substack{a+b=n\\a'+b'=n}} C^{ab}_{a'b'} \, d \sigma^{ab}(e_{s_2}^{\mathrm{left}}) \wedge d \sigma^{a'b'}(e_{s_2}^{\mathrm{right}}) 
\\
&\qquad
- \sum_{\substack{a+b=n\\a'+b'+c'=n}} C^{ab}_{a'(b'+c')} \, d \sigma^{ab}(e_{s_1}^{\mathrm{left}})  \wedge d \tau^{a'b'c'}(U_{s_1}, s_1) 
\\
&\qquad\qquad \qquad\qquad\qquad\qquad\qquad
- \sum_{\substack{a+b=n\\a'+b'+c'=n}} C^{ab}_{a'(b'+c')} \, d \sigma^{ab}(e_{s_2}^{\mathrm{left}})  \wedge d \tau^{a'b'c'}(U_{s_2}, s_2) 
\\
&\qquad
+ \sum_{\substack{a+b=n\\a'+b'+c'=n}} C^{ab}_{a'(b'+c')}  \, d \sigma^{ab}(e_{s_1}^{\mathrm{right}})  \wedge d \tau^{a'b'c'}(U_{s_1}, s_1)
\\
&\qquad\qquad\qquad\qquad\qquad\qquad\qquad
+ \sum_{\substack{a+b=n\\a'+b'+c'=n}} C^{ab}_{a'(b'+c')} \, d \sigma^{ab}(e_{s_2}^{\mathrm{right}})  \wedge d \tau^{a'b'c'}(U_{s_2}, s_2),
\end{align*}
and the contribution of $s_1'$ and $s_2'$ to $A(\Phi')$ is obtained from the same formula by replacing  $s_1$ by $s_1'$ and $s_2$ by $s_2'$ everywhere. 

Note that, for every $a$, $b$, $a'$, $b'$, $c'$,  
\begin{align*}
\tau^{a'b'c'}(U_{s_1}, s_1) &= \tau^{a'b'c'}(U_{s_1'}, s_1')
&
\tau^{a'b'c'}(U_{s_2}, s_2) &= \tau^{a'b'c'}(U_{s_2'}, s_2')
\\
 \sigma^{ab}(e_{s_1}^{\mathrm{left}}) &=  \sigma^{ab}(e_{s_1'}^{\mathrm{left}})
 &
  \sigma^{ab}(e_{s_2}^{\mathrm{left}}) &=  \sigma^{ab}(e_{s_2'}^{\mathrm{left}}),
\end{align*}
and 
$$
 \sigma^{ab}(e_{s_2'}^{\mathrm{right}}) =  \sigma^{ba}(e_{s_1'}^{\mathrm{right}})
 $$
 since we orient the left/right branches toward the associated switch; in particular, the branches $e_{s_1'}^{\mathrm{right}}$ and $e_{s_2'}^{\mathrm{right}}$ coincide but come with opposite orientations. Also, by the Switch Condition of \S \ref{subsect:GenFocGonInvariants}, 
 \begin{align*}
 \sigma^{ab}(e_{s_1}^{\mathrm{right}})
 &=   \sigma^{ba}(e_{s_2'}^{\mathrm{out}})
 = \sigma^{ba}(e_{s_2'}^{\mathrm{left}}) +  \sigma^{ba}(e_{s_2'}^{\mathrm{right}}) - \sum_{\substack{b'', c''\geq 1\\b'' + c'' = n-b}}  \tau^{bb''c''}(U_{s_2'}, s_2')
 \\
&=
  \sigma^{ba}(e_{s_2'}^{\mathrm{left}}) +  \sigma^{ab}(e_{s_1'}^{\mathrm{right}}) - \sum_{\substack{b'', c''\geq 1\\b'' + c'' = n-b}}  \tau^{bb''c''}(U_{s_2'}, s_2')
\end{align*}
and, similarly,
$$
 \sigma^{ab}(e_{s_2}^{\mathrm{right}})
=
  \sigma^{ba}(e_{s_1'}^{\mathrm{left}}) +  \sigma^{ab}(e_{s_2'}^{\mathrm{right}}) - \sum_{\substack{b'', c''\geq 1\\b'' + c'' = n-b}}  \tau^{bb''c''}(U_{s_1'}, s_1').
  $$

Combining all these observations and comparing contributions term by term, we can now express $A(\Phi) - A(\Phi')$ as
\begin{align*}
&
  \sum_{\substack{a+b=n\\a'+b'=n}} C^{ab}_{a'b'} \, d \sigma^{ab}(e_{s_1'}^{\mathrm{left}}) \wedge 
  \bigg(
  d \sigma^{b'a'}(e_{s_2'}^{\mathrm{left}}) - \sum_{\substack{b'', c''\geq 1\\b'' + c'' = n-b'}}  \, d\tau^{b'b''c''}(U_{s_2'}, s_2')
  \bigg)
  \\
  &\qquad
  + 
  \sum_{\substack{a+b=n\\a'+b'=n}} C^{ab}_{a'b'} \, d \sigma^{ab}(e_{s_2'}^{\mathrm{left}}) \wedge 
  \bigg(
  d \sigma^{b'a'}(e_{s_1'}^{\mathrm{left}}) - \sum_{\substack{b'', c''\geq 1\\b'' + c'' = n-b'}}  d\tau^{b'b''c''}(U_{s_1'}, s_1')
  \bigg)
  \\
& \qquad
+ \sum_{\substack{a+b=n\\a'+b'+c'=n}} C^{ab}_{a'(b'+c')} 
 \bigg(
  d \sigma^{ba}(e_{s_2'}^{\mathrm{left}}) - \sum_{b'' + c'' = n-b}  d\tau^{bb''c''}(U_{s_2'}, s_2')
  \bigg)
\wedge d \tau^{a'b'c'}(U_{s_1'}, s_1')
  \\
& \qquad
+ \sum_{\substack{a+b=n\\a'+b'+c'=n}} C^{ab}_{a'(b'+c')}  
\bigg(
  d \sigma^{ba}(e_{s_1'}^{\mathrm{left}}) - \sum_{b'' + c'' = n-b}  d\tau^{bb''c''}(U_{s_1'}, s_1')
  \bigg)
\wedge d \tau^{a'b'c'}(U_{s_2'}, s_2')
\end{align*}

Expanding the above expression and renaming indices, this is equal to
\begin{align*}
&
  \sum_{\substack{a+b=n\\a'+b'=n}} C^{ab}_{b'a'}  d \sigma^{ab}(e_{s_1'}^{\mathrm{left}}) \wedge 
  d \sigma^{a'b'}(e_{s_2'}^{\mathrm{left}}) 
  \\
   &\qquad\qquad\qquad
  -  \sum_{\substack{a+b=n\\a'+ b' + c' = n}}  C^{ab}_{(b'+c')a'} \, d \sigma^{ab}(e_{s_1'}^{\mathrm{left}}) \wedge   d\tau^{a'b'c'}(U_{s_2'}, s_2')
  \\
  +&
   \sum_{\substack{a+b=n\\a'+b'=n}} C^{a'b'}_{ba} \, d \sigma^{a'b'}(e_{s_2'}^{\mathrm{left}}) \wedge 
  d \sigma^{ab}(e_{s_1'}^{\mathrm{left}}) 
  \\
   &\qquad\qquad\qquad
   -   \sum_{\substack{a+b=n\\a'+ b' + c' = n}}   C^{ab}_{(b'+c')a'} \, d \sigma^{ab}(e_{s_2'}^{\mathrm{left}}) \wedge   d\tau^{a'b'c'}(U_{s_1'}, s_1')
  \\
+& 
 \sum_{\substack{a+b=n\\a'+ b' + c' = n}}  C^{ba}_{a'(b'+c')} \,
  d \sigma^{ab}(e_{s_2'}^{\mathrm{left}}) \wedge d \tau^{a'b'c'}(U_{s_1'}, s_1') 
  \\
   &\qquad\qquad\qquad
    - \sum_{\substack{a+b+c=n\\a'+ b' + c' = n}}   C^{(b'+c')a'}_{a(b+c)} \, d\tau^{a'b'c'}(U_{s_2'}, s_2')
\wedge d \tau^{abc}(U_{s_1'}, s_1')
  \\
+& 
  \sum_{\substack{a+b=n\\a'+ b' + c' = n}}  C^{ba}_{a'(b'+c')} \,
  d \sigma^{ab}(e_{s_1'}^{\mathrm{left}}) \wedge d \tau^{a'b'c'}(U_{s_2'}, s_2')
  \\
   &\qquad\qquad\qquad
   -  \sum_{\substack{a+b+c=n\\a'+ b' + c' = n}}    C^{(b+c)a}_{a'(b'+c')}  d\tau^{abc}(U_{s_1'}, s_1')
\wedge d \tau^{a'b'c'}(U_{s_2'}, s_2').
\end{align*}

From the definition of the constant $C^{ab}_{a'b'}$ in Lemma~\ref{lem:ApproximateFormula}, we have that $C^{ab}_{a'b'} = C^{a'b'}_{ab}=C^{ba}_{b'a'}$ for every $a$, $b$, $a'$, $b'$. In particular, $C^{ab}_{b'a'} = C^{a'b'}_{ba}$ and the first and third line therefore cancel out. 

Similarly,  $C^{ab}_{(b'+c')a'} = C^{ba}_{a'(b'+c')} $ and the second line cancels out with the seventh, while the fourth line cancels out with the fifth. 

Finally, $C^{(b'+c')a'}_{a(b+c)} = C^{(b+c)a}_{a'(b'+c')} $ and the sixth line cancels out with the eighth. 

Therefore, $A(\Phi) = A (\Phi')$ when $\Psi'$ is obtained from $\Psi$ by the move of Figure~\ref{fig:ZipperOpening1}.

A very similar argument shows that  $A(\Phi) = A (\Phi')$ when $\Psi'$ is obtained from $\Psi$ by the move of Figure~\ref{fig:ZipperOpening2}. Exchanging ``left'' and ``right'' and reversing signs everywhere also proves invariance under the mirror images of these two moves. 
\end{proof}

\section{Proof of the main theorem}
\label{bigsect:ProofMainThm}

We are now ready to prove our main theorem, namely Theorem~\ref{thm:MainThm} of the Introduction. With the notation of \S \ref{subsect:GenFocGonInvariants} and subsequent sections, this is the following statement. 

\begin{thm}
 \label{thm:MainThm2}
 
 Let $\lambda$ be a maximal geodesic lamination in the closed oriented surface $S$, and let $\Psi$ be a train track carrying $\lambda$. At each switch $s$ of $\Psi$, orient the incoming switches $e_s^\Left$ and $e_s^\Right$ toward that switch. Then, the Atiyah-Bott-Goldman symplectic form $\omega$ of the Hitchin component $\Hit(S)$ can be expressed in terms of the generalized Fock-Goncharov coordinate functions $\tau^{abc}(U, s)$ and $\sigma^{ab}(b)$ as
 \begin{align*}
\omega&= \sum_{\substack{U\text{component}\\ \text{of }S-\Psi}} \sum_{\substack{a,b,c\geq 1\\a+b+c=n}}  \sum_{\substack{a',b', c'\geq 1\\a'+b'+c'=n}}  \tfrac12 C^{abc}_{a'b'c'} \, d\tau^{abc}(U,s_U) \wedge d\tau^{a'b'c'}(U,s_U) 
\\
&\qquad
+\sum_{\substack{s\text{ switch}\\\text{of } \Psi}}  \sum_{\substack{a,b\geq 1\\a+b=n}}  \sum_{\substack{a',b'\geq 1\\a'+b'=n}} C^{ab}_{a'b'}  \, d\sigma^{ab}(e_s^{\mathrm{left}})  \wedge d \sigma^{a'b'}(e_s^{\mathrm{right}}) 
\\
&\qquad
-\sum_{\substack{s\text{ switch}\\\text{of } \Psi}}  \sum_{\substack{a,b\geq 1\\a+b=n}}  \sum_{\substack{a',b', c'\geq 1\\a'+b'+c'=n}} C^{ab}_{a'(b'+c')} \, d\sigma^{ab}(e_s^{\mathrm{left}})  \wedge d \tau^{a'b'c'}(U_s, s) 
\\
&\qquad
+\sum_{\substack{s\text{ switch}\\\text{of } \Psi}}  \sum_{\substack{a,b\geq 1\\a+b=n}}  \sum_{\substack{a',b', c'\geq 1\\a'+b'+c'=n}} C^{ab}_{a'(b'+c')} \, d \sigma^{ab}(e_s^{\mathrm{right}})  \wedge d \tau^{a'b'c'}(U_s, s).
\end{align*}
with
\begin{align*}
C^{abc}_{a'b'c'}&=
\begin{cases}
a'b-ab' &\text{if } a\leq a', b\leq b' \text{ and } c\geq c'\\
ac'-a'c &\text{if } a\leq a', b\geq b' \text{ and } c\leq c'\\
b'c-bc' &\text{if } a\leq a', b\geq b' \text{ and } c\geq c'\\
b'c-bc' &\text{if } a\geq a', b\leq b' \text{ and } c\leq c'\\
ac'-a'c &\text{if } a\geq a', b\leq b' \text{ and } c\geq c'\\
a'b -ab'&\text{if } a\geq a', b\geq b' \text{ and } c\leq c'
\end{cases}
\\
 C^{ab}_{a'b'} &=
\begin{cases}
a'b  &\text{if } a\geq a'
\\
 ab' &\text{if } a\leq a'.
\end{cases}
\end{align*}
\end{thm}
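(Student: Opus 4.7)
The plan is to combine the two main ingredients already established in Section~\ref{bigsect:ComputeEstimateCupProduct}: the approximate formula of Lemma~\ref{lem:ApproximateFormula}, which asserts that
$$\omega_{[\rho]} = A(\Phi_\delta) + O(\E^{-C\delta})$$
for any train track neighborhood $\Phi_\delta$ obtained from a fixed train track neighborhood $\Phi_0$ of $\lambda$ by opening zippers up to depth $\delta$, and the invariance result of Lemma~\ref{lem:InvarianceZipperOpening}, which guarantees that $A(\Phi') = A(\Phi)$ whenever $\Phi'$ is obtained from $\Phi$ by a single elementary zipper opening.

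The first step is to observe that every $\Phi_\delta$ is obtained from $\Phi_0$ by a finite sequence of elementary zipper openings, each of the two types analyzed in Lemma~\ref{lem:InvarianceZipperOpening} (or their mirror images). By induction on the length of this sequence, we conclude that
$$A(\Phi_\delta) = A(\Phi_0) \qquad \text{for every } \delta \geq 1.$$
Substituting into Lemma~\ref{lem:ApproximateFormula}, we obtain $\omega_{[\rho]} = A(\Phi_0) + O(\E^{-C\delta})$ for every $\delta \geq 1$. Both $\omega_{[\rho]}$ and $A(\Phi_0)$ are independent of $\delta$, while the error term goes to zero as $\delta \to \infty$. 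It follows that $\omega_{[\rho]} = A(\Phi_0)$ exactly, with no error term. Since the formula $A(\Phi_0)$ is precisely the expression displayed in the statement of the theorem (using the train track $\Psi$ associated to $\Phi_0$), this establishes Theorem~\ref{thm:MainThm2}.

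A minor point worth addressing along the way is that the formula involves an arbitrary choice of corner $s_U$ for each component $U$ of $S-\Psi$, and an arbitrary choice of Hitchin representation $\rho$ representing $[\rho] \in \Hit(S)$. The first choice is immaterial by the rotational symmetry $C^{bca}_{b'c'a'}=C^{abc}_{a'b'c'}$ of the triangle coefficients, combined with the Rotation Condition of \S\ref{subsect:GenFocGonInvariants} for the triangle invariants $\tau^{abc}(U,s)$ — this was already observed in the remark following Lemma~\ref{lem:ContribTriangleToCupProd2}. The second choice is immaterial because both $\omega$ and the generalized Fock-Goncharov invariants descend to well-defined objects on the character variety.

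The main obstacle in the overall argument was really the work already done in preceding sections: establishing the analytic estimates (Lemmas~\ref{lem:ElementarySlitheringBoundedByGapLength}, \ref{lem:HolderSumOfGapLengths} and \ref{lem:EstimateWeilZipperOpening}) that control the error term $O(\E^{-C\delta})$, and carrying out the combinatorial cancellation in the proof of Lemma~\ref{lem:InvarianceZipperOpening} using the Switch Condition. Once these tools are in hand, the proof of the main theorem is the short limiting argument described above.
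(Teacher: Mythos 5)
Your proposal is correct and follows exactly the same route as the paper's own proof: invoke Lemma~\ref{lem:ApproximateFormula} for the approximation $\omega = A(\Phi_\delta) + O(\E^{-C\delta})$, invoke Lemma~\ref{lem:InvarianceZipperOpening} to conclude $A(\Phi_\delta) = A(\Phi_0)$, and let $\delta \to \infty$. The additional observations about the independence of the choice of corner $s_U$ and of the representative $\rho$ are correct but were already folded into the earlier lemmas.
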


\begin{proof} Let $\Phi_\delta$ be obtained from the train track neighborhood $\Phi_0$ corresponding to $\Psi$ by opening zippers up to depth $\delta$. 
Lemma \ref{lem:ApproximateFormula} and \ref{lem:InvarianceZipperOpening} show that
$$\omega = A(\Phi_\delta) + O(\E^{-C\delta})=A(\Phi_0) + O(\E^{-C\delta})$$ 
for some constant $C>0$. Letting $\delta$ tend to infinity  proves that the approximation is actually an equality. 
\end{proof}

\bibliographystyle{amsalpha}
\bibliography{HitchinSymplectic}

\providecommand{\bysame}{\leavevmode\hbox to3em{\hrulefill}\thinspace}
\providecommand{\MR}{\relax\ifhmode\unskip\space\fi MR }
\providecommand{\MRhref}[2]{%
  \href{http://www.ams.org/mathscinet-getitem?mr=#1}{#2}
}
\providecommand{\href}[2]{#2}
\begin{thebibliography}{MMMZ23}

\bibitem[AB83]{AtiBot}
Michael~F. Atiyah and Raoul Bott, \emph{The {Y}ang-{M}ills equations over
  {R}iemann surfaces}, Philos. Trans. Roy. Soc. London Ser. A \textbf{308}
  (1983), no.~1505, 523--615.

\bibitem[BD14]{BonDre1}
Francis Bonahon and Guillaume Dreyer, \emph{Parametrizing {H}itchin
  components}, Duke Math. J. \textbf{163} (2014), no.~15, 2935--2975.

\bibitem[BD17]{BonDre2}
\bysame, \emph{Hitchin characters and geodesic laminations}, Acta Math.
  \textbf{218} (2017), no.~2, 201--295.

\bibitem[Bon96]{Bon96}
Francis Bonahon, \emph{Shearing hyperbolic surfaces, bending pleated surfaces
  and {T}hurston's symplectic form}, Ann. Fac. Sci. Toulouse Math. (6)
  \textbf{5} (1996), no.~2, 233--297.

\bibitem[Bon97]{Bon97Top}
\bysame, \emph{Transverse {H}\"older distributions for geodesic laminations},
  Topology \textbf{36} (1997), no.~1, 103--122.

\bibitem[Bro82]{Brown}
Kenneth~S. Brown, \emph{Cohomology of groups}, Graduate Texts in Mathematics,
  vol.~87, Springer-Verlag, New York-Berlin, 1982.

\bibitem[CB88]{CasBle}
Andrew~J. Casson and Steven~A. Bleiler, \emph{Automorphisms of surfaces after
  {N}ielsen and {T}hurston}, London Mathematical Society Student Texts, vol.~9,
  Cambridge University Press, Cambridge, 1988.

\bibitem[CEG87]{CanEpsGre}
Richard~D. Canary, David B.~A. Epstein, and Paul Green, \emph{Notes on notes of
  {T}hurston}, Analytical and geometric aspects of hyperbolic space
  ({C}oventry/{D}urham, 1984), London Math. Soc. Lecture Note Ser., vol. 111,
  Cambridge Univ. Press, Cambridge, 1987, pp.~3--92.

\bibitem[FG06]{FocGon1}
Vladimir Fock and Alexander Goncharov, \emph{Moduli spaces of local systems and
  higher {T}eichm\"{u}ller theory}, Publ. Math. Inst. Hautes \'{E}tudes Sci.
  (2006), no.~103, 1--211.

\bibitem[Gol84]{Gol}
William~M. Goldman, \emph{The symplectic nature of fundamental groups of
  surfaces}, Adv. in Math. \textbf{54} (1984), no.~2, 200--225.

\bibitem[Hat02]{Hat}
Allen Hatcher, \emph{Algebraic topology}, Cambridge University Press,
  Cambridge, 2002. \MR{1867354}

\bibitem[Hit92]{Hit}
Nigel~J. Hitchin, \emph{Lie groups and {T}eichm\"{u}ller space}, Topology
  \textbf{31} (1992), no.~3, 449--473.

\bibitem[Lab06]{Lab}
Fran\c{c}ois Labourie, \emph{Anosov flows, surface groups and curves in
  projective space}, Invent. Math. \textbf{165} (2006), no.~1, 51--114.

\bibitem[MMMZ23]{MalMarMazZha}
Sara Maloni, Giuseppe Martone, Filippo Mazzoli, and Tengren Zhang,
  \emph{$d$-pleated surfaces and their shear-bend coordinates}, 2023, preprint,
  {arXiv:2305.11780}.

\bibitem[PH92]{PenHar}
Robert~C. Penner and John~L. Harer, \emph{Combinatorics of train tracks},
  Annals of Mathematics Studies, vol. 125, Princeton University Press,
  Princeton, NJ, 1992.

\bibitem[SB01]{SozBon}
Ya\c{s}ar S\"{o}zen and Francis Bonahon, \emph{The {W}eil-{P}etersson and
  {T}hurston symplectic forms}, Duke Math. J. \textbf{108} (2001), no.~3,
  581--597.

\bibitem[SWZ20]{SunWieZha}
Zhe Sun, Anna Wienhard, and Tengren Zhang, \emph{Flows on the {${\rm
  PGL}(V)$}-{H}itchin component}, Geom. Funct. Anal. \textbf{30} (2020), no.~2,
  588--692.

\bibitem[SZ17]{SunZha}
Zhe Sun and Tengren Zhang, \emph{The {G}oldman symplectic form on the {${\rm
  PGL}(V)$}-{H}itchin component}, 2017, preprint, arXiv:1709.03589.

\bibitem[Thu23]{ThuNotes}
William~P. Thurston, \emph{The geometry and topology of three-manifolds},
  Collected Works of William P. Thurston with commentary IV (Steven~P.
  Kerckhoff, ed.), American Mathematical Society, 2023, pp.~1--316.

\bibitem[Wei60]{Wei}
Andr\'{e} Weil, \emph{On discrete subgroups of {L}ie groups}, Ann. of Math. (2)
  \textbf{72} (1960), 369--384.

\bibitem[Zey21]{Zey}
Hatice Zeybek, \emph{Shearing deformations of {H}itchin representations and the
  {A}tiyah-{B}ott-{G}oldman symplectic form}, Geom. Dedicata \textbf{213}
  (2021), 401--421.

\end{thebibliography}

\end{document}